\documentclass[11pt, a4paper, draft]{amsart}
\usepackage{amsmath, amsthm, amssymb}
\usepackage[english]{babel}
\usepackage[utf8]{inputenc}
\usepackage[all]{xy}
\usepackage{xspace}
\usepackage{comment}
\usepackage{setspace}
\usepackage{enumerate}
\usepackage{mathtools}

\usepackage[mathscr]{eucal}
\usepackage{hyperref}
\usepackage{graphicx}
\usepackage{tikz}
\usetikzlibrary{matrix}

	\newcommand{\Prim}{\ensuremath{\operatorname{Prim}}}

	\newcommand{\Ad}{\ensuremath{\operatorname{Ad}}\xspace}
	\renewcommand{\min}{\mathrm{min}}

	\newcommand{\Ann}{\mathrm{Ann}\,}

	\newcommand{\dimnuc}{\mathrm{dim}_{\mathrm{nuc}}}
	
	\newcounter{theorem}
	\theoremstyle{plain}
	
	\newtheorem{lemma}[theorem]{Lemma}
	\newtheorem{theorem}[theorem]{Theorem}
	\newtheorem{proposition}[theorem]{Proposition}
	\newtheorem{corollary}[theorem]{Corollary}

	\newtheorem{sublemma}[theorem]{Sublemma}
	
	\theoremstyle{definition}
	\newtheorem{definition}[theorem]{Definition}
	
	\newtheorem{remark}[theorem]{Remark}
	
	\newtheorem{example}[theorem]{Example}
	\newtheorem{problem}[theorem]{Problem}

	\numberwithin{equation}{section}
	\numberwithin{figure}{section}

\newcommand{\id}{\mathrm{id}}
\newcommand{\dr}{\mathrm{dr}}

\newcommand{\Cu}{\mathrm{Cu}}

\numberwithin{equation}{section}

\newcommand{\Cstar}{{\ensuremath C^*}}
\newcommand{\mCstar}{{\ensuremath C^*}}

\newcommand{\seq}{(\infty)}

\title[The nuclear dimension of $\mathcal O_\infty$-stable $\Cstar$-algebras]{The nuclear dimension of $\mathcal O_\infty$-stable $\Cstar$-algebras}

\begin{document}
\author{Joan Bosa}
\address{\hskip-\parindent Joan Bosa, Departamento de Matemáticas, Universidad de Zaragoza, C/Pedro Cerbuna 12, Zaragoza, 50009, Spain.} \email{jbosa@unizar.es}
\author{James Gabe}
\address{\hskip-\parindent James Gabe, Department of Mathematics and Computer Science, University of Southern Denmark, Odense, Denmark.}
\email{gabe@imada.sdu.dk}
\author{Aidan Sims}
\address{\hskip-\parindent Aidan Sims, School of Mathematics and Applied Statistics, University of Wollongong, Wollongong NSW 2522, Australia.}
\email{asims@uow.edu.au}
\author{Stuart White}
\address{\hskip-\parindent Stuart White, Mathematical Institute, University of Oxford, Radcliffe Observatory Quarter, Woodstock Road, Oxford, OX2 6GG, United Kingdom}
\email{stuart.white@maths.ox.ac.uk}

\thanks{Research partially supported by an Alexander von Humboldt Foundation Fellowship (SW), Australian Research Council grant DP180100595 (AS), a Carlsberg Foundation Internationalisation Fellowship (JG), by the DGI-MINECO and European Regional Development Fund through grant MTM2017-83487-P (JB), EPSRC:EP/R025061/1 (SW), and the Beatriu de Pinós postdoctoral programme of the Government
	of Catalonia’s Secretariat for Universities and Research 2017-BP-0079 (JB)}
\begin{abstract}
We show that every nuclear $\mathcal O_\infty$-stable $^*$-homomorphism with a separable exact
domain has nuclear dimension at most $1$. In particular separable, nuclear, $\mathcal
O_\infty$-stable $\Cstar$-algebras have nuclear dimension $1$. We also characterise when
$\mathcal O_\infty$-stable $\Cstar$-algebras have finite decomposition rank in terms of
quasidiagonality and primitive-ideal structure, and determine when full $\mathcal O_2$-stable
$^*$-homomorphisms have nuclear dimension~0.
\end{abstract}
\maketitle

\section*{Introduction}

\renewcommand*{\thetheorem}{\Alph{theorem}}

Nuclear dimension for $\Cstar$-algebras, introduced in \cite{WZ:Adv}, provides a non-commutative analogue
of Lebesgue covering dimension. Via the Gelfand transform, every commutative
$\Cstar$-algebra $A$ is isomorphic to the algebra $C_0(X)$ of continuous functions vanishing at
infinity on some locally compact Hausdorff space $X$, and then the nuclear dimension of $A$ is
exactly the Lebesgue covering dimension of $X$. Simple $\Cstar$-algebras lie at the other extreme.
These are highly non-commutative, and here nuclear dimension provides the dividing line between
the tame and the wild.  Indeed, separable, simple, unital, $\Cstar$-algebras of finite nuclear
dimension satisfying the Universal Coefficient Theorem (UCT) of Rosenberg and Schochet
\cite{RS:DMJ} are now classified by their Elliott invariants
(\cite{K:Book,P:Doc,GLN,EGLN,TWW:Ann}; this precise statement can be found as
\cite[Corollary~D]{TWW:Ann}).

Accordingly there has been substantial interest in determining the nuclear dimension of
$\Cstar$-algebras, with a heavy initial focus on Kirchberg algebras (the simple, separable,
nuclear and purely infinite algebras classified by independently Kirchberg and Phillips using
Kasparov's bivariant $K$-theory). There have been two complementary strands of development.
Kirchberg algebras satisfying the UCT were shown to have nuclear dimension one in \cite{RSS:Adv},
building on the earlier work \cite{RST:Adv,E:JFA,WZ:Adv}. As it remains a major open problem
whether all separable nuclear $\Cstar$-algebras (or equivalently all Kirchberg algebras) satisfy
the UCT, Matui and Sato instigated a new approach to the nuclear dimension computation of Kirchberg algebras in
\cite{MS:DMJ} which does not rely on the UCT. A remarkable theorem of Kirchberg \cite{K:ICM} (see
also \cite{KP:crelle}), characterises these now eponymous algebras amongst simple separable
nuclear $\Cstar$-algebras: $A$ is Kirchberg if and only if it tensorially absorbs the Cuntz
algebra $\mathcal O_\infty$ in the sense that $A\cong A\otimes\mathcal O_\infty$. Peeling off a
tensor factor of $\mathcal O_\infty$ creates extra space that makes the analysis in \cite{MS:DMJ}
possible. This approach ultimately led to a proof that all Kirchberg algebras have
nuclear dimension~1  (\cite{BBSTWW}). Tensorial absorption of other strongly self-absorbing $\Cstar$-algebras has
also been used to compute nuclear dimension for many finite simple $\Cstar$-algebras in
\cite{ENST:arXiv,SWW:Invent,BBSTWW,CETWW,CE}.

Tensorial absorbtion of $\mathcal O_\infty$ also has profound implications for non-simple
$\Cstar$-algebras, including Kirchberg and R\o{}rdam's algebraic characterisation of $\mathcal
O_\infty$-stable nuclear $\Cstar$-algebras (\cite{KR-adv}) and Kirchberg's classification of
separable nuclear $\mathcal O_\infty$-stable $\Cstar$-algebras via ideal-related bivariant
$K$-theory (\cite{K:Book}) as outlined in \cite{K:German}. This has played an important role in the
classification of non-simple Cuntz--Krieger algebras and their automorphisms (\cite{CRR:TAMS,
ERR:JNCG}) and other non-simple algebras with small ideal lattices (\cite{ARR:JFA}). So it is
natural to seek to compute the nuclear dimension of $\mathcal O_\infty$-stable $\Cstar$-algebras.
In \cite{S:Adv}, Szab\'o showed that separable $\mathcal O_\infty$-stable algebras have nuclear
dimension at most~$3$ (extending the special cases from \cite{BEMSW}) but the precise value
remained open. Our first main result settles this question:
\begin{theorem}\label{ThmA}
Let $A$ be a separable, nuclear $\mathcal O_\infty$-stable $\Cstar$-algebra.  Then $A$ has nuclear
dimension $1$.
\end{theorem}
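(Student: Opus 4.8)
The plan is to establish the two inequalities $\dimnuc(A)\le 1$ and $\dimnuc(A)\ge 1$ separately, with essentially all the content in the upper bound. The lower bound is the easy half: by the Winter--Zacharias theorem \cite{WZ:Adv} that nuclear dimension zero characterises AF algebras, it suffices to see that $A$ is not AF. But AF algebras are stably finite, whereas any nonzero $\O_\infty$-stable algebra has a properly infinite positive element: for $0\neq a\ge 0$ in $A\cong A\otimes\O_\infty$, the element $a\otimes 1$ dominates two mutually orthogonal Cuntz-equivalent copies of itself via the range projections of the isometries $s_1,s_2\in\O_\infty$. Hence $A$ is not stably finite, not AF, and $\dimnuc(A)\ge 1$.

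For the upper bound I would not argue with $A$ alone but prove the sharper homomorphism-level statement that $\id_A$ admits, for every finite $F\subseteq A$ and $\eps>0$, a two-coloured completely positive approximation: completely positive contractive maps $A\xrightarrow{\ \psi\ }F_0\oplus F_1\xrightarrow{\ \eta\ }A$ with $\eta\psi\approx_\eps\id$ on $F$, finite-dimensional $F_i$, and each $\eta|_{F_i}$ order zero; Theorem~\ref{ThmA} is then the case $\phi=\id_A$, which is legitimate since $A$ nuclear makes $\id_A$ a nuclear (hence exact, separable-domain) $\O_\infty$-stable $^*$-homomorphism. This reformulation is what makes the tensorial structure usable, since the iterated isomorphism $A\cong A\otimes\O_\infty\cong A\otimes\O_\infty\otimes\O_\infty\cong\cdots$ places a supply of approximately central unital copies of $\O_\infty$ inside the central sequence algebra $A_\infty\cap A'$.

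Starting from nuclearity, the completely positive approximation property supplies an ordinary factorisation $A\xrightarrow{\ \psi\ }F\xrightarrow{\ \eta_0\ }A$ through a finite-dimensional algebra with $\eta_0\psi\approx\id$ on $F$; the only defect is that $\eta_0$ is merely completely positive rather than a sum of two order-zero maps. The heart of the argument is to split it into two colours. Here I would use the room from one approximately central tensor factor of $\O_\infty$ to produce a two-coloured order-zero partition of unity in $A_\infty\cap A'$ --- reflecting that $\O_\infty$ itself already has nuclear dimension one --- and distribute the finite-dimensional reconstruction across the two order-zero maps $\eta^{(0)},\eta^{(1)}$, at the cost of doubling the matrix blocks. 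In the simple case this essentially finishes the proof, as in the Kirchberg-algebra computation.

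The genuine obstacle, and the reason this goes strictly beyond the simple case, is that $A$ need not be simple: the colouring data must be chosen compatibly across all of $\Prim(A)$ and then assembled into a single global approximation without spending a third colour. I would handle this with a local-to-global principle, building two-coloured approximations over the primitive quotients, controlling them by a partition of unity subordinate to the ideal lattice of $A$, and using a further central copy of $\O_\infty$ to render that partition order-zero-compatible (``complemented'') so that the local pieces glue. Separability and exactness are what let this run as a reindexing argument over a countable approximating net in $A_\infty\cap A'$. Verifying that the glued map retains exactly two colours uniformly over $\Prim(A)$ is where I expect the real difficulty to lie.
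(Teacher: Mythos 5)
Your lower bound is fine and matches the paper: $\mathcal O_\infty$-stable algebras are not AF (they contain properly infinite positive elements), so $\dimnuc A>0$ by Winter's characterisation of dimension zero \cite{Winter-JFA}. The upper bound, however, has a genuine gap: your outline contains no classification or uniqueness ingredient, and this is precisely the input that every known proof of this type requires (as the paper's introduction states, all existing techniques for passing from tensorial absorption to finite dimension rely on some aspect of classification). Concretely, your central step --- start from a CPAP factorisation $A\xrightarrow{\psi}F\xrightarrow{\eta_0}A$ and ``split $\eta_0$ into two colours'' using a two-coloured order-zero partition of unity coming from an approximately central copy of $\mathcal O_\infty$ --- does not work: cutting a merely completely positive map $\eta_0$ by (even exactly central, even order-zero) positive elements does not make its pieces order zero, and there is no order-zero structure in a CPAP factorisation to redistribute. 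This already fails in the simple (Kirchberg algebra) case, where \cite{MS:DMJ} and \cite[Section~9]{BBSTWW} do not colour a CPAP approximation; instead they discard it entirely, build downward maps from Voiculescu's quasidiagonality of cones \cite{Voi91} (note that $A$ itself is never quasidiagonal here, so the cone is unavoidable), build upward order-zero maps by hand, and then invoke a uniqueness theorem to show the resulting composite approximates $\mathrm{id}_A\otimes h$. Without some such uniqueness statement there is no mechanism connecting your finite-dimensional models back to the identity map.

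The paper's actual route makes this explicit: the downward maps $\psi_n\colon C_0((0,1])\otimes A\to F_n$ come from quasidiagonality of cones over the subquotients $A/I^{P(I)}$ (Lemma~\ref{l:conetomatrix}); the upward maps are $x\mapsto h_I\otimes\iota(x)$ for pairwise orthogonal positive elements $h_I$ full in prescribed ideals (Lemma~\ref{l:upwardsmap}); the induced cone homomorphism $\rho$ is arranged to be $\mathcal O_2$-stable (Lemmas \ref{Lm:HO2} and \ref{l:O2stablelift}) with the same ideal data as the map induced by $\mathrm{id}_A\otimes h$; and the bridge is Gabe's classification of nuclear $\mathcal O_2$-stable morphisms by their behaviour on ideals (Theorem~\ref{Thm:O2MvN} from \cite{Gabe17}), followed by a $2\times 2$ amplification and compression (Proposition~\ref{Prop:cpcHer}). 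Your local-to-global step is also not viable as stated: $\Prim A$ is a non-Hausdorff $T_0$ space, so there is no partition of unity subordinate to the ideal lattice in any usable sense, and ideals are not complemented. The paper instead approximates $\mathcal I(A)$ by finite sublattices (Lemma~\ref{l:basis}), takes direct sums of the cone models over unique-predecessor elements, and needs a further smearing by the multiplication map $m^\ast$ to control the ideals of $C_0((0,1])\otimes A$ that are not of product form. So the difficulty you correctly flag at the end --- gluing two-coloured data over $\Prim(A)$ without spending a third colour --- is real, but the tools you propose cannot resolve it; the resolution is an existence-plus-uniqueness (classification) argument, not a gluing argument.
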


Nuclear dimension was preceded by the earlier notion of decomposition rank from \cite{KW:IJM}. The
difference between the two conditions appears small, but is significant. It is tied up with the
notion of quasidiagonality, which is an external approximation property originating in work of
Halmos \cite{Popa}. Since quasidiagonality implies stable finiteness, Kirchberg algebras are never
quasidiagonal. However every $\Cstar$-algebra with finite decomposition rank is quasidiagonal
\cite[Equation~(3.3) and Proposition~5.1]{KW:IJM}. So all Kirchberg algebras have nuclear
dimension 1, but infinite decomposition rank. Indeed, since finite decomposition rank passes to
quotients, if $A$ has finite decomposition rank, then every quotient of $A$ is quasidiagonal. So,
for example, while $C_0((0,1], \mathcal{O}_\infty)$ has finite nuclear dimension (this dates back to \cite{WZ:Adv})
and is quasidiagonal by Voiculescu’s famous homotopy invariance of quasidiagonality \cite{Voi91},
it has infinite decomposition rank because it surjects onto $\mathcal{O}_\infty$. It is perhaps
surprising at first sight that there can exist $\mathcal O_\infty$-stable $\Cstar$-algebras of
finite decomposition rank,  but R\o{}rdam's $\mathcal O_\infty$-stable approximately subhomogeneous
algebra from \cite{Rordam-IJM} provides an example.

Quasidiagonality has repeatedly played a major role in the structure and classification theory for
$\Cstar$-algebras. For example, in hindsight Winter's use in \cite{W:PLMS} of finite decomposition rank to
access tracial approximations, and thence Lin's classification results (\cite{L:DMJ}), for
$\Cstar$-algebras of real rank zero, hinges on quasidiagonality. This became explicit
in Matui and Sato's work \cite{MS:DMJ}, which planted the seeds for the use of quasidiagonality in
the classification results of \cite{EGLN,TWW:Ann}. We now know that for simple $\Cstar$-algebras,
quasidiagonality delineates between finite nuclear dimension and finite decomposition rank;
building on \cite{MS:DMJ,SWW:Invent,BBSTWW}, it is shown in \cite{CETWW} that a simple unital
$\Cstar$-algebra with finite nuclear dimension has finite decomposition rank precisely when it and
all its traces are quasidiagonal (the latter in the sense of \cite{B:MAMS}).

Our second main result completely characterises when nuclear $\mathcal O_\infty$-stable
$\Cstar$-algebras have finite decomposition rank. Although quasidiagonality need not pass to
quotients, finite decomposition rank does, so a necessary condition for finite decomposition rank
is that every quotient is quasidiagonal. We prove that this necessary condition is also
sufficient. Moreover using \cite{Gabe:new}, we can also describe when these algebras have finite
decomposition rank in terms of the primitive ideal space.

\begin{theorem}\label{ThmB}
Let $A$ be a separable, nuclear $\mathcal O_\infty$-stable $\Cstar$-algebra. Then the following
are equivalent:
\begin{enumerate}[(i)]
\item $A$ has finite decomposition rank;
\item $A$ has decomposition rank $1$;
\item All quotients of $A$ are quasidiagonal;
\item Every non-zero hereditary $\Cstar$-subalgebra of $A$ is stable;
\item The primitive ideal space of $A$ has no locally closed, one point subsets.
\end{enumerate}
\end{theorem}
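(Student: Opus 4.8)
The plan is to close the cycle $\text{(ii)}\Rightarrow\text{(i)}\Rightarrow\text{(iii)}\Rightarrow\text{(v)}\Rightarrow\text{(ii)}$ and to attach (iv) through the structural equivalence $\text{(iv)}\Leftrightarrow\text{(v)}$, so that all five conditions become equivalent. The implication (ii)$\Rightarrow$(i) is trivial, and (i)$\Rightarrow$(iii) is immediate from the two facts about decomposition rank recalled in the introduction: it passes to quotients and it forces quasidiagonality. Thus if $A$ has finite decomposition rank, so does every quotient $A/I$, whence each $A/I$ is quasidiagonal.

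For (iii)$\Rightarrow$(v) I would argue contrapositively. A locally closed one-point subset $\{P\}\subseteq\Prim(A)$ yields ideals $I\subsetneq J\trianglelefteq A$ with $J/I$ simple and $\Prim(J/I)=\{P\}$. Since $\mathcal O_\infty$-stability is inherited by ideals and quotients, $J/I$ is a separable, simple, nuclear, $\mathcal O_\infty$-stable $\Cstar$-algebra, that is, a Kirchberg algebra; being purely infinite it is not quasidiagonal, as quasidiagonality entails stable finiteness. But $J/I$ is an ideal in the quotient $A/I$, and quasidiagonality is inherited by $\Cstar$-subalgebras, so $A/I$ is not quasidiagonal, contradicting (iii).

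The equivalence (iv)$\Leftrightarrow$(v) is where I expect to invoke external input, and is flagged in the introduction as the role of \cite{Gabe:new}. A nonzero hereditary subalgebra $B\subseteq A$ is again separable, nuclear and $\mathcal O_\infty$-stable, so all of its simple subquotients are purely infinite; such a $B$ is stable exactly when it admits no nonzero unital quotient, by the stability theory of Hjelmborg--R\o{}rdam specialised to purely infinite algebras. The existence of a hereditary subalgebra with a unital quotient is in turn equivalent, via the ideal-lattice/$\Prim$-space dictionary of \cite{Gabe:new}, to the existence of a locally closed point of $\Prim(A)$ (at such a point the simple subquotient is purely infinite and, after passing to a corner, unital). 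This yields $\text{(iv)}\Leftrightarrow\text{(v)}$.

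The main obstacle is the single hard implication (v)$\Rightarrow$(ii): one must produce genuine decomposition-rank-$1$ approximations from a purely topological hypothesis on $\Prim(A)$. Theorem~\ref{ThmA} already supplies, for the identity $^*$-homomorphism on $A$, two-coloured completely positive approximations $(F_n,\psi_n,\varphi_n)$ in which each colour of the upward map $\varphi_n$ is a contractive order-zero map; the sole deficiency is that the sum $\varphi_n$ itself need not be contractive, and this gap between $\dimnuc$ and $\dr$ is precisely the quasidiagonal content. The strategy is to reprove the order-zero approximation machinery behind Theorem~\ref{ThmA} in a form where the upward summands factor through hereditary subalgebras, and then to exploit the stability of those subalgebras --- equivalently, the absence of locally closed points --- to renormalise the two order-zero maps to contractions while retaining approximate multiplicativity. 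I expect the crux to be achieving this contractivity \emph{uniformly over the whole ideal lattice}: quasidiagonality of $A$ alone does not suffice (it holds for $C_0((0,1],\mathcal O_\infty)$, which has a non-quasidiagonal quotient), and this is exactly why the hypothesis must exclude Kirchberg subquotients at every point. Delivering this uniform quasidiagonal control --- presumably through a stable, one-sided refinement of the $\mathcal O_\infty$-absorption techniques that drive Theorem~\ref{ThmA} --- is the step I anticipate being the most delicate, and it simultaneously forces quasidiagonality of every quotient, closing the cycle back to (iii).
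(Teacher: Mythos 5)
There is a genuine gap, and it sits exactly where you flagged it: the implication (v)$\Rightarrow$(ii) is left as a programme rather than a proof, and the mechanism you propose for it is not the one that can be made to work. You suggest reproving the approximation machinery behind Theorem~\ref{ThmA} and then ``renormalising the two order-zero maps to contractions'' using stability of hereditary subalgebras; nothing in the paper (or, as far as I can see, elsewhere) supports such a renormalisation, and the paper never proves (v)$\Rightarrow$(ii) directly. Instead it factors this step as (v)$\Rightarrow$(iii)$\Rightarrow$(ii). The input you are missing for (v)$\Rightarrow$(iii) is \cite[Corollary~C]{Gabe:new}: a separable, nuclear, $\mathcal O_\infty$-stable $\Cstar$-algebra is quasidiagonal if and only if its primitive ideal space has no non-empty, compact, open subsets. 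Given a non-quasidiagonal quotient $A/I$, this produces a non-empty compact open $V \subseteq \Prim(A/I)$; compactness plus Zorn's lemma yields a maximal proper open subset $W \subsetneq V$, the $T_0$ property forces $V\setminus W$ to be a singleton, and pulling back to $\Prim A$ gives a locally closed point --- contradicting (v). Then (iii)$\Rightarrow$(ii) is precisely the decomposition-rank half of Theorem~\ref{ThmC}, which is already proved: quasidiagonality of \emph{all quotients} is fed into the main technical construction (Lemmas \ref{lm:Main}~and~\ref{l:Oinftymapmain}) to choose the downward maps $\psi_n$ vanishing on $C_0((0,1))\otimes A$, making the induced map a $^*$-homomorphism, and Lemma~\ref{detectdr} then yields $\dr \le 1$ \emph{without} ever making the upward maps contractive (contractivity is recovered there by cutting with spectral projections of $\psi_i(e_0)$, not by renormalising $\eta$). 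Note that your logical dependency runs backwards: in the paper, quasidiagonality of quotients (iii) is obtained from (v) \emph{first} and is the hypothesis that powers the approximations, whereas you hope it falls out of an approximation argument run under hypothesis (v) alone.

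Your remaining steps are essentially sound, with one misattribution. The implications (ii)$\Rightarrow$(i)$\Rightarrow$(iii) are correct as you state them, and your contrapositive for (iii)$\Rightarrow$(v) (a locally closed point gives a simple purely infinite subquotient $J/I$, which is an ideal in $A/I$, and quasidiagonality passes to subalgebras) is a valid variant of the paper's (i)$\Rightarrow$(v). Your sketch of (iv)$\Leftrightarrow$(v) is in substance the paper's Proposition~\ref{p:herstable}, but its inputs are not \cite{Gabe:new}: they are the Kirchberg--R\o{}rdam theory of \cite{KR-AJM} (hereditary subalgebras of purely infinite algebras are purely infinite; a $\sigma$-unital purely infinite algebra is stable iff it has no non-zero unital quotient; stability of all $\overline{aAa}$ implies pure infiniteness) together with the Rieffel correspondence to see that a unital quotient of a hereditary subalgebra forces a simple subquotient of $A$, hence a locally closed point; the ideal-lattice/$\Prim$ dictionary is classical (Pedersen). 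So the proposal correctly identifies the architecture of the equivalences but leaves the one hard implication unproven, and the concrete tool that closes it --- the quasidiagonality characterisation of \cite{Gabe:new} combined with the already-established decomposition-rank statement of Theorem~\ref{ThmC} --- is absent.
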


In order to classify separable nuclear $\mathcal O_\infty$-stable $\Cstar$-algebras Kirchberg
classified morphisms between these algebras. More generally, he classified nuclear
$^*$-homomorphisms from separable exact $\Cstar$-algebras into $\mathcal O_\infty$-stable
$\Cstar$-algebras. Although originally defined for $\Cstar$-algebras, the definitions of both
decomposition rank and nuclear dimension of a $\Cstar$-algebra $A$ are given in terms of
approximation properties for the identity map $\id_A \colon A \to A$, and make perfect sense for other
$^*$-homomorphisms between $\Cstar$-algebras (see \cite{TW:APDE}). Following the philosophy that
we should detect classifiability through nuclear dimension, it is then reasonable to ask
whether the maps classified by Kirchberg are of finite nuclear dimension.

Motivated by Kirchberg's transfer of the nuclearity hypothesis from $\Cstar$-algebras to morphisms in his
classification of nuclear maps between non-nuclear $\Cstar$-algebras, one should seek to do likewise with
other ingredients of classification theorems. In particular, in his new treatment
\cite{Gabe17} of Kirchberg's $\mathcal O_2$-stable classification theorem, the second named author
introduces $\mathcal O_\infty$ and $\mathcal O_2$-stable $^*$-homomorphisms. These include all
$^*$-homomorphisms whose domain or codomain is $\mathcal O_\infty$-stable or $\mathcal O_2$-stable
respectively, as well as considerably more examples. He shows that nuclear $\mathcal O_2$-stable
maps out of separable exact $\Cstar$-algebras are classified by their behaviour on ideals. The
forthcoming work \cite{G:InPrep} will establish a version of Kirchberg's classification theorem
for $\mathcal O_\infty$-stable maps. Our third main theorem establishes finite nuclear dimension
for such maps; in particular, in answer to the question above, the maps classified by Kirchberg
all have nuclear dimension at most~1.

\begin{theorem}\label{ThmC}
Let $A$ and $B$ be $\Cstar$-algebras with $A$ separable and exact. Let $\theta \colon A\rightarrow B$ be
a nuclear $\mathcal O_\infty$-stable $^*$-homomorphism. Then $\theta$ has nuclear dimension at
most~$1$. If every quotient of $A$ is quasidiagonal, then the decomposition rank of $\theta$ is at
most~$1$.
\end{theorem}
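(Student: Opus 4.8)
The plan is to verify the local approximation criterion for the nuclear dimension of a $^*$-homomorphism: given a finite set $\mathcal F\subseteq A$ and $\eps>0$, I must produce a finite-dimensional $\Cstar$-algebra $M$, a completely positive contractive (c.p.c.) map $\psi\colon A\to M$, and two c.p.c.\ order zero maps $\phi^{(0)},\phi^{(1)}\colon M\to B$ with $\|(\phi^{(0)}+\phi^{(1)})\psi(a)-\theta(a)\|<\eps$ for all $a\in\mathcal F$. First I would invoke nuclearity of $\theta$, which yields c.p.c.\ factorisations $A\xrightarrow{\psi}M\xrightarrow{\phi}B$ with $\phi\psi\approx\theta$ on $\mathcal F$. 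The entire difficulty is then concentrated in replacing the single c.p.c.\ map $\phi$ by a sum of two order zero maps, and it is precisely here that $\mathcal O_\infty$-stability of $\theta$ is brought in.

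I would encode $\mathcal O_\infty$-stability of $\theta$ as a unital embedding $\mathcal O_\infty\hookrightarrow F(\theta)$ into the central sequence algebra $F(\theta)=(\theta(A)'\cap B_\w)/\mathrm{Ann}(\theta(A))$ relative to $\theta$, where $B_\w$ is an ultrapower of $B$; the embedded copy of $\mathcal O_\infty$ commutes with $\theta(A)$ and is represented by bounded sequences in $B$ that asymptotically commute with $\theta(\mathcal F)$. The point of having $\mathcal O_\infty$ available here is that $\dimnuc(\mathcal O_\infty)=1$, so $\id_{\mathcal O_\infty}$ admits two-colour c.p.c.\ order zero approximations through finite-dimensional algebras. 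I would combine such a two-colour system for the unit of the embedded $\mathcal O_\infty$ with the factorisation $\psi,\phi$ to manufacture two c.p.c.\ maps $\tilde\phi^{(0)},\tilde\phi^{(1)}\colon M\to F(\theta)$ that are order zero, the order zero relation being inherited from $\mathcal O_\infty$, and whose sum agrees with $\phi$ modulo the commutant relations. Because the embedded $\mathcal O_\infty$ commutes with $\theta(A)$, reassembling the two pieces after composing with $\psi$ recovers $\theta$ on $\mathcal F$ to within $\eps$.

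The maps $\tilde\phi^{(0)},\tilde\phi^{(1)}$ take values in the ultrapower, so a reindexing (diagonal sequence) argument is needed to pull them down to genuine c.p.c.\ order zero maps $\phi^{(0)},\phi^{(1)}\colon M\to B$ realising the stated estimate; for this I would use the cone picture of order zero maps, representing each colour as a $^*$-homomorphism $C_0((0,1])\otimes M\to B$ and exploiting projectivity of the cone so that order zero-ness survives the lift. The hard part will be exactly this construction: arranging the two colours inside the embedded $\mathcal O_\infty$ so that, after combining with $\phi$ and descending from $F(\theta)$ to $B$, both summands remain honestly order zero while their sum still reproduces $\theta$ on $\mathcal F$. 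Controlling the kernel (order zero) condition and the commutator errors on $\mathcal F$ simultaneously is the crux of the argument; separability of $A$ is what makes the ultrapower and reindexing manipulations available, and exactness of $A$ is what allows the nuclear factorisations of $\theta$ and the use of the embedding of $\mathcal O_\infty$ into $F(\theta)$.

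For the decomposition rank statement I would rerun the same construction, now additionally requiring the total map $\phi^{(0)}+\phi^{(1)}$ to be contractive. The $\mathcal O_\infty$ two-colour system produces order zero summands but not automatically a contractive sum, and the hypothesis that every quotient of $A$ is quasidiagonal is what restores the missing norm control: quasidiagonality lets the upward maps $\psi$ be taken approximately multiplicative, forcing $\|(\phi^{(0)}+\phi^{(1)})\psi(a)\|$ not to exceed $\|\theta(a)\|$ asymptotically and hence yielding $\dr(\theta)\le 1$. The additional obstacle at this stage is to organise the quasidiagonal approximations of the various quotients of $A$ compatibly with the $\mathcal O_\infty$-colouring across the primitive ideal space of $A$; this is where I expect to invoke the structural theory of $\mathcal O_\infty$-stable $\Cstar$-algebras to patch the local quasidiagonal data into a single global approximation.
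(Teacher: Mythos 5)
There is a genuine gap, and it sits exactly where you yourself locate ``the hard part.'' Your plan is to take a c.p.c.\ factorisation $A\xrightarrow{\psi}M\xrightarrow{\phi}B$ coming from nuclearity of $\theta$ and then to split the upward map $\phi$ into two order zero colours by multiplying against a two-colour (nuclear dimension one) approximation of $\id_{\mathcal O_\infty}$ inside the relative commutant. No mechanism can make this work: the embedded copy of $\mathcal O_\infty$ commutes with $\theta(A)$ only, not with $\phi(M)$, so the products you would form do not even approximately commute with the range of $\phi$; and more fundamentally, order zero-ness is not ``inherited from $\mathcal O_\infty$'' by such products, because $\phi$ is merely c.p.c.\ and carries no orthogonality-preservation whatsoever --- even if $h$ commuted exactly with $\phi(M)$, the map $x\mapsto \phi(x)h$ would still fail to be order zero. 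This is precisely why every known passage from $\mathcal O_\infty$-absorption to finite nuclear dimension (Matui--Sato, \cite[Section~9]{BBSTWW}, and the present paper) routes through a classification ingredient rather than through commutant manipulations alone. What does work, and what the paper does, is: (1) split $\theta=\theta^{(0)}+\theta^{(1)}$ using a positive $h\in\mathcal O_\infty$ of spectrum $[0,1]$ and $1_{\mathcal O_\infty}-h$ in the commutant (this part of your outline is sound); (2) build, entirely by hand, factorisations whose upward maps are \emph{honestly} order zero --- matrix algebras coming from Voiculescu's quasidiagonality of the cones over the subquotients $A/I^J$, embedded into $\mathcal O_\infty$ and cut by pairwise orthogonal positive elements $h_I$ full in prescribed ideals (Lemmas \ref{l:conetomatrix}, \ref{l:upwardsmap}, \ref{lm:Main}); and then (3) prove that these model maps actually approximate the $\theta^{(i)}$ by invoking a uniqueness theorem: the induced cone morphisms $\rho,\mu^{(i)}\colon C_0((0,1])\otimes A\to (B_{(\infty)})_{(\infty)}$ are nuclear and $\mathcal O_2$-stable (via Lemmas \ref{Lm:HO2} and \ref{Lm:TensorO2}) and induce the same map on ideal lattices, whence Theorem~\ref{Thm:O2MvN} gives approximate Murray--von Neumann equivalence, upgraded to unitary equivalence after a $2\times 2$ amplification. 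Your proposal contains no uniqueness/classification input and no tracking of ideal data, and for non-simple $A$ the matching of $\mathcal I(\rho)$ with $\mathcal I(\mu^{(i)})$ is the bulk of the work (Sections \ref{S2}--\ref{S4}); without it there is no reason the two-coloured maps you construct converge to $\theta$ at all.

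Two smaller points. Exactness of $A$ is not used to ``allow nuclear factorisations'' --- nuclearity of $\theta$ does that by itself --- but rather to ensure that the limit morphism $\rho$ built from finite-dimensional pieces is nuclear (via \cite[Proposition~3.3]{D:JFA}) and to place the problem within the scope of the $\mathcal O_2$-stable classification theorem. For the decomposition rank statement your instinct is closer to the paper: quasidiagonality of all quotients is indeed used to make the downward maps approximately multiplicative (in the paper, by arranging $\psi_n(C_0((0,1))\otimes A)=0$ in Lemma~\ref{l:Oinftymapmain}, so that $\psi$ becomes a $^*$-homomorphism), and a criterion in the spirit of your norm-control remark (Lemma~\ref{detectdr}, which notably does not require the upward maps to be contractive) then yields $\dr\,\theta\le 1$. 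But the quotients enter through the subquotients $A/I^J$ indexed by the finite sublattices of $\mathcal I(A)$ inside the existence construction --- not through a patching argument over the primitive ideal space --- and since this construction is the part your proposal lacks, the decomposition rank claim inherits the same gap.
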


In addition to recognising classifiability via nuclear dimension, we regard this theorem as a
proof of concept for the development of regularity theory for $^*$-homomorphisms in the spirit of
the Toms--Winter conjecture (see \cite{ET:BAMS,W:Invent1,WZ:Adv}). In particular, in the setting
of stably finite $\Cstar$-algebras, the Jiang--Su algebra, $\mathcal Z$, from \cite{JS:AJM} is the
appropriate replacement for $\mathcal O_\infty$, and a key future goal is to develop a suitable
notion of Jiang--Su stability for maps, and an analogue of Winter's $\mathcal Z$-stability theorem
from \cite{W:Invent1,W:Invent2} in this context.

The results discussed above show that very large classes of maps have nuclear dimension either
0~or~1; but which ones have dimension~0? For spaces, it is more or less immediate that zero
dimensionality coincides with total disconnectedness; and for $\Cstar$-algebras Winter established
early in the theory that zero dimensionality coincides with approximate finite dimensionality (see
\cite{Winter-JFA}, noting that this uses an earlier notion of non-commutative covering dimension).
For maps, characterising zero dimensionality seems harder, but we do so for full $\mathcal
O_2$-stable maps, in terms of quasidiagonality.

\begin{theorem}\label{ThmD}
Let $A$ and $B$ be $\Cstar$-algebras with $A$ separable and exact, and suppose that $\theta \colon
A \to B$ is a full, $\mathcal O_2$-stable $^\ast$-homomorphism. Then the nuclear dimension of
$\theta$ is given by
\begin{equation}
\dimnuc\theta = \left\{ \begin{array}{ll}
0 & \textrm{ if $\theta$ is nuclear and $A$ is quasidiagonal}, \\
1 & \textrm{ if $\theta$ is nuclear and $A$ is not quasidiagonal},\\
\infty&\text{ otherwise}.
\end{array} \right.
\end{equation}
\end{theorem}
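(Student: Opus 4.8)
The plan is to dispose of the three cases in turn, beginning with the observation that finite nuclear dimension forces nuclearity: every approximant of the form $\sum_i\phi_i\psi_i$ factors through a finite-dimensional algebra and is therefore nuclear, and $\theta$ is a point-norm limit of such maps. Hence whenever $\theta$ fails to be nuclear we automatically have $\dimnuc\theta=\infty$, which handles the ``otherwise'' line. In the two remaining cases $\theta$ is nuclear, and since $\O_2\cong\O_2\otimes\O_\infty$ every $\O_2$-stable $^*$-homomorphism is $\O_\infty$-stable; Theorem~\ref{ThmC} then yields $\dimnuc\theta\le 1$ immediately. It therefore remains only to determine, under the standing hypotheses that $\theta$ is nuclear, full and $\O_2$-stable with $A$ separable and exact, exactly when $\dimnuc\theta=0$. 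I would record at the outset that fullness forces injectivity of $\theta$ (a nonzero element of $A$ cannot map to $0$, since $0$ generates the zero ideal rather than $B$), so that $\theta$ is isometric; this isometry will be used repeatedly.

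To analyse the dimension-zero case I would pass to the sequence-algebra reformulation. Writing $\mathcal F_\infty=\prod_n F_n/\bigoplus_n F_n$ for a sequence of finite-dimensional algebras and $B_\infty$ for the norm sequence algebra of $B$, one has $\dimnuc\theta=0$ precisely when there are a c.p.c.\ map $\Psi\colon A\to\mathcal F_\infty$ and a c.p.c.\ order zero map $\Phi\colon\mathcal F_\infty\to B_\infty$ with $\Phi\Psi=\iota_B\circ\theta$, where $\iota_B\colon B\to B_\infty$ is the canonical embedding. (Note that with a single colour the decomposition-rank contractivity condition is automatic, so this simultaneously computes $\dr\theta=0$, consistently with the two cases of the statement.) The goal is thus to show that such a factorisation exists if and only if $A$ is quasidiagonal.

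For the direction from quasidiagonality, recall that quasidiagonality of $A$ supplies c.p.c.\ maps $\psi_n\colon A\to M_{k_n}$ that are asymptotically multiplicative and asymptotically isometric, so that $\Psi=(\psi_n)_n$ is a $^*$-monomorphism into $\mathcal F_\infty=\prod_n M_{k_n}/\bigoplus_n M_{k_n}$ admitting a c.p.c.\ lift. I would then construct $\Phi$ so that $\Phi\Psi=\iota_B\theta$. Viewing an order zero map as a $^*$-homomorphism out of a cone, this amounts to extending a $^*$-homomorphism defined on $C_0((0,1])\otimes\Psi(A)$ to one on $C_0((0,1])\otimes\mathcal F_\infty$ with values in $B_\infty$. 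This is precisely where the full strength of $\O_2$-stability enters: the transported map into $B_\infty$ is again full, nuclear and $\O_2$-stable, and Gabe's classification/existence theorem for nuclear $\O_2$-stable $^*$-homomorphisms out of separable exact algebras should produce the required extension, the relevant ideal data being trivial because everything in sight is full. This is the step that genuinely uses $\O_2$ rather than merely $\O_\infty$, as it is what lets one colour suffice. To avoid the non-separability (and potential non-exactness) of $\mathcal F_\infty$, I would work at finite stages, or equivalently replace $\mathcal F_\infty$ by a separable exact subalgebra containing $\Psi(A)$ before invoking the classification theorem.

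For the converse, suppose $\dimnuc\theta=0$ and take $\Psi,\Phi$ as above with $\Phi\Psi=\iota_B\theta$; I would deduce that $A$ is quasidiagonal, which is exactly the contrapositive needed to force $\dimnuc\theta\ge 1$ in the non-quasidiagonal case. Since $\iota_B\theta$ is an isometric $^*$-homomorphism, the order-zero identity $\Phi(x)\Phi(y)=\Phi(1)\Phi(xy)$, combined with unitality of the $\psi_n$ (after unitising $A$ if necessary), should pin down $\Phi(1)$ to act as a unit on the hereditary subalgebra carrying $\Psi(A)$; there $\Phi$ restricts to a genuine $^*$-homomorphism, and this forces $\Psi$ to be multiplicative, so that $(\psi_n)_n$ is a quasidiagonalising sequence for $A$. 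This is the map-theoretic analogue of the classical fact that finite decomposition rank implies quasidiagonality, and I would follow that argument. I expect the main obstacle to lie in the quasidiagonal direction: checking that Gabe's existence theorem applies after the reduction to a separable exact subalgebra, and that the resulting order zero map intertwines $\Psi$ with $\iota_B\theta$ exactly rather than merely approximately, will require careful bookkeeping of the finite-stage approximations against the asymptotic nature of $\Psi$.
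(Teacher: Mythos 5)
Your handling of the easy parts is correct and matches the paper: non-nuclearity forces $\dimnuc\theta=\infty$; $\mathcal O_2$-stability implies $\mathcal O_\infty$-stability so Theorem~\ref{ThmC} gives $\dimnuc\theta\le 1$; fullness forces injectivity; and your converse direction ($\dimnuc\theta=0$ implies $\dr\,\theta=0$ implies $A$ quasidiagonal, via the classical Kirchberg--Winter block-removal argument) is exactly the paper's route through Proposition~\ref{ApproxOrderZeroProp} and Corollary~\ref{DetectQDCor}. The genuine gap is in the quasidiagonal direction, at the step where you invoke ``Gabe's classification/existence theorem'' to extend the $^*$-homomorphism on $C_0((0,1])\otimes\Psi(A)$ to one on $C_0((0,1])\otimes\mathcal F_\infty$. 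First, that machinery applies to separable, exact domains, and $\mathcal F_\infty=\prod_n F_n/\bigoplus_n F_n$ is neither separable nor exact; your proposed fix of cutting down to a separable exact subalgebra containing $\Psi(A)$ discards exactly what you need, since a $0$-decomposable approximation requires the upward map to be implemented by order zero maps $\eta_n\colon F_n\to B$ on the finite-dimensional levels themselves, and an abstract order zero map defined on a subalgebra of $\mathcal F_\infty$ yields no such $\eta_n$ (Arveson extension produces cpc maps, not order zero ones, and cpc maps out of the non-separable quotient $\mathcal F_\infty$ need not lift levelwise). Second, no unconditional existence theorem with arbitrary codomain can hold: realising even the trivial (everything-to-$B$) ideal data by an $\mathcal O_2$-stable map forces $B$ to contain a full properly infinite projection, which a general $\Cstar$-algebra (e.g.\ any stably finite $B$) lacks. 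Manufacturing that projection from the hypotheses is a genuine step of the proof --- the paper extracts it from fullness and $\mathcal O_\infty$-stability of $\theta$ via Lemma~\ref{l:imageOinftystable} and the Blackadar--Cuntz scaling-element argument of Lemma~\ref{l:fullpropinf} --- and your sketch skips it entirely.

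The workable repair inverts your logic, and is what the paper does: with the full properly infinite projection in hand, fix a full embedding $\iota\colon \mathcal O_2\to B$ and a Kirchberg embedding $\phi\colon A\to\mathcal O_2$, construct the upward maps \emph{concretely} as $^*$-homomorphisms $\eta_n=\kappa\circ(\rho_n\otimes 1_{\mathcal O_2})\colon M_{k_n}\to\mathcal O_2$ (the commuting copy $\kappa(1_{\mathcal O_2}\otimes\mathcal O_2)$ provides $\mathcal O_2$-stability of the composite, and exactness of $A$ its nuclearity), and then apply the \emph{uniqueness} theorem (Theorem~\ref{Thm:O2MvN}) --- not an existence theorem --- to compare the composite with $\phi$, respectively $\theta$ with $\iota\circ\phi$, the ideal data agreeing trivially by fullness and simplicity of the relevant codomains. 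Your worry about upgrading approximate to exact intertwining is a red herring: approximate Murray--von Neumann equivalence gives, after the $\oplus\,0$ amplification of \cite[Proposition~3.10]{Gabe17}, unitaries $u$ with $\|\theta(a)\oplus 0-\Ad u\,(\eta_n(\psi_n(a))\oplus 0)\|<\epsilon$ on any prescribed finite set, and $\Ad u\circ(\eta_n\oplus 0)$ is again a $^*$-homomorphism, so these triples are literally $0$-decomposable approximations of $\theta\oplus 0$; Proposition~\ref{Prop:cpcHer} then compresses back to $\theta$. No exact factorisation through $\mathcal F_\infty$ is required anywhere.
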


\medskip
\subsection*{\sc Discussion of methods}\hfill \\

All existing techniques for passing from tensorial absorption to finite topological dimension rely on some aspect of classification. In \cite{WZ:Adv}, Winter and Zacharias estimate the nuclear
dimension of Cuntz algebras by direct analysis. They combine this with R\o{}rdam's models for
UCT-Kirchberg algebras as inductive limits of algebras of the form $\bigoplus_{i=1}^k
M_{m_i}\otimes\mathcal O_{n_i}\otimes C(\mathbb T)$ (obtained using the Kirchberg--Phillips
classification) to show that UCT-Kirchberg algebras have nuclear dimension at most~$5$. This
approach was refined in \cite{E:JFA} and \cite{RST:Adv}, leading to the realisation in
\cite{RSS:Adv} of UCT-Kirchberg algebras as inductive limits of higher rank graph algebras with
nuclear dimension~$1$.

Matui and Sato's use of $\mathcal O_\infty$-absorption to obtain finite nuclear dimension absent
the UCT also uses classification, in the form of uniqueness results for maps. Since $\mathcal
O_\infty$ is strongly self-absorbing, to compute the nuclear dimension of an $\mathcal
O_\infty$-stable $\Cstar$-algebra $A$, it suffices to compute the nuclear dimension of the map
$\mathrm{id}_A\otimes 1_{\mathcal O_\infty} \colon A\rightarrow A\otimes\mathcal O_\infty$. Matui
and Sato do this for Kirchberg algebras in \cite{MS:DMJ} using a trade off between $\mathcal
O_\infty$-stability and Kirchberg's $\mathcal O_2$-embedding theorem. They approximate
$\id_A\otimes 1_{\mathcal O_\infty}$ with two copies of an embedding $A\hookrightarrow \mathcal
O_2\hookrightarrow 1_A\otimes\mathcal O_\infty\subseteq A\otimes\mathcal O_\infty$; classification
results enter implicitly through a $2\times 2$ matrix trick \`a la Connes. This strategy was made
both more general and more explicit in \cite[Theorem~3.3]{BEMSW}. Szab\'o's work \cite{S:Adv} also
uses a trade off of this nature; he uses R\o{}rdam's strongly purely infinite approximately
subhomogeneous algebra $\mathcal A_{[0,1]}$ in place of $\mathcal O_2$, obtaining a bound on
nuclear dimension for arbitrary $\mathcal O_\infty$-stable nuclear $\Cstar$-algebras.  These
methods yield approximations which naturally decompose into $4$ summands (via approximations for
$\mathcal O_2$ and $\mathcal A_{[0,1]}$ that each decompose into two summands), so yield nuclear
dimension at most $3$.\footnote{Recall that approximations that decompose into $n+1$ summands
witness nuclear dimension at most $n$; see Definition \ref{DefNDim} below.}

To obtain the exact nuclear dimension value of $1$, we must avoid passing through
intermediate one dimensional algebras like $\mathcal O_2$, and instead build factorisations
through zero dimensional building blocks directly. The analysis of \cite[Section~9]{BBSTWW}
achieves this for Kirchberg algebras in four broad steps. First use a positive contraction $h\in
\mathcal O_\infty$ of spectrum $[0,1]$ to decompose $\id_A\otimes 1_{\mathcal O_\infty}$ as the
sum of two order zero maps $\mathrm{id}_A\otimes h$ and $\mathrm{id}_A\otimes (1_{\mathcal
O_\infty}-h)$. Next use Voiculescu's quasidiagonality of the cone over $A$ from \cite{Voi91} to
obtain a sequence $(\phi_n\colon A\rightarrow F_n)_{n=1}^\infty$ of approximately order zero maps into
finite dimensional algebras $F_n$. Next embed each $F_n$ into $A\otimes\mathcal O_\infty$ using
the second tensor factor, and thereby regard each $\phi_n$ as a map $A\rightarrow A\otimes\mathcal
O_\infty$. Finally employ ingredients from Kirchberg's classification of simple purely infinite
nuclear $\Cstar$-algebras to power a $2\times 2$ matrix trick to see that the sequence
$(\phi_n)_{n=1}^\infty$ is approximately unitarily equivalent to each of $\mathrm{id}_A\otimes h$
and $\mathrm{id}_A\otimes (1_{\mathcal O_\infty}-h)$. This gives the required finite-dimensional
approximations.

\medskip
The broad strategy of the previous paragraph underpins our proof of Theorem~\ref{ThmA}. However
when $A$ is non-simple, both the construction of finite dimensional models and the appropriate
classification vehicle for transferring these models to $\mathrm{id}_A\otimes h$ are more
delicate.\footnote{By a trick from \cite{BEMSW}, $h$ and $1_{\mathcal O_\infty}-h$ are
approximately unitarily equivalent in $\mathcal O_\infty$, so that $\mathrm{id}_A\otimes h$ and
$\mathrm{id}_A\otimes (1_{\mathcal O_\infty}-h)$ are approximately unitarily equivalent. So it
suffices to model $\id_A\otimes h$, and we suppress $\mathrm{id}_A\otimes (1_{\mathcal
O_\infty}-h)$ henceforth.}   For the latter, we must use classification results for non-simple
algebras. The order zero map $\id_A\otimes h$, gives rise to a $^*$-homomorphism $\pi \colon f\otimes a
\mapsto a\otimes f(h)$ from the cone $C_0((0,1])\otimes A$ into $A\otimes \mathcal O_\infty$.
Since embeddings of cones into Kirchberg algebras are $\mathcal O_2$-stable (see
Lemma~\ref{Lm:HO2}), the second author's classification framework for $\mathcal O_2$-stable maps
in \cite{Gabe17} applies to show that $\pi$ is determined up to approximate equivalence by
its behaviour on ideals.

To obtain our finite dimensional models, we construct a sequence of diagrams
\begin{equation}\label{IntroEq2}
\xymatrix{A\ar[rr]\ar[dr]_{\tilde\psi_n}&&A\otimes\mathcal O_\infty\\&F_n\ar[ur]_{\eta_n}}
\end{equation}
in which the $F_n$ are finite dimensional $\Cstar$-algebras, the $\eta_n$ are contractive order
zero maps, and the $\tilde\psi_n$ are completely positive contractions and approximately order
zero (in a point-norm sense). The sequence $(\eta_n\circ\tilde{\psi}_n)_{n=1}^\infty$ induces an
order zero map $\theta$ from $A$ into the sequence algebra $\prod_i A / \bigoplus_i A$, which is
typically denoted $A_\infty$, but here denoted $A_{\seq}$ because $\mathcal{O}_\infty$ features
heavily throughout. (For readers unfamiliar with sequence algebras, these are described just
before Lemma~\ref{l:fctcalcseqoz}). Through the duality (\cite{WZ:MJM}) between order zero maps on
$A$ and $^*$-homomorphisms out of $C_0((0,1])\otimes A$, this $\theta$ is determined by a
$^*$-homomorphism $\rho\colon C_0((0,1])\otimes A\rightarrow (A\otimes\mathcal O_\infty)_{\seq}$.
Our goal is to choose the diagrams~\eqref{IntroEq2} so that $\rho$ is $\mathcal O_2$-stable and
induces the same map on ideals as $\pi$ (viewed as a map into $(A\otimes\mathcal
O_\infty)_{\seq})$. Classification will then ensure that $\rho$ and $\pi$, and hence also the
associated order zero maps, are approximately equivalent.\footnote{The form of approximate
equivalence is subtle; classification gives approximate Murray and von Neumann equivalence of
$\rho$ and $\pi$, which in turn gives approximate unitary equivalence after passing to a $2\times
2$ matrix amplification.  We perform the nuclear dimension calculations in this matrix
amplification, and subsequently compress back to the original $\Cstar$-algebra $A$.} This provides
the required finite-dimensional approximations for $\id_A\otimes h$, and hence shows that $A$ has
nuclear dimension one.

The construction of the diagrams~\eqref{IntroEq2} occupies the bulk of the paper.\footnote{We note
for comparison with Lemmas \ref{lm:Main}~and~\ref{l:Oinftymapmain} (which provide the meat of the
construction) that for technical reasons we actually construct approximate $^*$-homomorphisms
$\psi_n\colon C_0((0,1])\otimes A\rightarrow F_n$, and then the maps $\tilde{\psi}_n$
in~(\ref{IntroEq2}) are given by $\tilde{\psi}_n(a)=\psi_n(\id_{(0,1]}\otimes a)$. In the main
body of the paper, we make no explicit reference to $\tilde\psi_n$.} When $A$ has exactly one
nontrivial ideal $I$, the idea is to take the `downward maps' $\tilde\psi_n$ as a direct sum
$\tilde\psi_{n,1}\oplus \tilde\psi_{n,2}\colon A\rightarrow F_{n,1}\oplus F_{n,2}$ where
$(\tilde\psi_{n,1})_{n=1}^\infty$ arises from order zero maps corresponding to the
quasidiagonality of the cone over $A$ and $(\tilde\psi_{n,2})_{n=1}^\infty$ by following the
quotient $A\rightarrow A/I$ by maps obtained from the quasidiagonality of the cone over $A/I$. The
`upward maps' $\eta_n$ are obtained by fixing embeddings $\iota_{n,i}\colon F_{n,i}\rightarrow\mathcal
O_\infty$ and orthogonal\footnote{$\mathcal O_\infty$-stability facilitates this orthogonality.}
positive contractions $h_I$ and $h_A$ which are full in $I$ and $A$ respectively, and setting
$\eta_n(x_1,x_2)=h_I\otimes \iota_1(x_1)+h_A\otimes \iota_2(x_2)$. Then elements $x\in I$ are
annihilated under the sequence $(\tilde\psi_{n,2})_{n=1}^\infty$, and so
$(\eta_n\circ\tilde{\psi}_n(x))_{n=1}^\infty$ sees only the first component, which generates the
ideal $I$.  Keeping track of exactly which ideal is generated in a sequence algebra turns out to
be a little delicate; we do this in Lemma~\ref{lm:Main}, where we also need an additional smearing
across the interval to ensure that the resulting map $\rho$ has the required behaviour on all
ideals of $C_0((0,1])\otimes A$, not just those of the form $C_0((0,1])\otimes I$ for some $I\lhd
A$.\footnote{This is the point behind the composition with the multiplication map $m^*$
in~(\ref{eq:rhocomp}).}  It remains to arrange $\mathcal O_2$-stability, which we do in
Section~\ref{S4}, using a similar smearing to merge the $\rho$ from Section~\ref{S3} with an
embedding of a cone into $\mathcal O_\infty$.

To handle general ideal lattices, we simultaneously model the ideal lattice of $A$ by finite
sublattices, while performing a similar construction to that outlined above. The machinery we need
to approximate the ideal lattice of $A$ by finite sublattices is set up in Section~\ref{S2}, with
technical difficulties arising throughout our main arguments because the approximating finite
sublattice is typically not linearly ordered.

With more care, the strategy discussed above also takes care of the nuclear dimension part of
Theorem~\ref{ThmC}. As it turns out, the decomposition rank component of Theorem~\ref{ThmC} is a
little easier in that we can use the quasidiagonality hypothesis directly in place of quasidiagonality
of cones in our main technical construction.  We review nuclear dimension and decomposition rank in Section~\ref{S1}, setting
out the criteria we use to recognise finite decomposition rank.  This enables us to handle both the nuclear dimension and decomposition rank cases of Theorem~\ref{ThmC} in tandem, keeping track of the required extra detail throughout Sections \ref{S3}~and~\ref{S4} and the proof of Theorem~\ref{ThmC} in Section~\ref{S5}.

Section~\ref{S6} turns to the characterisation of finite decomposition rank, using
Theorem~\ref{ThmC} and the developments of \cite{Gabe:new} to prove Theorem~\ref{ThmB}.   We end in
Section~\ref{S7} with the proof of Theorem~\ref{ThmD}. With Theorem~\ref{ThmC} in place, the last
component of Theorem~\ref{ThmD} is obtained by using quasidiagonality of $A$ to produce finite
dimensional models for a full map $\theta$; the fullness assumption
ensures that the map carries no ideal data, and then $\mathcal O_2$-stable classification ensures
that these models can actually be used to approximate $\theta$.

\medskip

\paragraph*{\textbf{Acknowledgments}} Portions of this research where undertaken during the research programme on the classification of Operator Algebras: Complexity, Rigidity and Dynamics at the Institut Mittag--Leffler in 2016, and the intensive research programme on Operator Algebras: Dynamics and Interactions at CRM, Barcelona in 2017.  We thank the organisers and funders of these programmes.
We are also grateful for the valuable suggestions made by the anonymous referee.

\renewcommand*{\thetheorem}{\roman{theorem}}
\numberwithin{theorem}{section}

\section{Nuclear dimension and decomposition rank}\label{S1}

In this section we recall the definitions of the nuclear dimension and decomposition rank from
\cite{WZ:Adv} and \cite{KW:IJM} respectively, and collect some relatively standard results for
later use.  Throughout the paper if $a,b$ are elements of a $\Cstar$-algebra, and $\epsilon>0$, we
write $a\approx_\epsilon b$ to mean $\|a-b\|\leq\epsilon$.

Recall that a completely positive map $\phi\colon A\to B$ between $\Cstar$-algebras is said to be
\emph{order zero} if, for every $a,b \in A_+$ with $ab=0$, one has $\phi(a)\phi(b)=0$. Building on
\cite{Wolff}, the structure theory for these maps was developed in \cite{WZ:MJM}.  In particular,
there is a $^*$-homomorphism $\pi_\phi \colon A \to \mathcal M (\mCstar(\phi(A)))$ (called the
\emph{supporting $^*$-homomorphism}), taking values in the multiplier algebra $\mathcal M(\mCstar(\phi(A)))$ of $\mCstar(\phi(A))$, and a positive contraction $h \in \mathcal
M(\mCstar(\phi(A)))\cap \pi(A)'$ such that
\begin{equation}\label{eq:hpioz}
\phi(a) = h \pi(a) , \quad a\in A.
\end{equation}
This then induces a $^*$-homomorphism $\rho_\phi \colon C_0((0,1]) \otimes A \to \mCstar(\phi(A))
\subseteq B$ such that
\begin{equation}\label{eq:rhooz}
\rho_\phi(f \otimes a) = f(h) \pi(a), \quad f\in C_0((0,1]),\ a\in A.
\end{equation}

Conversely, each $^*$-homomorphism from $C_0((0,1]) \otimes  A$ to $B$ determines an order zero map from
$A$ to $B$ by composition with the completely positive and contractive (cpc) order zero inclusion $a \mapsto \id_{(0,1]} \otimes a$ of
$A$ into its cone.

\begin{proposition}[{\cite[Corollary~3.1]{WZ:MJM}}]\label{prop.Cone}
Let $A,B$ be $\Cstar$-algebras. There is a one-to-one correspondence between cpc, order zero maps $\phi\colon A \to B$ and ${}^*$-homomorphisms $\rho\colon C_0((0,1])
\otimes A \to B$, where $\phi$ and $\rho$ are related by the commuting diagram
\begin{equation}
\xymatrix{
A \ar[rr]^-{a \mapsto \mathrm{id}_{(0,1]} \otimes a} \ar[drr]_-{\phi} && C_0((0,1]) \otimes A \ar[d]^{\rho} \\
&& B \, .
}
\end{equation}
\end{proposition}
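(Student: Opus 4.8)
The plan is to exhibit explicit assignments in both directions and to reduce the one non-trivial equality to the observation that the cone $C_0((0,1]) \otimes A$ is singly generated over $A$ by the elements $\id_{(0,1]} \otimes a$.

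One direction is already supplied by the structure theory recalled above \eqref{eq:rhooz}: to a cpc order zero map $\phi$ it associates the $^*$-homomorphism $\rho_\phi$ determined by $\rho_\phi(f \otimes a) = f(h)\pi_\phi(a)$, where $\pi_\phi$ and $h$ are the supporting $^*$-homomorphism and the commuting positive contraction of \eqref{eq:hpioz}. For the reverse assignment, given a $^*$-homomorphism $\rho \colon C_0((0,1]) \otimes A \to B$ I would set $\phi_\rho(a) := \rho(\id_{(0,1]} \otimes a)$. This is cpc order zero: the cone inclusion $\iota \colon a \mapsto \id_{(0,1]} \otimes a$ is isometric, completely positive and order zero (if $ab = 0$ then $\iota(a)\iota(b) = \id_{(0,1]}^2 \otimes ab = 0$), and post-composing the order zero map $\iota$ with the $^*$-homomorphism $\rho$ preserves both complete positivity and the order zero condition, so $\phi_\rho = \rho \circ \iota$ is cpc order zero.

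It remains to check that these assignments are mutually inverse. The equality $\phi_{\rho_\phi} = \phi$ is immediate, since $\rho_\phi(\id_{(0,1]} \otimes a) = \id_{(0,1]}(h)\pi_\phi(a) = h\pi_\phi(a) = \phi(a)$ by \eqref{eq:hpioz}; this computation simultaneously verifies the commuting triangle in the statement. The substantive point is the reverse equality $\rho_{\phi_\rho} = \rho$. Both sides are $^*$-homomorphisms on $C_0((0,1]) \otimes A$ which, by the same computation, agree on every generator $\id_{(0,1]} \otimes a$. I would then invoke that $C_0((0,1])$ is generated as a $\Cstar$-algebra by the single positive contraction $\id_{(0,1]}$ --- every element vanishing at $0$ is a uniform limit of polynomials in $\id_{(0,1]}$ with zero constant term, by Stone--Weierstrass --- so $\{\id_{(0,1]} \otimes a : a \in A\}$ generates the whole cone, and two $^*$-homomorphisms agreeing on a generating set coincide.

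I do not expect a serious obstacle here; the only points needing care are this generation argument and the bookkeeping of codomains, namely that $C^*(\phi_\rho(A))$ equals the image $\rho(C_0((0,1]) \otimes A)$ (which holds precisely because $\phi_\rho(A)$ generates that image), so that the formula \eqref{eq:rhooz} for $\rho_{\phi_\rho}$ is consistent with $\rho$ itself.
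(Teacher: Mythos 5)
Your proposal is correct and takes the standard route: the paper itself does not reprove this statement but cites \cite[Corollary~3.1]{WZ:MJM}, whose proof is precisely your argument, namely the assignment $\phi\mapsto\rho_\phi$ coming from the structure theory \eqref{eq:hpioz}--\eqref{eq:rhooz}, the inverse $\rho\mapsto\rho\circ\iota$ along the cone inclusion, and the observation that $\{\id_{(0,1]}\otimes a : a\in A\}$ generates $C_0((0,1])\otimes A$. The one step worth making fully explicit is that generation claim, since products of your generators yield $\id_{(0,1]}^{\,n}\otimes a_1\cdots a_n$ rather than $\id_{(0,1]}^{\,n}\otimes a$; this is repaired by an approximate identity $(e_\lambda)$ of $A$, giving $(\id_{(0,1]}\otimes e_\lambda)(\id_{(0,1]}\otimes a)=\id_{(0,1]}^{\,2}\otimes e_\lambda a\to\id_{(0,1]}^{\,2}\otimes a$ and inductively $\id_{(0,1]}^{\,n}\otimes a$ for all $n$, after which your Stone--Weierstrass argument applies verbatim.
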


The structural theorem yields a functional calculus on order zero maps (cf.
\cite[Corollary~4.2]{WZ:MJM}): if $\phi \colon A \to B$ is a cpc order zero map, and  $f\in
C_0((0,1])_+$ is a contraction, then $f(\phi) \colon A\to B$ is the cpc~order zero map such that
\begin{equation}\label{fncaloz}
f(\phi)(a) = \rho_\phi(f\otimes a), \quad a\in A.
\end{equation}

We record two facts regarding induced $^*$-homomorphisms on the cones and functional calculus for
later use.  These are proved by routine verification of the defining properties of
$\rho_{\eta\circ\psi}$ and $f(\phi)$ respectively.
\begin{lemma}\label{l:compoz}
Let $\psi \colon A \to F$ be a $^*$-homomorphism, and let $\eta \colon F \to B$ be a cpc~order
zero map. The $^*$-homomorphism $\rho_{\eta\circ\psi}\colon C_0((0,1]) \otimes A \to B$ induced by the
composition $\eta \circ \psi$ is exactly the composition
\begin{equation}
C_0((0,1]) \otimes A \xrightarrow{\id_{C_0((0,1])} \otimes \psi } C_0((0,1]) \otimes F \xrightarrow{\rho_{\eta}} B,
\end{equation}
where $\rho_\eta$ is the $^*$-homomorphism induced by $\eta$.
\end{lemma}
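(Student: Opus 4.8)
The plan is to reduce everything to the uniqueness built into the correspondence of Proposition~\ref{prop.Cone}. First I would note that $\eta\circ\psi\colon A\to B$ is itself a cpc order zero map: it is completely positive and contractive as a composite of cpc maps, and it is order zero since whenever $a,b\in A_+$ satisfy $ab=0$ we have $\psi(a)\psi(b)=\psi(ab)=0$ with $\psi(a),\psi(b)\in F_+$, so $\eta(\psi(a))\eta(\psi(b))=0$ by the order zero property of $\eta$. Thus Proposition~\ref{prop.Cone} applies and characterises $\rho_{\eta\circ\psi}$ as the unique $^*$-homomorphism $C_0((0,1])\otimes A\to B$ with $\rho_{\eta\circ\psi}(\id_{(0,1]}\otimes a)=\eta(\psi(a))$ for all $a\in A$.

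Next I would check that the composite $\Phi:=\rho_\eta\circ(\id_{C_0((0,1])}\otimes\psi)$ has this same defining property. As $C_0((0,1])$ is nuclear the tensor product is unambiguous, and $\id_{C_0((0,1])}\otimes\psi$ is a genuine $^*$-homomorphism; composing it with the $^*$-homomorphism $\rho_\eta$ shows that $\Phi$ is a $^*$-homomorphism. Evaluating on a generator and using the defining property of $\rho_\eta$ (namely $\rho_\eta(\id_{(0,1]}\otimes x)=\eta(x)$ for $x\in F$) gives
\[
\Phi(\id_{(0,1]}\otimes a)=\rho_\eta\bigl(\id_{(0,1]}\otimes\psi(a)\bigr)=\eta(\psi(a)).
\]

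Finally I would invoke uniqueness: both $\Phi$ and $\rho_{\eta\circ\psi}$ are $^*$-homomorphisms agreeing on every element $\id_{(0,1]}\otimes a$. Since $\id_{(0,1]}$ generates $C_0((0,1])$ as a $\Cstar$-algebra (Stone--Weierstrass), such elements generate $C_0((0,1])\otimes A$, so two $^*$-homomorphisms agreeing on them must coincide; equivalently, this is the injectivity of the assignment $\phi\mapsto\rho_\phi$ in Proposition~\ref{prop.Cone}. Hence $\Phi=\rho_{\eta\circ\psi}$, which is the assertion of the lemma.

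This argument is essentially routine, and there is no serious obstacle; the only point demanding any care is the uniqueness step, namely that a $^*$-homomorphism out of $C_0((0,1])\otimes A$ is pinned down by its restriction to the generating set $\{\id_{(0,1]}\otimes a:a\in A\}$. If one prefers a direct verification on elementary tensors, the structure theorem gives $\eta(x)=h\,\pi_\eta(x)$ for a supporting homomorphism $\pi_\eta$ and a commuting positive contraction $h$, whence by \eqref{eq:rhooz} both $\Phi$ and $\rho_{\eta\circ\psi}$ send $f\otimes a$ to $f(h)\,\pi_\eta(\psi(a))$, with the pair $(h,\pi_\eta\circ\psi)$ furnishing the structure data of $\eta\circ\psi$.
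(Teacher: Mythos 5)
Your proof is correct and matches the paper's approach: the paper states that this lemma is ``proved by routine verification of the defining properties of $\rho_{\eta\circ\psi}$'', which is precisely your argument of checking that $\rho_\eta\circ(\id_{C_0((0,1])}\otimes\psi)$ is a $^*$-homomorphism sending $\id_{(0,1]}\otimes a$ to $\eta(\psi(a))$ and then invoking the uniqueness in Proposition~\ref{prop.Cone}. Your justification of the uniqueness step via generation of $C_0((0,1])\otimes A$ by $\{\id_{(0,1]}\otimes a : a\in A\}$ is sound and fills in the routine detail correctly.
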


\begin{lemma}\label{Lem:cpcRanges}
Let $\phi_1,\dots, \phi_n \colon A \to B$ be cpc~order zero maps with pairwise orthogonal ranges,
and let $f\in C_0((0,1])_+$ be a contraction. Then $\phi \coloneqq \sum_{i=1}^n \phi_i \colon A
\to B$ is a cpc~order zero map, and
\begin{equation}
f(\phi) = \sum_{i=1}^n f(\phi_i) \colon A \to B.
\end{equation}
\end{lemma}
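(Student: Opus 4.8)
The statement to prove is Lemma~\ref{Lem:cpcRanges}: given cpc order zero maps $\phi_1,\dots,\phi_n\colon A\to B$ with pairwise orthogonal ranges and a contraction $f\in C_0((0,1])_+$, the sum $\phi=\sum_i\phi_i$ is cpc order zero and $f(\phi)=\sum_i f(\phi_i)$. The strategy is to reduce everything to the induced $^*$-homomorphism $\rho_\phi$ on the cone $C_0((0,1])\otimes A$, since the functional calculus $f(\phi)$ is \emph{defined} via $\rho_\phi$ by \eqref{fncaloz}. So the whole game is to identify $\rho_\phi$ in terms of the $\rho_{\phi_i}$, and then the claimed identity for $f(\phi)$ falls out by applying $\rho_\phi$ to $f\otimes a$.

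First I would verify that $\phi$ is a cpc order zero map. Positivity is clear from summing, and contractivity of $\phi$ uses the orthogonality of the ranges: since the $\phi_i(a)$ are pairwise orthogonal positive elements, $\|\sum_i\phi_i(a)\|=\max_i\|\phi_i(a)\|\le\|a\|$. For the order zero condition, if $a,b\in A_+$ with $ab=0$, then $\phi_i(a)\phi_j(b)=0$ for $i=j$ by order zero of $\phi_i$, and for $i\ne j$ by orthogonality of the ranges; hence $\phi(a)\phi(b)=\sum_{i,j}\phi_i(a)\phi_j(b)=0$.

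Next I would identify the supporting data of $\phi$. Let $h_i\in\mathcal M(\Cstar(\phi_i(A)))$ and $\pi_i\colon A\to\mathcal M(\Cstar(\phi_i(A)))$ be the positive contraction and supporting $^*$-homomorphism for each $\phi_i$, as in \eqref{eq:hpioz}. The orthogonality of the ranges means $\Cstar(\phi(A))$ splits as a direct sum (or at least the $\Cstar(\phi_i(A))$ are mutually orthogonal hereditary pieces), so that one can assemble $h=\sum_i h_i$ and $\pi=\bigoplus_i\pi_i$ as a positive contraction and a $^*$-homomorphism into $\mathcal M(\Cstar(\phi(A)))$ satisfying $\phi(a)=h\,\pi(a)$ with $h$ commuting with $\pi(A)$. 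Using continuous functional calculus, for each $f\in C_0((0,1])_+$ the element $f(h)$ then decomposes as $\sum_i f(h_i)$ because the $h_i$ lie in mutually orthogonal subalgebras (so that mixed products vanish and $f$ acts blockwise). Consequently the induced map on the cone satisfies
\begin{equation}
\rho_\phi(f\otimes a)=f(h)\pi(a)=\sum_i f(h_i)\pi_i(a)=\sum_i\rho_{\phi_i}(f\otimes a),
\end{equation}
by \eqref{eq:rhooz}. Applying \eqref{fncaloz} with this $f$ gives $f(\phi)(a)=\rho_\phi(f\otimes a)=\sum_i\rho_{\phi_i}(f\otimes a)=\sum_i f(\phi_i)(a)$, as required.

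The main obstacle is purely bookkeeping around the multiplier algebras: the decomposition $f(h)=\sum_i f(h_i)$ requires care, since $h$ and each $h_i$ live in different multiplier algebras and one must check that the orthogonality of the ranges $\phi_i(A)$ genuinely forces the orthogonality of the supporting positive elements $h_i$ and of the images $\pi_i(A)$. This is the step where the hypothesis on orthogonal ranges does real work, and where I would be most careful to justify that functional calculus distributes across the summands. Everything else is, as the paper itself signals, routine verification of the defining properties.
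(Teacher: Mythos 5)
Your proposal is correct in substance, and it targets exactly what the paper's (deliberately omitted) proof consists of: verifying the defining property \eqref{fncaloz} of $f(\phi)$ by identifying $\rho_\phi$ in terms of the $\rho_{\phi_i}$. However, you take a heavier route than the one the paper signals. The efficient argument never touches supporting data or multiplier algebras: since the generating sets $\phi_i(A)$ are mutually orthogonal and closed under adjoints, the subalgebras $\Cstar(\phi_i(A))$ are mutually orthogonal (the two-sided annihilator of a self-adjoint set is a $\Cstar$-subalgebra), so $\rho \coloneqq \sum_{i=1}^n \rho_{\phi_i}$ is a sum of $^*$-homomorphisms with pairwise orthogonal ranges and hence itself a $^*$-homomorphism; as $\rho(\id_{(0,1]} \otimes a) = \sum_i \phi_i(a) = \phi(a)$, the uniqueness in the bijective correspondence of Proposition~\ref{prop.Cone} forces $\rho = \rho_\phi$, which in one stroke shows that $\phi$ is cpc order zero (it is the composition of the cpc order zero inclusion $a \mapsto \id_{(0,1]}\otimes a$ with a $^*$-homomorphism) and, evaluating at $f \otimes a$, gives $f(\phi) = \sum_i f(\phi_i)$. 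Your direct verification that $\phi$ is cpc order zero is fine, but becomes redundant on this route.

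Your detour through $(h_i,\pi_i)$ can be pushed through, but one intermediate claim is false as literally stated: $\Cstar(\phi(A))$ need \emph{not} split as a direct sum. Take $\phi_1 = \phi_2 = \id_{C_0((0,1])}$ into the two summands of $B = C_0((0,1]) \oplus C_0((0,1])$; then $\Cstar(\phi(A))$ is the diagonal $\{b \oplus b\}$, a proper subalgebra of $\Cstar(\phi_1(A)) \oplus \Cstar(\phi_2(A))$, and in this example the individual $\pi_i(a)$ (here $a \oplus 0$) do not even multiply $\Cstar(\phi(A))$ --- only the sums $\sum_i \pi_i(a)$ and $\sum_i h_i$ do. So ``assemble $\pi = \bigoplus_i \pi_i$ into $\mathcal M(\Cstar(\phi(A)))$'' must be read as: realise $B$ faithfully and nondegenerately on a Hilbert space, let each $h_i$ and $\pi_i$ act on the essential subspace of $\Cstar(\phi_i(A))$ (these subspaces are pairwise orthogonal), and check that the \emph{sums} leave $\Cstar(\phi(A))$ invariant, e.g.\ via $\big(\sum_i \pi_i(a)\big)\phi(b) = \phi(ab)$ and $\big(\sum_i h_i\big)\phi(b) = (\id_{(0,1]}^2)(\phi)(b)$. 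Your hedged fallback --- mutual orthogonality of the $\Cstar(\phi_i(A))$ --- is what actually carries the proof, and with it $f\big(\sum_i h_i\big) = \sum_i f(h_i)$ holds by the standard orthogonal functional calculus (using $f(0)=0$). In short: your approach is repairable and you correctly flagged where the work lies, but the uniqueness clause of Proposition~\ref{prop.Cone} short-circuits all of it, which is presumably why the paper calls the verification routine.
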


Throughout the paper we often use sequence algebras to encode approximate behaviour.  Given a
sequence $(B_n)_{n=1}^\infty$ of $\Cstar$-algebras, the \emph{sequence algebra} is the quotient of
the $\ell^\infty$-product by the $c_0$-sum: $\prod_{n=1}^\infty B_n/\bigoplus_{n=1}^\infty B_n$.
In the special, but particularly relevant, case where $B_n = B$ for all $n$, it is common to denote
this sequence algebra by $B_\infty$. However, in this paper the Cuntz algebra $\mathcal{O}_\infty$
plays a major role, so the subscripted $\infty$ could cause confusion, and the natural alternative,
namely $B_\omega$ (where $\omega$ is intended to denote the co-finite filter on $\mathbb{N}$), is
problematic because we reserve $\omega$ to denote a free ultrafilter on $\mathbb{N}$ in Section~\ref{S7}.
Instead, we have adopted the slightly unusual notation
$B_{\seq} := \prod_{n=1}^\infty B/\bigoplus_{n=1}^\infty B$.
The $C^*$-algebra $B$ embeds into $B_{\seq}$ as (cosets of) constant
sequences, and we will often implicitly regard $B$ as a $\Cstar$-subalgebra of $B_{\seq}$.
When needed, we will denote the embedding by $\iota_B\colon B\to B_{\seq}$.

We denote elements in a sequence algebra $\prod_{n=1}^\infty B_n/\bigoplus_{n=1}^\infty B_n$ by
representatives in $\prod_{n=1}^\infty B_n$, rather than introducing additional notation for the
class they represent. By the same principle, given cpc maps $\phi_n \colon A \to B_n$, we often write
$(\phi_n)^\infty_{n=1}$ for the induced map $A_{\seq} \to \prod^\infty_{n=1}
B_n / \bigoplus^\infty_{n=1} B_n$ and also for its restriction to $A$. For later use we record the following routine fact regarding
functional calculus of sequences of order zero maps, which does not seem to have explicitly
appeared in the literature.

\begin{lemma}\label{l:fctcalcseqoz}
Let $(\eta_n\colon F_n\to B_n)_{n=1}^\infty$ be a sequence of cpc order zero maps between
$\Cstar$-algebras, and let
\begin{equation}
\eta \colon \frac{\prod_{n=1}^\infty F_n}{\bigoplus_{n=1}^\infty F_n} \to \frac{\prod_{n=1}^\infty B_n}{\bigoplus_{n=1}^\infty B_n}
\end{equation}
be the cpc~order zero map induced by $(\eta_n)_{n=1}^\infty$. For a contraction $f\in
C_0((0,1])_+$,  the sequence $(f(\eta_n))_{n=1}^\infty$ induces the cpc order zero map
$f(\eta)$.\end{lemma}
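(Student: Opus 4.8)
The plan is to avoid building the supporting ${}^*$-homomorphism of $\eta$ directly (which would force us to work inside the multiplier algebra of a sequence algebra), and instead reduce everything to an \emph{intrinsic} formula for the order zero functional calculus at monomials. Concretely, I first establish that for any cpc order zero map $\phi\colon A\to B$, every $k\in\N$, and every $a\in A_+$, one has $t^k(\phi)(a)=\phi(a^{1/k})^k$. Using the structure theorem, write $\phi(a)=h\pi(a)$ as in \eqref{eq:hpioz}; then $t^k(\phi)(a)=h^k\pi(a)$ by \eqref{eq:rhooz} and \eqref{fncaloz} applied with $f=t^k$, while $\phi(a^{1/k})^k=(h\pi(a^{1/k}))^k=h^k\pi(a)$ because $h$ commutes with $\pi(A)$ and $\pi$ is multiplicative. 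The key feature of the right-hand side $\phi(a^{1/k})^k$ is that it is expressed purely through $\phi$ and the product in $B$, with no reference to $h$ or $\pi$.

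Next I transport this through the sequence-algebra construction. Write $F=\prod_n F_n/\bigoplus_n F_n$ and $B=\prod_n B_n/\bigoplus_n B_n$. Given a positive element of $F$, lift it to a sequence $(x_n)_n$ of \emph{positive} elements $x_n\in F_n$ (positive elements of a quotient lift to positive elements). Since continuous functional calculus is computed componentwise in the product and commutes with the quotient map, $x^{1/k}$ is represented by $(x_n^{1/k})_n$; as $\eta$ is the map induced by $(\eta_n)_n$, it follows that $t^k(\eta)(x)=\eta(x^{1/k})^k$ is represented by $(\eta_n(x_n^{1/k})^k)_n=(t^k(\eta_n)(x_n))_n$. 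Thus $t^k(\eta)$ agrees, on positive elements, with the map induced by $(t^k(\eta_n))_n$. Because order zero functional calculus is linear in the function variable --- by \eqref{fncaloz}, $p(\phi)=\sum_k c_k\, t^k(\phi)$ for any polynomial $p(t)=\sum_k c_k t^k$ with $p(0)=0$ --- this extends immediately to all such polynomials.

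Finally I approximate the contraction $f\in C_0((0,1])_+$ uniformly on $[0,1]$ by polynomials $p_m$ with $p_m(0)=0$, and pass to the limit. The estimate $\|g(\phi)(a)-f(\phi)(a)\|=\|\rho_\phi((g-f)\otimes a)\|\le\|g-f\|_\infty\|a\|$ holds for \emph{every} cpc order zero map $\phi$, with a bound independent of $\phi$; applying it to $\eta$ and simultaneously to each $\eta_n$ (uniformly in $n$, so the limit survives the $\limsup_n$ defining the norm on $B$) gives $f(\eta)(x)=(f(\eta_n))_n(x)$ for positive $x$. Since both $f(\eta)$ and the induced map are completely positive, hence linear, equality on $F_+$ forces equality on all of $F$; well-definedness of the induced map $\bigoplus_nF_n\to\bigoplus_nB_n$ is immediate from contractivity of the $f(\eta_n)$, and identifying the induced map with $f(\eta)$ simultaneously shows it is cpc order zero. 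The only genuinely delicate point is this last interchange of the function-variable limit with the sequence-algebra structure, which the uniform estimate above is designed to control.

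\begin{proof}
[See the proof proposal above.]
\end{proof}
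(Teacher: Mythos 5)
Your proof is correct, but it takes a genuinely different route from the paper's. The paper proves the lemma in five lines by working entirely through the duality of Proposition~\ref{prop.Cone}: it assembles the cone $^*$-homomorphisms $\rho_n \colon C_0((0,1])\otimes F_n \to B_n$ into a single $^*$-homomorphism $\rho$ on $C_0((0,1])\otimes \prod_n F_n/\bigoplus_n F_n$, checks $\rho(\id_{(0,1]}\otimes (x_n)_n)=\eta((x_n)_n)$, and concludes by uniqueness in the correspondence that $\rho=\rho_\eta$, whence $f(\eta)(x)=\rho(f\otimes x)$ is represented componentwise by definition. You instead avoid identifying $\rho_\eta$ at the sequence-algebra level altogether, replacing it with the intrinsic identity $t^k(\phi)(a)=\phi(a^{1/k})^k$ on positives, componentwise continuous functional calculus (valid since $t\mapsto t^{1/k}$ vanishes at $0$ and lifts of positive elements can be chosen positive), and a polynomial approximation argument whose error bound $\|\rho_\phi((p_m-f)\otimes a)\|\le\|p_m-f\|_\infty\|a\|$ is uniform over all cpc order zero $\phi$, hence survives the $\limsup_n$ norm --- exactly the delicate interchange you flag. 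All the individual steps check out: the monomial identity follows from $h$ commuting with $\pi(A)$ in \eqref{eq:hpioz}--\eqref{eq:rhooz}; linearity in the function variable is immediate from \eqref{fncaloz} once one extends the notation $p(\phi)(a)\coloneqq\rho_\phi(p\otimes a)$ to non-positive polynomials with $p(0)=0$, which you implicitly and legitimately do; and passage from $F_+$ to $F$ by linearity is fine. What each approach buys: the paper's is shorter because Proposition~\ref{prop.Cone} does everything in one stroke (though it leaves the routine assembly of $\rho$ from the $\rho_n$ implicit); yours is more elementary and self-contained, never touching multiplier algebras of sequence algebras, and the intrinsic formula is robust enough to transfer verbatim to ultraproducts or nets of order zero maps, where assembling cone homomorphisms is more awkward.
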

\begin{proof}
Let $\rho_n\colon C_0((0,1])\otimes F_n\to B_n$ be the $^*$-homomorphism induced by $\eta_n$ as in
Proposition~\ref{prop.Cone}. We obtain a $^*$-homomorphism
\begin{equation}
\rho\colon C_0((0,1])\otimes \prod_{n=1}^\infty F_n/\bigoplus_{n=1}^\infty F_n\to\prod_{n=1}^\infty B_n/\bigoplus_{n=1}^\infty B_n,
\end{equation}
such that $\rho(f\otimes (x_n)_{n=1}^\infty) = (\rho_n(f\otimes x_n))_{n=1}^\infty$ for all $f \in
C_0((0,1])$ and $(x_n)^\infty_{n=1} \in \prod^\infty_{n=1} B_n / \bigoplus^\infty_{n=1} B_n$. So it suffices to check that $\rho$ is the $^*$-homomorphism associated to $\eta$ by Proposition~\ref{prop.Cone}. This follows as, for any $(x_n)_{n=1}^\infty  \in \prod_{n=1}^\infty B_n/\bigoplus_{n=1}^\infty B_n$, we have
\begin{equation}
\rho(\id_{(0,1]}\otimes (x_n))_{n=1}^\infty=(\eta_n(x_n))_{n=1}^\infty=\eta((x_n)_{n=1}^\infty).\qedhere
\end{equation}
\end{proof}

We next recall the definitions of nuclear dimension and decomposition rank,
from \cite{WZ:Adv} and \cite{KW:IJM} respectively. We work at the level of morphisms, these
definitions were first developed in \cite{TW:APDE}.

\begin{definition}\label{DefNDim}
Let $A$ and $B$ be $\Cstar$-algebras, let $\phi\colon A\to B$ be a $^*$-homo\-mor\-phism and let
$n\in\mathbb N$. Then $\phi$ is said to have \emph{nuclear dimension at most $n$} (written
$\dimnuc \phi\leq n$) if for any finite subset $\mathcal F \subseteq A$ and $\epsilon>0$, there
exist finite dimensional $\Cstar$-algebras $F^{(0)},\dots,F^{(n)}$ and maps
\begin{equation}
\xymatrix{
A \ar[r]^-{\psi} & F:=F^{(0)} \oplus \cdots \oplus F^{(n)} \ar[r]^-{\eta} & B
}
\end{equation}
such that $\psi$ is cpc, $\eta|_{F^{(i)}}$ is cpc\ order zero for $i=0,\dots,n$, and such that
\begin{equation}
\| \eta(\psi(x)) - \phi (x)\| \leq \epsilon,\quad x\in \mathcal F.
\end{equation}
Such an approximation $(F,\psi,\eta)$ is called an \emph{$n$-decomposable approximation} of $\phi$
for $\mathcal F$ up to $\epsilon$. So $\dimnuc \phi = \inf\{n : \dimnuc\phi \le n\}$ with the
convention that $\inf \emptyset = \infty$. If additionally the maps $\eta$ in these $n$-decomposable approximations can always be taken contractive, then $\phi$ is
said to have \emph{decomposition rank at most $n$}, written $\dr\,\phi\leq n$, and again
$\dr\,\phi = \inf\{n : \dr\,\phi\le n\}$. The \emph{nuclear dimension} and \emph{decomposition
rank} of a $\Cstar$-algebra $A$, written $\dimnuc A$ and $\dr\,A$ respectively, are defined to be
the nuclear dimension and decomposition rank of the identity map $\id_A$.
\end{definition}

Note that the definitions of nuclear dimension 0 and decomposition rank 0 coincide, but, for $n \ge
1$, that $\dr\,\phi = n$ is a stronger condition than that $\dimnuc\phi = n$.

The next result is a modification to maps of the fact that finite nuclear dimension and
decomposition rank are inherited by hereditary subalgebras (see \cite[Proposition~2.5]{WZ:Adv} and
\cite[Proposition~3.8]{KW:IJM} respectively).  The proof is a minor modification of
\cite[Proposition~2.5]{WZ:Adv} to this context.

\begin{proposition}\label{Prop:cpcHer}
Let $\phi \colon A \to B$ be a $^*$-homomorphism, and let $B_0 \subseteq B$ be a hereditary
$\Cstar$-subalgebra such that $\phi(A) \subseteq B_0$. Let $\phi_0 \colon A \to B_0$ be the
corestriction of $\phi$. Then $\dimnuc \phi = \dimnuc \phi_0$ and $\dr\,\phi=\dr\,\phi_0$.
\end{proposition}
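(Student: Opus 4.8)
The plan is to prove the two inequalities $\dimnuc\phi\le\dimnuc\phi_0$ and $\dimnuc\phi_0\le\dimnuc\phi$ separately (and likewise for $\dr$), following the strategy of \cite[Proposition~2.5]{WZ:Adv} but carried out at the level of the map.

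The inequality $\dimnuc\phi\le\dimnuc\phi_0$ (and $\dr\,\phi\le\dr\,\phi_0$) is immediate. Given an $n$-decomposable approximation $(F,\psi,\eta)$ of $\phi_0$ for $\mathcal F$ up to $\epsilon$, I would post-compose the upward map with the inclusion $\iota\colon B_0\hookrightarrow B$. As $\iota$ is an isometric $^*$-homomorphism, $\iota\circ\eta|_{F^{(i)}}=\iota\circ(\eta|_{F^{(i)}})$ is again cpc order zero (a $^*$-homomorphism sends orthogonal ranges to orthogonal ranges), contractivity of $\eta$ is preserved, and $\iota\circ\eta\circ\psi$ equals $\eta\circ\psi$ as elements of $B$, hence still $\epsilon$-approximates $\phi$ on $\mathcal F$. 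So $(F,\psi,\iota\circ\eta)$ witnesses $\dimnuc\phi\le n$.

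The substance is the reverse inequality $\dimnuc\phi_0\le\dimnuc\phi=:n$. Fix a finite set $\mathcal F\subseteq A$ of contractions and $\epsilon>0$. Since $\phi(A)\subseteq B_0$ and $B_0$ is hereditary, I would first pick a positive contraction $e\in B_0$ acting as an approximate unit on $\phi(\mathcal F)$, so that $e\phi(a)\approx\phi(a)\approx\phi(a)e$ and $e\phi(a)e\approx\phi(a)$ for $a\in\mathcal F$, and then take an $n$-decomposable approximation $(F,\psi,\eta)$ of $\phi$ for $\mathcal F$ up to a small tolerance $\delta$. The naive move—replacing $\eta$ by $e\,\eta(\,\cdot\,)\,e$, which does land in $B_0$—fails, because compressing an order zero map by an element that does not commute with its range destroys orthogonality of ranges, so the summands $e\,\eta|_{F^{(i)}}(\,\cdot\,)e$ need not be order zero. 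Instead I would discard the part of each summand irrelevant to the approximation. Writing $x=\bigoplus_i x^{(i)}$ for the block decomposition of $x\in F$, let $r_i\in F^{(i)}$ be a thresholded spectral cut-down—a continuous function of $\sum_{a\in\mathcal F}\psi(a)^{(i)*}\psi(a)^{(i)}$ suppressing small spectrum—capturing the substantial part of $\psi(\mathcal F)^{(i)}$, and restrict each $\eta|_{F^{(i)}}$ to the corner $r_iF^{(i)}r_i$. The restriction of a cpc (and contractive) order zero map to a hereditary subalgebra of its \emph{domain} is again cpc order zero, and since $\psi(\mathcal F)$ is approximately supported under $\bigoplus_i r_i$, replacing $\psi$ by $\psi_0:=\bigoplus_i r_i\psi(\,\cdot\,)r_i$ alters the approximation only by $O(\delta)$.

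The key point—and the step I expect to be the main obstacle—is to show that the restricted summands now map approximately into $B_0$, i.e.\ that each $\eta^{(i)}(r_i)$ is close to $B_0$. Here I would invoke the structure theorem \eqref{eq:hpioz}: writing $\eta|_{F^{(i)}}=h_i\pi_i$ with $\pi_i$ the supporting $^*$-homomorphism and $h_i$ a commuting positive contraction, the relations $h_i\pi_i(\psi(a)^{(i)})=\eta^{(i)}(\psi(a)^{(i)})\approx$ (the $i$-th part of) $\phi(a)\in B_0$ should force $h_i\pi_i(r_i)$ into $B_0$ up to an error controlled by $\delta$, since $\pi_i(r_i)$ is approximately supported where $\pi_i(\psi(\mathcal F)^{(i)})$ lives. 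Granting this, $e$ acts as an approximate unit on each $\eta^{(i)}(r_i)$, so the compressions $x\mapsto e\,\eta^{(i)}(x)\,e$ on $r_iF^{(i)}r_i$ land in $B_0$ and are \emph{approximately} order zero—the orthogonality defect being controlled by $\|(1-e)\eta^{(i)}(r_i)^{1/2}\|$, which is small once $\eta^{(i)}(r_i)$ is close to $B_0$. Finally I would perturb these approximately order zero maps to genuine cpc order zero maps into $B_0$, using a standard perturbation result for order zero maps out of finite-dimensional $\Cstar$-algebras (equivalently, semiprojectivity of the cones $C_0((0,1])\otimes r_iF^{(i)}r_i$ via the correspondence of Proposition~\ref{prop.Cone}). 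Assembling $\psi_0$ with the perturbed upward map yields an $n$-decomposable approximation of $\phi_0$ for $\mathcal F$ up to $\epsilon$; carrying contractions through each compression, restriction and perturbation gives the $\dr$ statement verbatim. Apart from the image-in-$B_0$ estimate and this passage from approximate to exact order zero, the remaining work—tracking $\delta$ against $\epsilon$ and bookkeeping the contractivity—is routine.
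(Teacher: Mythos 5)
Your overall route is the paper's route: the easy inequality by composing with the inclusion $B_0\hookrightarrow B$, and for the hard inequality a cut-down of $F$ to a corner on which $\psi$ of the relevant elements is substantially supported, followed by a perturbation of the upward map into the hereditary subalgebra, with the $\dr$ case obtained by tracking contractivity. The paper implements the cut-down via the spectral projection $p=\chi_{[\epsilon_0^{1/2},1]}(\psi(e))$ for a single positive contraction $e\in A$ with $ea_j=a_j$ (included in the approximated set), and your compress-then-reperturb endgame is exactly the content of \cite[Lemma~3.6]{KW:IJM}, which the paper simply cites rather than reproving via semiprojectivity of cones; both implementations of that endgame are standard. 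One small point: in the $\dr$ case the perturbation inflates the norm of the upward map slightly (in the paper, $\|\widehat\eta\|\leq 1+16(n+1)\epsilon_0^{1/8}$), so ``verbatim'' is too quick --- one must rescale $\widehat\eta$ by its norm, as the paper does in its final paragraph.

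The genuine soft spot is the step you yourself flag: showing $\eta^{(i)}(r_i)$ is nearly in $B_0$. As argued --- ``$\pi_i(r_i)$ is approximately supported where $\pi_i(\psi(\mathcal F)^{(i)})$ lives'' --- this is not a proof, and with your choice $r_i=g\bigl(\sum_{a\in\mathcal F}\psi(a)^{(i)*}\psi(a)^{(i)}\bigr)$ the needed estimate does not follow directly: since $\psi$ is merely cpc, $\eta(\psi(a)^*\psi(a))$ is not a priori close to anything in $B_0$ (the approximation only controls $\eta(\psi(a))$ for $a\in\mathcal F$), and the Schwarz-type inequalities for the order zero map $\eta$ run the wrong way --- they give \emph{lower} bounds such as $\eta(\psi(a)^*\psi(a))\geq\eta(\psi(a))^*\eta(\psi(a))$, whereas you need an \emph{upper} bound by an element near $B_0$. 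The missing ingredient is an operator inequality dominating the cut-down by $\psi$ of a single element of the approximated set: the paper uses $p\leq\epsilon_0^{-1/2}\psi(e)$ together with $\eta(\psi(e))\approx_{\epsilon_0}\phi(e)\in B_0$ and $\|h\phi(e)-\phi(e)\|<\epsilon_0$ for $h\in B_0$, which yields $\|\eta^{(i)}(p^{(i)})(1_{\widetilde B}-h)\|\leq 2^{1/2}\epsilon_0^{1/4}$ by a direct computation. Your version can be repaired along the same lines --- e.g.\ reduce to positive contractions, note $\psi(a)^*\psi(a)=\psi(a)^2\leq\psi(a)$, and dominate $\sum_a\psi(a)$ by $\psi$ of a dominating element included in $\mathcal F$ (or use Kadison--Schwarz for $\psi$, namely $\psi(a)^*\psi(a)\leq\psi(a^*a)$, after adjoining the elements $a^*a$ to the approximated set) --- but some such inequality must be supplied; the support heuristic alone does not substitute for it.
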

\begin{proof}
Clearly $\dimnuc \phi_0 \geq \dimnuc \phi$ and $\dr\,\phi_0\geq\dr\,\phi$, so we will prove the
other inequalities. Assume first that $\dimnuc \phi \leq n < \infty$.

Let $a_1,\dots,a_m \in A$ be positive contractions, and $\epsilon > 0$. By perturbing each $a_j$
slightly, we may assume that there is a positive contraction $e \in A$, such that $e a_j = a_j$
for $j=1,\dots,m$. Let
\begin{equation}\label{eq:epsilon0min}
\epsilon_0 \coloneqq \min\left\{  \frac{\epsilon^8}{(35(n+1))^8}, \frac{1}{2^{18}} \right\}.
\end{equation}
 Let $h\in B_0$ be a positive contraction such that
\begin{equation}\label{eq:hphiepsilon}
\| h \phi(e) - \phi(e)\| < \epsilon_0,
\end{equation}
 and pick an $n$-decomposable cpc~approximation
\begin{equation}
\Big(F= \bigoplus_{i=0}^n F^{(i)},\ \psi \colon  A \to F,\ \eta=\sum_{i=0}^n\eta^{(i)}  \colon F \to B\Big)
\end{equation}
such that
\begin{equation}\label{eq:hereditary.neweq}
\|\eta(\psi(x))-\phi(x)\|<\epsilon_0,\quad x\in \{e, a_1,\dots, a_m\}.
\end{equation}
Let $\chi_{[\epsilon_0^{1/2} , 1]}$ denote the characteristic function of $[\epsilon_0^{1/2}, 1]$,
and let $p$ be the projection $p \coloneqq \chi_{[\epsilon_0^{1/2} , 1]} (\psi(e)) \in F$. Note
that
\begin{equation}\label{eq:pleqstuff}
p \leq \epsilon_0^{-1/2} \psi(e), \quad \textrm{and} \quad (1_F-p) \psi(e) \leq \epsilon_0^{1/2}1_F.
\end{equation}
For each $i=0,\dots,n$, define $p^{(i)} \coloneqq 1_{F^{(i)}} p$. Then (working in the unitisation $\widetilde{B}$ of $B$ for convenience),
\begin{align}
\|\eta^{(i)} (p^{(i)}) (1_{\widetilde{B}} - h)\|
    &= \| (1_{\widetilde{B}}-h) \eta^{(i)} (p^{(i)})^2 (1_{\widetilde{B}} - h)\|^{1/2}&& \nonumber\\
    &\leq   \| (1_{\widetilde{B}}-h) \eta^{(i)} (p^{(i)}) (1_{\widetilde{B}} - h)\|^{1/2}&&\nonumber\\
    &\leq \| (1_{\widetilde{B}}-h) \eta(p) (1_{\widetilde{B}}-h)\|^{1/2}&& \nonumber\\
    &\leq \epsilon_0^{-1/4} \| (1_{\widetilde{B}}-h) \eta(\psi(e)) (1_{\widetilde{B}}-h)\|^{1/2} \quad&&\text{by~\eqref{eq:pleqstuff}}\nonumber\\
    &\leq \epsilon_0^{-1/4} \| (1_{\widetilde{B}}-h) \eta(\psi(e)) \|^{1/2}&&\nonumber\\
    &\leq  \epsilon_0^{-1/4} (\| (1_{\widetilde{B}}-h) \phi(e)\| + \epsilon_0)^{1/2}\quad&&\text{by~\eqref{eq:hereditary.neweq}} \nonumber\\
    &\leq 2^{1/2} \epsilon_0^{1/4}\quad&&\text{by~\eqref{eq:hphiepsilon}} \label{eq:sqrt2epsilon}\\
    &\leq 1/16\quad&&\text{by~\eqref{eq:epsilon0min}.}&&\nonumber
\end{align}

Let $\widehat F \coloneqq p F p$ and $\widehat F^{(i)} \coloneqq pF^{(i)} p$ for $i=0,\ldots,n$,
and let $\widehat\psi \colon A \to \widehat{F}$ be the cpc map such that
\begin{equation}
\widehat \psi(a) = p \psi(a) p,\quad a \in A.
\end{equation}
By \cite[Lemma~3.6]{KW:IJM} and \eqref{eq:sqrt2epsilon}, for each $i \in \{0, \dots, n\}$ there
exists a cpc~order zero map
\begin{equation}
\hat \eta^{(i)} \colon \widehat F^{(i)} \to \overline{h B h} \subseteq B_0
\end{equation}
such that for all positive $x\in \widehat F^{(i)}$,
\begin{equation}
\| \widehat \eta^{(i)} (x) - \eta^{(i)} (x) \|
    \leq 8 \cdot 2^{1/4} \epsilon_0^{1/8}\| x\| %\nonumber\\
    \leq 16 \epsilon_0^{1/8} \| x\|. \label{eq:16epsilon}
\end{equation}
Let
\begin{equation}
\widehat \eta  \coloneqq \sum_{i=0}^n \widehat \eta^{(i)} \colon \widehat F \to B_0,
\end{equation}
which is a sum of $n+1$ cpc~order zero maps by construction.

Fix $j \le m$. Since $\psi$ is cpc and each $a_j$ is a positive contraction, \eqref{eq:16epsilon}
gives
\begin{equation}\label{eq:hatvsnonhat}
\| \widehat \eta(\widehat \psi(a_j)) - \eta ( \widehat \psi(a_j)) \| \leq 16 (n+1)\epsilon_0^{1/8}.
\end{equation}
Using that $a_j \leq e$ by assumption at the third step, and using~\eqref{eq:pleqstuff} at the
final step, we calculate,
\begin{align}
\| (1_F - p) \psi(a_j) \|
    &= \| (1_F - p) \psi(a_j)^2 (1_F - p)\|^{1/2} \nonumber\\
    &\leq  \| (1_F - p) \psi(a_j) (1_F - p) \|^{1/2} \nonumber\\
    &\leq \| (1_F - p) \psi(e) (1_F - p) \|^{1/2} \nonumber\\
    &\leq \| (1_F - p) \psi(e) \|^{1/2} \nonumber\\
    &\leq \epsilon_0^{1/4}.\label{eq:1-ppsi}
\end{align}
Hence, using~\eqref{eq:1-ppsi} at the second step, we see that
\begin{align}
\| \widehat \psi(a_j) - \psi(a_j) \|
    &\leq \| p \psi(a_j) ( 1_F- p) \| + \| (1_F - p) \psi(a_j) \| \nonumber\\
    &\leq 2 \epsilon_0^{1/4}. \label{eq:hatnonhat}
\end{align}
Consequently, using \eqref{eq:hatvsnonhat}~and~\eqref{eq:hatnonhat} at the third step, and
then~\eqref{eq:epsilon0min} at the final step, we have
\begin{align}
\| \widehat \eta(\widehat \psi(a_j))&{} - \phi(a_j)\| \nonumber\\
    &\leq \| \widehat \eta(\widehat \psi(a_j)) - \eta(\widehat \psi(a_j)) \| + \| \eta(\widehat \psi(a_j) - \psi(a_j)) \| \nonumber\\
    &\qquad + \| \eta(\psi(a_j)) - \phi(a_j)\| \nonumber\\
    &\leq 16 (n+1)\epsilon_0^{1/8} + 2(n+1) \epsilon_0^{1/4} + \epsilon_0 \nonumber\\
    &\leq 19(n+1) \epsilon_0^{1/8} \label{eq:her,neweq2}\nonumber\\
    &\leq \epsilon,
\end{align}
and so $\dimnuc\phi_0\leq n$.

Finally note that when $\dr\,\phi\leq n$, we can additionally assume that $\|\eta\|\leq 1$. Then
\eqref{eq:16epsilon} ensures that $\|\widehat{\eta}\|\leq 1+16(n+1)\epsilon_0^{1/8}$.  Let
$\widetilde{\eta}\coloneqq\widehat{\eta}/\|\widehat{\eta}\|$. Then for $j=1,\dots,m$, using
\eqref{eq:her,neweq2}~and~\eqref{eq:epsilon0min} at the second and third steps respectively, we
have
\begin{align}
\|\widetilde{\eta}(\psi(a_j))-\phi(a_j)\|
    &\leq \|\widehat{\eta}(\psi(a_j))-\phi(a_j)\|+\|\widetilde{\eta}-\widehat{\eta}\|\nonumber\\
    &\leq 19(n+1)\epsilon_0^{1/8}+16(n+1)\epsilon_0^{1/8}\nonumber\\
    &\leq \epsilon,
\end{align}
and so $\dr\,\phi_0\leq n$.
\end{proof}

We end this section by discussing the difference between finite nuclear dimension and
decomposition rank, aiming for a criterion for recognising when a system of approximations which
a priori witnesses finite nuclear dimension also gives finite decomposition rank.  Firstly, if
$(F_i,\psi_i,\eta_i)_i$ is a net of $n$-decomposable approximations for $\id_A$ showing that $A$
has finite nuclear dimension, then \cite[Proposition~3.2]{WZ:Adv} shows that by removing certain
full matrix summands from the $F_i$ (and modifying $\psi_i$ and $\eta_i$ accordingly), one can
additionally assume that the maps $\psi_i\colon A\rightarrow F_i$ are approximately order zero. The
corresponding result for decomposition rank from \cite[Proposition~5.1]{KW:IJM} shows that if each
$\eta_i$ is also contractive, a suitable restriction of the approximations can be found so that
the maps $\psi_i\colon A\rightarrow F_i$ are approximately multiplicative; this is why finite
decomposition rank entails quasidiagonality.    Very similar proofs can be used to establish the
same results for any $^*$-homomorphism in place of the identity map.  We sketch the details of the
decomposition rank case, which we will use later in the paper.
\begin{proposition}\label{ApproxOrderZeroProp}
Let $\theta\colon A\rightarrow B$ be a $^*$-homomorphism between $\Cstar$-algebras with
$\dimnuc(\theta)\leq n$.  Then there exist a net of $n$-decomposable approximations
$(F_i,\psi_i,\eta_i)_i$ for $\theta$ such that the maps $\psi_i$ are approximately order zero,
i.e. the induced map $\psi\colon A\rightarrow\prod_iF_i/\bigoplus_iF_i$ is cpc order zero.  If additionally
$\dr\,\theta\leq n$, then the maps $\psi_i$ can be taken to be approximately multiplicative, i.e.
the induced map $\psi\colon A\rightarrow\prod_iF_i/\bigoplus_iF_i$ is a $^*$-homomorphism.
\end{proposition}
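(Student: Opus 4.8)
The plan is to adapt the standard arguments of \cite[Proposition~3.2]{WZ:Adv} and \cite[Proposition~5.1]{KW:IJM} from the identity map to an arbitrary $^*$-homomorphism $\theta$. Since the statement concerns the induced maps into the sequence algebra $\prod_iF_i/\bigoplus_iF_i$, I would first fix a null sequence $\epsilon_i\downarrow 0$ and an increasing sequence of finite subsets $\mathcal F_i\subseteq A$ with dense union, and select $n$-decomposable approximations $(F_i,\psi_i,\eta_i)$ witnessing $\dimnuc\theta\leq n$ (respectively $\dr\,\theta\leq n$, so that in the latter case $\|\eta_i\|\leq 1$) for $\mathcal F_i$ up to $\epsilon_i$. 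The goal is to modify these so that the induced $\psi\colon A\to\prod_iF_i/\bigoplus_iF_i$ becomes order zero (respectively multiplicative), while $\eta\circ\psi$ still agrees with $\theta$.

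The core technical device in both \cite{WZ:Adv} and \cite{KW:IJM} is to excise the matrix summands of $F_i$ on which $\psi_i$ fails to be approximately order zero (respectively multiplicative). Concretely, for each full matrix summand $M\subseteq F_i$ with minimal projection $q$, the compression $q\psi_i(\cdot)q$ behaves like a positive functional times $q$; the point is that on such a summand the order-zero (or multiplicative) defect is controlled by how far $\psi_i$ is from being supported off that summand. One discards the summands contributing large defect and reweights $\psi_i$ and $\eta_i$ on the survivors. The key structural input I would reuse verbatim is that the defect estimates in those proofs are \emph{purely about the downward map $\psi_i$ and the finite-dimensional targets $F_i$}, and never reference the specific form of the upward composite beyond the approximation inequality $\|\eta_i(\psi_i(x))-\theta(x)\|\leq\epsilon_i$. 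Hence replacing $\id_A$ by $\theta$ changes only the right-hand side of that inequality and nothing in the combinatorial surgery on the $F_i$.

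I would therefore carry out the argument in three steps. First, recall the quantitative lemma (from the cited proofs) giving, for a cpc map into a finite-dimensional algebra, a bound on the order-zero defect (respectively multiplicative defect, under contractivity of $\eta_i$) after discarding the offending summands, and verify that its hypotheses involve only $\psi_i$ and $F_i$. Second, apply this summand-removal to each $\psi_i$, obtaining modified approximations $(F_i',\psi_i',\eta_i')$ with $\psi_i'$ of order-zero defect tending to $0$ (respectively multiplicative defect tending to $0$), while the composite still satisfies $\|\eta_i'(\psi_i'(x))-\theta(x)\|\to 0$ for each $x$ in the dense union $\bigcup_i\mathcal F_i$. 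Third, pass to the sequence algebra: since the defects vanish as $i\to\infty$, the induced map $\psi=(\psi_i')_i\colon A\to\prod_iF_i'/\bigoplus_iF_i'$ is genuinely cpc order zero (respectively a $^*$-homomorphism) on the dense set, and hence on all of $A$ by continuity.

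The main obstacle I anticipate is bookkeeping rather than conceptual novelty: I must check that the \emph{contractivity} of $\eta_i$ is preserved, or at most degraded in a controllable way, under the summand removal and reweighting, so that the decomposition rank conclusion (not merely the nuclear dimension one) survives. This is exactly the delicate point in \cite[Proposition~5.1]{KW:IJM}, where one must ensure the reweighted upward maps remain contractive after the surgery, and where a final renormalisation of $\eta_i'$ (as in the renormalisation $\widetilde\eta=\widehat\eta/\|\widehat\eta\|$ used in the proof of Proposition~\ref{Prop:cpcHer}) may be needed to restore contractivity at the cost of a vanishing error. Since the excerpt only asks us to \emph{sketch} the decomposition rank case, I would record these modifications briefly and refer to \cite{WZ:Adv,KW:IJM} for the verbatim estimates.
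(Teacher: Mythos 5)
Your proposal is correct and takes essentially the same route as the paper: the paper's proof also follows \cite[Proposition~3.2]{WZ:Adv} and \cite[Proposition~5.1]{KW:IJM} verbatim, discarding the matrix summands of $\tilde F$ on which the order-zero (respectively multiplicative) defect of $\tilde\psi$ is large, and observing --- exactly as you do --- that the estimates use only the inequality $\|\tilde\eta(\tilde\psi(x))-\theta(x)\|\leq\epsilon$ together with contractivity of $\tilde\eta$, so that replacing $\id_A$ by $\theta$ costs nothing. Your anticipated obstacle about restoring contractivity does not in fact arise: in the paper the upward map on the surviving summands is simply the \emph{restriction} of $\tilde\eta$ (and the downward map the compression by $1_F$), so contractivity is automatic and no renormalisation of the form $\widehat\eta/\|\widehat\eta\|$ is needed; the only price of the surgery is the additive error $\|\sum_{i\in I}\tilde\eta_i(1_{M_{r_i}})\|\leq\epsilon^2/2$ in the composite, absorbed by choosing the initial tolerance $\epsilon^4/(6(n+1))$.
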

\begin{proof}
We will prove the second assertion, about $\theta$ with decomposition rank at most $n$. The
nuclear dimension statement follows from a very similar small modification to the corresponding
statement for algebras of finite nuclear dimension \cite[Proposition~3.2]{WZ:Adv}.\footnote{Note
that the second line of the proof of \cite[Proposition~3.2]{WZ:Adv} contains a typo; it should ask
for $0<\epsilon<\frac{1}{(n+2)^{16}}$.}

Fix a finite subset $\mathcal F\subseteq A$ of positive elements of norm $1$ and $0<\epsilon<1$.
Assume that $\theta$ has decomposition rank at most $n$.  Using the Stinespring calculation of
\cite[Lemma~3.4]{KW:IJM}, it suffices to find an $n$-decomposable approximation $(F,\psi,\eta)$ of
$\theta$ for $\mathcal F$ up to $\epsilon$ with $\eta$ contractive such that
\begin{equation}\label{ApproxOrderZeroProp.1}
\|\psi(x^2)-\psi(x)^2\|<\epsilon,\quad x\in\mathcal F.
\end{equation}
We closely follow the proof of \cite[Proposition~5.1]{KW:IJM}. Fix an $n$-decomposable
approximation $(\tilde F,\tilde \psi,\tilde \eta)$ of $\theta$ on $\mathcal F$ up to
$\frac{\epsilon^4}{6(n+1)}$ with $\tilde\eta$ contractive. Decompose $\tilde F=M_{r_1}\oplus\ldots\oplus
M_{r_s}$, and write $\tilde\psi_j$ and $\tilde\eta_j$ for the respective components of
$\tilde\psi$ and $\tilde\eta$. Set
\begin{equation}
I\coloneqq\{i\in\{1,\dots,s\}:\|\tilde\psi_i(x^2)-\tilde\psi_i(x)^2\|\geq\epsilon^2\text{ for some }x\in\mathcal F\}.
\end{equation}
Then, calculating identically\footnote{This is where the tolerance $\frac{\epsilon^4}{6(n+1)}$ plays a role in the proof.} to \cite[Proposition~5.1]{KW:IJM}, for $i\in I$, we have
$\|\sum_{i\in I}\tilde\eta_i(1_{M_{r_i}})\|\leq\frac{\epsilon^2}{2}$.  Now define $F \coloneqq
\bigoplus_{i\in\{1,\dots,s\}\setminus I}M_{r_i}$. Let $\psi\colon A\rightarrow F$ be the compression of
$\tilde\psi$ by $1_F$ to $F$ and let $\eta\colon F\rightarrow B$ be the restriction of $\tilde{\eta}$.
Note that~(\ref{ApproxOrderZeroProp.1}) holds by construction.  Moreover, for any contraction
$x\in A$, we have
\begin{equation}
\|\eta\psi(x)-\tilde\eta\tilde\psi(x)\|\leq \Big\|\sum_{i\in I}\tilde\eta_i(1_{M_{r_i}})\Big\|\leq\frac{\epsilon^2}{2}.
\end{equation}
Thus
\begin{equation}
\|\theta(x)-\eta\psi(x)\|\leq\|\theta(x)-\tilde\eta\tilde\psi(x)\|+\frac{\epsilon^2}{2}<\epsilon,\quad x\in \mathcal F.\qedhere
\end{equation}
\end{proof}

\begin{corollary}\label{DetectQDCor}
Let $\theta\colon A\rightarrow B$ be an injective $^*$-homomorphism with finite decomposition rank.
Then $A$ is quasidiagonal.
\end{corollary}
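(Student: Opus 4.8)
The plan is to recognise quasidiagonality of $A$ through a net of cpc maps into finite dimensional $\Cstar$-algebras that is simultaneously asymptotically multiplicative and asymptotically isometric, and to extract such a net directly from the decomposition rank approximations for $\theta$.

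First I would invoke Proposition~\ref{ApproxOrderZeroProp} in its decomposition rank form. Since $\theta$ has finite decomposition rank, say $\dr\,\theta\leq n<\infty$, and since $\dimnuc(\theta)\leq\dr\,\theta$, the proposition supplies a net of $n$-decomposable approximations $(F_i,\psi_i,\eta_i)_i$ for $\theta$ in which each $\eta_i$ is contractive and the downward maps $\psi_i\colon A\to F_i$ are approximately multiplicative, i.e. the induced map $\psi\colon A\to\prod_iF_i/\bigoplus_iF_i$ is a $^*$-homomorphism. Concretely, $\|\psi_i(ab)-\psi_i(a)\psi_i(b)\|\to0$ for all $a,b\in A$. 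As the $F_i$ are finite dimensional and the $\psi_i$ are cpc, the only remaining property needed to witness quasidiagonality of $A$ is that the net $(\psi_i)$ is asymptotically isometric.

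The asymptotic isometry is where the injectivity hypothesis enters, and it is genuinely short. Because $\theta$ is an injective $^*$-homomorphism it is isometric, so $\|\theta(a)\|=\|a\|$ for every $a\in A$. On the other hand the approximations satisfy $\|\eta_i(\psi_i(a))-\theta(a)\|\to0$, while each $\eta_i$ is contractive and each $\psi_i$ is cpc. Combining these gives, for each fixed $a\in A$,
\begin{equation}
\|a\|=\|\theta(a)\|=\lim_i\|\eta_i(\psi_i(a))\|\leq\liminf_i\|\psi_i(a)\|\leq\limsup_i\|\psi_i(a)\|\leq\|a\|,
\end{equation}
so $\|\psi_i(a)\|\to\|a\|$. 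Thus $(\psi_i)_i$ is the required asymptotically multiplicative, asymptotically isometric net of cpc maps into finite dimensional algebras, and $A$ is quasidiagonal.

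I do not anticipate a serious obstacle: the substantive content has already been packaged into Proposition~\ref{ApproxOrderZeroProp}, and the corollary merely observes that contractivity of each $\eta_i$ together with isometry of the injective map $\theta$ upgrades the approximate multiplicativity to a full witness of quasidiagonality. The only point requiring care is the bookkeeping that a net of cpc maps into finite dimensional $\Cstar$-algebras which is asymptotically multiplicative and asymptotically isometric is precisely the working characterisation of quasidiagonality in force here; if a matrix-algebra formulation is preferred, the finite dimensional targets $F_i$ can be replaced by a single matrix algebra via a routine embedding.
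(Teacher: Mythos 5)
Your proof is correct and takes essentially the same route as the paper: both feed the decomposition rank hypothesis into Proposition~\ref{ApproxOrderZeroProp} to obtain approximately multiplicative cpc maps $\psi_i$ into finite dimensional algebras, and both use injectivity (hence isometry) of $\theta$ to make the net asymptotically isometric. The only difference is the finishing step --- the paper first reduces to separable $A$ and deduces injectivity, hence isometry, of the induced $^*$-homomorphism $\psi\colon A\to\prod_i F_i/\bigoplus_i F_i$ from $\eta\circ\psi=\iota\circ\theta$, whereas your pointwise norm squeeze using contractivity of the $\eta_i$ (which the \emph{proof}, though not the literal statement, of Proposition~\ref{ApproxOrderZeroProp} does supply in the decomposition rank case) yields the full limit $\lim_i\|\psi_i(a)\|=\|a\|$ directly and renders the separability reduction unnecessary.
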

\begin{proof}
As quasidiagonality is a local property, we may assume that $A$ is separable (cf.
\cite[Exercise~1.1]{BO:Book}). Let $(F_n,\psi_n,\eta_n)_{n=1}^\infty$ be an approximation for
$\theta$ as in Proposition~\ref{ApproxOrderZeroProp}, so the map $\psi\colon A\rightarrow
\prod_{n=1}^\infty F_n/\bigoplus_{n=1}^\infty F_n$ induced by $(\psi_n)_{n=1}^\infty$ is a
$^*$-homomorphism.  The map $\eta\colon \prod_{n=1}^\infty F_n/\bigoplus_{n=1}^\infty F_n\rightarrow
B_{\seq}$ induced by $(\eta_n)_{n=1}^\infty$ satisfies $\iota\circ\theta=\eta\circ\psi$, where $\iota\colon B\rightarrow B_{\seq}$ is the canonical inclusion. As
$\iota\circ\theta$ is injective, so too is $\psi$, so $(\psi_n)_{n=1}^\infty$ witnesses
quasidiagonality of $A$.
\end{proof}

A kind of converse to Proposition~\ref{ApproxOrderZeroProp} provides a method for detecting when a
finite nuclear dimensional approximation $(F_i,\psi_i,\eta_i)$ also gives rise to finite
decomposition rank.  With the benefit of hindsight, we see that the criterion below is used to prove the
decomposition rank case of \cite[Theorem~F]{BBSTWW} and \cite[Theorem~B]{CETWW}, albeit in the case where $A$ is unital (when
Lemma~\ref{detectdr} has an easier proof).

\begin{lemma}\label{detectdr}
Let $\phi \colon A \to B$ be a $^*$-homomorphism and let $n\in \mathbb N_0$.  Suppose there is a
net $(F_i,\psi_i,\eta_i)_i$ of $n$-decomposable approximations converging point norm to $\phi$
such that the induced map
\begin{equation}
\Psi\coloneqq(\psi_i)\colon A\to\prod_i F_i/\bigoplus_i F_i,
\end{equation}
is a $^*$-homomorphism.  Then $\dr\,\phi\leq n$.
\end{lemma}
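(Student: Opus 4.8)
The plan is to upgrade the given $n$-decomposable approximations witnessing $\dimnuc\phi\le n$ into approximations with a \emph{contractive} upward map, producing one such approximation for each prescribed finite set $\mathcal G\subseteq A$ of positive contractions and tolerance $\epsilon>0$. As in the proof of Proposition~\ref{Prop:cpcHer}, I would first perturb the elements of $\mathcal G$ slightly so as to fix a positive contraction $e\in A$ with $a\le e$ and $ea=a$ for all $a\in\mathcal G$, and then fix a small auxiliary $\delta>0$ together with a continuous contraction $g\in C_0((0,1])_+$ satisfying $g\equiv 1$ on $[\delta^{1/2},1]$, so that $g\ge\chi_{[\delta^{1/2},1]}$ pointwise. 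The only place where the hypothesis that $\Psi$ is a $^*$-homomorphism is used in an essential way is the resulting asymptotic relation $g(\psi_i(e))\approx\psi_i(g(e))$ along the net: the multiplicativity defect of the $\psi_i$ tends to $0$, and $g$ is a uniform limit on $[0,1]$ of polynomials vanishing at $0$, so $\|g(\psi_i(e))-\psi_i(g(e))\|\to0$.

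Then, for an index $i$ far enough along the net, I would compress by the spectral projection $p_i\coloneqq\chi_{[\delta^{1/2},1]}(\psi_i(e))\in F_i$, setting $\widehat F_i\coloneqq p_iF_ip_i=\bigoplus_{j=0}^n p_iF_i^{(j)}p_i$, defining $\widehat\psi_i(a)\coloneqq p_i\psi_i(a)p_i$, and taking $\widehat\eta_i$ to be the restriction of $\eta_i$ to $\widehat F_i$. Since the restriction of a cpc order zero map to a corner is again cpc order zero, $(\widehat F_i,\widehat\psi_i,\widehat\eta_i)$ is still $n$-decomposable. The bound $\|(1_{F_i}-p_i)\psi_i(a)\|\le\delta^{1/4}$ obtained exactly as in the proof of Proposition~\ref{Prop:cpcHer} (this is where $a\le e$ enters) gives $\|\psi_i(a)-\widehat\psi_i(a)\|\le 2\delta^{1/4}$; and because $\eta_i$ is a sum of $n+1$ cpc order zero maps we automatically have $\|\eta_i\|\le n+1$. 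Hence $\|\widehat\eta_i(\widehat\psi_i(a))-\phi(a)\|\le 2(n+1)\delta^{1/4}+\|\eta_i\psi_i(a)-\phi(a)\|$, which is less than $\epsilon$ on $\mathcal G$ once $\delta$ is small and $i$ is suitable.

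The heart of the argument is the contractivity of $\widehat\eta_i$. As $\widehat F_i$ is unital with unit $p_i$ and $\widehat\eta_i$ is positive, $\|\widehat\eta_i\|=\|\eta_i(p_i)\|$. Now $0\le p_i\le g(\psi_i(e))$ by the choice of $g$, and $g(\psi_i(e))\le\psi_i(g(e))+\|g(\psi_i(e))-\psi_i(g(e))\|\,1_{F_i}$, so positivity of $\eta_i$ and $\|\eta_i\|\le n+1$ give $\eta_i(p_i)\le\eta_i(\psi_i(g(e)))+(n+1)\|g(\psi_i(e))-\psi_i(g(e))\|\,1$. Since $\psi_i(g(e))$ is hit by the \emph{genuine} composite $\eta_i\psi_i$ applied to the positive contraction $g(e)$, we have $\|\eta_i(\psi_i(g(e)))\|\le\|\phi(g(e))\|+\|\eta_i\psi_i(g(e))-\phi(g(e))\|\le 1+o(1)$. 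Combining these, $\|\widehat\eta_i\|\le 1+o(1)$ along the net, so replacing $\widehat\eta_i$ by $\widehat\eta_i/\max\{1,\|\widehat\eta_i\|\}$ costs only $o(1)$ in the approximation while making the upward map contractive. This produces, for each $(\mathcal G,\epsilon)$, a contractive $n$-decomposable approximation of $\phi$, whence $\dr\,\phi\le n$.

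I expect the contractivity step to be the main obstacle. In the pure nuclear dimension setting $\|\eta_i\|$ may genuinely be as large as $n+1$, and no compression by a contraction can force the total upward map to be contractive; indeed the other hypotheses alone only give $\dimnuc\phi\le n$. It is precisely the $^*$-homomorphism property of $\Psi$ — forcing $\eta_i\psi_i(g(e))$ to approximate the positive contraction $\phi(g(e))$, and forcing $g(\psi_i(e))\approx\psi_i(g(e))$ — that confines the excess norm of $\eta_i$ to the part of $F_i$ orthogonal to the corner $p_iF_ip_i$, where the compression discards it.
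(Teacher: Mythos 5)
Your argument is correct, and it shares the paper's overall skeleton --- compress by a spectral projection of $\psi_i$ at an approximate unit, then renormalise the upward map --- but the two proofs spend the hypothesis that $\Psi$ is a $^*$-homomorphism in opposite places, which is worth recording. You take a \emph{low} spectral threshold, $p_i=\chi_{[\delta^{1/2},1]}(\psi_i(e))$, so the compression step needs no multiplicativity at all: exactly as in Proposition~\ref{Prop:cpcHer}, the bound $\|(1_{F_i}-p_i)\psi_i(a)\|\leq\delta^{1/4}$ uses only $a\leq e$ and positivity of $\psi_i$. The price is that $\delta^{1/2}p_i\leq\psi_i(e)$ gives no useful control of $\|\eta_i(p_i)\|$, so contractivity must come from multiplicativity, via your functional-calculus transfer $\|g(\psi_i(e))-\psi_i(g(e))\|\to 0$ (valid: $g$ vanishes at $0$, so it is a uniform limit of polynomials with zero constant term, and $\|\psi_i(e^k)-\psi_i(e)^k\|\to 0$ along the net since $\Psi$ is multiplicative), combined with $p_i\leq g(\psi_i(e))$ and $\|\eta_i(\psi_i(g(e)))\|\leq\|\phi(g(e))\|+o(1)\leq 1+o(1)$. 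The paper does the reverse: it takes a \emph{high} threshold $p_i=\chi_{[1-\delta,1]}(\psi_i(e_0))$, so contractivity is nearly free --- $(1-\delta)p_i\leq\psi_i(e_0)$ and $\eta_i(\psi_i(e_0))\approx_\delta\phi(e_0)$ give $(1-\delta)\|\eta_i(p_i)\|\leq 1+\delta$, absorbed by the a priori rescaling $\widehat\eta_i=\tfrac{1-\delta}{1+\delta}\eta_i$, yielding $\|\widehat\eta_i\|\leq 1$ exactly rather than $1+o(1)$ followed by a final normalisation --- but then the high-threshold compression genuinely threatens the approximation, and \emph{that} is where the $^*$-homomorphism hypothesis enters: from a pair $e_0e_1=e_1$ it deduces $g(\psi_i(e_0))\psi_i(e_1)\approx_\delta\psi_i(e_1)$ and $\psi_i(a)\approx_{4\delta}\psi_i(e_1)\psi_i(a)\psi_i(e_1)$, forcing $\psi_i(a)$ under $p_i$ up to small error. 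Both routes are sound; yours reuses the compression estimates already established in Proposition~\ref{Prop:cpcHer} at the cost of the (standard) functional-calculus-transfer step, while the paper's keeps all functional calculus exact and avoids it. The remaining details of your write-up check out: the perturbation producing $e$ with $a\leq e$ is the same as in Proposition~\ref{Prop:cpcHer}; restrictions of cpc order zero maps to the corners $p_iF_i^{(j)}p_i$ are cpc order zero; $\|\widehat\eta_i\|=\|\eta_i(p_i)\|$ since $\widehat\eta_i$ is positive on the unital algebra $p_iF_ip_i$; and including $g(e)$ in the finite set on which the approximation is requested is legitimate by point-norm convergence. Your closing diagnosis --- that multiplicativity of $\Psi$ is precisely what confines the excess norm of $\eta_i$ to the part of $F_i$ discarded by the compression --- matches the mechanism of the paper's proof exactly.
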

The point of the lemma is that the $\eta_i$ are not assumed to be contractive.

\begin{proof}
Let $(F_i,\psi_i,\eta_i)_i$ be as in the statement of the lemma, and fix a finite subset $\mathcal F\subseteq A$ of
contractions and $0 < \epsilon < 1$. Define $\delta \coloneqq (6n + 15)^{-1} \epsilon$. Pick
positive contractions $e_0,e_1\in A$ such that $e_0 e_1 = e_1$ and
\begin{equation}
\max\{\| a e_1 - a\|, \| e_1 a - a \|\} \leq \delta, \quad a\in \mathcal F.
\end{equation}

Let $g \colon [0,1] \to [0,1]$ denote the continuous function which is $0$ on
$[0,1-\delta]$, 1 on $[1-\delta/2, 1]$ and affine otherwise. By hypothesis $\Psi$ is a
$^*$-homomorphism, so as $e_0e_1=e_1$, we have
\begin{equation}
g (\Psi(e_0)) \Psi(e_1) = \Psi(e_1).
\end{equation}
Hence there exists $i$ such that
$(F_i,\psi_i,\eta_i)$ approximates $\phi$ on $\mathcal F\cup\{e_0,e_1\}$ up to $\delta$,
\begin{equation}\label{detectdr.neweq.1}
\psi_i(a)\approx_{4\delta}\psi_i(e_1)\psi_i(a)\psi_i(e_1),\quad a\in \mathcal F,
\end{equation}
and
\begin{equation}\label{eq:g1-delta2psim}
g (\psi_i(e_0)) \psi_i(e_1) \approx_\delta \psi_i(e_1).
\end{equation}
Let $\chi_{[1-\delta, 1]}$ denote the characteristic function of $[1-\delta, 1]$, and let $p_i
\coloneqq \chi_{[1-\delta, 1]}(\psi_i(e_0))\in F_i$ be the corresponding spectral projection.
Define $\widehat{F}_i\coloneqq p_iF_ip_i$, define $\widehat{\psi}_i\colon A\rightarrow \widehat{F}_i$ by
$\widehat{\psi}_i(\cdot) \coloneqq p_i\psi_i(\cdot)p_i$ and define
$\widehat{\eta}_i\colon \widehat{F}_i\rightarrow A$ by $\widehat{\eta}_i\coloneqq
\frac{1-\delta}{1+\delta}\eta_i|_{\widehat{F}_i}$. Note that $\widehat{\psi}_i$ is cpc and
$\widehat{\eta}_i$ is cp. The algebra $\widehat{F_i}$ inherits the decomposition into
$(n+1)$-summands on which $\widehat{\eta}_i$ restricts to a cpc order zero map from $\widehat{F}_i$ and
$\eta_i$.

By definition of $p_i$, we have $(1-\delta) p_i \leq \psi_i(e_0)$, so that
\begin{equation}
\eta_i((1-\delta) p_i ) \leq \eta_i(\psi_i(e_0)) \approx_\delta \phi(e_0).
\end{equation}
In particular $(1- \delta)\| \eta_i(p_i)\| \leq 1 + \delta$ and thus
\begin{equation}
\| \widehat{\eta}_i\| = \frac{1-\delta}{1+\delta}\| \eta_i(p_i)\| \leq 1.
\end{equation}
The definition of $p_i$ also ensures that
\begin{equation}\label{eq:g1-delta2}
g(\psi(e_0)) p_i = p_i g(\psi(e_0)) =  g(\psi(e_0)).
\end{equation}
For $a\in \mathcal F$, applying~\eqref{detectdr.neweq.1}, \eqref{eq:g1-delta2psim}, and
then~\eqref{eq:g1-delta2}, we have
\begin{align}
\widehat{\eta}_i(\widehat{\psi}_i(a))
    &\mathrel{\approx_{4\delta}} \tfrac{1-\delta}{1+\delta} \eta_i (p_i \psi_i(e_1) \psi_i(a) \psi_i(e_1) p_i) \nonumber\\
    &\approx_{2\delta} \tfrac{1-\delta}{1+\delta} \eta_i (p_i g(\psi_i(e_0)) \psi_i(e_1) \psi_i(a) \psi_i(e_1) g(\psi_i(e_0)) p_i)\nonumber \\
    &= \tfrac{1-\delta}{1+\delta} \eta_i (g(\psi_i(e_0)) \psi_i(e_1) \psi_i(a) \psi_i(e_1) g(\psi(e_0))).
\end{align}
Now, using that $\|\eta_i\|\leq n+1$, we continue the calculation above, obtaining
\begin{align}
\widehat{\eta_i}(\widehat{\psi}_i(a))
&\approx_{6\delta+2(n+1)\delta} \tfrac{1-\delta}{1+\delta} \eta_i (\psi_i(e_1) \psi_i(a) \psi_i(e_1))\quad&&\text{by~\eqref{eq:g1-delta2psim}} \nonumber\\
&\approx_{4(n+1)\delta} \tfrac{1-\delta}{1+\delta} \eta_i(\psi_i(a)))\quad&&\text{by~\eqref{detectdr.neweq.1}} \nonumber\\
&\approx_{\delta} \tfrac{1-\delta}{1+\delta} \phi(a).	
\end{align}
Since $1-\tfrac{1-\delta}{1+\delta}\leq 2\delta$, this gives
\begin{equation}
\| \phi(a) - \widehat{\eta}_i \widehat{\psi}_i(a)\| \leq 9 \delta + 6(n+1)\delta  = \epsilon,\quad a\in \mathcal F.
\end{equation}
Hence $\dr\,\phi\leq n$.
\end{proof}

%%%%%%%%%%%%%%%%%%%%%%%%%%%%%%%%%%%%%%%%%%%%%%%%%%%%%%%%%%%%%%%
%%%%%%%%%%%%%%%%%%%%%%%%%%%%%%%%%%%%%%%%%%%%%%%%%%%%%%%%%%%

\section{The ideal lattices of $\Cstar$-algebras}\label{S2}

In this section, we collect various facts regarding the ideal lattice of a $\Cstar$-algebra, which
we need for our main existence argument.

Here, and throughout, by an ideal in a $\Cstar$-algebra $A$
we always mean a closed, two-sided ideal. As is customary, $\overline{A S A}$ denotes the ideal generated by a non-empty subset $S \subseteq
A$. So $\overline{ASA} = \overline{\operatorname{span}}\{axb : a,b \in A,\ x \in S\}$. If $S=\{a\}$
is a singleton, we write $\overline{AaA}$ rather than $\overline{A\{a\}A}$.

Given an ideal $I$ in a $\Cstar$-algebra $A$, an element $a\in I$ is \emph{full}, if it generates
$I$ as an ideal; that is, if $I=\overline{AaA}$.  Every ideal in a separable $\Cstar$-algebra has
a full element (for example, any strictly positive element is full).

The lattice $\mathcal I(A)$ of ideals of a $\Cstar$-algebra $A$ is complete in the sense that
every $\mathcal S \subseteq \mathcal I(A)$ has a supremum and an infimum. Indeed,
\begin{equation}
\sup \mathcal S = \overline{\sum_{I \in \mathcal S} I}, \qquad\text{ and }\qquad \inf \mathcal S = \bigcap_{I\in \mathcal S} I
\end{equation}
for every non-empty subset $\mathcal S \subseteq \mathcal I(A)$; by convention, we take $\sup
\emptyset = 0$ and $\inf \emptyset = A$. There is a notion of compact containment in $\mathcal
I(A)$: if $I,J \in \mathcal I(A)$, then $I$ is \emph{compactly contained} in $J$, written $I
\Subset J$, if whenever $(I_\lambda)_{\lambda\in \Lambda}$ is a family in $\mathcal I(A)$ such
that $J \subseteq \overline{\sum_{\lambda \in \Lambda} I_\lambda}$ then there are finitely many
$\lambda_1,\dots, \lambda_n \in \Lambda$ such that $I \subseteq \sum_{i=1}^n I_{\lambda_i}$.

With suprema and compact containment, the ideal lattice of a separable $\Cstar$-algebra fits into
the framework of the abstract Cuntz semigroup category $\Cu$ introduced in \cite{CEI08} (see
\cite[Proposition~2.5]{Gabe17}). Given $\Cstar$-algebras $A$ and $B$, a map between their ideal
lattices $\Phi \colon \mathcal I(A) \to \mathcal I(B)$ is a \emph{$\Cu$-morphism} if it preserves
suprema and compact containment. By \cite[Lemma~2.12]{Gabe17} any $^*$-homomorphism $\phi \colon A
\to B$ induces a $\Cu$-morphism
\begin{equation}
\mathcal I(\phi) \colon \mathcal I(A) \to \mathcal I(B), \qquad I \mapsto \overline{B \phi(I) B}.
\end{equation}
Then $\mathcal I(\cdot)$ is a covariant functor from $\Cstar$-algebras and $^\ast$-homomorphisms
to complete lattices and $\Cu$-morphisms \cite[Proposition~2.15]{Gabe17}.

The presence of a full element in an ideal is detected by the ideal lattice structure
(\cite[Corollary~2.3]{Gabe17}), and so preserved by $\Cu$-morphisms.

\begin{lemma}\label{lem:fullelement}
Let $A,B$ be $\Cstar$-algebras with $A$ separable, and let $\Theta\colon \mathcal I(A)\rightarrow
\mathcal I(B)$ be a $\Cu$-morphism.  Then, for every non-zero ideal $J\in \mathcal I(A)$, the
ideal $\Theta(J)$ has a full element.
\end{lemma}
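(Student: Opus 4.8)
The plan is to reduce everything to the lattice-theoretic characterisation of ideals with a full element recorded in \cite[Corollary~2.3]{Gabe17}, and then to exploit that an abstract $\Cu$-morphism sees precisely the data appearing in that characterisation. Concretely, \cite[Corollary~2.3]{Gabe17} tells us that an ideal $K$ of a $\Cstar$-algebra has a full element if and only if it is $\sigma$-compact in its ideal lattice, in the sense that there is an increasing sequence of ideals $K_1\subseteq K_2\subseteq\cdots$ with $K_n\Subset K$ for every $n$ and $\sup_n K_n=K$. Since this is a statement purely about the position of $K$ within the complete lattice $\mathcal I(\cdot)$, it is tailor-made to be transported along $\Theta$, and the whole proof will consist of applying the characterisation once in $\mathcal I(A)$ and once in $\mathcal I(B)$.

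First I would apply the forward direction of this characterisation to $J$ itself. As $A$ is separable, the non-zero ideal $J$ has a full element (indeed any strictly positive element of $J$ is full, as noted earlier in this section), so $J$ is $\sigma$-compact: fix an increasing sequence $(J_n)_{n=1}^\infty$ in $\mathcal I(A)$ with $J_n\Subset J$ for all $n$ and $\sup_n J_n=J$. If one prefers an explicit witness, one can take $J_n=\overline{A(a-1/n)_+A}$ for a positive full element $a\in J$, the compact containments $J_n\Subset J$ being the usual Cuntz-comparison estimates and the supremum being $\overline{AaA}=J$.

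Then I would push this sequence through $\Theta$. A $\Cu$-morphism is order preserving (for $I\subseteq J$ the constant-after-one-step sequence $I,J,J,\dots$ has supremum $J$, so $\Theta(I)\subseteq\Theta(J)$), whence $(\Theta(J_n))_{n=1}^\infty$ is again an increasing sequence in $\mathcal I(B)$; preservation of suprema gives $\sup_n\Theta(J_n)=\Theta(\sup_n J_n)=\Theta(J)$; and preservation of compact containment gives $\Theta(J_n)\Subset\Theta(J)$ for every $n$. Thus $(\Theta(J_n))_{n=1}^\infty$ witnesses that $\Theta(J)$ is $\sigma$-compact, and the converse direction of \cite[Corollary~2.3]{Gabe17} supplies a full element of $\Theta(J)$. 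The only point to watch is that $\Theta$ is assumed to be no more than an abstract $\Cu$-morphism, and in particular need not be induced by a $^*$-homomorphism, so the argument has to remain entirely on the lattice side; but since fullness is itself a lattice-theoretic property this is exactly what the two applications of \cite[Corollary~2.3]{Gabe17} deliver, and the degenerate possibility $\Theta(J)=0$ causes no trouble since $0$ is full in the zero ideal. The substantive content therefore sits in the cited corollary, and the main thing to get right is simply the correct formulation and invocation of that characterisation.
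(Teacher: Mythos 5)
Your proposal is correct and follows essentially the same route as the paper: separability gives a full element of $J$, hence (via \cite[Corollary~2.3]{Gabe17}) an exhausting sequence of compactly contained ideals, which the $\Cu$-morphism $\Theta$ transports because it preserves suprema and compact containment, and the converse direction of the same corollary then yields a full element of $\Theta(J)$. The only cosmetic discrepancy is that the paper records the characterisation with \emph{consecutive} compact containments $J_{k_n}\Subset J_{k_{n+1}}$ rather than $J_n\Subset J$; this is harmless here, since your explicit witnesses $J_n=\overline{A(a-1/n)_+A}$ in fact satisfy $J_n\Subset J_{n+1}$ as well (by the same Cuntz-comparison estimate, as $(a-1/n)_+=\bigl((a-\tfrac{1}{n+1})_+-(\tfrac1n-\tfrac{1}{n+1})\bigr)_+$), and $\Theta$ transports that relation verbatim.
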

\begin{proof}
Since $A$ is separable, $J$ has a full element.  So \cite[Corollary~2.3]{Gabe17} gives
$J=\overline{\bigcup_{n=1}^\infty J_n}$ for some sequence of ideals $J_n \in \mathcal{I}(A)$ such
that $J_n \Subset J_{n+1}$ for all $n$. As $\Theta$ preserves suprema and compact containment,
$\Theta(J)=\overline{\bigcup_{n=1}^\infty\Theta(J_n)}$, and $\Theta(J_n)\Subset\Theta(J_{n+1})$
for all $n$. Hence $\Theta(J)$ has a full element by \cite[Corollary~2.3]{Gabe17}.
\end{proof}

It is well-known that the ideal lattice of a separable $\Cstar$-algebra is countably generated.
The next lemma provides a slight strengthening of this statement.

\begin{lemma}\label{l:basis}
Let $A$ be a separable $\Cstar$-algebra. Then there is a sequence $(J_n)_{n=1}^\infty$ in
$\mathcal I(A)$, such that for any $J\in \mathcal I(A)$ there is a strictly increasing sequence
$(k_n)^\infty_{n=1}$ in $\mathbb N$, such that $J_{k_n} \Subset J_{k_{n+1}}$ for all $n$, and
$\overline{\bigcup_{n=1}^\infty J_{k_n}} = J$.
\end{lemma}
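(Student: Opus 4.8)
The plan is to exploit separability of $A$ to get a countable dense-enough supply of ideals, and then, for each target ideal $J$, to extract from this supply a subsequence that approximates $J$ from below in the compact-containment order. The key input is the characterisation (from \cite[Corollary~2.3]{Gabe17}, already used in Lemma~\ref{lem:fullelement}) that an ideal has a full element if and only if it can be written as the closure of an increasing union $\overline{\bigcup_n K_n}$ with $K_n \Subset K_{n+1}$; since every ideal of a separable $\Cstar$-algebra has a full element, every ideal admits such a presentation.

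First I would fix a countable dense subset $\{a_1, a_2, \dots\}$ of $A$ and consider the countable collection of principal ideals $\overline{Aa_mA}$. The natural candidate for the sequence $(J_n)_{n=1}^\infty$ is an enumeration of all \emph{finite sums} $\overline{Aa_{m_1}A + \cdots + Aa_{m_k}A}$ of principal ideals generated by elements of the dense set; this collection is countable, so we may enumerate it as $(J_n)_{n=1}^\infty$. The point of closing under finite sums is that any ideal $J$ is the closed span of the principal ideals it contains, and by density it is already the supremum of those $\overline{Aa_mA}$ with $a_m \in J$ (one approximates a full/strictly positive element of $J$ by elements of the dense set and checks the generated principal ideals exhaust $J$). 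Thus $J = \sup\{J_n : J_n \subseteq J\}$, and the relevant $J_n$ form an upward-directed family (directedness is exactly what closure under finite sums buys us).

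Next, given $J$, I would build the subsequence $(k_n)$ using the full-element presentation $J = \overline{\bigcup_n K_n}$ with $K_n \Subset K_{n+1}$. Since $J$ is the supremum of the directed family $\{J_n : J_n \subseteq J\}$ and $K_n \Subset J = \sup_m\{J_m : J_m \subseteq J\}$, the definition of compact containment (together with directedness, so that finitely many $J_m$ already sit inside a single larger $J_{m'} \subseteq J$) yields, for each $n$, an index $m(n)$ with $J_{m(n)} \subseteq J$ and $K_n \subseteq J_{m(n)}$. I would then interleave: choose indices $k_1 < k_2 < \cdots$ so that $J_{k_n}$ sits between consecutive $K$'s, i.e. arrange $K_n \subseteq J_{k_n} \subseteq K_{n+1}$ (possible because $J_{m(n)} \subseteq J = \overline{\bigcup_\ell K_\ell}$ and $J_{m(n)}$, being a finite sum of principal ideals contained in $J$, is itself compactly contained in $J$, hence sits inside some $K_\ell$). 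From $K_n \subseteq J_{k_n} \subseteq K_{n+1} \subseteq J_{k_{n+1}}$ and $K_{n+1} \Subset K_{n+2}$ one reads off both $J_{k_n} \Subset J_{k_{n+1}}$ and $\overline{\bigcup_n J_{k_n}} = \overline{\bigcup_n K_n} = J$.

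The main obstacle I expect is the bookkeeping in the interleaving step: one must verify the compact-containment relation $J_{k_n} \Subset J_{k_{n+1}}$ rigorously, and this requires knowing that each $J_n$ lying in $J$ is in fact compactly contained in some $K_\ell$. The cleanest way to secure this is to observe that each $J_n$ is a \emph{finite} sum of principal ideals $\overline{Aa_mA}$, and that a principal ideal $\overline{AaA}$ with $a \in J$ is compactly contained in $J$ precisely because $a$ lies in the open increasing union $\bigcup_\ell K_\ell$ (in the norm sense, $a$ is approximated by elements of some $K_\ell$, forcing $\overline{AaA} \subseteq K_{\ell'}$ for slightly larger $\ell'$ via a standard functional-calculus argument on $a^*a$). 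Finite sums of such compactly-contained ideals remain compactly contained by directedness of the $K_\ell$. Once this lemma is in hand the interleaving is routine, and I would take care to state the $K_n \subseteq J_{k_n} \subseteq K_{n+1}$ sandwich explicitly so that the two required conclusions follow immediately.
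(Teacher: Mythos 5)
Your overall skeleton---a countable upwards-directed family of ideals, the presentation $J = \overline{\bigcup_n K_n}$ with $K_n \Subset K_{n+1}$ supplied by \cite[Corollary~2.3]{Gabe17}, and an interleaving argument---is exactly the paper's, but your construction of the countable family has a genuine gap: finite sums of principal ideals $\overline{Aa_mA}$, with $(a_m)$ a norm-dense sequence in $A$, need \emph{not} form a basis for $\mathcal I(A)$. Norm density does not interact well with ideal membership: an $a_m$ that is merely close to $J$ typically satisfies $\overline{Aa_mA} \not\subseteq J$, and a dense sequence may contain no non-zero element of $J$ at all. Concretely, in $A = C_0(\mathbb R)$ with $J = \{f : f|_{[0,\infty)} = 0\}$, perturb any countable dense set by adding to its $n$-th element a small bump supported in $(n,n+1)$; the result is still dense, but no element lies in $J$, so with your candidate family one gets $\sup\{J_n : J_n \subseteq J\} = 0 \neq J$, and the first step of your argument fails.

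Your repair attempt in the final paragraph breaks down for the same ``approximately in'' versus ``in'' reason. It is false that $a \in J$ forces $\overline{AaA} \Subset J$, or even $\overline{AaA} \subseteq K_\ell$ for some $\ell$: take $A = J = C_0((0,1])$, $a = \id_{(0,1]}$, $K_\ell = C_0((1/\ell,1])$. Then $\overline{AaA} = J$, while $J \not\Subset J$, since $J \subseteq \overline{\sum_\ell K_\ell}$ but $J$ is contained in no finite sum of the $K_\ell$; and $a \notin K_\ell$ for every $\ell$. The functional-calculus argument you invoke only yields $(a-\epsilon)_+ \in K_\ell$, never $a \in K_{\ell'}$. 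Both gaps close simultaneously if you build the family from cutdowns rather than the elements themselves: take $(a_m)$ dense in $A_+$ and use the ideals $\overline{A(a_m - q)_+A}$ for rational $q > 0$, which do form a countable basis---in substance this is \cite[Corollary~4.3.4]{P:Book}, which the paper simply cites before closing under finite sums to get upwards directedness. One further small point: to make the indices $k_1 < k_2 < \cdots$ \emph{strictly} increasing, the paper enumerates the basis so that each ideal appears infinitely often; in your enumeration each candidate ideal occurs at a single index, and the sandwich $K_n \subseteq J_{k_n} \subseteq K_{n+1}$ may then force indices out of order.
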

\begin{proof}
As $A$ is separable, \cite[Corollary~4.3.4]{P:Book} shows that $\mathcal I(A)$ has a countable
basis $\mathcal B \subseteq \mathcal I(A)$; that is, for every $I\in\mathcal I(A)$, there exists
$\mathcal B_I\subseteq \mathcal B$ such that $I=\sup\mathcal B_I$. Replacing $\mathcal B$ with
$\{\sum_{I \in \mathcal S} I : \emptyset \not= \mathcal S \subseteq \mathcal B\text{ is
finite}\}$, we may assume that $\mathcal B$ is upwards directed.  Let $(J_n)_{n=1}^\infty$ be a
sequence in $\mathcal B$ in which each $I \in \mathcal B$ appears infinitely often; that is, $\{n
\in \mathbb N : J_n = I\}$ is infinite for each $I\in \mathcal B$.

Fix $J \in \mathcal I(A)$. As $A$ is separable, $J$ contains a full element, so by
\cite[Corollary~2.3]{Gabe17} there is a sequence $I_1 \Subset I_2 \Subset \dots$ in $\mathcal
I(A)$ such that $J = \overline{\bigcup_{n=1}^\infty I_n}$. As $\mathcal B$ is an upwards directed
basis, we can use the compact containment relation to find $M_1\in\mathcal B$ such that $I_{2}
\subseteq M_1 \subseteq I_{3}$. In a similar manner, for each $n$ we can find $M_n\in\mathcal B$
such that $I_{2n} \subseteq M_n \subseteq I_{2n+1}$ for each $n\in\mathbb N$. Since each element
in $\mathcal B$ appears infinitely often in $(J_n)_{n=1}^\infty$, we can find a sequence $k_1 <
k_2 < \dots$ such that $M_n=J_{k_n}$. This together with the compact containment gives
$I_2\subseteq J_{k_1}\subseteq I_3\Subset I_4\subseteq J_{k_2} \subseteq I_5\Subset \dots$,
consequently $J_{k_1}\Subset J_{k_2}\Subset\dots$, and $J = \overline{\bigcup_{n=1}^\infty
J_{k_n}}$.
\end{proof}

The previous lemma allows us to approximate the ideal lattice of a separable $\Cstar$-algebra by
an increasing sequence of finite lattices.  The definition and lemma below collect some notation
and an easy fact regarding these finite lattices.

\begin{definition}\label{DefUPD}
Let $\mathcal I$ be a finite lattice. Given $I\in\mathcal I$, we call $J\in\mathcal I$ a
\emph{predecessor} of $I$ if $J<I$ and $\{K\in\mathcal I : J\leq K\leq I\} = \{J, I\}$.  Note that an element
$J\in\mathcal I$ is the unique predecessor of $I\in\mathcal I$ if $\{K \in \mathcal I : K < I\} =
\{K \in \mathcal I : K \le J\}$. We write $\mathcal{UP}(\mathcal I)$ for the set of elements in
$\mathcal I$ that have a unique predecessor. For $I \in \mathcal{UP} (\mathcal I)$, we write $P(I)$
for the unique predecessor of $I$.
\end{definition}

For the next lemma, note that every finite lattice $\mathcal I$ has a minimum element $
\inf_{I \in \mathcal I} I$, which is by convention the supremum in $\mathcal I$ of the empty set.

\begin{lemma}\label{lem:unionpredecessors}
	Let $\mathcal I$ be a finite lattice. Then any $I\in\mathcal I$ satisfies
	\begin{equation}\label{UnionPredecessors.1}
	I = \sup_{\substack{J \in \mathcal{UP}(\mathcal I) \\ J \leq I}} J.
	\end{equation}
\end{lemma}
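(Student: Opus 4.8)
The plan is to prove the identity by \emph{strong induction} on the element $I$ with respect to the partial order on the finite lattice $\mathcal I$ (using that every strictly descending chain in a finite poset terminates). Write $S_I := \{J \in \mathcal P(\mathcal I) : J \leq I\}$ and $L_I := \sup S_I$ for the right-hand side of \eqref{UnionPredecessors.1}. One inequality is immediate: every $J \in S_I$ satisfies $J \leq I$, so $I$ is an upper bound for $S_I$ and hence $L_I \leq I$. The whole content of the lemma is the reverse inequality $I \leq L_I$, which I would establish by induction, assuming $L_{I'} = I'$ for every $I' < I$.

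If $I$ already has a unique predecessor, i.e.\ $I \in \mathcal P(\mathcal I)$, then $I$ itself lies in $S_I$, whence $L_I \geq I$ and we are done with no appeal to the inductive hypothesis. So the real work is the case $I \notin \mathcal P(\mathcal I)$. Let $\mathcal M$ denote the set of predecessors of $I$, equivalently the maximal elements of $\{K \in \mathcal I : K < I\}$. In a finite poset every element $< I$ lies below some element of $\mathcal M$, so $\{K : K < I\} = \bigcup_{M \in \mathcal M}\{K : K \leq M\}$, and therefore $\sup\{K : K < I\} = \sup \mathcal M$, where as usual $\sup\emptyset$ denotes the minimum of $\mathcal I$.

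Here is the crux, and what I expect to be the main obstacle: I claim that $I \notin \mathcal P(\mathcal I)$ forces $\sup \mathcal M = I$. Indeed, if $I$ is the minimum of $\mathcal I$ then $\mathcal M = \emptyset$ and $\sup \mathcal M = I$ by convention; otherwise $J^\ast := \sup \mathcal M = \sup\{K : K < I\} \leq I$, and if $J^\ast < I$ then $J^\ast$ would be the \emph{maximum} of $\{K : K < I\}$, making it the unique predecessor of $I$ and placing $I$ in $\mathcal P(\mathcal I)$, a contradiction; hence $J^\ast = I$. (Equivalently, an element fails to lie in $\mathcal P(\mathcal I)$ precisely when it is either minimal or the join of two or more distinct predecessors.) With $I = \sup \mathcal M$ in hand, each $M \in \mathcal M$ satisfies $M < I$, so the inductive hypothesis gives $M = L_M = \sup S_M$, while $M \leq I$ gives $S_M \subseteq S_I$. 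Taking suprema and using associativity of $\sup$,
\begin{equation}
I = \sup \mathcal M = \sup_{M \in \mathcal M} \sup S_M = \sup\Big(\bigcup_{M \in \mathcal M} S_M\Big) \leq \sup S_I = L_I,
\end{equation}
which combined with $L_I \leq I$ closes the induction. The only delicate point is isolating the equivalence between ``$I$ has no unique predecessor'' and ``$I$ is the join of its predecessors''; once that is settled, the remainder is a routine application of the inductive hypothesis together with the monotonicity $S_M \subseteq S_I$.
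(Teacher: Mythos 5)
Your proof is correct and takes essentially the same route as the paper's: induction over the finite lattice, with the case $I \in \mathcal P(\mathcal I)$ immediate because $I$ itself appears in the supremum, and the remaining case reduced to the fact that an element outside $\mathcal P(\mathcal I)$ is the join of its predecessors, to which the inductive hypothesis is then applied. The only difference is that you prove that key fact in detail (if $\sup \mathcal M < I$ it would be the maximum of $\{K : K < I\}$ and hence a unique predecessor), whereas the paper asserts it without justification.
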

\begin{proof}
	As the empty supremum defines the minimal element, (\ref{UnionPredecessors.1}) holds for the minimal element of $\mathcal I$.  Fix $I\in\mathcal I$ and suppose inductively that
(\ref{UnionPredecessors.1}) holds for all $J\in\mathcal I$ with $J\leq I$ and $J\neq I$.  If $I$
has a unique predecessor then (\ref{UnionPredecessors.1}) holds vacuously.  Otherwise $I$ is the
join of its predecessors, each of which satisfies (\ref{UnionPredecessors.1}), so $I$ satisfies
(\ref{UnionPredecessors.1}).
\end{proof}

For nested ideals $J \subseteq I$ in a $\Cstar$-algebra $A$, let
\begin{equation}\label{eq:M_A(I,J)}
M_A(I,J) \coloneqq  \{ x \in A : x I \cup Ix \subseteq J \}.
\end{equation}
This is readily seen to be an ideal in $A$.  The notation is chosen, as $M_A(I,J)$ consists of those elements of $A$ which multiply $I$ into $J$.

The next lemma combines the above construction of ideals with the unique predecessors.

\begin{lemma}\label{l:JinKPK}
Let $A$ be a $\Cstar$-algebra, and let $\mathcal I \subseteq \mathcal I(A)$ be a finite
sublattice. Suppose that $J\in \mathcal I$ and $K \in \mathcal{UP}(\mathcal I)$ satisfy $K \not
\subseteq J$. Then $J \subseteq M_A(K,P(K))$.
\end{lemma}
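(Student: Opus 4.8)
The plan is to locate the meet $J \cap K$ inside the finite lattice $\mathcal I$, to use the hypothesis $K \not\subseteq J$ to push this meet strictly below $K$, and then to invoke the defining property of the unique predecessor $P(K)$ to trap $J \cap K$ inside $P(K)$. Once that containment is in hand, membership of $J$ in $K^{P(K)}$ is a one-line ideal computation. Note first that the notation $K^{P(K)}$ is legitimate: since $P(K)$ is a predecessor of $K$ we have $P(K) < K$, hence $P(K) \subseteq K$, so $K^{P(K)}$ is defined by \eqref{eq:I^J} with $I = K$ and the nested ideal $P(K)$.

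First I would record that, because $\mathcal I$ is a sublattice of $\mathcal I(A)$ and the meet in $\mathcal I(A)$ is realised as intersection, the ideal $J \cap K$ lies in $\mathcal I$. Since $J \cap K \subseteq K$ always holds, and since $J \cap K = K$ would force $K \subseteq J$ contrary to our hypothesis, we conclude $J \cap K \subsetneq K$, i.e.\ $J \cap K < K$ in $\mathcal I$. Next I would appeal to Definition~\ref{DefUPD}: the unique predecessor $P(K)$ is characterised by the identity $\{L \in \mathcal I : L < K\} = \{L \in \mathcal I : L \le P(K)\}$. Applying this to the element $J \cap K$, which belongs to $\mathcal I$ and satisfies $J \cap K < K$, immediately yields $J \cap K \subseteq P(K)$.

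Finally, to verify $J \subseteq K^{P(K)}$ it suffices, by \eqref{eq:I^J}, to check that $xK \cup Kx \subseteq P(K)$ for each $x \in J$. But for $x \in J$ and $y \in K$, both products $xy$ and $yx$ lie in $J$ (as $J$ is an ideal) and in $K$ (as $K$ is an ideal), hence in $J \cap K \subseteq P(K)$. Thus every $x \in J$ belongs to $K^{P(K)}$, which is exactly the claim. The argument is elementary and I do not expect a genuine obstacle; the only points demanding a little care are confirming that the meet $J \cap K$ is the intersection and lands in the sublattice $\mathcal I$, so that the unique-predecessor characterisation can legitimately be applied to it, together with the strictness $J \cap K < K$ extracted from $K \not\subseteq J$.
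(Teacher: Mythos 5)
Your proof is correct and takes essentially the same route as the paper's: extract $J \cap K \subsetneq K$ from $K \not\subseteq J$, use the unique-predecessor property to get $J \cap K \subseteq P(K)$, and conclude from the fact that products of elements of $J$ and $K$ land in $J \cap K$. The only cosmetic difference is that you check the membership $J \subseteq K^{P(K)}$ elementwise, where the paper phrases the same computation as $J\cap K = JK = KJ \subseteq P(K)$.
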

\begin{proof}
Since $K \not \subseteq J$, we have $J \cap K \subsetneq K$. Since $P(K)$ is the unique
predecessor of $K$ in $\mathcal I$, we have $J\cap K \subseteq P(K)$. Since $J \cap K = KJ = JK$,
we deduce that $J \subseteq M_A(K,P(K))$.
\end{proof}

The following `lower bound' lemma plays a key role in our proof of Sublemma~\ref{sl:2} below, which we use to  control our `upward maps'. In the following proof and elsewhere,
given a positive element $a$ of a $\Cstar$-algebra and a positive constant $\lambda$, we write $(a
- \lambda)_+$ for the element obtained by applying functional calculus for $a$ to the function $t
\mapsto \max\{t - \lambda, 0\}$.

\begin{lemma}\label{l:alowerbound}
Let $A$ be a $\Cstar$-algebra and let $J\Subset I$ be a compact containment of ideals in $A$.  For
any full, positive element $a\in I$, there exists a constant $\lambda>0$ such that, for any ideals
$K_1 \subsetneq K_2\subseteq J$, we have
\begin{equation}
\|a + M_A(K_2,K_1)\|_{A/M_A(K_2,K_1)}\geq\lambda.
\end{equation}
\end{lemma}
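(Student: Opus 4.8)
The plan is to reduce the estimate to a statement about irreducible representations, and then to extract the uniform constant $\lambda$ from the compact containment $J\Subset I$. Recall that for any ideal $L\lhd A$ and any $z\in A$ one has $\|z+L\|_{A/L}=\sup\{\|\sigma(z)\|:\sigma\text{ an irreducible representation of }A\text{ with }L\subseteq\ker\sigma\}$, since the $\Cstar$-norm on $A/L$ is the supremum of its irreducible representations, and these are precisely the irreducible representations of $A$ annihilating $L$. Applying this with $L=K_2^{K_1}$ and $z=a$, it suffices to exhibit, uniformly in the pair $K_1\subsetneq K_2\subseteq J$, a single irreducible representation $\sigma$ with $K_2^{K_1}\subseteq\ker\sigma$ and $\|\sigma(a)\|\geq\lambda$.

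First I would fix the constant using $J\Subset I$. Since $a$ is full and positive in $I$, the ideals $I_n\coloneqq\overline{A(a-1/n)_+A}$ increase with $\sup_n I_n=\overline{AaA}=I$ (using that $(a-1/n)_+\to a$ and that each $(a-1/n)_+$ lies in $\overline{AaA}$). Feeding the family $(I_n)_n$ into the definition of $J\Subset I$, and using that the $I_n$ are nested, produces an $N$ with $J\subseteq I_N$; set $\lambda\coloneqq 1/N>0$, so that $J\subseteq\overline{A(a-\lambda)_+A}$. Crucially, this $\lambda$ depends only on $a$, $J$ and $I$, not on $K_1,K_2$. The quantitative payoff is the following spectral bound: if $\sigma$ is an irreducible representation with $\sigma(J)\neq0$, then $J\not\subseteq\ker\sigma$, so $\overline{A(a-\lambda)_+A}\not\subseteq\ker\sigma$; as $\ker\sigma$ is prime this forces $(a-\lambda)_+\notin\ker\sigma$, and since $\sigma((a-\lambda)_+)=(\sigma(a)-\lambda)_+\neq0$ the positive operator $\sigma(a)$ must have spectrum above $\lambda$, whence $\|\sigma(a)\|>\lambda$. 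I expect this extraction of a uniform $\lambda$ to be the main obstacle: it is exactly where compact containment (rather than mere inclusion $J\subseteq I$) is indispensable, since without it one obtains only $\|\sigma(a)\|>0$, with no control independent of the representation.

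Finally I would produce the representation $\sigma$. Because $K_1\subsetneq K_2$ and $\overline{K_2^2}=K_2$, we have $K_2^2\not\subseteq K_1$, so by the definition of $K_2^{K_1}$ the ideal $K_2$ is not contained in $K_2^{K_1}$. Hence the image of $K_2$ in $A/K_2^{K_1}$ is a non-zero ideal, and as a non-zero ideal cannot lie in every primitive ideal, there is a primitive ideal $P\supseteq K_2^{K_1}$ with $K_2\not\subseteq P$. Taking $\sigma$ irreducible with $\ker\sigma=P$, we get $K_2^{K_1}\subseteq\ker\sigma$ while $\sigma(K_2)\neq0$, so in particular $\sigma(J)\neq0$ since $K_2\subseteq J$. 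Combining with the spectral bound from the previous paragraph yields $\|a+K_2^{K_1}\|_{A/K_2^{K_1}}\geq\|\sigma(a)\|>\lambda$, as required. The remaining verifications—that the $(a-1/n)_+$ lie in and generate the advertised ideals, the primeness of $\ker\sigma$, and the commutation of $\sigma$ with continuous functional calculus—are routine.
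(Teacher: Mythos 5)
Your proof is correct, and its first half is exactly the paper's: both arguments extract the uniform constant from compact containment by writing $I=\overline{\bigcup_{\lambda>0}A(a-\lambda)_+A}$ (you use the countable cofinal family $I_n=\overline{A(a-1/n)_+A}$, which is equivalent) to obtain $\lambda>0$ with $J\subseteq\overline{A(a-\lambda)_+A}$, depending only on $a$, $J$, $I$. Where you diverge is the finishing step. The paper stays entirely inside the ideal lattice: it supposes $(a-\lambda)_+\in K_2^{K_1}$, deduces $K_2\subseteq\overline{A(a-\lambda)_+A}\subseteq K_2^{K_1}$, and then uses $K_2\subseteq K_2\cap K_2^{K_1}=K_2\cdot K_2^{K_1}\subseteq K_1$ to contradict $K_1\subsetneq K_2$, so that $(a-\lambda)_+\notin K_2^{K_1}$ and hence $\|a+K_2^{K_1}\|\geq\lambda$ by functional calculus in the quotient. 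You instead pass through representation theory: from $\overline{K_2^2}=K_2\not\subseteq K_1$ you get $K_2\not\subseteq K_2^{K_1}$ (the same algebraic fact the paper uses, in contrapositive form), choose a primitive ideal $P\supseteq K_2^{K_1}$ with $K_2\not\subseteq P$, and read off $\|\sigma(a)\|>\lambda$ from $\sigma((a-\lambda)_+)=(\sigma(a)-\lambda)_+\neq 0$. This is a valid, if heavier, route; it even yields the strict inequality. Two minor remarks: your appeal to primeness of $\ker\sigma$ is unnecessary, since $\overline{A(a-\lambda)_+A}\not\subseteq\ker\sigma$ already forces $(a-\lambda)_+\notin\ker\sigma$ for any ideal $\ker\sigma$; and the paper's two-line contradiction shows that the existence of an irreducible representation separating $K_2$ from $K_2^{K_1}$ can be bypassed entirely, which is the main economy of its approach.
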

\begin{proof}
As $I=\overline{\bigcup_{\lambda>0}A(a-\lambda)_+A}$, the compact containment $J\Subset I$
provides $\lambda>0$ such that  $J \subseteq \overline{A (a-\lambda)_+A}$.

Fix ideals $K_1 \subsetneq K_2 \subseteq J$. Suppose for contradiction that $(a-\lambda)_+ \in
M_A(K_2,K_1)$. Then
\begin{equation}
K_2 \subseteq \overline{A (a-\lambda)_+A } \subseteq M_A(K_2,K_1).
\end{equation}
Then $K_2\subseteq K_2\cap M_A(K_2,K_1)=K_2\cdot M_A(K_2,K_1)\subseteq K_1$, by definition of $M_A(K_2,K_1)$,
contradicting $K_1 \subsetneq K_2$. Hence $(a-\lambda)_+ \notin M_A(K_2,K_1)$, and so
\begin{equation}
\| a + M_A(K_2,K_1) \|_{A/M_A(K_2,K_1)} \geq \lambda. \qedhere
\end{equation}
\end{proof}

Before turning to our main construction, we record a standard fact regarding the behaviour of
ideals in $C_0((0,1])$ under the multiplication map.

\begin{lemma}\label{l:mcomm}
Let $m \colon [0,1] \times [0,1] \to [0,1]$ be the multiplication map, and $m^\ast \colon
C_0((0,1]) \to C_0((0,1]) \otimes C_0((0,1])$ be the induced $^*$-homomorphism. For every
non-zero, positive element $h\in C_0((0,1])$ there are positive, non-zero contractions $f,g \in
C_0((0,1])$ such that $g(1) = 1$ and $f \otimes g \leq m^\ast(h)$.

In particular, for any non-zero $I \in \mathcal I(C_0((0,1]))$, the ideal $\mathcal I(m^\ast)(I)$
contains a non-zero, positive contraction of the form $f \otimes g$, for which $g(1) = 1$.
\end{lemma}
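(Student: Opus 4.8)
The plan is to pass to the function-algebra picture. Identify $C_0((0,1]) \otimes C_0((0,1])$ with $C_0\big((0,1] \times (0,1]\big)$, so that $f \otimes g$ becomes the function $(s,t) \mapsto f(s)g(t)$ while $m^\ast(h)$ becomes $(s,t) \mapsto h(st)$ (this lands in $C_0$ because $h$ vanishes at $0$ and $st \to 0$ as $s \to 0$ or $t \to 0$). The desired inequality $f \otimes g \le m^\ast(h)$ then reads $f(s)g(t) \le h(st)$ for all $s,t \in (0,1]$, and I would arrange it by confining the supports of $f$ and $g$ to a region on which $h(st)$ is bounded below.

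Concretely, I would first use that $h$ is non-zero and positive to pick $x_0$ with $h(x_0) > 0$, and then by continuity a non-degenerate closed interval $[a,b] \subseteq (0,1]$ together with a constant $\delta \in (0,1]$ such that $h \ge \delta$ on $[a,b]$. The heart of the argument is a short piece of interval arithmetic: I want $\operatorname{supp} f \subseteq [\alpha, b]$ and $\operatorname{supp} g \subseteq [c,1]$ chosen so that $s \in [\alpha,b]$ and $t \in [c,1]$ force $st \in [a,b]$. Since the range of $st$ over this rectangle is exactly $[\alpha c, b]$, it suffices to have $\alpha c \ge a$. Taking $\alpha \coloneqq (a+b)/2 \in (a,b)$ and $c \coloneqq a/\alpha$ gives $\alpha c = a$ and $c < 1$ (as $a < b$), so both constraints hold while leaving room for $g$ to satisfy $g(1) = 1$.

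With these supports fixed, I would construct $g$ to be a positive contraction supported in $[c,1]$ with $g(1) = 1$ (for instance a ramp rising from $0$ at $c$ to $1$ at $1$), and $f$ to be a non-zero positive bump supported in $[\alpha, b]$ with $\|f\|_\infty \le \delta$. Both are non-zero contractions vanishing at $0$, hence lie in $C_0((0,1])$. To verify $f(s)g(t) \le h(st)$: off $[\alpha,b] \times [c,1]$ the left-hand side vanishes and $h \ge 0$; on this rectangle $st \in [a,b]$, so $h(st) \ge \delta \ge \|f\|_\infty \|g\|_\infty \ge f(s)g(t)$. This establishes the first assertion.

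For the ``in particular'' clause, I would take any non-zero positive $h \in I$ and apply the first part to obtain non-zero $f, g$ with $0 \le f \otimes g \le m^\ast(h)$ and $g(1) = 1$. Since $m^\ast(h) \in m^\ast(I) \subseteq \mathcal I(m^\ast)(I)$ and ideals of $\Cstar$-algebras are hereditary, the domination $f \otimes g \le m^\ast(h)$ forces $f \otimes g \in \mathcal I(m^\ast)(I)$, giving the required element. The only genuinely delicate step is the support geometry in the second paragraph, in particular ensuring the constraint $g(1) = 1$ is compatible with forcing $st$ into $[a,b]$; this is exactly why the rectangle must be anchored at the top edge $t = 1$, making $st \le b$ the binding upper constraint.
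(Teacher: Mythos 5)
Your proof is correct and follows essentially the same route as the paper: both anchor the rectangle at the top edge $t=1$, take $\|f\|_\infty\leq\delta$ with $\operatorname{supp} f$ in a region where $m$ lands in $\{h\geq\delta\}$, and let $g$ be a ramp with $g(1)=1$, with heredity of ideals giving the ``in particular'' clause. The only cosmetic difference is that you verify the containment $m([\alpha,b]\times[c,1])\subseteq[a,b]$ by explicit interval arithmetic (choosing $\alpha=(a+b)/2$, $c=a/\alpha$), whereas the paper simply invokes continuity of $m$ at $(s,1)$ to extract a suitable rectangle inside $m^{-1}(U)$.
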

\begin{proof}
We may find $\delta >0$ and a non-empty, open subset $U \subseteq (0,1]$ such that $h(s) \geq
\delta$ for $s \in U$. Fix $s\in U \setminus \{1\}$. As $(s,1) \in m^{-1}(U)$ we may pick
$\epsilon > 0$ such that $[s-\epsilon, s+\epsilon] \times [1-\epsilon, 1] \subseteq m^{-1}(U)$.
Let $f, g \in C_0((0,1])$ be non-zero, positive contractions supported in $[s-\epsilon,
s+\epsilon]$ and $[1-\epsilon, 1]$ respectively, and such that $\| f\| \leq \delta$ and $g (1) =
1$. Then $f\otimes g \leq m^\ast(h)$.
\end{proof}

%%%%%%%%%%%%%%%%%%%%%%%%%%%%%%%%%%%%%%%%%%%%%%%%%%%
%%%%%%%%%%%%%%%%%%%%%%%%%%%%%%%%%%%%%%%%%%%%%%%%%%%%%

%%%%%%%%%%%%%%%%%%%%%%%%%%%%%%%%%%%%%

%%%%%%%%%%%%%%%%%%%%%%%%%%%%%%%%%%%%%%%%%%%%%
%%%%%%%%%%%%%%%%%%%%%%%%%%%%%%%%%%%%%%%%%%%%%

\section{Main technical construction}\label{S3}

In this section we provide the main technical construction of the paper, obtaining
$^*$-homomorphisms $C_0((0,1])\otimes A \to B_{\seq}$ induced by sequences of maps of nuclear
dimension zero with specified behaviour on ideals.  Our first lemma is the main tool for producing
the `downward' maps, using Voiculescu's quasidiagonality of cones \cite{Voi91} in the spirit of
earlier nuclear dimension computations \cite{MS:DMJ,SWW:Invent,BBSTWW}.

\begin{lemma}\label{l:conetomatrix}
Let $A$ be a separable $\Cstar$-algebra, and let $J \subsetneq I \subseteq A$ be ideals. Then
there exist a sequence $(M_{k_n})_{n=1}^\infty$ of matrix algebras and a sequence
$(\psi_n)^\infty_{n=1}$ of cpc maps $\psi_n \colon C_0((0,1]) \otimes A \to M_{k_n}$ such that the
induced map
\begin{equation}
\psi \coloneqq (\psi_n)_{n=1}^\infty  \colon C_0((0,1])\otimes A\rightarrow\prod_n M_{k_n}/\bigoplus_n M_{k_n}
\end{equation}
is a $^*$-homomorphism such that for all $f\in C_0((0,1])$ and $a\in A$,
\begin{equation}\label{l:conetomatrix.1}
|f(1)| \, \| a + M_A(I,J)\|_{A/M_A(I,J)} \leq \|\psi(f \otimes a)\| \leq \| f\| \, \|a + M_A(I,J)\|_{A/M_A(I,J)}.
\end{equation}
If $A/M_A(I,J)$ is quasidiagonal we can choose the $\psi_n$ so that $\psi_n(C_0((0,1)) \otimes A) = 0$
for all $n\in \mathbb N$.
\end{lemma}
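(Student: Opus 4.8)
The plan is to reduce everything to the quotient $A/I^J$ and invoke Voiculescu's quasidiagonality of cones. Write $q\colon A \to A/I^J$ for the quotient map, and recall the standard reformulation (see \cite{BO:Book}) that a separable $\Cstar$-algebra $C$ is quasidiagonal precisely when there is a sequence of cpc maps $\tau_n\colon C \to M_{k_n}$ into matrix algebras that is asymptotically multiplicative and asymptotically isometric; equivalently, the induced map $\tau\colon C \to \prod_n M_{k_n}/\bigoplus_n M_{k_n}$ is an isometric $^*$-homomorphism.

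First I would establish the two displayed inequalities in general. Since $A/I^J$ is separable, so is its cone $C_0((0,1])\otimes (A/I^J)$, and by Voiculescu's theorem \cite{Voi91} this cone is quasidiagonal. Let $\tau_n\colon C_0((0,1])\otimes(A/I^J)\to M_{k_n}$ be cpc maps witnessing this, with induced isometric $^*$-homomorphism $\tau$, and set $\psi_n \coloneqq \tau_n\circ(\id_{C_0((0,1])}\otimes q)$. Each $\psi_n$ is cpc, being a composition of the surjective $^*$-homomorphism $\id\otimes q$ with the cpc map $\tau_n$, and the induced map is $\psi = \tau\circ(\id\otimes q)$, a composition of $^*$-homomorphisms and hence itself a $^*$-homomorphism. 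Since $\tau$ is isometric and $\|f\otimes b\| = \|f\|\,\|b\|$ in $C_0((0,1])\otimes B$ for any $B$, one computes $\|\psi(f\otimes a)\| = \|(\id\otimes q)(f\otimes a)\| = \|f\otimes q(a)\| = \|f\|\,\|a+I^J\|_{A/I^J}$. This gives the upper bound with equality, and the lower bound follows since $|f(1)|\le\|f\|$.

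For the final clause, assume $A/I^J$ is quasidiagonal; here I would use a more economical model that sees only the value at the endpoint $1$. Evaluation at $1$ is a character $\mathrm{ev}_1\colon C_0((0,1])\to\mathbb C$, so $\mathrm{ev}_1\otimes q\colon C_0((0,1])\otimes A\to A/I^J$, $f\otimes a \mapsto f(1)q(a)$, is a $^*$-homomorphism vanishing on $C_0((0,1))\otimes A$. Choosing cpc maps $\sigma_n\colon A/I^J\to M_{k_n}$ witnessing quasidiagonality of $A/I^J$, with induced isometric $^*$-homomorphism $\sigma$, I would set $\psi_n\coloneqq \sigma_n\circ(\mathrm{ev}_1\otimes q)$. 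Then $\psi_n(C_0((0,1))\otimes A)=0$ because $f(1)=0$ for $f\in C_0((0,1))$, the induced map $\psi = \sigma\circ(\mathrm{ev}_1\otimes q)$ is again a $^*$-homomorphism, and $\|\psi(f\otimes a)\| = |f(1)|\,\|\sigma(q(a))\| = |f(1)|\,\|a+I^J\|_{A/I^J}$. This realises the lower bound as an equality while still respecting the upper bound.

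There is no deep obstacle once the correct viewpoint is fixed; the only thing to get right is the choice of target. The key realisation is that the relevant norm is controlled by $A/I^J$ rather than by $A$, so the correct input to Voiculescu's theorem is the cone \emph{over the quotient} precomposed with $\id\otimes q$, whereas the endpoint-sensitive clause requires switching from the cone over $A/I^J$ to $A/I^J$ itself paired with evaluation at $1$. It is worth noting in passing that $I^J\subsetneq A$ — otherwise $I=\overline{AI}\subseteq J$, contradicting $J\subsetneq I$ — so that $A/I^J\neq 0$ and the statement is not vacuous, though the argument above is unaffected either way.
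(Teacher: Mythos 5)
Your proposal is correct and follows essentially the same route as the paper's proof: Voiculescu's quasidiagonality of the cone over $A/I^J$ precomposed with $\id_{C_0((0,1])}\otimes q$ in the general case, and quasidiagonality of $A/I^J$ itself precomposed with evaluation at $1$ (the paper's $\overline\psi_n\circ q_{I^J}\circ\mathrm{ev}_1$ is your $\sigma_n\circ(\mathrm{ev}_1\otimes q)$) for the final clause. Your observation that $I^J\subsetneq A$ is a harmless addition not needed in the paper's argument.
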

\begin{proof}
By Voiculescu's quasidiagonality of cones \cite{Voi91} there exist integers $k_n$ and cpc maps
\begin{equation}
\widetilde{\psi}_n \colon C_0((0,1]) \otimes (A/M_A(I,J)) \to M_{k_n}
\end{equation}
such that the map $\widetilde \psi \colon C_0((0,1])\otimes A/M_A(I,J) \to \prod_n M_{k_n}/\bigoplus_n
M_{k_n}$ induced by the $\tilde\psi_n$ is an injective $^*$-homomorphism. Let $q_{M_A(I,J)} \colon A \to A/M_A(I,J)$
denote the quotient map, and for each $n \in \mathbb N$, let
\begin{equation}
\psi_n \coloneqq  \widetilde{\psi}_n \circ (\id_{C_0((0,1])} \otimes q_{M_A(I,J)}) \colon C_0((0,1]) \otimes A \to M_{k_n}.
\end{equation}
Let $\psi \colon C_0((0,1]) \otimes A \to \prod_{n=1}^\infty M_{k_n} / \bigoplus_{n=1}^\infty M_{k_n}$ be
the induced map, so $\psi = \widetilde{\psi} \circ (\id_{C_0((0,1])} \otimes q_{M_A(I,J)})$. Then for
$f \in C_0((0,1])$ and $a \in A$, since $\widetilde{\psi}$ is isometric,
\begin{equation}
\begin{split}
\| \psi(f\otimes a) \| &= \| \widetilde \psi(f \otimes q_{M_A(I,J)}(a))\|\\
    &= \| f\| \, \|q_{M_A(I,J)}(a)\| \geq |f(1)| \, \|q_{M_A(I,J)}(a)\|.
\end{split}
\end{equation}

If $A/M_A(I,J)$ is quasidiagonal, then there is a sequence of integers $k_n$ and there are cpc
maps $\overline \psi_n \colon A/M_A(I,J) \to M_{k_n}$ such that the induced map $\overline \psi
\coloneqq (\overline \psi_n)_{n=1}^\infty \colon A/M_A(I,J) \to \prod_n M_{k_n}/\bigoplus_n
M_{k_n}$ is an injective $^*$-homomorphism. Let $\mathrm{ev}_1 \colon C_0((0,1]) \otimes A \to A$
be evaluation at $1$, and for each $n$, let
\begin{equation}
\psi_n \coloneqq  \overline{\psi}_n \circ q_{M_A(I,J)} \circ \mathrm{ev}_1 \colon C_0((0,1]) \otimes A \to M_{k_n},
\end{equation}
so that $\psi_n(C_0((0,1))\otimes A)=0$.  Let $\psi \colon A \to \prod_{n=1}^\infty M_{k_n} /
\bigoplus_{n=1}^\infty M_{k_n}$ be the map induced by $(\psi_n)_{n=1}^\infty$. Then
\begin{equation}
\begin{split}
\| \psi(f \otimes a) \| &= |f(1)| \, \| \overline \psi(q_{M_A(I,J)}(a))\|\\
    &= | f(1) | \, \|q_{M_A(I,J)}(a)\| \leq \| f\|\, \|q_{M_A(I,J)}(a)\|
\end{split}
\end{equation}
for all $a\in A$ and $f\in C_0((0,1])$.
\end{proof}

Recall that if $a,b\in A$ are positive elements, then $a$ is said to be \emph{Cuntz dominated} by
$b$, written $a \precsim b$, if there is a sequence of elements $(z_n)_{n=1}^\infty$ in $A$ such
that $z_n^\ast b z_n \to a$. In general it is not possible to choose $(z_n)_{n=1}^\infty$ to be
bounded without additional assumptions. The next lemma identifies one such assumption.

\begin{lemma}\label{l:Cuntzdompi}
Let $A$ be a $\Cstar$-algebra, and let $a,b\in A_+$. Suppose that for every continuous function $f
\colon [0,\| b \|] \to [0,1]$ with $f(0) = 0$ and $f(\| b \|) \neq 0$, we have  $a\precsim f(b)$.
Then there is a sequence $(z_n)_{n=1}^\infty$ in $A$ such that $z_n^\ast b z_n \to a$ and
\begin{equation}
\| z_n \| = (\| a\| / \| b\|)^{1/2},\quad n\in \mathbb N.
\end{equation}
\end{lemma}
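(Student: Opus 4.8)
The plan is to reduce everything to a norm-controlled approximate version and then rescale. We may assume $a\neq 0$ and $b\neq 0$ (if $a=0$ take $z_n=0$). Write $L:=(\|a\|/\|b\|)^{1/2}$. I would first show that for each $\epsilon\in(0,\|a\|)$ one can produce $z\in A$ with $z^\ast b z=(a-\epsilon)_+$ and $\|z\|\leq L$. Granting this, set $t:=L/\|z\|$; since $\|(a-\epsilon)_+\|=\|a\|-\epsilon\leq\|b\|\,\|z\|^2$ we get $\|z\|\geq((\|a\|-\epsilon)/\|b\|)^{1/2}$, so that $t\in[1,(\|a\|/(\|a\|-\epsilon))^{1/2}]$ and in particular $t\to1$ as $\epsilon\to 0$. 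The rescaled element $z':=tz$ then has norm exactly $L$, while $\|z'^\ast b z'-a\|=\|t^2(a-\epsilon)_+-a\|\leq(t^2-1)\|a\|+\epsilon\to 0$. Applying this with a sequence $\epsilon_n\to0$ produces the required sequence $(z_n)$ with $\|z_n\|=L$ and $z_n^\ast bz_n\to a$.

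The core construction extracts $a$ from the part of $b$ whose spectrum sits near $\|b\|$, so that the implicit ``division by $b$'' costs only about $\|b\|^{-1/2}$. Fix $\epsilon$, choose a small $\eta>0$, and pick $f\in C([0,\|b\|])$ with values in $[0,1]$, $f(0)=0$, $f(\|b\|)=1$, and $f$ supported in $[\|b\|-2\eta,\|b\|]$. The hypothesis applies to this $f$, giving $a\precsim f(b)$; by R{\o}rdam's lemma there is $s\in A$ with $s^\ast f(b)s=(a-\epsilon)_+$ (alternatively one can avoid this and approximate $(a-\epsilon)_+$ directly from the definition of $\precsim$, at the cost of a slightly larger tolerance). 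Let $h(t):=(f(t)/t)^{1/2}$, which is continuous, supported in $[\|b\|-2\eta,\|b\|]$, and satisfies $h(t)^2t=f(t)$; put $z:=h(b)s$. Then $z^\ast bz=s^\ast h(b)bh(b)s=s^\ast f(b)s=(a-\epsilon)_+$ as desired.

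The crux is the norm bound, and this is exactly where the full strength of the hypothesis (that $a\precsim f(b)$ for $f$ supported arbitrarily close to $\|b\|$) is used. Since $h$ is supported in $[\|b\|-2\eta,\|b\|]$, I would choose a continuous $k$ with $\|k\|_\infty\leq(\|b\|-2\eta)^{-1/2}$ and $k(t)t^{1/2}=1$ on the support of $h$, so that $k(b)b^{1/2}z=z$. Then
\[
\|z\|\leq\|k(b)\|\,\|b^{1/2}z\|=(\|b\|-2\eta)^{-1/2}\|z^\ast bz\|^{1/2}=(\|b\|-2\eta)^{-1/2}(\|a\|-\epsilon)^{1/2}.
\]
Choosing $\eta\leq\|b\|\epsilon/(2\|a\|)$ makes the right-hand side at most $L=(\|a\|/\|b\|)^{1/2}$, completing the norm-controlled approximation and hence, via the rescaling of the first paragraph, the proof. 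Had $f$ only been supported on $[\delta,\|b\|]$ for a fixed $\delta$, the same estimate would give the useless bound $\delta^{-1/2}(\|a\|-\epsilon)^{1/2}$; it is the ability to push the support of $f$ up near $\|b\|$ that pins $\|z\|$ down to the optimal value $L$. I expect this localisation-at-the-top step to be the main obstacle, with R{\o}rdam's lemma and the final rescaling being routine.
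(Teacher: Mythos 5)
Your proof is correct, and it isolates exactly the right mechanism: the freedom to push the support of $f$ up against $\|b\|$ is what pins the norm of $z_n$ down to $(\|a\|/\|b\|)^{1/2}$, and all your estimates check out (in particular the choice $\eta \leq \|b\|\epsilon/(2\|a\|)$ does give $(\|a\|-\epsilon)^{1/2}(\|b\|-2\eta)^{-1/2} \leq L$, and the extension $k(t) = \max\{t, \|b\|-2\eta\}^{-1/2}$ realises the required bound). The paper's proof rests on the same insight but executes it differently: after normalising $\|a\| = \|b\| = 1$, it applies the hypothesis to the cap functions $t \mapsto (t-\delta_n)_+$ with $\delta_n = 1 - 1/n$, takes $y_n$ with $y_n^\ast (b-\delta_n)_+ y_n \approx_{1/n} a$ directly from the definition of $\precsim$, sets $z_n = (b-\delta_n)_+^{1/2} y_n / \|(b-\delta_n)_+^{1/2} y_n\|$ (normalising \emph{down} to norm one, rather than scaling \emph{up} to $L$ as you do), and then replaces your functional-calculus division by the approximate-unit trick: a plateau function $f_n$ satisfies $f_n(b) z_n = z_n$ and $\|f_n(b) - f_n(b) b\| \leq 2/n$, whence $z_n^\ast b z_n \approx_{2/n} z_n^\ast z_n \to a$. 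The trade-off is mild but real: your route, via R\o{}rdam's lemma, produces the exact identity $z^\ast b z = (a-\epsilon)_+$ at each stage (a slightly stronger intermediate statement, with the element $h(b)s$, $h = (f/\mathrm{id})^{1/2}$, doing the division exactly), at the cost of importing that lemma; the paper's argument is entirely self-contained, using only the definition of Cuntz subequivalence and tolerating an $O(1/n)$ error that is absorbed in the limit. As you anticipate, the alternative you flag (approximating directly from the definition of $\precsim$ with a slightly larger tolerance) is essentially the paper's proof.
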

\begin{proof}
We may assume that $\| a \| = \| b\| =1$. For each $n\in\mathbb N$, let $\delta_n \coloneqq  1 -
1/n$. Our hypothesis implies that for each $n$ we have $a \preceq (b - \delta_n)_+$, so there
exists $y_n \in A$ such that $\| y_n^\ast (b - \delta_n)_+ y_n - a\| < 1/n$. Let $z_n'\coloneqq
(b-\delta_n)_+^{1/2} y_n$ so that $\|z_n'\|\rightarrow \|a\|^{1/2} = 1$, and let $z_n \coloneqq
z'_n/\|z'_n\|$ for each $n$. Then each $\|z_n\|=1$ and $z_n^*z_n\rightarrow a$.

Let $f_n \colon [0,1] \to [0,1]$ be the continuous function which is $0$ on $[0, \delta_n - 1/n]$,
$1$ on $[\delta_n , 1]$ and affine otherwise. Then $f_n(b) z_n = z_n$, and
\begin{equation}
\|f_n(b) - f_n(b)b\| \leq \max\big\{ \sup_{t\in [\delta_n,1]} (1-t), \sup_{t\in [\delta_n -1/n, \delta_n]} f_n(t) (1-t)\big\} \leq 2/n.
\end{equation}
Hence
\begin{equation}
z_n^\ast b z_n = z_n^\ast f_n(b) b z_n \approx_{2/n} z_n^\ast f_n(b) z_n = z_n^\ast z_n.
\end{equation}
Therefore $z_n^\ast b z_n \to a$.
\end{proof}

We now turn to the construction of the `upwards maps'. Here we need the additional space given by
absorption of the Cuntz algebra $\mathcal O_\infty$ from \cite{C:CMP}. Recall that a
$\Cstar$-algebra $B$ is $\mathcal O_\infty$-stable if $B \otimes \mathcal O_\infty \cong B$. By
\cite[Proposition~4.5]{KR-AJM}, each such $\Cstar$-algebra $B$ is \emph{purely infinite} in the
sense that $B$ has no characters and for all $a,b\in B_+$ with $a \in \overline{BbB}$, we
have $a \precsim b$. We begin by recording a by-now-standard consequence of $\mathcal
O_\infty$-stability for repeated later use (and also as results of this type in the literature are
often stated with separability hypotheses).
\begin{lemma}\label{lem:kappa}
Let $B$ be a $\Cstar$-algebra such that $B\cong B\otimes\mathcal O_\infty$.  Then there exists an
isomorphism $\kappa\colon B\otimes\mathcal O_\infty\rightarrow B$ such that $\mathcal
I(\kappa)(K\otimes\mathcal O_\infty)=K$ for all $K\in\mathcal I(B)$.
\end{lemma}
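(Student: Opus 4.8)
The plan is to build $\kappa$ as the inverse of an isomorphism $B\to B\otimes\mathcal O_\infty$ that is approximately unitarily equivalent to the first-factor embedding; the point is that this embedding visibly implements $K\mapsto K\otimes\mathcal O_\infty$ on ideals, and approximate unitary equivalence does not disturb the induced $\Cu$-morphism on ideal lattices.

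First I would record the ideal map of the first-factor embedding $\iota\colon B\to B\otimes\mathcal O_\infty$, $b\mapsto b\otimes 1_{\mathcal O_\infty}$. For $K\in\mathcal I(B)$, since $\mathcal O_\infty$ is unital and $\overline{BKB}=K$, the spanning elements $(b_1\otimes c_1)(k\otimes 1_{\mathcal O_\infty})(b_2\otimes c_2)=b_1kb_2\otimes c_1c_2$ show that $\overline{(B\otimes\mathcal O_\infty)\iota(K)(B\otimes\mathcal O_\infty)}=K\otimes\mathcal O_\infty$; that is, $\mathcal I(\iota)(K)=K\otimes\mathcal O_\infty$.

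Next I would invoke the defining absorption property of the strongly self-absorbing algebra $\mathcal O_\infty$: as $B\cong B\otimes\mathcal O_\infty$, the embedding $\iota$ is approximately unitarily equivalent to an isomorphism $\mu\colon B\to B\otimes\mathcal O_\infty$. I then claim that approximately unitarily equivalent $^*$-homomorphisms induce the same $\Cu$-morphism $\mathcal I(\cdot)$. Indeed, writing $\mu(a)=\lim_n u_n\iota(a)u_n^*$ pointwise for unitaries $u_n\in\mathcal M(B\otimes\mathcal O_\infty)$, each conjugate $u_n\iota(a)u_n^*$ lies in the ideal $K\otimes\mathcal O_\infty$ whenever $a\in K$ (ideals are invariant under conjugation by multiplier unitaries), so passing to the limit gives $\mu(K)\subseteq K\otimes\mathcal O_\infty$, and the reverse inclusion follows symmetrically from $\iota(a)=\lim_n u_n^*\mu(a)u_n$. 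Hence $\mathcal I(\mu)(K)=K\otimes\mathcal O_\infty$, and since $\mu$ is an isomorphism this simply reads $\mu(K)=K\otimes\mathcal O_\infty$.

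Finally I would set $\kappa\coloneqq\mu^{-1}\colon B\otimes\mathcal O_\infty\to B$. As $\kappa$ is an isomorphism it carries ideals to ideals, so $\mathcal I(\kappa)(L)=\overline{B\kappa(L)B}=\kappa(L)$ for every $L\in\mathcal I(B\otimes\mathcal O_\infty)$; applying this with $L=K\otimes\mathcal O_\infty=\mu(K)$ yields $\mathcal I(\kappa)(K\otimes\mathcal O_\infty)=\mu^{-1}(\mu(K))=K$, as required. The only substantive ingredient---and the main obstacle---is the absorption theorem producing an isomorphism $\mu$ approximately unitarily equivalent to $\iota$; everything else is formal bookkeeping with ideals. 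The one delicate point is separability: the standard form of this absorption result is stated for separable algebras, so for general $B$ one should either restrict to the separable situation in which the lemma is applied (namely $B=A\otimes\mathcal O_\infty$ with $A$ separable) or appeal to a non-separable version of the absorption statement.
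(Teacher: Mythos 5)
Your strategy coincides with the paper's in outline: compute $\mathcal I$ of the first-factor embedding, note that approximately unitarily equivalent $^*$-homomorphisms induce the same map on ideal lattices, and take $\kappa$ to be the inverse of an isomorphism approximately unitarily equivalent to that embedding. All of that bookkeeping is correct as you wrote it. The one substantive divergence is the ingredient producing the isomorphism $\mu$, and it is exactly where your proof falls short of the lemma as stated: the absorption theorem for strongly self-absorbing algebras \cite{TW:Trans} is proved by an Elliott intertwining argument and genuinely requires $B$ to be separable, whereas Lemma~\ref{lem:kappa} carries no separability hypothesis --- indeed the sentence introducing it in the paper notes that results of this type ``are often stated with separability hypotheses'', so removing that hypothesis is the lemma's whole point. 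The paper's proof repairs this by localising the approximate unitary equivalence inside the fixed separable algebra $\mathcal O_\infty$: fix \emph{any} isomorphism $\theta \colon B \to B \otimes \mathcal O_\infty$ (this is the hypothesis; no intertwining is needed) and an isomorphism $\alpha \colon \mathcal O_\infty \to \mathcal O_\infty \otimes \mathcal O_\infty$ approximately unitarily equivalent to $\id_{\mathcal O_\infty} \otimes 1_{\mathcal O_\infty}$ --- this exists because $\mathcal O_\infty \cong \mathcal O_\infty \otimes \mathcal O_\infty$ \cite{KP:crelle} and any two unital $^*$-homomorphisms $\mathcal O_\infty \to \mathcal O_\infty \otimes \mathcal O_\infty$ are approximately unitarily equivalent \cite{LP:crelle} --- and then set
\begin{equation*}
\kappa \coloneqq \theta^{-1} \circ (\id_B \otimes \alpha^{-1}) \circ (\theta \otimes \id_{\mathcal O_\infty}).
\end{equation*}
One checks that $\kappa^{-1}$ is approximately unitarily equivalent to $\id_B \otimes 1_{\mathcal O_\infty}$ via unitaries in $B^\sim \otimes \mathcal O_\infty$, after which your computation of $\mathcal I$ of the first-factor embedding and your invariance argument finish the proof verbatim, for arbitrary $B$.

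Concerning your two fallbacks: restricting to separable $B$ would in fact cover the paper's end applications, since by the time Lemma~\ref{lem:kappa} is actually used (through Lemmas \ref{l:upwardsmap}, \ref{lm:Main} and \ref{l:O2stablelift}, invoked in Lemma~\ref{l:Oinftymapmain}) the relevant algebra has been replaced by the separable subalgebra $D = \theta_0(A \otimes \mathcal O_\infty) \subseteq B_{(\infty)}$; but those intermediate lemmas are stated for arbitrary $B$, so you would have to insert separability hypotheses throughout. A non-separable version of the absorption theorem, on the other hand, is not available off the shelf. So the honest summary is: your proof establishes the separable case, you correctly identified the obstruction, and the paper's trick of factoring through a given isomorphism $\theta$ and pushing the approximate unitary equivalence into $\mathcal O_\infty$ is precisely the missing idea for the statement in full generality.
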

\begin{proof}
Fix any isomorphism $\theta\colon B\cong B\otimes\mathcal O_\infty$. There exists an isomorphism
$\alpha\colon \mathcal O_\infty\rightarrow\mathcal O_\infty\otimes\mathcal O_\infty$ which is
approximately unitarily equivalent to the first factor embedding $\id_{\mathcal O_\infty}\otimes
1_{\mathcal O_\infty}\colon \mathcal O_\infty\rightarrow\mathcal O_\infty\otimes\mathcal
O_\infty$.\footnote{By \cite[Theorem~3.15]{KP:crelle}, one has $\mathcal O_\infty \cong \mathcal
O_\infty \otimes \mathcal O_\infty$, and by \cite[Theorem~3.3]{LP:crelle}, any two unital
$^*$-homomorphisms $\phi, \psi \colon \mathcal O_\infty \to \mathcal O_\infty \otimes \mathcal
O_\infty$ are approximately unitarily equivalent.}  Define $\kappa$ to be the composition
\begin{equation}
B\otimes\mathcal O_\infty\stackrel{\theta\otimes\id_{\mathcal O_\infty}}{\longrightarrow}B\otimes\mathcal O_\infty\otimes\mathcal O_\infty\stackrel{\id_B\otimes\alpha^{-1}}\longrightarrow B\otimes\mathcal O_\infty\stackrel{\theta^{-1}}\longrightarrow B.
\end{equation}
By construction $\kappa$ is an isomorphism and $\kappa^{-1}$ is approximately unitarily equivalent
(via unitaries in $B^\sim\otimes\mathcal O_\infty$) to the first factor embedding $\id_B\otimes
1_{\mathcal O_\infty}\colon B\rightarrow B\otimes\mathcal O_\infty$. Therefore $\mathcal
I(\kappa^{-1})=\mathcal I(\id_B\otimes 1_{\mathcal O_\infty})$.  Since $\mathcal I$ is functorial
(\cite[Proposition~2.15]{Gabe17}), $\mathcal{I}(\kappa)=\mathcal I(\id_B\otimes 1_{\mathcal
O_\infty})^{-1}$, so that $\mathcal I(\kappa)(K\otimes\mathcal O_\infty)=K$ for all $K\in\mathcal
I(B)$.
\end{proof}

The next lemma produces appropriate families of pairwise orthogonal positive elements $(h_I)_{I\in\mathcal{UP}(I)}$. We
shall produce our `upwards maps' out of finite dimensional algebras by embedding the finite
dimensional algebras into $\mathcal O_\infty$ and tensoring by these $(h_I)$. The symbol $\mathcal{UP}(\mathcal I)$ is defined in Definition~\ref{DefUPD}.

\begin{lemma}\label{l:upwardsmap}
Let $A$ and $B$ be $\Cstar$-algebras such that $A$ is separable and $B \otimes \mathcal O_\infty
\cong B$. Suppose that $\Theta \colon \mathcal I(A) \to \mathcal I(B)$ is a $\Cu$-morphism. Let
$\mathcal I_0 \subseteq \mathcal I(A)$ be a finite sublattice. Then there exists a collection
$\{h_I: I \in \mathcal{UP}(\mathcal I_0)\} \subseteq B$ of pairwise orthogonal positive elements,
each with spectrum $[0,1]$ such that
\begin{itemize}
\item[$(a)$] for each $I \in \mathcal{UP}(\mathcal I_0)$, the element $h_I$ is full in
    $\Theta(I)$, and
\item[$(b)$] for any pair of ideals $J_1 \subseteq J_2$ in $\mathcal I_0$, such that $J_1
    \Subset J_2$ in $\mathcal I(A)$, and any non-zero, continuous function $f\colon [0,1] \to
    [0,1]$ with $f(0) = 0$, $\Theta(J_1)$ is contained in the ideal of $B$ generated by
\begin{equation}
\sum_{\substack{I \in \mathcal{UP}(\mathcal I_0) \\ I \subseteq J_2}} f(h_I).
\end{equation}
\end{itemize}
\end{lemma}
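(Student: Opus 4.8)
The plan is to realise each $h_I$ in the form $h_I=\kappa(g_I\otimes e_I)$, where $\kappa\colon B\otimes\mathcal O_\infty\to B$ is the ideal-preserving isomorphism of Lemma~\ref{lem:kappa}, the elements $e_I\in\mathcal O_\infty$ (indexed by $I\in\mathcal P(\mathcal I_0)$) are pairwise orthogonal positive contractions of full spectrum $[0,1]$ (these exist, and are automatically full since $\mathcal O_\infty$ is simple), and $g_I\in\Theta(I)_+$ is a carefully chosen full element of norm one. Granting such $g_I$, part $(a)$ is immediate: orthogonality of the $e_I$ gives orthogonality of the $h_I$; since $\spec(e_I)=[0,1]$ and $\|g_I\|=1$ with $0\in\spec(g_I)$, the product $g_I\otimes e_I$, hence $h_I$, has spectrum $[0,1]$; and since $g_I$ is full in $\Theta(I)$ while $e_I$ is full in $\mathcal O_\infty$, the element $g_I\otimes e_I$ is full in $\Theta(I)\otimes\mathcal O_\infty$, so $h_I$ is full in $\mathcal I(\kappa)(\Theta(I)\otimes\mathcal O_\infty)=\Theta(I)$. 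Full elements $a_I\in\Theta(I)_+$ of norm one exist by Lemma~\ref{lem:fullelement}, so the entire problem reduces to choosing the $g_I$ (built from the $a_I$) so that $(b)$ holds.

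For $(b)$ I would first convert the statement into a condition on quotient norms of the $g_I$. Fix $f$ and set $\delta_f:=\sup\{t:f|_{[0,t]}\equiv 0\}\in[0,1)$. As $\kappa$ is an isomorphism, the ideal of $B$ generated by $f(h_I)$ is $\mathcal I(\kappa)$ applied to the ideal of $B\otimes\mathcal O_\infty$ generated by $f(g_I\otimes e_I)$. Identifying $\mathcal I(B\otimes\mathcal O_\infty)$ with $\mathcal I(B)$ via $K\mapsto K\otimes\mathcal O_\infty$ (valid since $\mathcal O_\infty$ is simple and nuclear), and using that $\spec(q_K(g_I)\otimes e_I)=[0,\|q_K(g_I)\|]$ is a \emph{full} interval precisely because $\spec(e_I)=[0,1]$, one checks that $f(g_I\otimes e_I)\in K\otimes\mathcal O_\infty$ if and only if $\|q_K(g_I)\|\le\delta_f$, i.e.\ $(g_I-\delta_f)_+\in K$. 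Hence the ideal generated by $f(h_I)$ equals $\overline{B(g_I-\delta_f)_+B}$, and the target ideal in $(b)$ is $\sup_{I\in\mathcal P(\mathcal I_0),\,I\subseteq J_2}\overline{B(g_I-\delta_f)_+B}$. Taking contrapositives and passing to primitive ideals, $(b)$ holds for \emph{all} $f$ once I arrange the single condition: for every $P\in\Prim(B)$ with $\Theta(J_1)\not\subseteq P$ there is some $I\in\mathcal P(\mathcal I_0)$ with $I\subseteq J_2$ and $\|q_P(g_I)\|=1$ (here I use Lemma~\ref{lem:unionpredecessors} to write $\Theta(J_1)=\sup_{I\subseteq J_1}\Theta(I)$).

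To secure this condition I would set $g_I:=\beta(a_I)$ for the boosting function $\beta(t)=\min(t/\lambda,1)$, with $\lambda\in(0,1]$ a uniform constant produced below. Since $\beta(0)=0$ and $\beta>0$ on $(0,1]$, each $g_I$ is still full of norm one, and as $\beta$ is increasing, $\|q_P(g_I)\|=\beta(\|q_P(a_I)\|)$, which equals $1$ as soon as $\|q_P(a_I)\|\ge\lambda$. So it suffices to find $\lambda>0$ with $\max_{I\subseteq J_2}\|q_P(a_I)\|\ge\lambda$ for each pair $J_1\Subset J_2$ in $\mathcal I_0$ and each primitive $P\not\supseteq\Theta(J_1)$. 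Fixing $(J_1,J_2)$, set $a:=\sum_{I\in\mathcal P(\mathcal I_0),\,I\subseteq J_2}a_I$, which is full in $\Theta(J_2)$ by Lemma~\ref{lem:unionpredecessors}; since $\Theta$ preserves compact containment, $\Theta(J_1)\Subset\Theta(J_2)$. Applying Lemma~\ref{l:alowerbound} inside $B$ yields $\lambda_{(J_1,J_2)}>0$ bounding $\|a+L_2^{L_1}\|$ below for all ideals $L_1\subsetneq L_2\subseteq\Theta(J_1)$. Given primitive $P\not\supseteq\Theta(J_1)$, take $L_2=\Theta(J_1)$ and $L_1=P\cap\Theta(J_1)\subsetneq L_2$; a direct computation (for $x\in P$ one has $x\Theta(J_1)\cup\Theta(J_1)x\subseteq P\cap\Theta(J_1)=L_1$) gives $P\subseteq L_2^{L_1}$, so $\|q_P(a)\|\ge\|a+L_2^{L_1}\|\ge\lambda_{(J_1,J_2)}$, and therefore $\max_{I\subseteq J_2}\|q_P(a_I)\|\ge\lambda_{(J_1,J_2)}/\lvert\mathcal P(\mathcal I_0)\rvert$. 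Taking $\lambda$ to be the minimum of these finitely many bounds completes the construction.

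The hard part is the third paragraph. One cannot in general find a full norm-one element all of whose cut-downs $(g_I-\delta)_+$ remain full for every $\delta<1$ (the algebra $C_0((0,1])\otimes\mathcal O_2$ already fails this), so upgrading tensorial fullness to the required norm-one-in-every-relevant-quotient condition is exactly where the compact containment $J_1\Subset J_2$ must be exploited. The technical crux is thus the passage, via the ideals $L_2^{L_1}$ and the inclusion $P\subseteq L_2^{L_1}$, from Lemma~\ref{l:alowerbound} to a uniform lower bound on primitive quotients, after which the elementary boosting $\beta$ delivers the equalities $\|q_P(g_I)\|=1$. The remaining ingredients---existence of the orthogonal full-spectrum $e_I$ in $\mathcal O_\infty$, the identification $\mathcal I(B\otimes\mathcal O_\infty)\cong\mathcal I(B)$, and the functional-calculus computation identifying the ideal generated by $f(h_I)$---I expect to be routine.
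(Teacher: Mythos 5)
Your proof is correct, and it shares the paper's basic skeleton --- full norm-one elements of $\Theta(I)$ from Lemma~\ref{lem:fullelement}, tensored against pairwise orthogonal elements of $\mathcal O_\infty$ through the ideal-preserving isomorphism $\kappa$ of Lemma~\ref{lem:kappa}, then uniformly boosted; indeed your $\beta(t)=\min(t/\lambda,1)$ is exactly the paper's rescaling function $r$, with $\lambda = \tfrac{1}{2n_0}$ --- but both quantitative steps are executed by genuinely different means. For the uniform constant, the paper argues directly: $(g_n(h_{J_2}''))_{n}$ is an approximate unit for $C^*(h_{J_2}'')$, so the ideals $\overline{Bg_n(h_{J_2}'')B}$ increase to $\Theta(J_2)$, and $\Theta(J_1)\Subset\Theta(J_2)$ produces $n_0$ with $\Theta(J_1)\subseteq\overline{Bg_{n_0}(h_{J_2}'')B}$; you instead route through Lemma~\ref{l:alowerbound} --- which the paper reserves for Sublemma~\ref{sl:2} --- and convert its bound on the quotients $B/L_2^{L_1}$ into a bound on primitive quotients via the (correctly verified) inclusion $P\subseteq L_2^{L_1}$ for $L_2=\Theta(J_1)$, $L_1 = P\cap\Theta(J_1)$. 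This is valid, if a mild detour: the proof of Lemma~\ref{l:alowerbound} consists of precisely the compact-containment step the paper applies in place. For general $f$, the paper tensors $r(h_I')$ with a single full-spectrum $h$ and needs only a one-sided estimate, minorizing $f(h_I)$ by $\kappa(f_1(r(h_I'))\otimes f_2(h))$ via the comultiplication Lemma~\ref{l:mcomm}; you build the full spectrum into the $e_I$ themselves and compute the ideal $\overline{Bf(h_I)B}=\overline{B(g_I-\delta_f)_+B}$ \emph{exactly}, using that every ideal of $B\otimes\mathcal O_\infty$ has the form $K\otimes\mathcal O_\infty$ and that $\sigma(q_K(g_I)\otimes e_I)=[0,\|q_K(g_I)\|]$ is a full interval. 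That exact identification is the real gain of your version: it reduces condition $(b)$, uniformly over all admissible $f$ simultaneously, to the single requirement that every primitive $P\not\supseteq\Theta(J_1)$ sees some $g_I$ with $I\subseteq J_2$ at norm one, and it makes explicit why full spectrum in the $\mathcal O_\infty$-factor is indispensable (your $C_0((0,1])\otimes\mathcal O_2$ remark), where the paper achieves the same smearing more implicitly through $m^*$. The ingredients you defer as routine really are: pairwise orthogonal positive elements of spectrum $[0,1]$ exist in $\mathcal O_\infty$ (take full-spectrum positive elements in corners cut by orthogonal projections, each corner being a unital Kirchberg algebra), the ideal structure of $B\otimes\mathcal O_\infty$ is standard since $\mathcal O_\infty$ is simple and nuclear, and the fullness and quotient-norm computations for $\beta(a_I)$ are elementary functional calculus.
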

\begin{proof}
By Lemma~\ref{lem:fullelement}, for each non-zero $I\in\mathcal{UP}(\mathcal I_0)$, the ideal
$\Theta(I)$ contains a full positive element $k_I$ of norm $1$. Use Lemma~\ref{lem:kappa} to
fix an isomorphism $\kappa\colon B\otimes\mathcal O_\infty\rightarrow B$ such that $\mathcal
I(\kappa)(K\otimes\mathcal O_\infty)=K$ for all $K\in\mathcal I(B)$.  Choose non-zero pairwise
orthogonal projections $(p_I)_{I\in \mathcal I_0}$ in $\mathcal O_\infty$, and define pairwise
orthogonal positive elements of norm $1$ by $h_I'\coloneqq \kappa(k_I\otimes p_I)$ for
$I\in\mathcal{UP}(\mathcal I_0)$.  Since $\mathcal O_\infty$ is simple and each $k_I$ is full in
$\Theta(I)$, each $k_I \otimes p_I$ is full in $\Theta(I) \otimes \mathcal O_\infty$. Hence, by the definition of
$\kappa$, each $h_I'$ is full in $\Theta(I)$.

For $n\in\mathbb N$, let $g_n \colon [0,1] \to [0,1]$ be the continuous function which is $0$ on
$[0,\frac{1}{2n}]$, $1$ on $[\frac{1}{n}, 1]$ and affine on $[\frac{1}{2n}, \frac{1}{n}]$. Fix
$J_1 \subseteq J_2$ in $\mathcal I_0$ such that $J_1 \Subset J_2$ in $\mathcal I(A)$.
Lemma~\ref{lem:unionpredecessors} implies that
\begin{equation}
J_2=\sum_{\substack{I\in\mathcal{UP}(\mathcal I_0) \\ I\subseteq J_2}}I.
\end{equation}
Since ideals are hereditary, for positive elements $a_1, \dots, a_n$ of a $\Cstar$-algebra $D$, we
have $\overline{D \big(\sum_i a_i\big)D} = \sum_i \overline{D a_i D}$. Thus the positive element
\begin{equation}\label{eq:l:upwardsmap.neweq.2}
h_{J_2}'' \coloneqq \sum_{\substack{I \in \mathcal{UP}(\mathcal I_0) \\ I \subseteq J_2}} h_I'
\end{equation}
is full in $\Theta(J_2)$. As $(g_n(h_{J_2}''))_{n=1}^\infty$ is an increasing approximate identity
for $\mCstar(h_{J_2}'')$, it follows that
\begin{equation}
\overline{\bigcup_{n=1}^\infty \overline{B g_n(h_{J_2}'') B}} = \overline{ B h_{J_2}'' B} = \Theta(J_2).
\end{equation}
As $\Theta$ is a $\Cu$-morphism, and $J_1 \Subset J_2$, it follows that $\Theta(J_1) \Subset
\Theta(J_2)$. Hence there exists $n_{J_1,J_2} \in \mathbb N$, such that every $n \ge n_{J_1, J_2}$
satisfies
\begin{equation}\label{eq:J1ingdelta}
\Theta(J_1) \subseteq \overline{B g_n(h_{J_2}'') B}.
\end{equation}

Since $\mathcal I_0$ is finite, we may define $n_0 \coloneqq  \max\{n_{J_1, J_2} : J_1, J_2 \in
\mathcal{I}_0\text{ and } J_1 \Subset J_2\text{ in }\mathcal{I}(A)\}$. Let $r\colon [0,1]\rightarrow
\mathbb [0,1]$ be the continuous function satisfying $r(t)=(2n_0)t$ for $t\in [0,\frac{1}{2n_0}]$
and $r(t)=1$ for $t\geq \frac{1}{2n_0}$. So $r(t)g_{n_0}(t)=g_{n_0}(t)$ for all $t$. Fix $h \in
(\mathcal O_\infty)_+$ with spectrum $\sigma(h) = [0,1]$, and for each $I\in \mathcal{UP}(\mathcal
I_0)$, let $h_I \coloneqq  \kappa(r(h_I') \otimes h)$. Each $h_I$ is positive with spectrum
$[0,1]$ since both $r(h_I')$ and $h$ are positive elements with spectrum $[0,1]$ (see \cite{BP66}).

By choice of $\kappa$ and by simplicity of $\mathcal O_\infty$, we have $\overline{B h_I B} =
\overline{B r(h_I') B}$ for each $I\in \mathcal{UP}(\mathcal I_0)$. For each $I\in\mathcal
P(\mathcal I_0$), the element $r(h_I')$ is a positive contraction in $\Theta(I)$ with $r(h_I')\geq
h_I'$, and so $r(h_I')$ has norm one and is full in $\Theta(I)$. Hence $h_I$ is full in
$\Theta(I)$. Since the $h_I'$ are pairwise orthogonal positive contractions so are the $h_I$. It
remains to check~$(b)$.

Take $J_1 \subseteq J_2$ in $\mathcal I_0$ with $J_1 \Subset J_2$ in $\mathcal I(A)$. Let $f\colon
[0,1] \to [0,1]$ be non-zero and continuous with $f(0) = 0$. Let $K$ be the ideal generated by
\begin{equation}
\sum_{\substack{I \in \mathcal{UP}(\mathcal I_0)\\ I \subseteq J_2}} f(h_I).
\end{equation}
We must show that $\Theta(J_1) \subseteq K$.

Let $\Phi_I \colon C_0((0,1]) \otimes C_0((0,1]) \to B \otimes \mathcal O_\infty$ be the $^\ast$-homomorphism given on elementary tensors by $\Phi_I(f_1 \otimes f_2) = f_1(r(h_I')) \otimes f_2(h)$.
Let $m \colon (0,1] \times (0,1] \to (0,1]$ be multiplication, and let $m^* \colon C_0((0,1]) \to C_0((0,1]) \otimes
C_0((0,1])$ be the induced homomorphism. Lemma~\ref{l:mcomm} yields positive
functions $f_1,f_2 \in C_0((0,1])$ such that $f_1(1) = 1$ and
\begin{equation}
f_1 \otimes f_2 \leq m^\ast(f) \in C_0((0,1]) \otimes C_0((0,1]).
\end{equation}
 As $\Phi_I(m^\ast(f)) = f(r(h_I') \otimes h)$, it follows that
\begin{equation}
f(h_I) = \kappa(\Phi_I(m^\ast(f))) \geq  \kappa( \Phi_I(f_1\otimes f_2)) = \kappa (f_1(r(h_I')) \otimes f_2(h)),
\end{equation}
for $I \in \mathcal{UP}(\mathcal I_0)$. As $(f_1 \circ r) \cdot g_{n_0} = g_{n_0}$ it follows that $K$ contains the element
\begin{equation}\label{eq:l:upwardsmap.neweq}
\sum_{\substack{I \in \mathcal{UP}(\mathcal I_0)\\ I \subseteq J_2}} \kappa(g_{n_0}(h_I') \otimes f_2(h)).
\end{equation}
As $h$ has spectrum $[0,1]$, the element $f_2(h) \in \mathcal O_\infty$ is non-zero and thus full.
Hence by the defining property of $\kappa$, the element~\eqref{eq:l:upwardsmap.neweq} generates
the same ideal as $\sum_{I \in \mathcal{UP}(\mathcal{I}_0),\,I \subseteq J_2} g_{n_0}(h'_I)$. Using
first that the $h'_I$ are pairwise orthogonal, and then the
definition~\eqref{eq:l:upwardsmap.neweq.2} of $h_{J_2}''$ we calculate:
\begin{equation}\label{eq:l:upwardsmap.neweq3}
\sum_{\substack{I \in \mathcal{UP}(\mathcal I_0)\\ I \subseteq J_2}} g_{n_0}(h_I') = g_{n_0}\Big( \sum_{\substack{I \in \mathcal{UP}(\mathcal I_0)\\ I \subseteq J_2}} h_I' \Big) = g_{n_0}(h_{J_2}'').
\end{equation}
In particular, $g_{n_0}(h''_{J_2}) \in K$. As $n_0 \geq n_{J_1,J_2}$, it follows from
\eqref{eq:J1ingdelta} that
\begin{equation}
\Theta(J_1) \subseteq \overline{B g_{n_0}(h_{J_2}'') B} \subseteq K.\qedhere
\end{equation}
\end{proof}

%%%%%

We now give our main technical existence result realising $\Cu$-morphisms by $^*$-homomorphisms
from cones. In the proof, we will use that a finite lattice has a unique minimum element (in our
case $0$), and again take the convention that the supremum of the empty set is this minimum
element. See Section \ref{S2} for further details.

\begin{lemma}\label{lm:Main}
Let $A$ and $B$ be $\Cstar$-algebras with $A$ separable and $B \otimes \mathcal O_\infty \cong B$,
and let $\Theta \colon \mathcal I(A) \to \mathcal I(B)$ be a $\Cu$-morphism. Then for $n\in\mathbb N$, there are finite
dimensional $\Cstar$-algebras $F_n$ and cpc maps $\psi_n \colon C_0((0,1]) \otimes A \to F_n$ and
$\eta_n \colon F_n \to B$  such that, writing
\begin{align}
\psi &\coloneqq (\psi_n)_{n=1}^\infty  \colon C_0((0,1]) \otimes A \to \frac{\prod_{n=1}^\infty  F_n}{\bigoplus_{n=1}^\infty  F_n},\quad\text{ and }\label{eq:psidef}\\
\eta &\coloneqq (\eta_n)_{n=1}^\infty  \colon \frac{\prod_{n=1}^\infty  F_n}{\bigoplus_{n=1}^\infty  F_n} \to B_{\seq}\label{eq:etadef}
\end{align}
for the induced maps, the following are satisfied:
\begin{itemize}
\item[$(a)$] $\psi$ is a $^*$-homomorphism;
\item[$(b)$] each $\eta_n$ is order zero; and
\item[$(c)$] the $^*$-homomorphism $\rho \colon C_0((0,1]) \otimes A \to B_{\seq}$ induced
    by the cpc order zero map $(\eta \circ \psi)(\id_{(0,1]} \otimes \cdot)$ (see
    Proposition~\ref{prop.Cone}) satisfies
\begin{equation}
\mathcal I(\rho)(I \otimes J) = \overline{B_{\seq} \Theta(J) B_{\seq}}
\end{equation}
for any $J\in \mathcal I(A)$ and any non-zero $I \in \mathcal I(C_0((0,1]))$.
\end{itemize}
If every quotient of $A$ is quasidiagonal, we can additionally arrange that
$\psi_n(C_0((0,1))\otimes A) = 0$ for each $n\in \mathbb N$.
\end{lemma}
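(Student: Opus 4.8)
The plan is to realise $\Theta$ by assembling, block by block over a finite approximating sublattice, the downward maps of Lemma~\ref{l:conetomatrix} together with the orthogonal ``heights'' of Lemma~\ref{l:upwardsmap}, and then to let the sublattice exhaust $\mathcal I(A)$. First I would use Lemma~\ref{l:basis} to fix a sequence in $\mathcal I(A)$ from which every ideal of $A$ is recovered as an increasing union of compactly contained members, and let $\mathcal I_n\subseteq\mathcal I(A)$ be the finite sublattice generated by its first $n$ terms, so the $\mathcal I_n$ increase with $n$ and $\bigcup_n\mathcal I_n$ is cofinal in the required sense. For each $n$ and each $I\in\mathcal P(\mathcal I_n)$ (notation as in Definition~\ref{DefUPD}), apply Lemma~\ref{l:conetomatrix} to $P(I)\subsetneq I$ to obtain cpc maps into matrix algebras whose induced sequence map is a $^*$-homomorphism obeying the two-sided estimate \eqref{l:conetomatrix.1} (in terms of $\|a+I^{P(I)}\|$). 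I would then pass to a single map $\psi_{n,I}$ per block by a diagonal choice of the approximation index (large enough that $\psi_{n,I}$ is multiplicative and norm-correct on a fixed dense sequence up to $1/n$), and set $F_n:=\bigoplus_{I\in\mathcal P(\mathcal I_n)}M_{k_{n,I}}$, $\psi_n:=\bigoplus_I\psi_{n,I}$; this delivers $(a)$. Using Lemma~\ref{l:upwardsmap} with $\mathcal I_0=\mathcal I_n$ I would fix pairwise orthogonal positive $h^{(n)}_I\in B$ of spectrum $[0,1]$, each full in $\Theta(I)$, and build $\eta_n=\sum_I\eta_{n,I}$ where $\eta_{n,I}\colon M_{k_{n,I}}\to B$ is order zero with support $h^{(n)}_I$ and range in $\overline{h^{(n)}_IBh^{(n)}_I}$: one embeds the block unitally into a tensor factor of $\mathcal O_\infty$ (available by $\mathcal O_\infty$-stability, cf.\ Lemma~\ref{lem:kappa}) commuting with a spectrum-$[0,1]$ element, using pure infiniteness of $\Theta(I)$ and Lemma~\ref{l:Cuntzdompi} to align the support. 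Orthogonality of the $h^{(n)}_I$ makes $\eta_n$ order zero by Lemma~\ref{Lem:cpcRanges}, giving $(b)$. The quasidiagonal addendum is then immediate: each $A/I^{P(I)}$ is a quotient of $A$, so the second half of Lemma~\ref{l:conetomatrix} lets us take the $\psi_{n,I}$ to annihilate $C_0((0,1))\otimes A$.

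For $(c)$ I would first make $\rho$ explicit. Writing $\Phi:=(\eta\circ\psi)(\id_{(0,1]}\otimes\,\cdot)$, note that since $\psi$ is a $^*$-homomorphism and $\eta$ is order zero, $\eta\circ\psi$ is order zero on the cone, so by Lemma~\ref{l:compoz} and the order-zero functional calculus the value $\rho(f\otimes a)=f(\Phi)(a)$ is $f$ evaluated on the \emph{product} of the supporting height of $\eta$ and that of the downward order zero map $a\mapsto\psi(\id_{(0,1]}\otimes a)$. Expanding $f$ of a product of commuting positives through the multiplication map and Lemma~\ref{l:mcomm} produces the ``smearing'' across $(0,1]$. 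The two facts I would carry through the rest are: in the $I$-block the downward support of $\psi_{n,I}(\id_{(0,1]}\otimes a)$ has norm $\|a+I^{P(I)}\|$, while the upward support $h^{(n)}_I$ has full spectrum $[0,1]$ and is full in $\Theta(I)$.

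For the inclusion $\mathcal I(\rho)(I\otimes J)\subseteq\overline{B_{(\infty)}\Theta(J)B_{(\infty)}}$ I would set $J_n^-:=\sup\{K\in\mathcal I_n:K\subseteq J\}\in\mathcal I_n$ and check $J_n^-\uparrow J$ via the cofinality from Lemma~\ref{l:basis}. For positive $a\in J$, split $\Phi(a)_n=\sum_I\eta_{n,I}(\psi_{n,I}(\id_{(0,1]}\otimes a))$ into the terms with $I\subseteq J_n^-$, which lie in $\Theta(J_n^-)\subseteq\Theta(J)$ since $\Theta$ is monotone, and the remaining terms. For a remaining term $I\not\subseteq J_n^-$, Lemma~\ref{l:JinKPK} gives $J_n^-\subseteq I^{P(I)}$, whence the block norm $\|a+I^{P(I)}\|\le\mathrm{dist}(a,J_n^-)\to0$; because the ranges of the $\eta_{n,I}$ are orthogonal, the norm of the entire remaining sum is the maximum of these and hence also tends to $0$. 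Thus $\Phi(a)$, and so every $\rho(f\otimes a)$, lies in $\overline{B_{(\infty)}\Theta(J)B_{(\infty)}}$.

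The reverse inclusion is the crux, and is where I expect the real work. Using Lemma~\ref{l:basis} and that $\Theta$ preserves suprema, it suffices to show $\Theta(J')\subseteq\mathcal I(\rho)(I\otimes J)$ for ideals $J'\Subset J''$ in some $\mathcal I_n$ with $J''\subseteq J$. Fixing a full positive $a$ in an ideal $L$ with $J''\Subset L\subseteq J$, Lemma~\ref{l:alowerbound} supplies a single $\lambda>0$ bounding all the block norms $\|a+I^{P(I)}\|$ from below for $I\in\mathcal P(\mathcal I_n)$ with $I\subseteq J''$; combined with the full spectrum of $h^{(n)}_I$ and the smearing (so that even an $f$ supported away from $0$ still meets each block, making the generated ideal independent of the nonzero $I\le C_0((0,1])$), the element $\rho(f\otimes a)$ generates, in each sufficiently large component, a nonzero $f'(h^{(n)}_I)$ with $f'(0)=0$ for every such $I$. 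Summing over $I\subseteq J''$ and invoking property~$(b)$ of Lemma~\ref{l:upwardsmap} (with $J_1=J'$, $J_2=J''$) shows the generated ideal contains $\Theta(J')$, and taking the supremum over $J'\Subset J$ yields $\Theta(J)$. The main obstacle throughout is keeping track of \emph{which} ideal is generated inside the sequence algebra $B_{(\infty)}$: one must simultaneously arrange that spurious blocks become asymptotically negligible (via Lemma~\ref{l:JinKPK} and the norm formula), that the answer is genuinely independent of the nonzero $I\le C_0((0,1])$ (via the spectrum-$[0,1]$ heights and the smearing), and that membership in $\overline{B_{(\infty)}\Theta(J)B_{(\infty)}}$ is correctly identified --- for which I would use the full element of $\Theta(J)$ from Lemma~\ref{lem:fullelement} to give an asymptotic description of this ideal.
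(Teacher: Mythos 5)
Your construction is essentially the paper's: the same increasing finite sublattices $\mathcal I_n$ built from the basis of Lemma~\ref{l:basis}, the same block maps $\psi_{n,I}$ from Lemma~\ref{l:conetomatrix} indexed by $I\in\mathcal P(\mathcal I_n)$ with the two-sided norm estimate relative to $I^{P(I)}$, upward maps obtained by tensoring the blocks into $\mathcal O_\infty$ against the orthogonal heights of Lemma~\ref{l:upwardsmap} and applying $\kappa$ from Lemma~\ref{lem:kappa}, the smearing via $m^\ast$ and Lemma~\ref{l:mcomm}, and the use of Lemma~\ref{l:JinKPK} (to kill spurious blocks) and Lemma~\ref{l:alowerbound} (to bound the relevant blocks below) in the two inclusions of $(c)$. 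One small misattribution: the upward maps need no Cuntz domination at all --- $\eta_{n,I}=\kappa(h_{n,I}\otimes\iota_{n,I}(\cdot))$ suffices; the real role of Lemma~\ref{l:Cuntzdompi} is in the reverse inclusion, as explained below.

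However, there is a genuine gap, and it sits precisely at the point the paper itself flags as delicate: identifying which ideal of $B_{(\infty)}$ your representing sequences generate or belong to. In your forward inclusion, the splitting shows that $\rho(\id_{(0,1]}\otimes a)$ is represented by a sequence $(x_n)_{n=1}^\infty$ with $x_n\in\Theta(J_n^-)\subseteq\Theta(J)$ and asymptotically vanishing remainder; but this only places the element in $\Theta(J)_{(\infty)}=\prod_n\Theta(J)\big/\bigoplus_n\Theta(J)$, which in general strictly contains $\overline{B_{(\infty)}\Theta(J)B_{(\infty)}}$ --- a sequence can run off to infinity inside $\Theta(J)$ (e.g.\ along an approximate unit) without ever being uniformly dominated by the constant-sequence generators. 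The paper closes this by first reducing to a compactly contained $K\Subset J$ (so $\Theta(K)\Subset\Theta(J)$) and invoking weak pure infiniteness of $B$, which follows from $\mathcal O_\infty$-stability by \cite[Theorem~9.1]{KR-adv}, together with \cite[Proposition~6.5]{Gabe17}, which yields $\Theta(K)_{(\infty)}\subseteq\overline{B_{(\infty)}\Theta(J)B_{(\infty)}}$; it then only needs $\rho(\id_{(0,1]}\otimes a)\in\Theta(K)_{(\infty)}$ for $a$ in a dense subset of $K$ (this is also why the dense set $\mathcal G$ must be chosen dense in each basis ideal, not merely in $A$). The dual problem recurs at the end of your reverse inclusion: Lemma~\ref{l:upwardsmap}$(b)$ gives only the componentwise containment $\Theta(J')\subseteq\overline{B\,c_n\,B}$ for each $n$, and passing to $\Theta(J')\subseteq\overline{B_{(\infty)}\,c\,B_{(\infty)}}$ for $c=(c_n)_{n}$ requires elements $z_n$ with $z_n^\ast c_n z_n\to b$ that are \emph{uniformly bounded}; your proposed ``asymptotic description via a full element'' does not supply this. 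This is exactly where Lemma~\ref{l:Cuntzdompi} enters in the paper: after inserting one further layer of compact containment (take $b\in\Theta(J_0)$ with $J_0\Subset J_{k_{l-1}}\Subset J_{k_l}$) and lower-bounding the blocks by $\tfrac{\lambda}{2}p_{n,K}$ so that each $c_n$ is a norm-one positive element Cuntz-dominating $b$, pure infiniteness plus Lemma~\ref{l:Cuntzdompi} yields contractions $z_n$, and $(z_n)_n$ then defines the required element of $B_{(\infty)}$. Both repairs fit your outline without structural change, but as written the two sequence-algebra membership claims are unjustified.
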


\begin{proof}
Lemma~\ref{l:basis} yields a countable basis $(J_n)_{n=1}^\infty$ for $\mathcal I(A)$ such that
for each $J\in \mathcal I(A)$ there exist $k_1 < k_2< \dots$ satisfying $J_{k_1} \Subset J_{k_2}
\Subset\dots$ and $J = \overline{\bigcup_{n=1}^\infty J_{k_n}}$. For each $n$, let $\mathcal I_n$
be the sublattice of $\mathcal I(A)$ generated by $0, A, J_1, \dots,J_n$.  Since the ideal lattice
of a $\Cstar$-algebra is distributive (it is isomorphic to the lattice of open subsets of its
primitive-ideal space; see \cite[Theorem~4.1.3]{P:Book}), each $\mathcal I_n$ is finite. Clearly
$\mathcal I_n \subseteq \mathcal I_{n+1}$ for all $n$.

Let $\id_{(0,1]} \in \mathcal F_1 \subseteq \mathcal F_2 \subseteq \dots \subseteq C_0((0,1])$ be
a sequence of finite subsets with $\overline{\bigcup_i \mathcal F_i} = C_0((0,1])$. Let $\mathcal
G_1 \subseteq \mathcal G_2 \subseteq \dots \subseteq A$ be a sequence of finite sets of positive
contractions such that for each $k$,
\begin{equation}\label{eq:Jcapdense}
J_k \cap \Big( \bigcup_{n=1}^\infty \mathcal G_n \Big)
\end{equation}
is dense in the set of positive contractions in $J_k$.

Fix $n\in \mathbb N$. Write $\mathcal F_n \otimes \mathcal G_n \coloneqq  \{ f\otimes a : f\in
\mathcal F_n \text{ and } a\in \mathcal G_n\}$. Using the notation of Definition \ref{DefUPD}, for $I \in \mathcal{UP}(\mathcal I_n)$ let $P_n(I)$
denote the unique predecessor of $I$ in $\mathcal I_n$. For such an $I$, use
Lemma~\ref{l:conetomatrix} to pick an integer $N(n, I)$ and a cpc~map $\psi_{n,I} \colon
C_0((0,1]) \otimes A \to M_{N(n,I)}$ that is $(\mathcal F_n \otimes \mathcal
G_n,1/n)$-multiplicative and has the property that for all $f\in \mathcal F_n$ and $a \in \mathcal
G_n$,
\begin{align}
|f(1)| \, \| a+{}& M_A(I,P_n(I)) \|_{A/M_A(I,P_n(I))} - \tfrac{1}{n}\nonumber\\
    &\leq \| \psi_{n,I} (f \otimes a) \| \nonumber\\
    &\leq \| f\| \, \| a +  M_A(I,P_n(I)) \|_{A/M_A(I,P_n(I))}  + \tfrac{1}{n}.\label{eq:psinormp}
\end{align}
If every quotient of $A$ is quasidiagonal, Lemma~\ref{l:conetomatrix} allows us to choose each
$\psi_{n,I}$ so that additionally $\psi_{n,I}(C_0((0,1))\otimes A) = 0$.

Let  $F_n \coloneqq  \bigoplus_{I\in \mathcal{UP}(\mathcal I_n)} M_{N(n,I)}$ and let $\psi_n \colon
C_0((0,1])\otimes A \to F_n$ be the cpc map such that
\begin{equation}\label{eq:psindef}
\psi_n(x) = \bigoplus_{I\in \mathcal{UP}(\mathcal I_n)}   \psi_{n,I}(x),\quad x\in C_0((0,1])\otimes A.
\end{equation}
If every quotient of $A$ is quasidiagonal, then $\psi_n(C_0((0,1)) \otimes A) = 0$.

Let $(h_{n,I})_{I \in \mathcal{UP}(\mathcal I_n)}$ be a collection of pairwise orthogonal, positive
elements of norm 1 in $B$, satisfying conditions $(a)$ and $(b)$ of Lemma~\ref{l:upwardsmap}.
Choose embeddings $\iota_{n,I} \colon M_{N(n, I)} \to \mathcal O_\infty$, and for each $I$, let
$\eta_{n,I}' \colon M_{N(n,I)} \to B \otimes \mathcal O_\infty$ be the cpc~order zero map such
that
\begin{equation}\label{eq:etan'def}
\eta_{n,I}'(x) = h_{n,I} \otimes \iota_{n,I}(x), \quad x \in M_{N(n,I)}.
\end{equation}
For a positive contraction $f\in C_0((0,1])$, we have
\begin{equation}\label{eq:etan'def.new}
f(\eta_{n,I}')(x)=f(h_{n,I})\otimes \iota_{n,I}(x),\quad x\in M_{N(n,I)}.
\end{equation}

By Lemma~\ref{lem:kappa} there exists an isomorphism $\kappa \colon B\otimes \mathcal O_\infty \to
B$ such that $\mathcal I(\kappa)(K\otimes\mathcal O_\infty)=K$ for all $K\in\mathcal I(B)$. Define
$\eta_{n,I} \coloneqq  \kappa \circ \eta_{n,I}'\colon M_{N(n,I)}\rightarrow B$ and
\begin{equation}\label{eq:etandef}
\eta_n \coloneqq \bigoplus_{I\in \mathcal{UP}(\mathcal I_n)} \eta_{n,I} = \kappa \circ \Big( \bigoplus_{I\in \mathcal{UP}(\mathcal I_n)} \eta_{n,I}' \Big) \colon F_n \to B.
\end{equation}
Since, if every quotient of $A$ is quasidiagonal, we have chosen the $\psi_n$ such that
$\psi_n(C_0((0,1)) \otimes A) = 0$, it now suffices to show that the $\psi_n$ and $\eta_n$ satisfy
$(a)$--$(c)$. Since the $\eta_{n, I}$ are cpc~order zero maps with pairwise orthogonal
ranges, each $\eta_n$ is cpc~order zero, which is~$(b)$.

Let $\psi \colon C_0((0,1]) \otimes A \to \frac{\prod_{n=1}^\infty F_n}{\bigoplus_{n=1}^\infty
F_n}$ and $\eta \colon \frac{\prod_{n=1}^\infty F_n}{\bigoplus_{n=1}^\infty F_n} \to B_{\seq}$
be as in \eqref{eq:psidef}~and~\eqref{eq:etadef}. Then $\psi$ is a cpc~map, and $\eta$ is a
cpc~map of order zero. As each $\psi_n$ is $(\mathcal F_n\otimes\mathcal G_n,1/n)$-multiplicative, it
follows that $\bigcup_{n=1}^\infty  \mathcal F_n \otimes \mathcal G_n$ is contained in the
multiplicative domain of $\psi$, and hence $\psi$ is a $^*$-homomorphism. This proves~$(a)$, so it
remains to prove~$(c)$.

As $\eta \circ \psi \colon C_0((0,1]) \otimes A \to B_{\seq}$ is a cpc~map of order zero, we
let
\begin{equation}
\rho_{\eta \circ \psi} \colon C_0((0,1]) \otimes C_0((0,1]) \otimes A \to B_{\seq}
\end{equation}
be the induced $^*$-homomorphism given by Proposition~\ref{prop.Cone}. Then
\begin{equation}\label{eq:rhofact}
\rho_{\eta \circ \psi}(\id_{(0,1]} \otimes x) = \eta \circ \psi(x),\quad\text{ $x\in C_0((0,1])\otimes A$.}
\end{equation}
Let $m^\ast \colon C_0((0,1]) \to C_0((0,1]) \otimes C_0((0,1])$ be the $^*$-homomorphism induced
by the multiplication map $m \colon (0,1] \times (0,1] \to (0,1]$, and let $\rho$ denote the
composition
\begin{equation}\label{eq:rhocomp}
C_0((0,1]) \otimes A \xrightarrow{m^\ast \otimes \id_A} C_0((0,1]) \otimes C_0((0,1]) \otimes A \xrightarrow{\rho_{\eta \circ \psi}} B_{\seq}.
\end{equation}
Since $m^\ast(\id_{(0,1]}) = \id_{(0,1]} \otimes \id_{(0,1]}$, for each $a \in A$, we have
\begin{equation}\label{eq:newname1}
\rho(\id_{(0,1]} \otimes a) = \rho_{\eta \circ \psi}(\id_{(0,1]} \otimes \id_{(0,1]} \otimes a) = (\eta \circ \psi)( \id_{(0,1]} \otimes a).
\end{equation}
Hence $\rho$ is the $^*$-homomorphism induced by the cpc order zero map $(\eta \circ
\psi)(\id_{(0,1]} \otimes \cdot)$ as in Proposition~\ref{prop.Cone}.

Now fix a non-zero $I\in \mathcal I(C_0((0,1]))$, and $J \in \mathcal I(A)$. To prove~$(c)$,
it  suffices to prove the following two sublemmas.

\begin{sublemma}\label{sl:1}
With notation as in the proof of Lemma~\ref{lm:Main}, we have $\mathcal I(\rho)(I\otimes J)
\subseteq \overline{B_{\seq} \Theta(J) B_{\seq}}$.
\end{sublemma}

\begin{sublemma}\label{sl:2}
With notation as in the proof of Lemma~\ref{lm:Main}, we have $\overline{B_{\seq} \Theta(J)
B_{\seq}} \subseteq \mathcal I(\rho)(I\otimes J)$.
\end{sublemma}

\begin{proof}[Proof of Sublemma~\ref{sl:1}]
Since $\mathcal I(\rho)(I \otimes J) \subseteq \mathcal I(\rho)(C_0((0,1]) \otimes J)$, it is
enough to show that $\mathcal I(\rho)(C_0((0,1]) \otimes J) \subseteq \overline{B_{\seq}
\Theta(J) B_{\seq}}$.

By choice of $(J_n)_{n=1}^\infty$ there exist $k_1 < k_2 < \dots$ such that $J_{k_1} \Subset
J_{k_2} \Subset\dots$ and $J = \overline{\bigcup_{n=1}^\infty J_{k_n}}$. As $\mathcal I(\rho)$
preserves suprema, it suffices to show that each $\mathcal I(\rho)(C_0((0,1]) \otimes J_{k_n})
\subseteq \overline{B_{\seq} \Theta(J) B_{\seq}}$. Fix $n \in \mathbb N$ and let $K
\coloneqq J_{k_n}$.

As $K \Subset J$ and $\Theta$ is a $\Cu$-morphism, $\Theta(K) \Subset \Theta(J)$. As $B$ is
$\mathcal O_\infty$-stable it is strongly purely infinite, and hence weakly purely infinite by
\cite[Theorem~9.1]{KR-adv}. Thus \cite[Proposition~6.5]{Gabe17} gives
$\Theta(K)_{\seq}\subseteq \overline{B_{\seq} \Theta(J) B_{\seq}}$. So it suffices to
show that
\begin{equation}
\mathcal I(\rho)(C_0((0,1]) \otimes K) \subseteq \Theta(K)_{\seq}.
\end{equation}
Since $K = J_{k_n}$, our choice of the $(\mathcal G_m)_{m=1}^\infty$ ensures that $K \cap \left(
\bigcup_{m=1}^\infty \mathcal G_m \right)$ is dense in the set of positive contractions in $K$.
Since $\id_{(0,1]} \in C_0((0,1])$ is a full element, it therefore suffices to show that for each
$m \ge k_n$ and $a \in K \cap \mathcal G_m$,
\begin{equation}\label{eq:rhoinKinfty}
\rho(\id_{(0,1]} \otimes a) \in \Theta(K)_{\seq}.
\end{equation}
So fix such $m$ and $a$.

By \eqref{eq:newname1}, the element $\rho(\id_{(0,1]} \otimes a)$ is represented by the sequence
$(\eta_r \circ \psi_r(\id_{(0,1]} \otimes a))_{r=1}^\infty \in \prod_{r=1}^\infty B$. For $r\geq
m$, let
\begin{align} \label{eq:xkdef}
x_r &\coloneqq  \sum_{\substack{L\in \mathcal{UP}(\mathcal I_r) \\ L \subseteq K}} \eta_{r,L} \circ \psi_{r,L} (\id_{(0,1]} \otimes a),\text{ and}\nonumber\\
y_r &\coloneqq  \sum_{\substack{L\in \mathcal{UP}(\mathcal I_r) \\ L \not \subseteq K}} \eta_{r,L} \circ \psi_{r,L} (\id_{(0,1]} \otimes a).
\end{align}
Then
\begin{equation}
\eta_r \circ \psi_r(\id_{(0,1]} \otimes a) = x_r + y_r.
\end{equation}

Since $(\eta_{r,L})_{L\in \mathcal{UP}(\mathcal I_r)}$ is a family of contractive maps with pairwise
orthogonal images, $x_r$ and $y_r$ are contractions. For $L \in \mathcal{UP}(\mathcal I_r)$ such
that $L \not \subseteq K$, contractivity of $\eta_{r,L}$ and~\eqref{eq:psinormp} show that
$\|\eta_{r,L} \circ \psi_{r,L} (\id_{(0,1]} \otimes a)\| \le \frac{1}{r} + \| a + M_A(L,P_r(L))
\|_{A/M_A(L,P_r(L))}$, and so
\begin{align}
\| y_r\|
    &= \max \{ \| \eta_{r,L} \circ \psi_{r,L} (\id_{(0,1]} \otimes a)\| : L \in \mathcal{UP}(\mathcal I_r), \, L \not \subseteq K\} \nonumber\\
    &\leq \frac{1}{r} + \max\{  \| a +M_A(L,P_r(L)) \|_{A/M_A(L,P_r(L))} : L \in \mathcal{UP}(\mathcal I_r), \, L \not \subseteq K\}.
\end{align}
As $K = J_{k_n}$ and $r \geq m \geq k_n$, we have $K \in \mathcal I_r$. Lemma~\ref{l:JinKPK} gives
$K \subseteq M_A(L,P_r(L))$ for any $L \in \mathcal{UP}(\mathcal I_r)$ satisfying $L \not \subseteq K$.
Hence $\| a +M_A(L,P_r(L))\|_{A/M_A(L,P_r(L))} =0$ for any such $L$. Thus $\| y_r \| \leq 1/r$.
Consequently $\rho(\id_{(0,1]} \otimes a)$ is represented by the sequence $(x_r)_{r=1}^\infty$.

Suppose that $r \geq m$ and that $L \in \mathcal{UP}(\mathcal I_r)$ satisfies $L \subseteq K$.
Recall from \eqref{eq:etan'def} that $\eta_{r,L}'(z) = h_{r,L} \otimes \iota_{r,L}(z)$
for every $z\in M_{N(r,L)}$, where $h_{r,L} \in \Theta(L) \subseteq \Theta(K)$. Thus
\begin{equation}\label{eq:etakL'psi}
\eta_{r,L}' \circ \psi_{r,L}  (\id_{(0,1]} \otimes a) \in \Theta(K) \otimes \mathcal O_\infty.
\end{equation}
Recall that $\kappa \colon B \otimes \mathcal O_\infty \xrightarrow \cong B$ satisfies $\mathcal
I(\kappa)(K'\otimes\mathcal O_\infty)=K'$ for $K'\in\mathcal I(B)$. Since $\eta_{r,L} = \kappa
\circ \eta_{r,L}'$ it follows from \eqref{eq:etakL'psi} that
\begin{equation}
\eta_{r,L} \circ \psi_{r,L}  (\id_{(0,1]} \otimes a) \in \Theta(K).
\end{equation}
Hence for $r\geq m$, we have $x_r \in \Theta(K)$ (see \eqref{eq:xkdef}). Since $\rho(\id_{(0,1]}
\otimes a)$ is represented by the sequence $(x_r)_{r=1}^\infty$ it follows that
$\rho(\id_{(0,1]}\otimes a)$ belongs to
$\Theta(K)_{\seq}$, as required.\let\oldqed\qed\renewcommand{\qed}{\oldqed\quad (Sublemma~\ref{sl:1}).}
\end{proof}

\begin{proof}[Proof of Sublemma~\ref{sl:2}]
Recall that we have fixed $I\in\mathcal I(C_0((0,1]))$ non-zero, and $J\in\mathcal I(A)$. Let $\iota
\colon B \to B_{\seq}$ be the canonical inclusion. Then
\begin{equation}
\mathcal I(\iota) \circ \Theta(K) = \overline{B_{\seq} \Theta(K) B_{\seq}},\quad K \in \mathcal I(A).
\end{equation}
As $A$ is separable, $J = \sup\{J_0 \in \mathcal I(A) : J_0 \Subset J\}$ by
\cite[Corollary~2.3]{Gabe17}. Fix $J_0 \in \mathcal I(A)$ such that $J_0 \Subset J$. Since
$\mathcal I(\iota) \circ \Theta$ preserves suprema, it suffices to show that
\begin{equation}
\overline{B_{\seq} \Theta(J_0) B_{\seq}}  \subseteq \mathcal I(\rho)(I\otimes J).
\end{equation}

By \eqref{eq:rhocomp}, it follows from \cite[Proposition~2.15]{Gabe17} that
\begin{equation}
\mathcal I(\rho) = \mathcal I(\rho_{\eta \circ \psi}) \circ \mathcal I(m^\ast \otimes \id_A).
\end{equation}
Hence
\begin{equation}\label{eq:Irhocomp}
\mathcal I(\rho) (I \otimes J) = \mathcal I(\rho_{\eta \circ \psi})( \mathcal I(m^\ast)(I) \otimes J).
\end{equation}
As $I\neq 0$ by assumption, Lemma~\ref{l:mcomm} yields positive, non-zero contractions $f,g\in
C_0((0,1])$ such that $f\otimes g \in \mathcal I(m^\ast)(I)$, and $g(1) = 1$. We may assume
without loss of generality that $\| f\| = 1$. Let $a\in J$ be a full, positive contraction.
Equation~\eqref{eq:Irhocomp} gives
\begin{equation}
\rho_{\eta\circ \psi}(f\otimes g \otimes a) \in\mathcal I(\rho_{\eta \circ \psi})( \mathcal I(m^\ast)(I) \otimes J) = \mathcal I(\rho) (I \otimes J),
\end{equation}
so it suffices to show that
\begin{equation}\label{eq:binrhoetapsi}
\overline{B_{\seq}\Theta(J_0)B_{\seq}} \subseteq \overline{B_{\seq} \rho_{\eta\circ \psi}(f\otimes g \otimes a) B_{\seq}}.
\end{equation}
Recall that $\psi$ and $\eta$ are the maps defined in \eqref{eq:psidef}~and~\eqref{eq:etadef}, and
that $\rho_{\eta \circ \psi}$ is the $^*$-homomorphism induced by the composition $\eta \circ
\psi$. Let $\rho_\eta$ be the $^*$-homomorphism induced by $\eta$ as in
Proposition~\ref{prop.Cone}. Since $\psi$ is a $^*$-homomor\-phism and $\eta$ is a cpc~order zero
map, Lemma~\ref{l:compoz} implies that $\rho_{\eta \circ \psi}$ is equal to the composition
\begin{equation}
C_0((0,1]) \otimes C_0((0,1]) \otimes A \xrightarrow{\id_{C_0((0,1])} \otimes \psi} C_0((0,1]) \otimes \frac{\prod_{n=1}^\infty  F_n}{\bigoplus_{n=1}^\infty  F_n} \xrightarrow{\rho_\eta} B_{\seq}.
\end{equation}
Hence, with $f(\eta)$ as in~\eqref{fncaloz},
\begin{equation}
\rho_{\eta\circ \psi}(f\otimes g \otimes a) = \rho_\eta( f \otimes \psi(g\otimes a)) =  f(\eta)(\psi(g \otimes a)).
\end{equation}
As $\eta$ is induced by the sequence $(\eta_n)_{n=1}^\infty$ of cpc~order zero maps,
Lemma~\ref{l:fctcalcseqoz} implies that $f(\eta)$ is induced by the sequence
$(f(\eta_n))_{n=1}^\infty$ of cpc~order zero maps. As $\psi$ is represented by the sequence
$(\psi_n)_{n=1}^\infty$, it follows that $\rho_{\eta\circ \psi}(f\otimes g \otimes a)$ is
represented by the sequence
\begin{equation}\label{eq:fetapsi}
(f(\eta_n)(\psi_n(g\otimes a)))_{n=1}^\infty.
\end{equation}

Fix $n\in \mathbb N$ and $x = (x_K)_{K\in \mathcal{UP}(\mathcal I_n)} \in F_n = \bigoplus_{K \in
\mathcal{UP}(\mathcal I_n)} M_{N(n,K)}$. By~\eqref{eq:etandef}, we have
\begin{equation}
f(\eta_n)(x) = f\Big(\kappa \circ \bigoplus_{K \in \mathcal{UP}(\mathcal I_n)} \eta_{n,K}' \Big) (x).
\end{equation}
As $\kappa$ is an isomorphism, we obtain $f(\eta_n)(x) = \kappa \big( f\big(\bigoplus_{K \in
\mathcal{UP}(\mathcal I_n)} \eta_{n,K}' \big) (x_K) \big)$. For fixed $n$, the maps $(\eta_{n,K})_{K
\in \mathcal{UP}(\mathcal I_n)}$ have mutually orthogonal ranges, so Lemma~\ref{Lem:cpcRanges}
implies that $f(\eta_n)(x) = \sum_{K \in \mathcal{UP}(\mathcal I_n)} \kappa (f(\eta_{n,K}')(x_K))$.
Applying~\eqref{eq:etan'def.new} now yields
\begin{equation}
f(\eta_n)(x) = \sum_{K \in \mathcal{UP}(\mathcal I_n)} \kappa (f(h_{n,K}) \otimes \iota_{n,K}(x_K)). \label{eq:bigsum}
\end{equation}
Combining~\eqref{eq:bigsum}, \eqref{eq:fetapsi}, and~\eqref{eq:psindef}, we deduce that
$\rho_{\eta\circ \psi}(f\otimes g \otimes a)$ is represented by the sequence
\begin{equation}\label{eq:rhoetapsirepresent}
\Big(\sum_{K \in \mathcal{UP}(\mathcal I_n)} \kappa (f(h_{n,K}) \otimes \iota_{n,K}(\psi_{n,K}(g \otimes a))) \Big)_{n=1}^\infty \in \prod_{n=1}^\infty B.
\end{equation}

By choice of $(J_k)_{k=1}^\infty$, there exist $k_1 < k_2 < \dots$ such that $J_{k_{l-1}} \Subset
J_{k_l}$ for all $l$, and $J = \overline{\bigcup J_{k_l} }$. Since $J_0 \Subset J$, there exists
$l\in \mathbb N$ such that
\begin{equation}\label{eq:JcpctJ}
J_0 \subseteq J_{k_{l-1}}.
\end{equation}
As $a\in J$ is a full, positive contraction, Lemma~\ref{l:alowerbound} applied to $J_{k_l}\Subset
J$ provides $\lambda >0$, such that for any ideals $K_1 \subsetneq K_2 \subseteq J_{k_l}$, we have
\begin{equation}\label{eq:a+K}
\| a + M_A(K_2,K_1)\|_{A/ M_A(K_2,K_1)} \geq \lambda.
\end{equation}
Pick $k_0 \geq \max\{ k_l, 10/\lambda\}$ such that there exist $g_0 \in \mathcal F_{k_0}$ and $a_0
\in \mathcal G_{k_0}$ satisfying
\begin{equation}\label{eq:normg0a0}
\| g_0 - g\| \leq \lambda/10 \quad\text{ and }\quad \| a_0 - a\| \leq \lambda/10.
\end{equation}
We chose $g$ satisfying $g(1) = 1$, so $|g_0(1) - 1| \leq \lambda/10$. For $n\geq k_0$ and $K\in
\mathcal{UP}(\mathcal I_n)$, we calculate, using the fact that $a_0$ is a contraction and $\psi_{n,K}$ is contractive at the first step,
\begin{align}%\label{eq:psia+K}
\| \psi_{n,K}&(g \otimes a) \|\nonumber\\
    &\geq \| \psi_{n,K}(g_0 \otimes a_0)\| - \frac{2\lambda}{10} \quad&&\text{by~\eqref{eq:normg0a0}}\nonumber\\
    &\geq |g_0(1)| \, \big\| a_0 + M_A(K,P_n(K))\big\|_{A/M_A(K,P_n(K))} - \frac{3\lambda}{10}\quad&&\text{by~\eqref{eq:psinormp}} \nonumber\\
    &\geq \| a + M_A(K,P_n(K))\|_{A/M_A(K,P_n(K))} - \frac{\lambda}{2}\quad&&\text{by~\eqref{eq:normg0a0}}. \label{eq:psia+K}
\end{align}
As $k_0 \geq k_l$, it follows that $J_{k_{l-1}} , J_{k_l} \in \mathcal I_n$ for every $n\geq k_0$.
Hence, whenever $n \geq k_0$ and $K \in \mathcal{UP}(\mathcal I_n)$ satisfies $K \subseteq J_{k_l}$,
applying~\eqref{eq:psia+K} and then~\eqref{eq:a+K}, we obtain
\begin{align}
\|\psi_{n,K}(g \otimes a) \|
    &\geq \| a + M_A(K,P_n(K))\|_{A/M_A(K,P_n(K))} - \lambda/2 \nonumber\\
    &\geq \lambda - \lambda/2 \nonumber\\
    &\geq \lambda/2. \label{eq:lambda/2}
\end{align}

Recall that $\psi_{n,K}$ takes values in the matrix algebra $M_{N(n,K)}$. Thus,
by~\eqref{eq:lambda/2}, for any $n \geq k_0$ and $K \in \mathcal{UP}(\mathcal I_n)$ satisfying $K
\subseteq J_{k_l}$ there exists a non-zero projection $p_{n,K} \in M_{N(n,K)}$, such that
\begin{equation}\label{eq:psilambda/2}
\psi_{n,K} (g\otimes a) \geq p_{n,K} \cdot \lambda/2.
\end{equation}

Recall from~\eqref{eq:rhoetapsirepresent} that  $\rho_{\eta \circ \psi}(f\otimes g \otimes a)$ is
represented by the sequence $\big(\sum_{K\in \mathcal{UP}(\mathcal I_n)} \kappa( f(h_{n,K}) \otimes
\iota_{n,K}(\psi_{n,K}(g\otimes a)))\big)^\infty_{n=1}$.  For each $n\geq k_0$,
Equation~\eqref{eq:psilambda/2} implies that
\begin{align}
\sum_{\substack{K\in \mathcal{UP}(\mathcal I_n) \\ K \subseteq J_{k_l}}} \kappa&(f(h_{n,K}) \otimes \iota_{n,K}(p_{n,K}))\nonumber\\
&\leq  \frac{2}{\lambda}\sum_{K\in \mathcal{UP}(\mathcal I_n)} \kappa( f(h_{n,K}) \otimes \iota_{n,K}(\psi_{n,K}(g\otimes a))). \label{eq:sumkappafhnK}
\end{align}
So the element in $B_{\seq}$ represented by\footnote{We only define the sequence for $n\geq
k_0$, as such sequences still determine a unique element in $B_{\seq}$.}
\begin{equation}\label{eq:sumXifp}
\Big( \sum_{\substack{K\in \mathcal{UP}(\mathcal I_n) \\ K \subseteq J_{k_l}}} \kappa (f(h_{n,K}) \otimes \iota_{n,K}(p_{n,K})) \Big)_{n=k_0}^\infty
\end{equation}
belongs to the ideal generated by $\rho_{\eta \circ \psi}(f \otimes g \otimes a)$. Thus, to
establish~\eqref{eq:binrhoetapsi} and complete the proof of the sublemma, it suffices to show that
$\Theta(J_0)$ is contained in the ideal of $B_{\seq}$ generated by the element represented
by~\eqref{eq:sumXifp}.

Recall that $f\in C_0((0,1])$ is a positive contraction with $\|f\|= 1$. Fix $f_0 \colon [0,1] \to
[0,1]$ continuous with $f_0(0)=0$ and $f_0(1)=1$. Then $f_0 \circ f \colon [0,1] \to [0,1]$ is a
non-zero, continuous function such that $f_0 \circ f(0)=0$. Fix $n\geq k_0$. Using first that the
$f(h_{n, K}) \otimes \iota_{n,K}(p_{n,K})$ are mutually orthogonal, and then that $\iota_{n,
K}(p_{n,K})$ is a projection, we calculate:
\begin{align}
f_0\Big( \sum_{\substack{K\in \mathcal{UP}(\mathcal I_n) \\ K \subseteq J_{k_l}}} \kappa&(f(h_{n,K}) \otimes \iota_{n,K}(p_{n,K})) \Big) \label{eq:f0sumstuff}\\
&= \sum_{\substack{K\in \mathcal{UP}(\mathcal I_n) \\ K \subseteq J_{k_l}}} \kappa (f_0 ( f(h_{n,K}) \otimes \iota_{n,K}(p_{n,K}))) \nonumber\\
&= \sum_{\substack{K\in \mathcal{UP}(\mathcal I_n) \\ K \subseteq J_{k_l}}} \kappa ((f_0 \circ f)(h_{n,K}) \otimes \iota_{n,K}(p_{n,K})).\label{eq:f0fstuff}
\end{align}
As the $\iota_{n,K}(p_{n,K}) \in \mathcal O_\infty$ are non-zero projections, and as $\mathcal
O_\infty$ is simple, the defining property of $\kappa$ ensures that the element
in~\eqref{eq:f0fstuff} generates the same ideal as
\begin{equation}\label{eq:sumf0fhnK}
\sum_{\substack{K\in \mathcal{UP}(\mathcal I_n) \\ K \subseteq J_{k_l}}} (f_0 \circ f)(h_{n,K}).
\end{equation}
Since $f_0 \circ f \colon [0,1] \to [0,1]$ is non-zero, continuous and maps 0 to 0, and as
$J_{k_{l-1}} \Subset J_{k_l}$ by \eqref{eq:JcpctJ}, Lemma~\ref{l:upwardsmap}(b) implies that
$\Theta(J_{k_{l-1}})$ is contained in the ideal generated by the element of~\eqref{eq:sumf0fhnK} by
choice of $h_{n,K}$ (see the text just above~\eqref{eq:etan'def}). Hence $\Theta(J_{k_{l-1}})$ is
contained in the ideal generated by the element of~\eqref{eq:f0sumstuff}.

Now fix a positive $b \in \Theta(J_0)$ with $\| b \|=1$. To show that $\Theta(J_0)$ is contained in the
ideal generated by the element $c$ of $B_{\seq}$ represented by~\eqref{eq:sumXifp}, it
suffices to show that $b$ belongs to $\overline{B_{\seq} c B_{\seq}}$. As $B$ is $\mathcal
O_\infty$-stable, it is purely infinite\footnote{Recall that a $\Cstar$-algebra $D$ is
\emph{purely infinite} if it has no characters, and if whenever $d_1,d_2 \in D$ are positive such
that $d_1 \in \overline{Dd_2D}$ then $d_1 \precsim d_2$.} by \cite[Proposition~4.5]{KR-AJM}. Since
$b$ is a positive contraction in $\Theta (J_0)\Subset \Theta(J_{k_{l-1}})$, it is Cuntz dominated by the element in
\eqref{eq:f0sumstuff}.  The element
\begin{equation}
\sum_{\substack{K\in \mathcal{UP}(\mathcal I_n) \\ K \subseteq J_{k_l}}} \kappa (f(h_{n,K}) \otimes \iota_{n,K}(p_{n,K}))
\end{equation}
is a positive contraction of norm $1$ by Lemma~\ref{l:upwardsmap}, since all $h_{n,K}$ are
pairwise orthogonal, positive elements with spectrum $[0,1]$, and since $f\in C_0((0,1])$ is a
positive contraction with $\|f\| = 1$ (see definition of $f$ right after \eqref{eq:Irhocomp}).
Since $f_0\colon [0,1] \to [0,1]$ is continuous and satisfies $f_0(0) = 0$ and $f_0(1) = 1$,
Lemma~\ref{l:Cuntzdompi} gives a contraction $z_n \in B$ such that
\begin{equation}\label{eq:zconj}
\Big\| b -  z_n^\ast  \Big( \sum_{\substack{K\in \mathcal{UP}(\mathcal I_n) \\ K \subseteq J_{k_l}}} \kappa (f(h_{n,K}) \otimes \iota_{n,K}(p_{n,K})) \Big) z_n \Big\| < \frac{1}{n}.
\end{equation}

Let $z \in B_{\seq}$ be the element induced by $(z_n)_{n=k_0}^\infty$. By~\eqref{eq:zconj}, we
have $\iota(b) = z^\ast c z$ as required.\let\oldqed\qed\renewcommand{\qed}{\oldqed\quad
(Sublemma~\ref{sl:2}).}
\end{proof}

With the two sublemmas in place, the proof of Lemma~\ref{lm:Main} is complete.
\end{proof}

%%%%%%%%%%%%%%%%%%%%%%%%%%%%%%%%%%%%%%%%%%%%%%%
%%%%%%%%%%%%%%%%%%%%%%%%%%%%%%%%%%%%%%%%%%%%%%%

\section{$\mathcal O_2$- and $\mathcal O_\infty$-stable $^*$-homomorphisms}\label{S4}

Our main objective in this section is to upgrade the technical construction of the previous
section to additionally ensure that $\rho$ is $\mathcal O_2$-stable. This is
Lemma~\ref{l:O2stablelift} below.

We begin with a review of $\mathcal O_2$ and $\mathcal O_\infty$-stability for maps. We use a
sequential version of a relative commutant construction developed by Kirchberg in the setting of
ultrapowers \cite{KirAbel}. If $\theta \colon A \to B$ is a $^*$-homomorphism, we denote the
relative commutant of $\theta(A) \subseteq B \subseteq B_{\seq}$  by $B_{\seq} \cap
\theta(A)'$. The annihilator,
\begin{equation}
\Ann \theta(A) \coloneqq \{ x\in B_{\seq} : x \theta(A) = \theta(A) x = \{0\} \},
\end{equation}
of $\theta(A)$ in $B_{\seq}$ is an ideal in $B_{\seq} \cap \theta(A)'$. If $A$ is
separable, then any sequential approximate identity $(e_n)^\infty_{n=1}$ for $A$ represents a unit
for $(B_{\seq} \cap \theta(A)') / \Ann \theta(A)$.  With this notation, we recall the
definitions of $\mathcal O_2$ and $\mathcal O_\infty$-stability of maps from
\cite[Definition~3.16]{Gabe17}.

\begin{definition}
Let $A$ and $B$ be $\Cstar$-algebras with $A$ separable, and let $\theta \colon A \to B$ be a
$^*$-homomorphism. Then $\theta$ is \emph{$\mathcal O_2$-stable} (respectively~\emph{$\mathcal
O_\infty$-stable}) if $\mathcal O_2$ (respectively~$\mathcal O_\infty$) embeds unitally into
\begin{equation}
\frac{B_{\seq} \cap \theta(A)'}{\Ann \theta(A)}.
\end{equation}
\end{definition}

In this framework, it goes back to work of Kirchberg, Lin, R\o{}rdam and Phillips (abstracted to
strongly self-absorbing algebras in \cite{TW:Trans}), that a separable $\Cstar$-algebra $A$ is
$\mathcal O_2$-stable (resp.~$\mathcal O_\infty$-stable) if and only if the identity map $\id_A$
is $\mathcal O_2$-stable (resp.~$\mathcal O_\infty$-stable), (see \cite[Proposition~3.19]{Gabe17}
for this exact statement).

The next lemma is extracted from the proof of \cite[Theorem~4.3]{BEMSW} to show that embeddings of
cones into simple purely infinite algebras are $\mathcal O_2$-stable.
\begin{lemma}\label{Lm:HO2}
Let $A$ be a simple, purely infinite $\Cstar$-algebra, and let $h\in A$ be a positive element with
spectrum $[0,1]$. Then the $^*$-homomorphism $\phi \colon C_0((0,1]) \to A$ given by functional
calculus on $h$ is $\mathcal O_2$-stable.
\end{lemma}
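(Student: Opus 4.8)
The plan is to unwind the definition of $\mathcal O_2$-stability and reduce to producing two approximately central isometries. Since $C_0((0,1])$ is generated by $\id_{(0,1]}$, which $\phi$ sends to $h$, commuting with $\phi(C_0((0,1]))$ is the same as commuting with $h$, so $A_{(\infty)} \cap \phi(C_0((0,1]))' = A_{(\infty)} \cap \{h\}'$, while the unit of the quotient $Q \coloneqq (A_{(\infty)}\cap\{h\}')/\Ann\phi(C_0((0,1]))$ is represented by any sequence $(g_m(h))_m$ with $g_m \in C_0((0,1])_+$ increasing to $1$ pointwise on $(0,1]$. Thus a unital embedding $\mathcal O_2 \hookrightarrow Q$ amounts to elements $v_1, v_2 \in A_{(\infty)} \cap \{h\}'$ whose classes satisfy $v_i^* v_i = 1_Q$, $v_1^* v_2 = 0$ and $v_1 v_1^* + v_2 v_2^* = 1_Q$; concretely this means $v_i^* v_i\,g(h) = g(h)$, $v_1^* v_2\, g(h) = 0$ and $(v_1 v_1^* + v_2 v_2^*) g(h) = g(h)$ modulo the annihilator, for all $g \in C_0((0,1])$. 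By a standard reindexing argument (Kirchberg's $\varepsilon$-test), it suffices to produce, for each $\varepsilon > 0$ and each finite $F \subseteq C_0((0,1])$, two contractions $v_1, v_2 \in A$ satisfying these relations to within $\varepsilon$ on $F(h)$, together with $\|[v_i, h]\| < \varepsilon$.

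To build these I would first replace the two ``slots'' by orthogonal copies of $h$: fix continuous $w_1, w_2 \colon [0,1] \to [0,1]$ with disjoint supports, each mapping onto $[0,1]$, and set $h_i \coloneqq w_i(h)$, so that $h_1, h_2 \in C^*(h)$ are orthogonal positive elements of spectrum $[0,1]$. As $A$ is simple and purely infinite (\cite[Proposition~4.5]{KR-AJM}), every non-zero positive element is Cuntz dominated by every other; in particular $h \precsim f(h_i)$ for every continuous $f$ vanishing at $0$ and non-zero at $\|h_i\|$, so Lemma~\ref{l:Cuntzdompi} supplies bounded sequences realising these comparisons. I would then partition $(0,1]$ into finitely many subintervals of length $<\delta$, take a subordinate partition of unity $(c_j = c_j(h))$ of $h$, and inside each hereditary subalgebra $\overline{c_j A c_j}$ --- which is again simple and purely infinite --- use these bounded comparisons to manufacture a local approximate copy of the $\mathcal O_2$-relations: elements $v_1^{(j)}, v_2^{(j)}$ whose source elements act as a local unit on $(c_j - \varepsilon)_+$, whose ranges are orthogonal, and whose range elements again sum to such a local unit. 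Setting $v_i \coloneqq \sum_j v_i^{(j)}$ and letting $\delta \to 0$ then yields the required approximate relations.

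The two features that make this work are exactly the hypotheses. Fullness and full spectrum of $h$ guarantee that each slice $c_j$ is non-zero, so the local construction never degenerates, and they force the unit of $Q$ to be a genuine approximate unit rather than a projection (consistently, the zero map is not $\mathcal O_2$-stable, so the fullness of $h$ cannot be dispensed with). Approximate centrality $\|[v_i, h]\| \lesssim \delta$ comes for free from confining each $v_i^{(j)}$ to a slice on which $h$ varies by at most $\delta$; the small part of the spectrum near $0$, where the local units fail to cover $h$, is harmless because the resulting defect is supported near the spectral value $0$ and hence lands in $\Ann\phi(C_0((0,1]))$ in the limit.

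The main obstacle is the local step: in a purely infinite simple algebra not assumed to contain projections adapted to $c_j$, one cannot literally embed $\mathcal O_2$ into the corner $\overline{c_j A c_j}$, so the ``isometries'' must be built at the level of the Cuntz semigroup and kept compatible with the approximate units $(c_j - \varepsilon)_+$ while remaining mutually orthogonal across slices. Controlling these approximate $\mathcal O_2$-relations uniformly is precisely the technical heart of the argument; here I would follow the construction in the proof of \cite[Theorem~4.3]{BEMSW}, from which the statement is extracted, organising the bounded Cuntz comparisons via Lemma~\ref{l:Cuntzdompi} and assembling the cpc order zero map $\mathcal O_2 \to A_{(\infty)}$ associated to the cone embedding through Proposition~\ref{prop.Cone}.
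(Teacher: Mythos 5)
Your reduction to producing approximately central pairs $v_1,v_2$ satisfying approximate $\mathcal O_2$-relations relative to $(g_m(h))_m$ is fine, and your slicing idea (confining constructions to hereditary subalgebras $\overline{c_jAc_j}$ on which $h$ varies by at most $\delta$) correctly accounts for approximate centrality. But the step you yourself flag as the ``technical heart'' --- manufacturing the local relations, above all $v_1^{(j)}v_1^{(j)*}+v_2^{(j)}v_2^{(j)*}\approx e_j$ for a local unit $e_j$ --- is a genuine gap, and the tool you propose for it cannot close it. Cuntz comparison, including the bounded form in Lemma~\ref{l:Cuntzdompi}, produces one-sided relations of the form $z^*bz\approx a$; it gives you the ``source'' relations $v_i^*v_i\approx e_j$ and, with a little care, orthogonality of ranges, but it is structurally blind to the constraint that the range elements \emph{sum back up} to the unit. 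That constraint carries a $K$-theoretic obstruction: a unital copy of $\mathcal O_2$ in a corner $pAp$ forces $[p]_0=2[p]_0$, i.e.\ $[p]_0=0$ in $K_0(A)$, and in a simple purely infinite algebra this condition is also sufficient but need not hold for the spectral projections naturally attached to your slices. Nothing in your outline addresses this obstruction, and an argument built purely on Cuntz domination cannot, since all non-zero positive elements in a simple purely infinite algebra are mutually Cuntz equivalent regardless of their $K_0$ data.

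The paper's proof resolves exactly this point, by a different mechanism than the one you sketch. It first invokes Zhang's theorem \cite{Z:PAMS} that simple purely infinite $\Cstar$-algebras have real rank zero, so $h$ can be approximated within $1/(2n)$ by $\sum_i\lambda_i\tilde p_i$ with $\tilde p_i$ honest pairwise orthogonal projections; it then runs a telescoping subprojection swap (choose $p_k\le\tilde p_k$ with $[p_k]_0=0$, then $q_{k-1}\le\tilde p_{k-1}$ with $[q_{k-1}]_0=-[\tilde p_k-p_k]_0$, set $p_{k-1}=q_{k-1}+(\tilde p_k-p_k)$, and so on) to replace the $\tilde p_i$ by orthogonal projections $p_i$ with $[p_i]_0=0$, still approximating $h$ within $1/n$. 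Only then do unital embeddings $\theta_{n,i}\colon\mathcal O_2\to p_iAp_i$ exist, and $\theta_n=\bigoplus_i\theta_{n,i}$ gives the required approximately central, approximately unit-preserving maps directly --- no $\varepsilon$-test gymnastics over positive local units is needed, because one has genuine projections. Your appeal to \cite[Theorem~4.3]{BEMSW} is not wrong as a citation (the paper states the lemma is extracted from that proof, and the BEMSW argument is of the same projection-plus-$K_0$ type), but as a proof it amounts to deferring precisely the content at issue; the construction you describe in your own words, organised around Lemma~\ref{l:Cuntzdompi} and Cuntz-semigroup bookkeeping, would stall at the range-sum relation.
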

\begin{proof}
Fix $n\in \mathbb N$.  As $A$ has real rank zero (\cite{Z:PAMS}), find $0<\lambda_1\leq \dots \leq \lambda_k \leq 1$ and non-zero pairwise orthogonal projections $\tilde{p}_1,\dots,\tilde{p}_k\in A$, such that
\begin{equation}
\| h - \sum_{i=1}^k \lambda_i \tilde{p}_i\| < 1/(2n).
\end{equation}
Set $\lambda_0=0$, and notice that $\lambda_i-\lambda_{i-1}< 1/(2n)$ for $i=1,\dots,k$.

As $A$ is simple and purely infinite, find a non-zero subprojection $p_k$ of $\tilde{p}_k$ such that $[p_k]_0=0$ in $K_0(A)$.  Now find a non-zero subprojection $q_{k-1}$ of $\tilde{p}_{k-1}$ so that $[q_{k-1}]_0=-[\tilde{p}_k-p_k]_0$ in $K_0(A)$, and set $p_{k-1}=q_{k-1}+(\tilde{p}_k-p_k)$.  Next find a non-zero subprojection $q_{k-2}$ of $\tilde{p}_{k-2}$ so that $[q_{k-2}]_0=-[\tilde{p}_{k-1}-q_{k-1}]_0$ in $K_0(A)$ and set $p_{k-2}=q_{k-2}+(\tilde{p}_{k-1}-q_{k-1})$.  Carry on in this way to find $p_{k-3},\dots,p_1$ so that each $[p_i]_0=0$.  Then
\begin{equation}
\|\sum_{i=1}^k\lambda_ip_i - \sum_{i=1}^k\lambda_i\tilde{p}_i\|\leq \max_{i=1,\dots,k}(\lambda_i-\lambda_{i-1})<\frac{1}{2n},
\end{equation}
so that
\begin{equation}
\| h - \sum_{i=1}^k \lambda_i p_i\| < 1/n.
\end{equation}

As each $p_i$ is properly infinite with $[p_i]_0 = 0$, fix unital embeddings $\theta_{n,i} \colon \mathcal O_2 \to p_i A p_i$.  Let $\theta_n = \bigoplus_{i=1}^k \theta_{n,i} \colon \mathcal O_2 \to \bigoplus_{i=1}^k p_i Ap_i \subseteq A$. Then $\| [\theta_n(x),h]\| \to 0$ for any $x\in \mathcal O_2$, and $\theta_n(1_{\mathcal O_2}) h \to h$. Thus $(\theta_n)_{n=1}^\infty$ induces a unital embedding of $\mathcal O_2$ into $(B_{\seq} \cap \phi(C_0((0,1]))')/\Ann  \phi(C_0((0,1]))$.\end{proof}

As expected, the tensor product of an $\mathcal{O}_2$-stable (respectively
$\mathcal{O}_\infty$-stable) $^*$-homomorphism with another $^*$-homomorphism is again
$\mathcal{O}_2$-stable (respectively $\mathcal{O}_\infty$-stable).

\begin{lemma}\label{Lm:TensorO2}
Let $\phi \colon A\to B$ and $\psi \colon C \to D$ be $^*$-homomorphisms with $A$ and $C$
separable, and suppose that $\psi$ is $\mathcal O_2$-stable (or $\mathcal O_\infty$-stable). Then
$\phi \otimes \psi \colon A \otimes_{\max{}} C \to B \otimes_{\max{}} D$ is $\mathcal O_2$-stable
(or $\mathcal O_\infty$-stable).
\end{lemma}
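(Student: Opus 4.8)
The plan is to reduce $\mathcal O_2$-stability (respectively $\mathcal O_\infty$-stability) of $\phi\otimes\psi$ to that of $\psi$ by transporting the witnessing embedding along a natural connecting $^*$-homomorphism. Write $E$ for the target quotient $\frac{(B\otimes_{\max}D)_{(\infty)}\cap(\phi\otimes\psi)(A\otimes_{\max}C)'}{\Ann(\phi\otimes\psi)(A\otimes_{\max}C)}$ and $F$ for $\frac{D_{(\infty)}\cap\psi(C)'}{\Ann\psi(C)}$. By hypothesis there is a unital embedding $\Gamma\colon\mathcal O_2\to F$ (respectively $\mathcal O_\infty\to F$). I would construct a unital $^*$-homomorphism $\bar\Lambda\colon F\to E$ and then take $\bar\Lambda\circ\Gamma$; since $\mathcal O_2$ and $\mathcal O_\infty$ are simple, any unital $^*$-homomorphism out of them is automatically injective, so this composite is the required unital embedding. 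In this way both cases are handled in one stroke, the only property of $\mathcal O_2$ or $\mathcal O_\infty$ used being simplicity. (We assume throughout that $\phi\neq0$, so that $\phi\otimes\psi\neq0$ and $E\neq0$.)

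To build $\bar\Lambda$, fix positive, contractive, sequential approximate units $(g_n)_n$ for $A$ and $(e_n)_n$ for $C$, available by separability. First I would define a bounded, linear, $^*$-preserving map $\Lambda_0\colon D_{(\infty)}\to(B\otimes_{\max}D)_{(\infty)}$ on representatives by $\Lambda_0([(d_n)_n])=[(\phi(g_n)\otimes d_n)_n]$; this is well defined because $\|\phi(g_n)\otimes d_n\|\leq\|d_n\|$ (the maximal norm is a cross norm on elementary tensors), so $c_0$-sequences are sent to $c_0$-sequences. The key structural input is a commutator estimate showing that $\Lambda_0$ carries $D_{(\infty)}\cap\psi(C)'$ into $(B\otimes_{\max}D)_{(\infty)}\cap(\phi\otimes\psi)(A\otimes_{\max}C)'$: for $x=[(d_n)_n]$ in the $\psi$-commutant and an elementary tensor $\phi(a)\otimes\psi(c)$, the relevant commutator splits as $\phi(g_na)\otimes[d_n,\psi(c)]+\phi(g_na-ag_n)\otimes\psi(c)d_n$, whose two summands vanish in norm because $x$ commutes with $\psi(C)$ and because $(g_n)_n$ is an approximate unit; density of the elementary tensors then handles all of $(\phi\otimes\psi)(A\otimes_{\max}C)$.

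Next I would pass to the quotients. Composing $\Lambda_0$ with the quotient map onto $E$, one checks that $\Ann\psi(C)$ lands in $\Ann(\phi\otimes\psi)(A\otimes_{\max}C)$ (again by a one-line norm estimate using $d_n\psi(c)\to0$), so the map descends to $\bar\Lambda\colon F\to E$. Finally, unitality of $\bar\Lambda$ follows from the identity $\Lambda_0([(\psi(e_n))_n])=[(\phi(g_n)\otimes\psi(e_n))_n]=[((\phi\otimes\psi)(g_n\otimes e_n))_n]$: the left-hand class is the unit of $F$, and since $(g_n\otimes e_n)_n$ is an approximate unit for $A\otimes_{\max}C$, the right-hand class represents the unit of $E$ by the observation recorded before the definition of $\mathcal O_2$-stability.

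I expect the one genuinely delicate point to be that $\Lambda_0$ is \emph{not} multiplicative on the nose: $\Lambda_0(x)\Lambda_0(y)$ involves $\phi(g_n^2)$ whereas $\Lambda_0(xy)$ involves $\phi(g_n)$, and $\|\phi(g_n^2)-\phi(g_n)\|$ need not tend to $0$. The resolution---and the reason the quotient by the annihilator is exactly the right object---is that $\Lambda_0(x)\Lambda_0(y)-\Lambda_0(xy)$ is represented by $((\phi(g_n^2)-\phi(g_n))\otimes d_nd_n')_n$, which annihilates every $\phi(a)\otimes\psi(c)$ since $\|(g_n^2-g_n)a\|\to0$. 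Hence $\bar\Lambda$ is multiplicative as a map into $E$, completing the argument; the $\mathcal O_\infty$ case is verbatim the same with $\mathcal O_\infty$ in place of $\mathcal O_2$.
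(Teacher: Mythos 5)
Your argument is correct, and at its core it runs on the same mechanism as the paper's proof: transport the unital embedding $\Gamma$ of $\mathcal O_2$ (or $\mathcal O_\infty$) into $E$ by tensoring with the unit on the $\phi$-side, which is represented by $(\phi(g_n))_n$. The paper packages this in one stroke: the canonical $^*$-homomorphism $B_{(\infty)}\otimes_{\max}D_{(\infty)}\to(B\otimes_{\max}D)_{(\infty)}$ induces a unital $^*$-homomorphism from $\bigl(\frac{B_{(\infty)}\cap\phi(A)'}{\Ann\phi(A)}\bigr)\otimes_{\max}F$ into $E$, which one composes with $x\mapsto 1\otimes\Gamma(x)$, using that the first tensor factor is unital by separability of $A$. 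Your $\Lambda_0$ is precisely that induced map restricted to the slice $1\otimes F$, since the unit of the first factor is the class of $(\phi(g_n))_n$; so conceptually the two proofs coincide. Where you genuinely diverge is in the implementation: the paper's tensor-product formulation makes multiplicativity automatic (the connecting map is an honest $^*$-homomorphism before passing to quotients) but leaves implicit the verifications that commutants land in the commutant, annihilators in the annihilator, and units on the unit; you never form the tensor product of the two corona-type algebras or invoke the universal property of $\otimes_{\max}$, and instead carry out exactly those checks by hand at the level of representatives. The price is the wrinkle you correctly identify and resolve: $\Lambda_0$ is multiplicative only modulo $\Ann(\phi\otimes\psi)(A\otimes_{\max}C)$, with the defect $\bigl(\phi(g_n^2-g_n)\otimes d_nd_n'\bigr)_n$ killed by $\|(g_n^2-g_n)a\|\to 0$. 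All your estimates are sound — the commutator splitting, the annihilator checks, and the fact that $(g_n\otimes e_n)_n$ is an approximate unit for $A\otimes_{\max}C$, so that its image represents $1_E$ by the remark preceding the definition of $\mathcal O_2$-stability — and your explicit caveat $\phi\neq 0$, needed so that $E\neq 0$ and simplicity of $\mathcal O_2$ or $\mathcal O_\infty$ forces injectivity of the unital composite, is an assumption the paper's proof also uses tacitly. In short: your route is a self-contained, more elementary unpacking of the paper's one-line argument; the paper's version buys brevity and automatic multiplicativity, yours buys transparency about what the quotient by the annihilator is actually doing.
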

\begin{proof}
The canonical $^*$-homomorphism $B_{\seq} \otimes_{\max{}} D_{\seq} \to (B\otimes_{\max{}}
D)_{\seq}$ induces a unital $^*$-homomorphism from the maximal tensor product
\begin{equation}
\Big(\frac{B_{\seq} \cap \phi(A)'}{\Ann \phi(A)}\Big) \otimes_{\max{}}
\Big(\frac{D_{\seq} \cap \psi(C)'}{\Ann \psi(C)}\Big)
\end{equation}
to the quotient
\begin{equation}
\frac{(B\otimes_{\max{}} D)_{\seq} \cap (\phi \otimes \psi)(A \otimes_{\max{}}
C)'}{\Ann (\phi \otimes \psi)(A\otimes_{\max{}} C)}.\label{Lm:TensorO2.1}
\end{equation}
Since $(B_{\seq}\cap\phi(A)')/\Ann(\phi(A))$ is unital, a unital embedding $\mathcal
O_2\rightarrow (D_{\seq}\cap \psi(C)')/\Ann(\psi(C))$ given by $\mathcal O_2$-stability of
$\psi$ gives rise to a unital embedding of $\mathcal O_2$ into the algebra of~(\ref{Lm:TensorO2.1}). So
$\phi\otimes\psi$ is $\mathcal O_2$-stable.  The same argument works with $\mathcal O_\infty$ in
place of $\mathcal O_2$.
\end{proof}

Following \cite{KR-AJM}, a non-zero, positive element $a\in A$ is called \emph{properly infinite}
if $a\oplus a \precsim a \oplus 0$ in $M_2(A)$. We record the following lemma for later use.

\begin{lemma}\label{l:imageOinftystable}
Any non-zero, positive element in the image of an $\mathcal O_\infty$-stable $^*$-homomorphism, is
properly infinite.
\end{lemma}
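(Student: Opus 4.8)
The plan is to verify proper infiniteness directly from the definition of $\mathcal O_\infty$-stability by a $2\times2$ matrix manipulation performed in a sequence algebra, and then descend to $B$. Let $\theta\colon A\to B$ be the $\mathcal O_\infty$-stable $^*$-homomorphism (so $A$ is separable), and let $c$ be a non-zero positive element of $\theta(A)$. Since a $^*$-homomorphism maps the positive cone onto the positive cone of its image, I would first write $c=\theta(a)$ for some non-zero $a\in A_+$. The goal is then to show $\theta(a)\oplus\theta(a)\precsim\theta(a)\oplus 0$ in $M_2(B)$, and I would obtain this by producing an exact Cuntz witness in $M_2(B_{(\infty)})=M_2(B)_{(\infty)}$ and then reindexing down to $M_2(B)$.

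The key mechanism is to extract two isometries with orthogonal ranges. By $\mathcal O_\infty$-stability there is a unital embedding of $\mathcal O_\infty$ into $D\coloneqq(B_{(\infty)}\cap\theta(A)')/\Ann\theta(A)$; inside $\mathcal O_\infty$ I would pick two isometries with orthogonal ranges, giving $t_1,t_2\in D$ with $t_i^*t_j=\delta_{ij}1_D$. Lifting along the surjection $q\colon B_{(\infty)}\cap\theta(A)'\to D$, I obtain $v_1,v_2\in B_{(\infty)}\cap\theta(A)'$ with $q(v_i^*v_j)=\delta_{ij}1_D$. The decisive observation is that these relations, which hold only modulo $\Ann\theta(A)$, become \emph{exact} after multiplying by elements of $\theta(A)$: since $1_D$ is represented by $(\theta(e_n))_n$ for an approximate identity $(e_n)$ of $A$, and since $\Ann\theta(A)\cdot\theta(A)=\{0\}$, for every $x\in A$ one gets the genuine identities
\begin{equation}
v_i^*v_j\,\theta(x)=\delta_{ij}\,\theta(x)\quad\text{and}\quad v_i\,\theta(x)=\theta(x)\,v_i\quad\text{in }B_{(\infty)},
\end{equation}
the latter because each $v_i$ lies in the relative commutant $\theta(A)'$.

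With these exact relations in hand, writing $b\coloneqq\theta(a)$, I would set
\begin{equation}
d\coloneqq\begin{pmatrix} v_1 & v_2 \\ 0 & 0 \end{pmatrix}\in M_2(B_{(\infty)})
\end{equation}
and compute $d^*(b\oplus 0)d$ entry by entry. Using $v_i\,b=b\,v_i$ followed by $v_i^*v_j\,b=\delta_{ij}b$, the diagonal entries become $v_i^*bv_i=v_i^*v_i b=b$ and the off-diagonal entries become $v_i^*bv_j=v_i^*v_j b=0$, so that
\begin{equation}
d^*(b\oplus 0)\,d=b\oplus b.
\end{equation}
This gives $b\oplus b\precsim b\oplus 0$ in $M_2(B_{(\infty)})=M_2(B)_{(\infty)}$, with an exact witness. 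Finally, representing $d$ by a sequence $(d^{(n)})_n$ in $M_2(B)$, the equality above says $\|(d^{(n)})^*(b\oplus 0)d^{(n)}-(b\oplus b)\|\to 0$, so evaluating at a sufficiently large index yields the required Cuntz subequivalence in $M_2(B)$ for each tolerance. Hence $\theta(a)$ is properly infinite.

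I expect no serious difficulty in this argument; the only real care needed is the bookkeeping around the annihilator and the representation of $1_D$ by an approximate identity, ensuring that the isometry relations in $D$ upgrade to honest equations once multiplied into $\theta(A)$, together with the routine (but essential) reindexing argument that transfers the Cuntz subequivalence from the sequence algebra $M_2(B)_{(\infty)}$ back to $M_2(B)$.
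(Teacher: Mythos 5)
Your proof is correct, and the central mechanism is the same as the paper's: use the unital copy of $\mathcal O_\infty$ in $(B_{(\infty)}\cap\theta(A)')/\Ann\theta(A)$ to extract two isometries with orthogonal ranges, lift them, and exploit the fact that relations holding modulo $\Ann\theta(A)$ become honest identities when multiplied against $\theta(A)$ (since the unit of the quotient is represented by $(\theta(e_n))_n$ for an approximate identity $(e_n)$ of the separable algebra $A$). Where you diverge is the endgame. The paper compresses the lifted isometries into the hereditary subalgebra, setting $x_n = \theta(a)^{1/4}s_1^{(n)}\theta(a)^{1/4}$ and $y_n = \theta(a)^{1/4}s_2^{(n)}\theta(a)^{1/4}$, checks the approximate relations $x_n^*x_n \to \theta(a)$, $y_n^*y_n \to \theta(a)$, $x_n^*y_n \to 0$ with $x_n, y_n \in \overline{\theta(a)B\theta(a)}$, and then cites the characterisation of proper infiniteness in \cite[Proposition~3.3(iv)]{KR-AJM}. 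You instead verify the definition $\theta(a)\oplus\theta(a)\precsim\theta(a)\oplus 0$ directly: your exact identities $v_i^*v_j\,\theta(x)=\delta_{ij}\theta(x)$ and $v_i\theta(x)=\theta(x)v_i$ yield the genuine equation $d^*(\theta(a)\oplus 0)d = \theta(a)\oplus\theta(a)$ in $M_2(B_{(\infty)})=M_2(B)_{(\infty)}$, after which picking terms of a representing sequence for $d$ gives the Cuntz subequivalence in $M_2(B)$ (no reindexing in the technical sense is even needed, just evaluation at large indices, as you say). Your route buys self-containedness --- it avoids the external characterisation entirely and even produces an exact, rather than approximate, algebraic witness in the sequence algebra --- at the cost of the small $2\times 2$ computation; the paper's route is shorter on the page precisely because the localisation into $\overline{\theta(a)B\theta(a)}$ lets the cited result do the matrix work. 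Both arguments are sound.
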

\begin{proof}
Let $\theta \colon A \to B$ be an $\mathcal O_\infty$-stable $^*$-homomorphism, and let $a\in A$
be a positive element such that $\theta(a)$ is non-zero. Let $s_1,s_2\in B_{\seq} \cap
\theta(A)'$ be elements so that $s_1+ \Ann \theta(A) , s_2 + \Ann \theta(A)$ are isometries in
$(B_{\seq} \cap \theta(A)')/ \Ann \theta(A)$ with mutually orthogonal range projections. Let
$(s_i^{(n)})_{n=1}^\infty \in \prod_{n=1}^\infty B$ be a lift of $s_i$ for $i=1,2$. Let $x_n
\coloneqq  \theta(a)^{1/4} s_1^{(n)} \theta(a)^{1/4}$ and $y_n \coloneqq  \theta(a)^{1/4} s_2^{(n)}
\theta(a)^{1/4}$. Then $x_n, y_n \in \overline{\theta(a) B \theta(a)}$, $x_n^\ast x_n \to \theta(a)$,
$y_n^\ast y_n \to \theta(a)$ and $x_n^\ast y_n \to 0$. By \cite[Proposition~3.3(iv)]{KR-AJM},
$\theta(a)$ is properly infinite.
\end{proof}

We are now able to upgrade our main technical lemma (Lemma~\ref{lm:Main}) to additionally insist
that the $^*$-homomorphism constructed is $\mathcal O_2$-stable.

\begin{lemma}\label{l:O2stablelift}
Let $A$ and $B$ be $\Cstar$-algebras with $A$ separable and $B \otimes \mathcal O_\infty \cong B$,
and let $\Theta \colon \mathcal I(A) \to \mathcal I(B)$ be a $\Cu$-morphism. Then for $n\in\mathbb N$, there are finite
dimensional $\Cstar$-algebras $F_n$ and cpc maps $\psi_n \colon C_0((0,1]) \otimes A \to F_n$ and
$\eta_n \colon F_n \to B$ such that, writing
\begin{align}
\psi &\coloneqq (\psi_n)_{n=1}^\infty  \colon C_0((0,1]) \otimes A \to \frac{\prod_{n=1}^\infty  F_n}{\bigoplus_{n=1}^\infty  F_n}, \quad\text{ and} \\
\eta &\coloneqq (\eta_n)_{n=1}^\infty  \colon \frac{\prod_{n=1}^\infty  F_n}{\bigoplus_{n=1}^\infty  F_n} \to B_{\seq}
\end{align}
for the induced maps, the following are satisfied:
\begin{itemize}
\item[$(a)$] $\psi$ is a $^*$-homomorphism;
\item[$(b)$] each $\eta_n$ is order zero;
\item[$(c)$] if $\rho \colon C_0((0,1]) \otimes A \to B_{\seq}$ is the $^*$-homomorphism
    induced by the cpc order zero map $(\eta \circ \psi)(\id_{(0,1]} \otimes \cdot)$ (see
    Proposition~\ref{prop.Cone}), then
\begin{equation}
\mathcal I(\rho)(I \otimes J) = \overline{B_{\seq} \Theta(J) B_{\seq}}
\end{equation}
for any $J\in \mathcal I(A)$ and any non-zero $I \in \mathcal I(C_0((0,1]))$; and
\item[$(d)$] the $^*$-homomorphism $\rho$ is $\mathcal O_2$-stable.
\end{itemize}
If every quotient of $A$ is quasidiagonal, we may additionally arrange that
$\psi_n(C_0((0,1))\otimes A) = 0$ for each $n\in \mathbb N$.
\end{lemma}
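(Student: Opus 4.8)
The plan is to re-run the construction of Lemma~\ref{lm:Main}, but to thread an additional clean cone embedding into $\mathcal O_\infty$ through the upward maps, so that the resulting $\rho$ becomes, up to a canonical reabsorption isomorphism, a tensor product of the map produced in Lemma~\ref{lm:Main} with a cone embedding $C_0((0,1])\to\mathcal O_\infty$. Once this structure is visible, $\mathcal O_2$-stability follows formally: the cone embedding is $\mathcal O_2$-stable by Lemma~\ref{Lm:HO2}, and tensoring an $\mathcal O_2$-stable $^*$-homomorphism with any other preserves $\mathcal O_2$-stability by Lemma~\ref{Lm:TensorO2}. Properties $(a)$--$(c)$ will be inherited from Lemma~\ref{lm:Main} essentially unchanged, since the extra cone factor lives in the simple algebra $\mathcal O_\infty$ and so does not alter which ideals of $B$ are generated.

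Concretely, I would first invoke Lemma~\ref{lm:Main} to obtain finite-dimensional $F_n$ together with $\psi_n^0$ and $\eta_n^0$, whose induced maps $\psi^0$, $\eta^0$ and $\rho^0$ satisfy $(a)$--$(c)$. Next fix a positive element $h_0\in\mathcal O_\infty$ with spectrum $[0,1]$, giving the cone embedding $\gamma\colon C_0((0,1])\to\mathcal O_\infty$, $\gamma(f)=f(h_0)$. Using Lemma~\ref{lem:kappa} together with $\mathcal O_\infty\cong\mathcal O_\infty\otimes\mathcal O_\infty$, I would enlarge the codomain of the upward maps from $B\otimes\mathcal O_\infty$ to $B\otimes\mathcal O_\infty\otimes\mathcal O_\infty$ and reabsorb, the new $\mathcal O_\infty$ factor hosting $h_0$. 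The key device is to introduce a \emph{second} multiplication-map smearing, exactly the role played by $m^\ast$ in~\eqref{eq:rhocomp}: one copy of the cone variable continues to drive the ideal-generating elements $h_{n,I}$ as in Lemma~\ref{lm:Main}, while the other copy is routed to the clean generator $h_0$. Through the cone/order-zero duality of Proposition~\ref{prop.Cone} and the composition identity of Lemma~\ref{l:compoz}, this realises the induced $\rho$ as
\begin{equation}
\rho = (\text{reabsorption})\circ(\rho^0\otimes\gamma)\circ(\text{smearing}),
\end{equation}
where the smearing is the non-degenerate $^*$-homomorphism $C_0((0,1])\otimes A\to C_0((0,1])\otimes A\otimes C_0((0,1])$ built from $m^\ast$.

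With this factorisation in hand, $(a)$ and $(b)$ are immediate, and $(c)$ follows as in Lemma~\ref{lm:Main}: the functional-calculus and ideal computations of Sublemmas~\ref{sl:1}~and~\ref{sl:2} go through with only cosmetic changes because $\gamma(f)$ is full in $\mathcal O_\infty$ for every non-zero $f$ with $f(0)=0$, and $\mathcal I(\kappa)(K\otimes\mathcal O_\infty)=K$, so tensoring by the extra cone factor and applying the reabsorption isomorphism leaves $\mathcal I(\rho)(I\otimes J)$ unchanged. For $(d)$, Lemma~\ref{Lm:HO2} gives $\mathcal O_2$-stability of $\gamma$, Lemma~\ref{Lm:TensorO2} upgrades this to $\rho^0\otimes\gamma$, and one checks that both the reabsorption isomorphism and precomposition by the non-degenerate smearing preserve the unital embedding of $\mathcal O_2$ into the relevant relative commutant, yielding $\mathcal O_2$-stability of $\rho$.

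I expect the main obstacle to be the $\mathcal O_2$-stability bookkeeping in $(d)$: Lemma~\ref{Lm:TensorO2} naturally produces a copy of $\mathcal O_2$ in a relative commutant computed inside a \emph{double} sequence algebra, whereas $\mathcal O_2$-stability of $\rho$ is a statement about $(B_{(\infty)}\cap\rho(C_0((0,1])\otimes A)')/\Ann\rho(C_0((0,1])\otimes A)$ inside the single sequence algebra $B_{(\infty)}$. Reconciling these, that is, lifting the abstract tensor-product argument to the finite level so that the approximately central copies of $\mathcal O_2$ land unitally in the correct relative commutant, and simultaneously confirming that the extra smearing does not disturb the norm estimates~\eqref{eq:psinormp} underpinning Sublemma~\ref{sl:2}, is where the real care is needed.
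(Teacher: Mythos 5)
Your proposal is correct and follows essentially the same route as the paper: there one sets $\eta_n \coloneqq \kappa(h\otimes\eta_n'(\cdot))$ for the data $(F_n,\psi_n,\eta_n')$ from Lemma~\ref{lm:Main} and verifies that the induced $\rho$ equals $\kappa_0\circ(\iota\otimes\rho')\circ(m^\ast\otimes\id_A)$ --- precisely your factorisation $(\text{reabsorption})\circ(\rho^0\otimes\gamma)\circ(\text{smearing})$ --- with $(d)$ obtained from Lemmas \ref{Lm:HO2}~and~\ref{Lm:TensorO2}, and $(c)$ obtained functorially from Lemma~\ref{lm:Main}$(c)$ together with simplicity of $\mathcal O_\infty$, a full element of $\Theta(J)$, and the defining property of $\kappa$, with no need to revisit Sublemmas \ref{sl:1}~and~\ref{sl:2} or the estimates~\eqref{eq:psinormp}. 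The one obstacle you flag largely dissolves: the double-sequence-algebra bookkeeping for $(d)$ is discharged wholesale by the fact that composing an $\mathcal O_2$-stable $^*$-homomorphism with arbitrary $^*$-homomorphisms on either side preserves $\mathcal O_2$-stability (\cite[Lemma~3.20]{Gabe17}), so no lifting of the tensor-product argument to the finite level is needed.
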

\begin{proof}
Apply Lemma~\ref{lm:Main} to obtain $(F_n, \psi_n, \eta_n')_{n=1}^\infty$ satisfying $(a)$--$(c)$
and satisfying $\psi_n(C_0((0,1)) \otimes A) = 0$ for all $n$ if every quotient of $A$ is
quasidiagonal. Fix a positive contraction $h\in \mathcal O_\infty$ with spectrum $[0,1]$. By
Lemma~\ref{lem:kappa}, there exists an isomorphism $\kappa \colon \mathcal O_\infty \otimes B \to
B$ such that $\mathcal I(\kappa)(\mathcal O_\infty\otimes K)=K$ for each $K\in\mathcal I(B)$.
Define $\eta_n \coloneqq  \kappa(h \otimes \eta_n'(\cdot)) \colon F_n \to B$. We will show that
$(F_n, \psi_n, \eta_n)_{n=1}^\infty$ satisfies conditions $(a)$--$(d)$ above.

Statement~$(a)$ and the final assertion are satisfied by choice of the $\psi_n$. Each $\eta'_n$ is
cpc~order zero, so each $\eta_n$ is cpc~order zero as well, giving~$(b)$. It remains to check
$(c)$~and~$(d)$. We first verify~$(d)$.

The map $\kappa$ induces a $^*$-homomorphism
\begin{equation}
\kappa_0 \colon \mathcal O_\infty \otimes B_{\seq} \to B_{\seq},
\end{equation}
such that, for $x\in \mathcal O_\infty$ and $(b_n)_{n=1}^\infty  \in B_{\seq}$,
\begin{equation}
\kappa_0 (x \otimes (b_n)_{n=1}^\infty) = (\kappa(x \otimes b_n))_{n=1}^\infty.
\end{equation}
Let
\begin{equation}
\eta' \coloneqq (\eta_n')_{n=1}^\infty \colon \frac{\prod_{n=1}^\infty  F_n}{\bigoplus_{n=1}^\infty  F_n} \to B_{\seq}.
\end{equation}
Then $\kappa_0 \circ (h \otimes \eta'(\cdot)) = \eta$.

Let $\rho'\colon C_0((0,1]) \otimes A \to B_{\seq}$ be the unique $^*$-homomorphism such that
\begin{equation}\label{eq:rho'ida}
\rho'(\id_{(0,1]} \otimes a) = (\eta' \circ \psi)(\id_{(0,1]} \otimes a),\quad a \in A.\footnote{This does \emph{not} imply that $\rho' = \eta' \circ \psi$ since $\eta' \circ
\psi$ is not necessarily a $^*$-homomorphism.}
\end{equation}
Let $\iota \colon C_0((0,1]) \to \mathcal O_\infty$ be the embedding given by $\iota(f)=f(h)$, and
let $m^\ast \colon C_0((0,1]) \to C_0((0,1]) \otimes C_0((0,1])$ be the $^*$-homomorphism induced
by multiplication. Define $\rho_0$ as the composition
\begin{equation}
C_0((0,1]) \otimes A \xrightarrow{m^\ast \otimes \id_A} C_0((0,1]) \otimes C_0((0,1]) \otimes A \xrightarrow{\iota \otimes \rho'} \mathcal O_\infty \otimes B_{\seq} \xrightarrow{\kappa_0} B_{\seq}.
\end{equation}
As $m^\ast(\id_{(0,1]}) = \id_{(0,1]} \otimes \id_{(0,1]}$ and $\iota(\id_{(0,1]}) = h$, for each
$a \in A$,
\begin{align}
\rho_0(\id_{(0,1]} \otimes a)
    &= (\kappa_0 \circ (\iota \otimes \rho'))( \id_{(0,1]} \otimes \id_{(0,1]} \otimes a) \nonumber\\
    &= \kappa_0 (h \otimes \eta'(\psi(\id_{(0,1]}\otimes a)))\quad\text{by~\eqref{eq:rho'ida}}\nonumber \\
    &= (\eta \circ \psi )(\id_{(0,1]}\otimes a).
\end{align}
Hence $\rho_0$ is the $^*$-homomorphism induced by the cpc order zero map $(\eta \circ
\psi)(\id_{(0,1]}\otimes \cdot)$ (see Proposition~\ref{prop.Cone}), and thus $\rho = \rho_0$.

The $^*$-homomorphism
\begin{equation}
\iota \otimes \rho' \colon C_0((0,1]) \otimes C_0((0,1]) \otimes A \to \mathcal O_\infty \otimes B_{\seq}
\end{equation}
is $\mathcal O_2$-stable by Lemmas \ref{Lm:HO2}~and~\ref{Lm:TensorO2}. As $\rho$ factors through
this $\mathcal O_2$-stable map by construction of $\rho_0$, it follows that $\rho$ is $\mathcal
O_2$-stable by \cite[Lemma~3.20]{Gabe17}. Hence $(d)$ is confirmed, and it remains to check~$(c)$.

For~$(c)$, take non-zero ideals $I,K \in \mathcal I(C_0((0,1]))$ and $J\in \mathcal I(A)$. Since
$\mathcal O_\infty$ is simple and since $\rho'$ satisfies Lemma~\ref{lm:Main}$(c)$,
\begin{align}
\mathcal I(\iota \otimes \rho')(K \otimes I \otimes J)
    &= \mathcal I(\iota)(K) \otimes \mathcal I(\rho')(I\otimes J)\nonumber\\
    &= \mathcal O_\infty \otimes \overline{B_{\seq} \Theta(J) B_{\seq}}.
\end{align}

As $\mathcal I(\iota \otimes \rho')$ is a $\Cu$-morphism, it preserves suprema. Since ideals of
the form $K\otimes I$ form a basis for $\mathcal I(C_0((0,1])\otimes C_0((0,1]))$, it follows that
for any non-zero ideals $I_0 \in \mathcal I(C_0((0,1])\otimes C_0((0,1]))$ and $J \in \mathcal
I(A)$,
\begin{equation}\label{eq:iotarhoI0}
\mathcal I(\iota \otimes \rho')(I_0 \otimes J) =  \mathcal O_\infty \otimes \overline{B_{\seq} \Theta(J) B_{\seq}}.
\end{equation}

By \cite[Proposition~2.15]{Gabe17}, $\mathcal I((\iota \otimes \rho')\circ (m^\ast \otimes
\id_{A})) = \mathcal I(\iota\otimes \rho') \circ \mathcal I(m^\ast \otimes \id_{A})$. Therefore,
for any non-zero ideals $I\in \mathcal I(C_0((0,1]))$ and $J \in \mathcal I(A)$,
using~\eqref{eq:iotarhoI0} at~\eqref{eq:OinftyTheta}, we have
\begin{align}
\mathcal I((\iota \otimes \rho')\circ (m^\ast \otimes \id_{A}))(I \otimes J)
    &= \mathcal I(\iota \otimes \rho')(\mathcal I(m^\ast)(I) \otimes J) \nonumber \\
    &= \mathcal O_\infty \otimes \overline{B_{\seq} \Theta(J) B_{\seq}}. \label{eq:OinftyTheta}
\end{align}
By Lemma~\ref{lem:fullelement}, the ideal $\Theta(J)$ contains a full element, say $b_J$. It
follows that $\mathcal I((\iota \otimes \rho')\circ (m^\ast \otimes \id_{A}))(I \otimes J)$ is the
ideal in $\mathcal O_\infty \otimes B_{\seq}$ generated by
\begin{equation}
1_{\mathcal O_\infty} \otimes b_J \in \mathcal O_\infty \otimes \Theta(J) \subseteq \mathcal O_\infty \otimes B_{\seq}.
\end{equation}
Hence $\mathcal I(\kappa_0)(\mathcal O_\infty \otimes \overline{B_{\seq} \Theta(J)
B_{\seq}})$ is the ideal generated by
\begin{equation}
\kappa_0(1_{\mathcal O_\infty} \otimes( b_J)_{n=1}^\infty) = (\kappa (1_{\mathcal O_\infty} \otimes b_J))_{n=1}^\infty\label{eq:kappa01bJ}.
\end{equation}
Since $\kappa (1_{\mathcal O_\infty} \otimes b_J)$ is a full element in $\Theta(J)$ by choice of
$\kappa$, the ideal generated by the element~\eqref{eq:kappa01bJ} is precisely
$\overline{B_{\seq}\Theta(J)B_{\seq}}$. Hence, for any nonzero $I \in
\mathcal{I}(C_0((0,1]))$ and $J \in \mathcal{I}(A)$,
\begin{align}
\mathcal I(\rho) (I\otimes J) &= \mathcal I(\kappa_0) \circ \mathcal I ((\iota \otimes \rho')\circ (m^\ast \otimes \id_{A}))(I \otimes J) \nonumber\\
    &=  \mathcal I(\kappa_0) (\mathcal O_\infty \otimes \overline{B_{\seq} \Theta(J) B_{\seq}})\qquad\text{by~\eqref{eq:OinftyTheta}} \nonumber\\
    &= \overline{B_{\seq} \Theta(J) B_{\seq}},
\end{align}
as required.
\end{proof}

We now turn to realising the ideal-lattice behaviour of $\mathcal O_\infty$-stable morphisms as
opposed to morphisms into $\mathcal O_\infty$-stable algebras.  We make use of
$(B_{\seq})_{\seq}$ --- that is, the sequence algebra of the sequence algebra
$B_{\seq}$ --- as a technical device. We eliminate it in the following section.

\begin{lemma}\label{l:Oinftymapmain}
Let $A$ and $B$ be $\Cstar$-algebras with $A$ separable, and suppose that $\theta \colon A \to B$
is an $\mathcal O_\infty$-stable $^*$-homomorphism. Then for $n\in\mathbb N$, there are finite dimensional
$\Cstar$-algebras $F_n$ and cpc maps $\psi_n \colon C_0((0,1]) \otimes A \to F_n$ and $\eta_n
\colon F_n \to B_{\seq}$ such that, writing
\begin{align}
\psi &\coloneqq (\psi_n)_{n=1}^\infty  \colon C_0((0,1]) \otimes A \to \frac{\prod_{n=1}^\infty  F_n}{\bigoplus_{n=1}^\infty  F_n}\quad\text{ and}\\
\eta &\coloneqq (\eta_n)_{n=1}^\infty  \colon \frac{\prod_{n=1}^\infty  F_n}{\bigoplus_{n=1}^\infty  F_n} \to (B_{\seq})_{\seq}
\end{align}
for the induced maps, the following are satisfied:
\begin{itemize}
\item[$(a)$] $\psi$ is a $^*$-homomorphism;
\item[$(b)$] each $\eta_n$ is order zero;
\item[$(c)$] if $\rho \colon C_0((0,1]) \otimes A \to (B_{\seq})_{\seq}$ is the
    $^*$-homomorphism induced by the cpc order zero map $(\eta \circ \psi)(\id_{(0,1]} \otimes
    \cdot)$ (see Proposition~\ref{prop.Cone}), then
\begin{equation}
\mathcal I(\rho)(I \otimes J) = \overline{(B_{\seq})_{\seq} \theta(J) (B_{\seq})_{\seq}}
\end{equation}
for any $J\in \mathcal I(A)$ and any non-zero $I \in \mathcal I(C_0((0,1]))$; and
\item[$(d)$] the $^*$-homomorphism $\rho$ is $\mathcal O_2$-stable.
\end{itemize}
If every quotient of $A$ is quasidiagonal, we may additionally arrange that
$\psi_n(C_0((0,1))\otimes A) = 0$ for each $n\in \mathbb N$.
\end{lemma}
\begin{proof}
As $\theta \colon A \to B$ is $\mathcal O_\infty$-stable, there is a unital embedding
\begin{equation}
j \colon \mathcal O_\infty \to \frac{B_{\seq} \cap \theta(A)'}{\Ann (\theta(A))}.
\end{equation}
Thus there is an induced $^*$-homomorphism $\theta_0 \colon A \otimes \mathcal O_\infty \to
B_{\seq}$ such that for all $a \in A$ and $x \in \mathcal O_\infty$, for any lift
$\overline{j(x)} \in B_{\seq} \cap \Theta(A)'$ of $j(x)$, we have\footnote{Note that the definition of the annihilator ensures that the expression~\eqref{e4.28} does not depend on the choice of lift $\overline{j(x)}$.}
\begin{equation}\label{e4.28}
\theta_0(a\otimes x)=\theta(a)\overline{j(x)}.
\end{equation}
In particular, $\theta_0(a \otimes 1_{\mathcal O_\infty}) = \theta(a)$ for all $a\in A$.

Let $D\coloneqq  \theta_0(A \otimes \mathcal O_\infty)$. Then $D$ is $\mathcal O_\infty$-stable
by, for example, \cite[Corollary~3.3]{TW:Trans}, and $\theta$ corestricts to a $^*$-homomorphism
$\theta|^D \colon A \to D$. Let $\Theta \coloneqq  \mathcal I(\theta|^D) \colon \mathcal I(A) \to
\mathcal I(D)$, be the $\Cu$-morphism induced by $\theta|^D$. Apply Lemma~\ref{l:O2stablelift} to
obtain $(F_n, \psi_n, \eta'_n)_{n=1}^\infty$ and $\rho' \colon C_0((0,1]) \otimes A \to
D_{\seq}$ satisfying $(a)$--$(d)$ of that lemma and satisfying $\psi_n(C_0((0,1]) \otimes A) =
0$ for all $n$ if every quotient of $A$ is quasidiagonal. Let $\iota \colon D \hookrightarrow
B_{\seq}$ be the inclusion, and $\eta_n \coloneqq  \iota \circ \eta_n' \colon F_n \to
B_{\seq}$. We will show that $(F_n, \psi_n, \eta_n)_{n=1}^\infty$ satisfy conditions
$(a)$--$(d)$.

Clearly~$(a)$ is satisfied, and also the final assertion of the lemma. As each $\eta_n'$ is order
zero, so too is each $\eta_n$, so~$(b)$ is satisfied.

Let $\iota_{\seq} \colon D_{\seq} \to (B_{\seq})_{\seq}$ be the $^*$-homomorphism
on sequence algebras induced by $\iota$. Clearly $\iota_{\seq} \circ \eta' = \eta$ where
$\eta' = (\eta_n')_{n=1}^\infty$. So for each $a \in A$,
\begin{equation}
(\iota_{\seq} \circ \rho') (\id_{(0,1]} \otimes a) = (\iota_{\seq} \circ \eta' \circ \psi)(\id_{(0,1]} \otimes a) = (\eta \circ \psi)(\id_{(0,1]} \otimes a).
\end{equation}
Hence $\iota_{\seq} \circ \rho'$ is the unique $^*$-homomorphism induced by the cpc order zero
map $(\eta \circ \psi)(\id_{(0,1]} \otimes \cdot)$ (see Proposition~\ref{prop.Cone}), so $\rho =
\iota_{\seq} \circ \rho'$ by uniqueness. As the composition of an $\mathcal O_2$-stable
$^*$-homomorphism with any $^*$-homomorphism is again $\mathcal O_2$-stable by
\cite[Lemma~3.20]{Gabe17}, the $\mathcal O_2$-stability of $\rho'$ yields $\mathcal O_2$-stability
of $\rho$. Hence~$(d)$ is satisfied and it remains to check~$(c)$.

To show $(c)$, note that, writing $\iota_B \colon B \to B_{\seq}$ for the canonical embedding,
\begin{equation}\label{eq:iotatheta}
\iota \circ \theta|^D = \iota_B \circ \theta.
\end{equation}
Functoriality of $\mathcal I$ gives
\begin{equation}
\mathcal I(\iota) \circ \Theta = \mathcal I(\iota \circ \theta|^D) = \mathcal I(\iota_B) \circ \mathcal I(\theta).
\end{equation}
Let $\iota_D \colon D \to D_{\seq}$ be the canonical embedding. Fix non-zero ideals $I \in
\mathcal I(C_0((0,1]))$ and $J \in \mathcal I(A)$. Using functoriality of $\mathcal{I}$ at the
first step and that $\rho$ satisfies Lemma~\ref{l:O2stablelift}$(c)$ at the second, we calculate:
\begin{align}
\mathcal I(\rho) (I\otimes J) &= \mathcal I(\iota_{\seq}) \circ \mathcal I(\rho') (I\otimes J) \nonumber\\
    &= \mathcal I(\iota_{\seq}) (\overline{D_{\seq} \Theta(J) D_{\seq}}) \label{eq:Oinftymapmain.e1}\nonumber\\
    &= \mathcal I(\iota_{\seq}) \circ \mathcal I(\iota_D) \circ \mathcal I(\theta|^D)(J).
\end{align}
As $\iota_{\seq} \circ \iota_D \circ \theta|^D$ is equal to the composition of $\theta$ with
the canonical inclusion $B \to (B_{\seq})_{\seq}$, it follows that
\begin{equation}
 \mathcal I(\iota_{\seq}) \circ \mathcal I(\iota_D) \circ \mathcal I(\theta|^D)(J) = \overline{(B_{\seq})_{\seq} \theta(J) (B_{\seq})_{\seq}}
\end{equation}
which verifies~$(c)$.
\end{proof}

%%%%%%%%%%%%%%%%%%%%%%%%%%%%%%%%%%%%%%%%%%%%%%
%%%%%%%%%%%%%%%%%%%%%%%%%%%%%%%%%%%%%%%%%%%%%%

\section{Nuclear dimension of $\mathcal O_\infty$-stable maps}\label{S5}

We now turn to the main results of the paper, combining the existence results of the previous
sections with classification theorems to compute the nuclear dimension of $\mathcal
O_\infty$-stable maps. The appropriate notion of equivalence in this context is approximate
Murray--von Neumann equivalence:

\begin{definition}
Let $A$ and $B$ be $\Cstar$-algebras with $A$ separable, and let $\phi,\psi \colon A\to B$ be
$^*$-homo\-morphisms. We say that $\phi$ and $\psi$ are \emph{approximately Murray--von Neumann
equivalent} if there exists $v\in B_{\seq}$ such that  $v^* \phi(a)v=\psi(a)$ and
$v\psi(a)v^*=\phi(a)$ for all $a\in A$.
\end{definition}

The precise classification ingredient we need is the following uniqueness theorem in the spirit of
Kirchberg's classification results \cite{K:Book,K:German}.

\begin{theorem}[{\cite[Theorem~3.23]{Gabe17}}]\label{Thm:O2MvN}
Let $A$ and $B$ be $\Cstar$-algebras with $A$ separable and exact. Suppose that $\phi,\psi \colon
A\to B$ are nuclear, $\mathcal O_2$-stable $^*$-homo\-morphisms. Then $\phi$ and $\psi$ are
approximately Murray--von Neumann equivalent if and only if $\mathcal I(\phi)=\mathcal I(\psi)$.
\end{theorem}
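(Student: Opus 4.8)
The plan is to treat the two implications separately: the forward (``only if'') direction is routine bookkeeping with ideals, while the converse carries all of the classification content.

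For the easy direction, suppose $v\in B_{(\infty)}$ implements approximate Murray--von Neumann equivalence, so that $v^*\phi(a)v=\psi(a)$ and $v\psi(a)v^*=\phi(a)$ for all $a\in A$. Writing $\iota_B\colon B\to B_{(\infty)}$ for the canonical inclusion, the first relation shows $\psi(J)\subseteq\overline{B_{(\infty)}\phi(J)B_{(\infty)}}$ for every $J\in\mathcal I(A)$, and the second gives the reverse containment; hence $\mathcal I(\iota_B\circ\phi)(J)=\mathcal I(\iota_B\circ\psi)(J)$. By functoriality of $\mathcal I$ (\cite[Proposition~2.15]{Gabe17}) this reads $\mathcal I(\iota_B)(\mathcal I(\phi)(J))=\mathcal I(\iota_B)(\mathcal I(\psi)(J))$, and since $\iota_B$ is injective the induced $\Cu$-morphism $\mathcal I(\iota_B)$ is injective (one recovers an ideal $K\lhd B$ as $\mathcal I(\iota_B)(K)\cap B$). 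Thus $\mathcal I(\phi)=\mathcal I(\psi)$.

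The substance is the converse, and I would prove it by an Elliott-style two-sided intertwining in $B_{(\infty)}$ fed by a one-sided absorption statement: if $\alpha,\beta\colon A\to B$ are nuclear with $\beta$ being $\mathcal O_2$-stable and $\mathcal I(\beta)\leq\mathcal I(\alpha)$ in the pointwise order on ideals, then there is a contraction $w\in B_{(\infty)}$ with $w^*\alpha(a)w=\beta(a)$ for all $a$. To build $w$, I would use $\mathcal O_2$-stability of $\beta$ to obtain a unital copy of $\mathcal O_2$ inside $(B_{(\infty)}\cap\beta(A)')/\Ann \beta(A)$, and hence isometries supplying ``infinite multiplicity''. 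Nuclearity of $\beta$ provides, through the completely positive approximation property, finite-dimensional factorisations $\alpha_k\colon A\to M_{n_k}$, $\beta_k\colon M_{n_k}\to B$ with $\beta\approx\beta_k\circ\alpha_k$; the comparison $\mathcal I(\beta)\leq\mathcal I(\alpha)$ together with the pure infiniteness forced by $\mathcal O_2$-stability lets each matrix block be Cuntz-subordinated (in the sense of $\precsim$) into the hereditary subalgebra of $B$ generated by the appropriate part of $\alpha(A)$, precisely where the ideal data permits. Assembling these conjugations in the sequence algebra yields $w$. This ideal-related Weyl--von Neumann--Voiculescu absorption is the main obstacle: the delicate point is tracking the ideal lattice so that the finite-dimensional pieces land in exactly the ideals dictated by $\mathcal I(\alpha)$, while the $\mathcal O_2$-isometries furnish the room needed for the error terms to vanish in $B_{(\infty)}$.

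Finally, since $\mathcal I(\phi)=\mathcal I(\psi)$ gives both inequalities, the absorption statement produces contractions implementing $\psi\precsim\phi$ and $\phi\precsim\psi$ inside $B_{(\infty)}$. I would then run an approximate intertwining, alternately applying the two one-sided comparisons and patching the resulting partial isometries via a $2\times2$ matrix (Cuntz--Pedersen) argument into a single $v\in B_{(\infty)}$ with $v^*\phi(a)v=\psi(a)$ and $v\psi(a)v^*=\phi(a)$, i.e.\ approximate Murray--von Neumann equivalence. Exactness of $A$ enters throughout the classification machinery to keep nuclearity stable under the tensorial and corner manipulations and to sustain the completely positive approximation property in the sequence algebra.

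\hfill$\square$
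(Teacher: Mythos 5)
You should first note that the paper contains no proof of this statement at all: it is imported wholesale from \cite{Gabe17}, where it is Theorem~3.23 and where its proof occupies most of that article. So the only meaningful comparison is with Gabe's argument. Your ``only if'' direction is correct and essentially the standard one: the identity $\mathcal I(\iota_B)(K)\cap B=K$ holds because a constant sequence approximated by elements of $\overline{B_{(\infty)}KB_{(\infty)}}$ can be approximated coordinatewise by elements of $\overline{BKB}$, so $\mathcal I(\iota_B)$ is indeed injective and the ideal computation goes through.

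The converse is where the genuine gaps lie. Your skeleton (a one-sided Voiculescu-type absorption plus a Cantor--Schr\"oder--Bernstein patching of the two subequivalences) does match the coarse shape of Gabe's proof, but the two load-bearing steps are asserted rather than proved, and the first is misstated. In your absorption lemma you place $\mathcal O_2$-stability on the \emph{dominated} map $\beta$; to compress $\alpha$ onto $\beta$, the infinite-multiplicity room must come from the \emph{dominating} side. A unital copy of $\mathcal O_2$ in $(B_{(\infty)}\cap\beta(A)')/\Ann\beta(A)$ creates no space under $\alpha(A)$, and since $B$ is arbitrary (not purely infinite), all Cuntz comparison must happen inside hereditary subalgebras controlled by the dominating map's image --- this is exactly where $\mathcal O_2$-stability of the dominating map is used in \cite{Gabe17}. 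Your sketch of ``Cuntz-subordinating the matrix blocks and assembling'' silently assumes this, and also elides the hard analytic content: controlling cross terms and upgrading elementwise domination of positives to exact implementation of a $^*$-homomorphism in $B_{(\infty)}$. Second, mutual approximate Murray--von Neumann subequivalence does not formally yield a single $v$ implementing both exact relations; this CSB-type step is itself a nontrivial standalone proposition in \cite{Gabe17} (and ``Cuntz--Pedersen'' is a misattribution --- that name belongs to tracial comparison theory, not the $2\times2$ rotation trick). Finally, exactness enters specifically so that maps into sequence algebras with nuclear cpc lifts remain nuclear (cf.\ \cite{D:JFA}), not as a generic stability under tensorial manipulations. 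In short: right skeleton, but the theorem's actual content is precisely what your proposal leaves unproven.
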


We can now prove Theorem~\ref{ThmC}, which asserts that every $\mathcal O_\infty$-stable
homomorphism $\theta$ out of a separable exact $\Cstar$-algebra $A$ has nuclear dimension at most~1,
and that if moreover every quotient of $A$ is quasidiagonal, then $\theta$ also has decomposition rank at
most~1.

\begin{proof}[Proof of Theorem~\ref{ThmC}]
Let $A$ and $B$ be $\Cstar$-algebras with $A$ separable and exact and let $\theta\colon A\rightarrow B$ be a nuclear $\mathcal O_\infty$-stable $^*$-homomorphism. Fix a positive element $h_0\in \mathcal O_\infty$ with spectrum $[0,1]$, and let $h_1 =
1_{\mathcal O_\infty}- h_0$. For $i \in \{0,1\}$, we write $\iota_i \colon C_0((0,1]) \to \mathcal
O_\infty$ for the injective $^*$-homo\-morphism that sends the generator $\id_{(0,1]}\in
C_0((0,1])$ to $h_i$. These $\iota_i$ are $\mathcal O_2$-stable by Lemma~\ref{Lm:HO2}, and
Lemma~\ref{Lm:TensorO2} implies that the $^*$-homomorphisms $\iota_i \otimes \id_A \colon
C_0((0,1]) \otimes A \to \mathcal O_\infty \otimes A$ are also $\mathcal O_2$-stable.

As $\theta$ is $\mathcal O_\infty$-stable, there exists a unital embedding
\begin{equation}\label{eq:jOinftyrelcom}
j \colon \mathcal O_\infty  \to \frac{B_{\seq} \cap \theta(A)'}{ \Ann \theta(A)}.
\end{equation}
Thus there exists a $^*$-homomorphism $j\times \theta\colon \mathcal O_\infty\otimes A\rightarrow
B_{\seq}$ such that for $x \in \mathcal O_\infty$ and $a \in A$, for any lift $\overline{j(x)}
\in B_{\seq} \cap \theta(A)'$ of $j(x)$, we have $(j\times\theta)(x\otimes
a)=\overline{j(x)}\theta(a)$. For $i \in \{0,1\}$, let $\mu^{(i)}$ denote the composition
\begin{equation}\label{eq:muidef}
C_0((0,1]) \otimes A \xrightarrow{\iota_i \otimes \id_A} \mathcal O_\infty \otimes A \xrightarrow{j \times \theta} B_{\seq} \subseteq (B_{\seq})_{\seq}.
\end{equation}
Since the $\iota_i \otimes \id_A$ are $\mathcal O_2$-stable, \cite[Lemma~3.20]{Gabe17} implies
that the compositions $\mu^{(i)} \coloneqq  (j \times \theta) \circ (\iota_i \otimes \id_A)$ are
$\mathcal O_2$-stable. To see that each $\mu^{(i)}$ is nuclear, it suffices by
\cite[Theorem~2.9]{Gabe-JFA} to show that the order zero map
\begin{equation}\label{eq:thetaidef}
\theta^{(i)} \coloneqq  \mu^{(i)} (\id_{(0,1]} \otimes \cdot) \colon A \to (B_{\seq})_{\seq}
\end{equation}
is nuclear. Let $\overline h_i \in B_{\seq} \cap \theta(A)' \subseteq
(B_{\seq})_{\seq} \cap \theta(A)'$ be a positive lift of $j(h_i) = j(\iota_i
(\id_{(0,1]}))$. Then
\begin{equation}\label{eq:phiirhoi}
\theta^{(i)} (a) = \mu^{(i)} (\id_{(0,1]} \otimes a) = \overline h_i \theta(a) = \overline h_i^{1/2} \theta(a) \overline h_i^{1/2},\quad a\in A.
\end{equation}
As $\theta^{(i)}$ is the composition of the nuclear map $\theta \colon A \to B \subseteq
(B_{\seq})_{\seq}$ and the cp~map $\overline h_i^{1/2} (\cdot) \overline h_i^{1/2} \colon
(B_{\seq})_{\seq} \to (B_{\seq})_{\seq}$, it is nuclear. Thus, each $\mu^{(i)}$ is
nuclear.

Since $h_0 + h_1 = 1_{\mathcal O_\infty}$, we have $1_{B_{\seq}} - (\overline h_0 + \overline
h_1) \in \Ann \theta(A)$. Hence~\eqref{eq:phiirhoi} yields
\begin{equation}
\theta^{(0)}(a) + \theta^{(1)}(a) =
    \theta(a)(\overline h_0 + \overline h_1) = \theta(a),\quad a \in A.
\end{equation}
Thus
\begin{equation}\label{eq:theta0theta1}
\theta^{(0)} + \theta^{(1)} = \theta \colon A \to (B_{\seq})_{\seq}.
\end{equation}

Let $(F_n, \psi_n, \eta_n)_{n=1}^\infty$ and $\rho \colon C_0((0,1])\otimes A \to
(B_{\seq})_{\seq}$ be as provided by Lemma~\ref{l:Oinftymapmain} for the $\mathcal
O_\infty$-stable map $\theta$. If all quotients of $A$ are quasidiagonal, we choose $\psi_n$ such
that $\psi_n(C_0((0,1))\otimes A) = 0$. By construction, the order zero map $\rho(\id_{(0,1]}
\otimes \cdot ) \colon A \to (B_{\seq})_{\seq}$ is represented by the sequence
\begin{equation}\label{5.7}
(\eta_n \circ \psi_n(\id_{(0,1]} \otimes \cdot) \colon A \to B_{\seq})_{n=1}^\infty.
\end{equation}
As each $\eta_n \circ \psi_n (\id_{(0,1]} \otimes \cdot)$ is nuclear (they factor through the
finite dimensional $\Cstar$-algebra $F_n$) and $A$ is exact, $\rho(\id_{(0,1]} \otimes \cdot)$ is
nuclear by \cite[Proposition~3.3]{D:JFA}. So \cite[Theorem~2.9]{Gabe-JFA} implies that $\rho$ is
nuclear.

We will show that $\mathcal I(\rho) = \mathcal I(\mu^{(i)})$ for $i=0,1$. For this, fix $i \in
\{0,1\}$, and fix non-zero $I \in \mathcal{I}(C_0((0,1]))$ and $J \in \mathcal{I}(A)$. As $\rho$
and $\mu^{(i)}$ are  $\Cu$-morphisms and thus preserve suprema, and as $\{I \otimes J : I \in
\mathcal{I}(C_0((0,1]))\text{ and }J \in \mathcal{I}(A)\}$ is a basis for $\mathcal
I(C_0((0,1])\otimes A)$ it suffices to check that $\mathcal I(\rho)(I\otimes J) = \mathcal
I(\mu^{(i)})(I\otimes J)$.

By Lemma~\ref{l:Oinftymapmain}$(c)$, we have
\begin{equation}\label{eq:IrhoIJ}
\mathcal I(\rho)(I \otimes J) = \overline{(B_{\seq})_{\seq} \theta(J) (B_{\seq})_{\seq}}.
\end{equation}
As $I$ is non-zero, and as $\mathcal I(\cdot)$ is a functor (\cite[Proposition~2.5]{Gabe17}), the
definition~\eqref{eq:muidef} of $\mu^{(i)}$ gives
\begin{align}
\mathcal I(\mu^{(i)}) (I \otimes J)
    &= \mathcal I(j \times \theta)(\mathcal I (\iota_i \otimes \id_A)(I \otimes J))\nonumber\\
    &= \mathcal I(j \times \theta)(\mathcal O_\infty \otimes J).\label{eq:Imu(i)}
\end{align}
By the definition of $j \times \theta \colon \mathcal O_\infty \otimes A \to B_{\seq} \subseteq
(B_{\seq})_{\seq}$, the ideal $(j \times \theta)(\mathcal O_\infty \otimes J)$ is
contained in the ideal generated by $\theta(J)$. Conversely, the defining property of $j \times
\theta$, and then~\eqref{eq:Imu(i)} give
\begin{equation}
\theta(J) = (j \times \theta)(1_{\mathcal O_\infty} \otimes J) \subseteq \mathcal I(\mu^{(i)})(I \otimes J).
\end{equation}
Hence $\mathcal I(\mu^{(i)})(I \otimes J)$ is the ideal generated by $\theta(J)$.
Thus~\eqref{eq:IrhoIJ} gives $\mathcal{I}(\mu^{(i)})(I \otimes J) = \mathcal{I}(\rho)(I \otimes
J)$ as required.

Since $\rho$ and $\mu^{(i)}$ are nuclear and $\mathcal O_2$-stable, Theorem~\ref{Thm:O2MvN}
implies that $\rho$ and $\mu^{(i)}$ are approximately Murray--von Neumann equivalent. Hence
\cite[Proposition~3.10]{Gabe17} shows that
\begin{equation}
\rho\oplus 0\text{ and } \mu^{(i)} \oplus 0 \colon C_0((0,1]) \otimes A \to M_2((B_{\seq})_{\seq})
\end{equation}
are approximately unitary equivalent (via unitaries in the minimal unitisation, though we work in the slightly larger unitisation $M_2((\widetilde B_{\seq})_{\seq})$). As $A$ is
separable, a standard reindexing argument for sequence algebras shows that $\rho\oplus 0$ and
$\mu^{(i)} \oplus 0$ are unitary equivalent. Choose unitaries
\begin{equation}
u^{(i)} \in M_2((\widetilde B_{\seq})_{\seq}) = M_2(\widetilde B_{\seq})_{\seq}
\end{equation}
such that
\begin{equation}\label{eq:uirhomu}
u^{(i)\ast} (\rho(x)\oplus 0) u^{(i)} = \mu^{(i)}(x) \oplus 0,\quad x\in C_0((0,1])\otimes A,\ i=0,1.
\end{equation}
Choose a unitary lift $(u^{(i)}_n)_{n=1}^\infty \in \prod_{n=1}^\infty M_2(\widetilde
B_{\seq})$ of each $u^{(i)}$.

Define cpc maps
\begin{equation}
\widehat \psi_n \coloneqq  \psi_n(\id_{(0,1]} \otimes \cdot) \colon A \to F_n,
\end{equation}
and, for $i=0,1$,
\begin{equation}
\widehat \eta_n^{(i)}\coloneqq  u^{(i)\ast}_n (\eta_n(\cdot)\oplus 0) u^{(i)}_n \colon F_n \to M_2(B_{\seq}).
\end{equation}
Then $\widehat{\psi}_n\oplus\widehat{\psi}_n\colon A\rightarrow F_n\oplus F_n$ is cpc for each $n$, and,
as each $\eta_n$ is cpc order zero by Lemma~\ref{l:Oinftymapmain}$(b)$, each
$\widehat{\eta}_n^{(i)}$ is cpc order zero.  Computing in $M_2(\tilde{B}_{\seq})_{\seq}$, for each
$a \in A$, we have
\begin{align}
\theta(a) \oplus 0
    &= (\theta^{(0)}(a) \oplus 0) + (\theta^{(1)}(a) \oplus 0) &&\text{by~\eqref{eq:theta0theta1}}\nonumber\\
    &= (\mu^{(0)}(\id_{(0,1]} \otimes a) \oplus 0) + (\mu^{(1)}(\id_{(0,1]}\otimes a) \oplus 0)&&\text{by~\eqref{eq:thetaidef}}   \nonumber\\
    &= \sum_{i=0}^1 u^{(i)\ast} (\rho(\id_{(0,1]} \otimes a)\oplus 0) u^{(i)} &&\text{by~\eqref{eq:uirhomu}} \nonumber\\
    &= \left( \sum_{i=0}^1 u^{(i)\ast}_n  ((\eta_n (\psi_n(\id_{(0,1]} \otimes a)))\oplus 0) u^{(i)}_n \right)_{n=1}^\infty\nonumber&&\text{by~\eqref{5.7}} \\
    &= \left(  \sum_{i=0}^1 (\widehat \eta_n^{(i)} \circ \widehat\psi_n)(a) \right)_{n=1}^\infty  \nonumber\\
    &= \left( ((\widehat \eta_n^{(0)} + \widehat\eta_n^{(1)}) \circ ( \widehat\psi_n \oplus \widehat\psi_n))(a)\right)_{n=1}^\infty.
\end{align}
Thus $\big(F_n\oplus
F_n,\widehat{\psi}_n\oplus\widehat{\psi}_n,\widehat{\eta}^{(0)}+\widehat{\eta}^{(1)}\big)_{n=1}^\infty$
is a sequence of $1$-decomposable approximations witnessing that $\theta\oplus 0\colon A\rightarrow
M_2(B_{\seq})$ has nuclear dimension at most $1$.  By Proposition~\ref{Prop:cpcHer} it follows
that $\theta \colon A \to B_{\seq}$ also has nuclear dimension at most $1$. Hence
$\dimnuc(\theta)\leq 1$ by \cite[Proposition~2.5]{TW:APDE}.

If all quotients of $A$ are quasidiagonal, then we can additionally choose the $\psi_n$ in
Lemma~\ref{l:Oinftymapmain} so that $\psi_n(C_0((0,1))\otimes A)=0$.  Then
Lemma~\ref{l:Oinftymapmain}$(a)$ implies that
\begin{equation}
( \widehat \psi_n \oplus \widehat \psi_n)_{n=1}^\infty \colon A \to \frac{\prod_{n=1}^\infty  (F_n \oplus F_n)}{\bigoplus_{n=1}^\infty(F_n \oplus F_n)}
\end{equation}
is a $^*$-homomorphism. Now Lemma~\ref{detectdr} shows that $\theta\oplus 0\colon A\rightarrow
M_2(B_{\seq})$ has decomposition rank at most $1$. Cutting to the hereditary subalgebra
$B_{\seq}$ by Proposition~\ref{Prop:cpcHer}, and removing the sequence algebra by
\cite[Proposition~2.5]{TW:APDE} just as above, we obtain $\dr\,\theta \leq 1$.
\end{proof}

Theorem~\ref{ThmA}, which says that separable, nuclear, $\mathcal O_\infty$-stable
$\Cstar$-algebras have nuclear dimension~1 is an immediate consequence of Theorem~\ref{ThmC}.

\begin{proof}[Proof of Theorem~\ref{ThmA}]
Let $A$ be a separable, nuclear, $\mathcal O_\infty$-stable $\Cstar$-algebra. By
\cite[Proposition~3.19]{Gabe17}, $\id_A$ is $\mathcal O_\infty$-stable, so
\begin{equation}
\dimnuc A = \dimnuc \id_A \leq 1
\end{equation}
by Theorem~\ref{ThmC}. As $\mathcal O_\infty$-stable $\Cstar$-algebras are not approximately finite dimensional, it follows from
\cite[Theorem~3.4]{Winter-JFA} that $\dimnuc A >0$ so $\dimnuc A = 1$.
\end{proof}

\section{Finite decomposition rank}\label{S6}

In \cite{Rordam-IJM}, R{\o}rdam constructs a separable, nuclear, $\mathcal O_\infty$-stable
$\Cstar$-algebra $\mathcal A_{[0,1]}$ that is an inductive limit of $\Cstar$-algebras of the form
$C_0((0,1]) \otimes M_{2^n}$. This provides an example of a separable, nuclear, $\mathcal
O_\infty$-stable $\Cstar$-algebra with decomposition rank one. In this section we characterise
exactly when separable, nuclear, $\mathcal O_\infty$-stable $\Cstar$-algebras have finite
decomposition rank.

In \cite{Gabe:new} it was established that a separable, nuclear, $\mathcal O_\infty$-stable
$\Cstar$-algebra is quasidiagonal if and only if its primitive ideal space\footnote{Recall that
the \emph{primitive ideal space} $\Prim A$ (sometimes denoted $\check A$) of a $\Cstar$-algebra
$A$ is the set of all primitive ideals equipped with the Jacobson topology, see
\cite[Section~4.1]{P:Book}. The primitive ideal space is often non-Hausdorff, but is always $T_0$
\cite[4.1.4]{P:Book}.} has no non-empty, compact, open subsets.\footnote{More generally, this
holds for any separable, exact $\Cstar$-algebra that is traceless in the sense of \cite{KR-adv}.
That $\mathcal O_\infty$-stable $\Cstar$-algebras are traceless follows from
\cite[Theorem~9.1]{KR-adv}.} This will be used to give a characterisation of when separable,
nuclear, $\mathcal O_\infty$-stable $\Cstar$-algebras have finite decomposition rank in terms of
their primitive ideal space.

Recall that a subset $C$ of a topological space $X$ is called \emph{locally closed} if there are
open subsets $U \subseteq V \subseteq X$ such that $C = V \setminus U$. Equivalently, $C$ is
locally closed if and only if there is an open subset $V \subseteq X$ such that $C = V \cap
\overline C$, where $\overline{C}$ denotes the closure of $C$.

\begin{remark}\label{r:locallyclosed}
Recall from \cite[Theorem~4.1.3]{P:Book} that there is an order isomorphism from $\mathcal I(A)$
to the collection of open subsets of $\Prim A$ that carries $I \in \mathcal{I}(A)$ to $U_I
\coloneqq  \{J \in \Prim A : I \not\subseteq J\}$. Since $I \mapsto U_I$ is an isomorphism of
lattices, we have $U_I \cap U_J = U_{I\cap J}$ and $U_I \cup U_J = U_{I+J}$ for $I, J \in
\mathcal{I}(A)$.

By a subquotient of a $\Cstar$-algebra $A$, we mean the quotient of an ideal in $A$; that is, a
$\Cstar$-algebra of the form $J/I$ where $I \subseteq J$ are ideals in $A$. The primitive ideal
space $\Prim(J/I)$ is homeomorphic to the locally closed subset $U_J \setminus U_I$ of $\Prim A$
by \cite[Theorem~4.1.11(ii)]{P:Book}. If $I_i \subseteq J_i$ are ideals in $A$ for $i=1,2$ such
that $U_{J_1} \setminus U_{I_1} = U_{J_2} \setminus U_{I_2}$, then the inclusions $I_i
\hookrightarrow I_1+ I_2$ and $J_i \hookrightarrow J_1 + J_2$ induce isomorphisms $J_1/I_1 \cong
(J_1+J_2)/(I_1+I_2) \cong J_2 /I_2$.\footnote{In fact, since $U_{J_1} \cup U_{J_2} = U_{J_1} \cup
(U_{J_2}\setminus U_{I_2}) \cup U_{I_2} = U_{J_1} \cup (U_{J_1} \setminus U_{I_1}) \cup U_{I_2} =
U_{J_1} \cup U_{I_2}$, one has $J_1 + J_2 = J_1 + I_2$. Similarly, $(U_{I_1} \cup U_{I_2}) \cap
U_{J_1} = U_{I_1} \cup (U_{I_2} \cap (U_{J_1} \setminus U_{I_1})) = U_{I_1} \cup (U_{I_2} \cap
(U_{J_2} \setminus U_{I_2})) = U_{I_1}$, and thus $I_1 = (I_1 + I_2) \cap J_1$. Hence
\[
\frac{J_1}{I_1} = \frac{J_1}{(I_1 + I_2) \cap J_1} \cong \frac{J_1 + I_1 + I_2}{I_1 + I_2} = \frac{J_1 + J_2}{I_1+I_2}
\]
where the isomorphism is induced by inclusions. By symmetry the result follows.} It follows that
there is an equivalence relation on subquotients of $A$ given by $J_1/I_1 \sim J_2/I_2$ if and
only if the inclusions $I_i \hookrightarrow I_1+ I_2$ and $J_i \hookrightarrow J_1 + J_2$ induce
isomorphisms $J_1/I_1 \cong (J_1+J_2)/(I_1+I_2) \cong J_2 /I_2$. It also follows that the map $J/I
\mapsto U_J \setminus U_I$ descends to a bijection between equivalence classes of subquotients of
$A$ and locally closed subsets of $\Prim A$.
\end{remark}

By \cite[Proposition~4.1.1]{R-book} a simple $\Cstar$-algebra $A$ is purely infinite if and only if every non-zero hereditary
$\Cstar$-subalgebra contains a non-zero, $\sigma$-unital, stable $\Cstar$-subalgebra. The
following is a similar characterisation.

\begin{proposition}\label{p:herstable}
Let $A$ be a $\Cstar$-algebra. Then every non-zero, $\sigma$-unital hereditary
$\Cstar$-sub\-algebra of $A$ is stable if and only if $A$ is purely infinite and $\Prim A$ has no
locally closed, one-point subsets.
\end{proposition}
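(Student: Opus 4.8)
The plan is to rephrase condition (v) in the language of subquotients. By Remark~\ref{r:locallyclosed}, one-point locally closed subsets of $\Prim A$ correspond precisely to simple subquotients $J/I$, so ``$\Prim A$ has no locally closed one-point subsets'' is the same as ``$A$ has no simple subquotient.'' I would then prove the two implications separately, using throughout that hereditary $\Cstar$-subalgebras and subquotients of purely infinite algebras are purely infinite, and that in a purely infinite algebra Cuntz subequivalence of positive elements is just containment of generated ideals (immediate from the definition recalled via \cite[Proposition~4.5]{KR-AJM}): $x \precsim y \iff \overline{AxA} \subseteq \overline{AyA}$.

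For the forward implication, assume every nonzero $\sigma$-unital hereditary subalgebra of $A$ is stable. First I would deduce that $A$ is purely infinite: for a quotient $A/I$ and $\bar a \in (A/I)_+$, lifting $\bar a$ to $a \in A_+$ shows $\overline{\bar a (A/I)\bar a}$ is a quotient of the stable algebra $\overline{aAa}$, hence stable; so every hereditary subalgebra of every quotient of $A$ is stable, and the non-simple version of \cite[Proposition~4.1.1]{R-book} (as in \cite{KR-AJM}) yields pure infiniteness. Next I would exclude simple subquotients: if $S = J/I$ were simple it would be purely infinite, hence of real rank zero and thus contain a nonzero projection $\bar p$; lifting $\bar p$ to a positive contraction $c \in J$, the $\sigma$-unital hereditary subalgebra $C := \overline{cJc}\subseteq A$ surjects onto the corner $\bar p S \bar p$, which is nonzero and unital. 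Then $C$ has a nonzero unital quotient and cannot be stable, contradicting the hypothesis.

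For the converse, assume $A$ is purely infinite with no simple subquotient, and let $D$ be a nonzero $\sigma$-unital hereditary subalgebra. As $D$ is purely infinite and $\Prim D$ is an open subspace of $\Prim A$ — which inherits the absence of locally closed singletons — $D$ again has no simple subquotient, so it suffices to show such a $D$ is stable. I would apply a version of the Hjelmborg--Rørdam stability criterion \cite{HjelmborgRordam}: $D$ is stable once, for every $a \in D_+$ and $\epsilon>0$, there is $b \in D_+$ with $b \perp (a-\epsilon)_+$ and $(a-2\epsilon)_+ \precsim b$. Taking $b$ to be a strictly positive element of the ($\sigma$-unital) hereditary subalgebra $\Ann((a-\epsilon)_+)$ of elements orthogonal to $(a-\epsilon)_+$, and using that subequivalence is ideal containment, this reduces to the ideal-theoretic assertion that $\overline{D(a-2\epsilon)_+D}$ lies in the ideal generated by $\Ann((a-\epsilon)_+)$; equivalently, every $P \in \Prim D$ with $\|a+P\|>2\epsilon$ fails to contain $\Ann((a-\epsilon)_+)$.

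Establishing this last containment — producing orthogonal room at every primitive quotient where $a$ is large — is the main obstacle. I would argue by contradiction: given $P$ with $\Ann((a-\epsilon)_+)\subseteq P$ and $\|a+P\|>2\epsilon$, I would run Zorn's lemma over the ideals $J \supseteq \Ann((a-\epsilon)_+)$ on which $(a-2\epsilon)_+$ retains norm bounded below (quotient norms decrease along increasing chains, so chain-suprema stay in the family) to obtain a maximal such $J^\ast$. In $D/J^\ast$ the image of $(a-2\epsilon)_+$ is nonzero but, by maximality, has strictly smaller norm in every proper further quotient; this should force $D/J^\ast$ to have a smallest nonzero ideal, which is necessarily simple and hence a simple subquotient of $D$, giving the contradiction. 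The delicate point I expect to demand the most care is choosing the cut-off constants so that the maximal quotient genuinely possesses a smallest \emph{nonzero} simple ideal rather than one whose norm is merely approached — exactly the place where the absence of locally closed singletons, together with pure infiniteness (via comparison and proper infiniteness of positive elements), must be invoked to convert ``no orthogonal room'' into an isolated simple piece of the ideal lattice.
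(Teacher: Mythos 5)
Your forward implication is essentially the paper's argument. Stability of the singly generated hereditary subalgebras $\overline{aAa}$ makes every non-zero positive element properly infinite and hence $A$ purely infinite --- the precise citation is \cite[Proposition~3.7 and Theorem~4.16]{KR-AJM}; your appeal to a ``non-simple version of \cite[Proposition~4.1.1]{R-book}'' and the detour through quotients are unnecessary --- and a simple subquotient $J/I$ would be purely infinite and simple, hence contain a non-zero projection whose positive lift $c\in J$ generates a $\sigma$-unital hereditary subalgebra $\overline{cJc}$ with a non-zero unital quotient, contradicting stability. This matches the paper's proof of that direction.

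The converse contains a genuine gap, at exactly the point you flag, and it does not close under tuning of constants. With cutoff $\beta$ and a maximal $J^\ast$ in your Zorn family, put $B \coloneqq D/J^\ast$ and let $c$ denote the image of $(a-2\epsilon)_+$. Maximality gives $\|c+\overline{K}\|<\beta$ for every non-zero ideal $\overline{K}$ of $B$, hence $(c-\beta)_+\in\overline{K}$ for every such $\overline{K}$. If $\|c\|>\beta$ this indeed yields a minimal non-zero ideal (which is automatically simple), but nothing prevents $\|c\|=\beta$ at the maximal element; then $(c-\beta)_+=0$ and all you retain is that the norm of $c$ drops strictly in every non-zero quotient of $B$. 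Picking a primitive ideal $P$ with $\|c+P\|=\|c\|$ forces $P=0$, so $B$ is primitive --- but a primitive, purely infinite $\Cstar$-algebra need have no minimal non-zero ideal at all: R{\o}rdam's $\mathcal A_{[0,1]}$ of \cite{Rordam-IJM}, whose non-zero ideals correspond to the sets $[0,t)$, is primitive with no minimal non-zero ideal. So ``the maximal quotient has a smallest non-zero ideal'' simply does not follow, and your contradiction never materialises. Converting ``no orthogonal room'' into an actual unital or simple piece is precisely where the Blackadar--Cuntz scaling-element technique is needed (\cite{BC:AJM}, \cite[Proposition~2.7]{PR:crelle}; compare Lemma~\ref{l:fullpropinf}), not lattice-theoretic maximality. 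A smaller unaddressed point: $\Ann((a-\epsilon)_+)$ need not be $\sigma$-unital when $D$ is $\sigma$-unital but not separable, so your strictly positive $b$ may not exist; this one is repairable via directedness of the singly generated ideals and an extra $\epsilon$.

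The heavy lifting you are attempting is in fact already packaged in the literature, and this is the paper's route: \cite[Theorem~4.24]{KR-AJM} states that a $\sigma$-unital purely infinite $\Cstar$-algebra is stable if and only if it has no non-zero unital quotient. Since $D$ is purely infinite by \cite[Proposition~4.17]{KR-AJM}, one only has to rule out unital quotients; a non-zero unital quotient has a simple quotient $D/I_D$, and as $D$ is full and hereditary in $J_A=\overline{ADA}$, strong Morita equivalence and the Rieffel correspondence show $J_A/I_A$ is simple, producing a locally closed point of $\Prim A$ by Remark~\ref{r:locallyclosed}. Your own observation that $\Prim D$ is an open subspace of $\Prim A$ is the same Morita-theoretic input, so you were one citation away from the short proof; as written, however, the converse direction is not proved.
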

\begin{proof}
Suppose that $A$ is purely infinite and that $\Prim A$ has no locally closed, one-point subsets,
and let $B \subseteq A$ be a non-zero, $\sigma$-unital hereditary $\Cstar$-subalgebra. Then $B$ is
also purely infinite by \cite[Proposition~4.17]{KR-AJM}. By \cite[Theorem~4.24]{KR-AJM} $B$ is
stable if (and only if) $B$ has no non-zero, unital quotients. Suppose for contradiction that $B$
has a unital quotient. As any unital $\Cstar$-algebra has a simple quotient, $B$ has a simple
quotient, say $B/I_B$. Let $I_A \coloneqq \overline{AI_BA}$ and $J_A \coloneqq \overline{ABA}$ be
the ideals in $A$ generated by $I_B$ and $B$ respectively. As $B$ is a full, hereditary
$\Cstar$-subalgebra of $J_A$, it follows that $B/I_B \cong (B+I_A)/I_A$ is a full, hereditary
$\Cstar$-subalgebra of $J_A/I_A$. Then $B/I_B$ and $J_A/I_A$ are strongly Morita
equivalent,\footnote{If $A\subseteq B$ is a full, hereditary $\Cstar$-subalgebra, then
$\overline{AB}$ is an imprimitivity $A$--$B$-bimodule, so $A$ and $B$ are strongly Morita
equivalent. See \cite[Chapter~3]{RW:book} for more details.} and thus $J_A/I_A$ is simple by the
Rieffel correspondence.\footnote{The Rieffel correspondence asserts that strongly Morita
equivalent $\Cstar$-algebras have order isomorphic ideal lattices, see for instance
\cite[Theorem~3.22]{RW:book}. In particular, if one is simple then so is the other.} Hence $\Prim
A$ has a locally closed, one-point set, namely $U_{J_A} \setminus U_{I_A}$, by
Remark~\ref{r:locallyclosed}. This is a contradiction, and so $B$ has no unital quotients. Hence
$B$ is stable.

Conversely, suppose that all non-zero, $\sigma$-unital hereditary $\Cstar$-sub\-algebras of $A$
are stable. Then every non-zero, positive element $a\in A$ is stable in the sense that
$\overline{aAa}$ is stable. By \cite[Proposition~3.7 and Theorem~4.16]{KR-AJM} it follows that $A$
is purely infinite. Suppose for contradiction that $\Prim A$ has a locally closed, one-point
subset $\{x\}$. By Remark~\ref{r:locallyclosed} there exist ideals $I \subseteq J$ of $A$ such
that $\{x\} = U_J \setminus U_I$, and hence $J/I$ is simple. As pure infiniteness passes to ideals
and quotients by \cite[Theorem~4.19]{KR-AJM}, it follows that $J/I$ is purely infinite and simple.
Hence $J/I$ contains a non-zero projection $p$. Let $a\in J$ be a positive lift of $p$. Then
$\overline{aAa}$ has a unital quotient, namely $p(J/I) p$, contradicting stability of
$\overline{aAa}$. Hence $\Prim A$ has no locally closed, one-point subsets.
\end{proof}

We do not know if the $\Cstar$-algebras considered in Proposition~\ref{p:herstable} are actually
strongly purely infinite, and thus $\mathcal O_\infty$-stable under separability and nuclearity
assumptions by \cite{KR-adv}.

We can now prove Theorem~\ref{ThmB}, which states that for separable, nuclear, $\mathcal
O_\infty$-stable $\Cstar$-algebras $A$, the following are equivalent:
\begin{itemize}
\item[$(i)$] the decomposition rank of $A$ is finite;
\item[$(ii)$] the decomposition rank of $A$ is $1$;
\item[$(iii)$] every quotient of $A$ is quasidiagonal;
\item[$(iv)$] every non-zero, hereditary $\Cstar$-subalgebra of $A$ is stable;
\item[$(v)$] the primitive ideal space of $A$ has no locally closed, one-point subsets.
\end{itemize}

\begin{proof}[Proof of Theorem~\ref{ThmB}]
$(iii) \Rightarrow (ii)$: Theorem~\ref{ThmC} shows that when all quotients of $A$ are quasidiagonal, then $\dr\,\id_A\leq 1$ (as the identity map inherits $\mathcal O_\infty$-stability from $A$).  Accordingly $\dr\, A=1$, as $A$ is not approximately finite dimensional.

$(ii) \Rightarrow (i)$: This is obvious.

$(i)\Rightarrow (v)$: Suppose that $A$ has finite decomposition rank, and assume for
contradiction that $\Prim A$ has a locally closed, one-point subset. By
Remark~\ref{r:locallyclosed}, $A$ has a simple subquotient. As both pure infiniteness and finite
decomposition rank pass to ideals and to quotients by \cite[Theorem~4.19]{KR-AJM} and \cite[(3.3)
and Proposition~3.8]{KW:IJM}, this subquotient is purely infinite, simple and has finite
decomposition rank. This contradicts that $\Cstar$-algebras of finite decomposition rank are
quasidiagonal \cite[Proposition~5.1]{KW:IJM}, and hence stably finite (see for example
\cite[Proposition~7.1.15]{BO:Book}).

$(v) \Rightarrow (iii)$: We prove the contrapositive statement, so suppose that $I$ is an ideal
in $A$ such that $A/I$ is not quasidiagonal. Let $U_I \subseteq \Prim A$ be the open subset
corresponding to $I$. By \cite[Corollary~C]{Gabe:new}, $\Prim(A/I)$ has a non-empty, compact, open
subset $V \subseteq \Prim(A/I)$. By compactness of $V$ and Zorn's lemma, $V$ contains a maximal,
open, proper subset $W$.\footnote{Consider the collection $\mathcal U$ of proper open subsets of
$V$. Let $\mathcal C \subseteq \mathcal U$ be a chain and let $U \coloneqq \bigcup \mathcal C$
which is an open subset of $V$. As $V$ is compact it follows that $U \neq V$ (otherwise $U' = V$
for some $U' \in \mathcal C$, a contradiction), and thus $U \in \mathcal U$ is an upper bound for
$\mathcal C$. By Zorn's lemma, $\mathcal U$ has a maximal element.} As primitive ideal spaces of
$\Cstar$-algebras are $T_0$ (see \cite[4.1.4]{P:Book}), it follows that $V\setminus W$ is a
singleton.\footnote{If $x,y \in V \setminus W$ are distinct, then that $\Prim(A)$ is $T_0$ implies
the existence of an open neighbourhood $X$ of one that does not contain the other; but then $W
\subsetneq W \cup (X \cap V) \subsetneq V$, contradicting maximality of $W$.} Let $W', V'
\subseteq (\Prim A) \setminus U_I$ be the relatively open subsets corresponding to $V$ and $W$ under the
canonical identification of $\Prim (A/I)$ with $(\Prim A) \setminus U_I$ of
\cite[Theorem~4.1.11]{P:Book}. Then $W_0 \coloneqq  W' \cup U_I$ and $V_0 \coloneqq  V' \cup U_I$
are open subsets of $\Prim A$, and $V_0 \setminus W_0$ is a singleton, so $\Prim A$ has a locally
closed, one-point subset.

$(iv)\Leftrightarrow (v)$: This follows from Proposition~\ref{p:herstable} as $A$ is $\mathcal
O_\infty$-stable and thus purely infinite.
\end{proof}

In particular for separable nuclear $\mathcal O_\infty$-stable $\Cstar$-algebras, finiteness of the decomposition rank is determined by the primitive ideal space.

\begin{corollary}
Let $A$ and $B$ be separable, nuclear, $\mathcal O_\infty$-stable $\Cstar$-algebras such that
$\Prim A$ and $\Prim B$ are homeomorphic. If $A$ has finite decomposition rank then so does $B$.
\end{corollary}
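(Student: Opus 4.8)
The plan is to apply Theorem~\ref{ThmB} to both $A$ and $B$, exploiting that its condition $(v)$ is purely topological. First I would observe that the property appearing in condition $(v)$ of Theorem~\ref{ThmB}---namely that the primitive ideal space has no locally closed, one-point subsets---depends only on the homeomorphism class of the space. Indeed, both the notion of being \emph{locally closed} (defined via $C = V \setminus U$ for open subsets $U \subseteq V$, equivalently $C = V \cap \overline{C}$ for some open $V$) and the notion of being a \emph{one-point} set are manifestly preserved by homeomorphisms. Hence any homeomorphism $\Prim A \to \Prim B$ carries the locally closed, one-point subsets of $\Prim A$ bijectively onto those of $\Prim B$; in particular $\Prim A$ has no such subsets if and only if $\Prim B$ has none.

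Next I would chain the relevant equivalences. Suppose that $A$ has finite decomposition rank. Since $A$ is separable, nuclear and $\mathcal O_\infty$-stable by hypothesis, the implication $(i) \Rightarrow (v)$ of Theorem~\ref{ThmB} applied to $A$ shows that $\Prim A$ has no locally closed, one-point subsets. As $\Prim A$ and $\Prim B$ are homeomorphic, the topological observation above yields that $\Prim B$ likewise has no locally closed, one-point subsets. Now $B$ is also separable, nuclear and $\mathcal O_\infty$-stable, so the implication $(v) \Rightarrow (i)$ of Theorem~\ref{ThmB} applied to $B$ gives that $B$ has finite decomposition rank, as required.

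There is no genuine obstacle in this argument: the entire substance is already contained in Theorem~\ref{ThmB}, and the only additional ingredient is the elementary fact that condition $(v)$ is a homeomorphism invariant of the primitive ideal space. In fact the same reasoning establishes slightly more. Combining the equivalence $(i) \Leftrightarrow (v)$ of Theorem~\ref{ThmB} with the homeomorphism-invariance of $(v)$ shows that, for separable, nuclear, $\mathcal O_\infty$-stable $\Cstar$-algebras, finiteness of the decomposition rank is entirely determined by the homeomorphism type of the primitive ideal space. Thus the hypothesis and the conclusion of the corollary are actually equivalent, and the stated one-directional implication is merely the half of this equivalence that we have chosen to record.
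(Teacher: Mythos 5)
Your proof is correct and is precisely the argument the paper intends: the corollary is stated as an immediate consequence of Theorem~\ref{ThmB}, since condition $(v)$ there is manifestly a homeomorphism invariant of $\Prim A$, and you chain $(i)\Rightarrow(v)$ for $A$ with $(v)\Rightarrow(i)$ for $B$ exactly as the paper's phrasing (``finiteness of the decomposition rank is determined by the primitive ideal space'') suggests. Your closing remark that the implication is in fact an equivalence is also accurate and consistent with the paper.
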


\begin{example}
The $\Cstar$-algebra $\mathcal A_{[0,1]}$ of \cite{Rordam-IJM} has primitive ideal space
homeomorphic to $[0,1)$ equipped with the topology $\{\emptyset\} \cup \{[0,t) : 0 < t \le 1\}$. In this
space, a non-empty intersection of an open and a closed set has the form $(a,b)$, so the space has
no locally closed, one-point sets. Therefore, any separable, nuclear, $\mathcal O_\infty$-stable
$\Cstar$-algebra $B$ with $\Prim B \cong [0,1)$ with topology as above has decomposition rank~1.
\end{example}

A consequence of \cite[Proposition~6.2]{KR-GAFA} is that $\mathcal A_{[0,1]} \otimes B$ has
decomposition rank 1 for any non-zero, separable, nuclear $\Cstar$-algebra $B$. This will be
generalised in Corollary~\ref{c:drtensor} below. We will need the following lemma.

\begin{lemma}\label{l:1pt}
Let $X$ and $Y$ be topological spaces. If $\{(x,y)\} \subseteq X \times Y$ is a locally closed,
one-point subset, then so is $\{ x\} \subseteq X$.
\end{lemma}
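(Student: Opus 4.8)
The plan is to use the equivalent description of locally closed sets recalled above: a subset $C$ of a topological space is locally closed precisely when $C = V \cap \overline{C}$ for some open set $V$. First I would record the standard fact that the closure of a point in a product is the product of the closures, namely $\overline{\{(x,y)\}} = \overline{\{x\}} \times \overline{\{y\}}$; indeed $\overline{\{x\}} \times \overline{\{y\}}$ is closed and contains $(x,y)$, while conversely any basic open neighbourhood $U \times U'$ of a point of $\overline{\{x\}} \times \overline{\{y\}}$ satisfies $U \ni x$ and $U' \ni y$, so it meets $\{(x,y)\}$.

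By hypothesis there is an open set $W \subseteq X \times Y$ with $\{(x,y)\} = W \cap \overline{\{(x,y)\}}$. Since $(x,y) \in W$ and rectangles form a basis for the product topology, I would fix open sets $V \subseteq X$ and $V' \subseteq Y$ with $(x,y) \in V \times V' \subseteq W$. The claim is then that $\{x\} = V \cap \overline{\{x\}}$, which exhibits $\{x\}$ as locally closed in $X$.

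The inclusion $x \in V \cap \overline{\{x\}}$ is clear. For the reverse, I would take $x' \in V \cap \overline{\{x\}}$ and examine the point $(x',y)$: since $x' \in \overline{\{x\}}$ and $y \in \overline{\{y\}}$, the first paragraph gives $(x',y) \in \overline{\{(x,y)\}}$, and since $x' \in V$ and $y \in V'$ we get $(x',y) \in V \times V' \subseteq W$. Hence $(x',y) \in W \cap \overline{\{(x,y)\}} = \{(x,y)\}$, forcing $x' = x$, as needed.

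This is essentially a diagram chase through the definitions, so I expect no serious obstacle. The only steps requiring care are the product-closure identity and the device of reinserting the fixed second coordinate $y$ to transport membership in $\overline{\{x\}}$ back up to membership in $\overline{\{(x,y)\}}$, thereby using the one-point hypothesis on the product to pin down $x'$.
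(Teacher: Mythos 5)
Your proof is correct and follows essentially the same route as the paper: both shrink the open set witnessing local closedness to a basic rectangle $V \times V'$ and invoke the identity $\overline{\{(x,y)\}} = \overline{\{x\}} \times \overline{\{y\}}$, the only cosmetic difference being that the paper reads off $\{x\} = V \cap \overline{\{x\}}$ by factoring the intersection as a product of intersections, whereas you verify it by a pointwise chase with the auxiliary point $(x',y)$.
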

\begin{proof}
Pick an open neighbourhood $U$ of $(x,y)$ such that $\{ (x,y)\} = U \cap \overline{\{(x,y)\}}$.
Let $U_X \subseteq X$ and $U_Y \subseteq Y$ be open subsets such that $(x,y) \in U_X \times U_Y
\subseteq U$.
%Since the closure operation distributes over products in the product topology \footnote{That
%$\overline{A \times B} \subseteq \overline{A} \times \overline{B}$ is
%clear. To see that $\overline{A \times B}^c \subseteq (\overline{A} \times \overline{B})^c$, fix
%$(a,b) \not\in \overline{A \times B}$. Then there are open $U,V$ with $(a, b) \in U
%\times V$ and $(U \times V) \cap (A \times B) = \emptyset$. Interchanging the roles of the factors
%if necessary, we may assume that $U \cap A = \emptyset$. Hence $a \not\in \overline{A}$, and in
%particular, $(x,y) \not\in \overline{A} \times \overline{B}$.}
Then $\{(x,y)\} = (U_X \times U_Y) \cap \overline{\{(x,y)\}} = (U_X \times U_Y) \cap
\big(\overline{\{x\}} \times \overline{\{y\}}\big) = \big(U_X \cap \overline{\{x\}}\big) \times
\big(U_Y \cap \overline{\{y\}}\big)$. In particular, $\{x\} = U_X \cap \overline{\{x\}}$.
\end{proof}

If a simple $\Cstar$-algebra $A$ has finite decomposition rank, then $A \otimes \mathcal O_\infty$
is purely infinite and simple, and thus has infinite decomposition rank.\footnote{If
$A\otimes\mathcal O_\infty$ has finite decomposition rank, then it is quasidiagonal by
\cite[Theorem~5.1]{KW:IJM}, and so is stably finite (see
\cite[Proposition~7.1.15]{BO:Book}, for example), a contradiction.} However, in the non-simple
case we obtain the following permanence property.

\begin{corollary}\label{c:drtensor}
Let $A$ be a separable, nuclear, $\mathcal O_\infty$-stable $\Cstar$-algebra with finite
decomposition rank. Then $A \otimes B$ has decomposition rank one for any non-zero, separable,
nuclear $\Cstar$-algebra $B$.
\end{corollary}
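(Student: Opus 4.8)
The plan is to deduce this directly from the characterisation of finite decomposition rank in Theorem~\ref{ThmB}, applied to the algebra $A\otimes B$ itself. First I would check that $A\otimes B$ again meets the standing hypotheses of that theorem: it is separable and nuclear since $A$ and $B$ are, it is non-zero since $A$ and $B$ are, and it is $\mathcal{O}_\infty$-stable because $A\cong A\otimes\mathcal{O}_\infty$ forces
\begin{equation}
A\otimes B\cong (A\otimes\mathcal{O}_\infty)\otimes B\cong (A\otimes B)\otimes\mathcal{O}_\infty .
\end{equation}
(Here the tensor product is unambiguous, as $A$ is nuclear.) Thus it suffices to verify condition~$(v)$ of Theorem~\ref{ThmB} for $A\otimes B$, namely that $\Prim(A\otimes B)$ has no locally closed, one-point subsets; the equivalence $(v)\Leftrightarrow(ii)$ then delivers $\dr(A\otimes B)=1$.

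The key input, and the reason Lemma~\ref{l:1pt} was prepared, is the identification of the primitive ideal space of the tensor product with a topological product. For nuclear $A$ the natural map
\begin{equation}
\Phi\colon \Prim A\times\Prim B\to \Prim(A\otimes B),\qquad (P,Q)\mapsto \overline{P\otimes B+A\otimes Q},
\end{equation}
is a homeomorphism. I would recall briefly why $\Phi$ is well defined and bijective: for irreducible representations $\pi_P,\pi_Q$ of $A/P$ and $B/Q$, the commutation theorem shows $\pi_P\otimes\pi_Q$ is irreducible, so $(A/P)\otimes(B/Q)$ is primitive, and nuclearity (exactness) of $A$ provides the short exact sequence $0\to\overline{P\otimes B+A\otimes Q}\to A\otimes B\to (A/P)\otimes(B/Q)\to 0$, exhibiting $\Phi(P,Q)$ as a primitive ideal independent of the chosen representations. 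That $\Phi$ is moreover a homeomorphism onto $\Prim(A\otimes B)$ for nuclear $A$ is the standard structural fact I would invoke.

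With $\Phi$ in hand the conclusion is immediate. Suppose for contradiction that $\Prim(A\otimes B)$ had a locally closed, one-point subset. Transporting it through the homeomorphism $\Phi^{-1}$ produces a locally closed, one-point subset $\{(x,y)\}\subseteq\Prim A\times\Prim B$, and Lemma~\ref{l:1pt} then yields a locally closed, one-point subset $\{x\}\subseteq\Prim A$. But $A$ is a separable, nuclear, $\mathcal{O}_\infty$-stable $\Cstar$-algebra of finite decomposition rank, so by the implication $(i)\Rightarrow(v)$ of Theorem~\ref{ThmB} its primitive ideal space has no locally closed, one-point subsets, a contradiction. Hence $\Prim(A\otimes B)$ has no such subset, and Theorem~\ref{ThmB} gives $\dr(A\otimes B)=1$. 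The hard part will be securing (or locating a citation for) the homeomorphism $\Prim(A\otimes B)\cong\Prim A\times\Prim B$ in this non-Hausdorff, nuclear setting; once that is granted, the remainder is a routine application of Theorem~\ref{ThmB} and Lemma~\ref{l:1pt}.
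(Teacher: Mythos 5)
Your proof is correct and takes essentially the same route as the paper: verify that $A\otimes B$ is separable, nuclear and $\mathcal O_\infty$-stable, invoke the homeomorphism $\Prim(A\otimes B)\cong \Prim A\times\Prim B$ for separable nuclear algebras (the paper simply cites \cite[Theorem~IV.3.4.25]{B:book:opalg} for the fact you flagged as the hard part), and combine Lemma~\ref{l:1pt} with the equivalences of Theorem~\ref{ThmB}. The only difference is cosmetic: you run the last step by contradiction, whereas the paper applies Lemma~\ref{l:1pt} directly in contrapositive form.
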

\begin{proof}
The tensor product $A\otimes B$ is separable, nuclear and $\mathcal O_\infty$-stable. As $A$ and
$B$ are separable and nuclear we have $\Prim (A\otimes B) \cong \Prim A \times \Prim B$ (see for
instance \cite[Theorem~IV.3.4.25]{B:book:opalg}). By Theorem~\ref{ThmB}, $\Prim A$ has no locally
closed, one-point subsets, so Lemma~\ref{l:1pt} implies that $\Prim (A \otimes B)$ has no locally
closed, one-point subsets. So Theorem~\ref{ThmB} implies that $A\otimes B$ has decomposition rank
one.
\end{proof}

\begin{problem}
Characterise when nuclear, $\mathcal O_\infty$-stable $^*$-homomorphisms have finite decomposition
rank.
\end{problem}

%%%%%%%%%%%%%%%%%%%%%%%%%%%%%%%%%%%%%%%%%%%%%%%%%%%%%%%%%%%%%%%
%%%%%%%%%%%%%%%%%%%%%%%%%%%%%%%%%%%%%%%%%%%%%%%%%%%%%%%%%%%%%%%

\section{Zero dimensional $\mathcal O_2$-stable $^*$-homomorphisms}\label{S7}

A $\Cstar$-algebra $A$ has nuclear dimension zero (equivalently decomposition rank zero) if and
only if it is AF, in the sense of being able to approximate finite dimensional subsets by finite
dimensional subalgebras; see \cite{KW:IJM}.  Here we begin the investigation of when
$^*$-homomorphisms are zero dimensional, resolving the question for full, $\mathcal O_2$-stable
maps with exact domains. Recall that fullness is a simplicity criterion for $^*$-homomorphisms:
$\theta \colon A \to B$ is called \emph{full} if the image under $\theta$ of every non-zero $a \in
A$ is full in $B$. Equivalently, $\theta$ is full if, for every $I \in \mathcal I(A)$,
\begin{equation}
\mathcal I(\theta)(I) = \left\{\begin{array}{ll}
0 & \textrm{ if $I=0$,} \\
B & \textrm{ otherwise.}
\end{array} \right.
\end{equation}

We begin with the special case of embeddings into $\mathcal O_2$, where Kirchberg famously showed that
every separable and exact $\Cstar$-algebra $A$ admits such an embedding
\cite{KP:crelle}.

\begin{proposition}\label{p:nucdimO2}
Let $A$ be a separable, exact $\Cstar$-algebra, and let $\phi \colon A \to \mathcal O_2$ be an
injective $^*$-homomorphism. Then
\begin{equation}
\dimnuc\phi = \left\{ \begin{array}{ll}
0 & \textrm{ if $A$ is quasidiagonal,} \\
1 & \textrm{ otherwise.}
\end{array} \right.
\end{equation}
\end{proposition}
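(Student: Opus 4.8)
The plan is to establish the two inequalities $\dimnuc\phi \le 1$ (always) and $\dimnuc\phi \ge 1$ (when $A$ is not quasidiagonal), and then, when $A$ is quasidiagonal, to produce genuinely $0$-decomposable approximations. Since $\mathcal O_2$ is nuclear, every completely positive map into it is nuclear, so $\phi$ is nuclear; and since $\mathcal O_2$ is $\mathcal O_\infty$-stable, $\phi$ is an $\mathcal O_\infty$-stable $^*$-homomorphism out of the separable exact algebra $A$ (by \cite[Proposition~3.19]{Gabe17} and \cite[Lemma~3.20]{Gabe17}). Theorem~\ref{ThmC} therefore gives $\dimnuc\phi \le 1$. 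For the lower bound, recall that nuclear dimension and decomposition rank agree at $0$; so if $\dimnuc\phi = 0$ then $\dr\,\phi = 0 < \infty$, and since $\phi$ is injective, Corollary~\ref{DetectQDCor} forces $A$ to be quasidiagonal. Contrapositively, if $A$ is not quasidiagonal then $\dimnuc\phi \ge 1$, and combined with the upper bound this yields $\dimnuc\phi = 1$.

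It then remains to treat the case where $A$ is quasidiagonal and show $\dimnuc\phi = 0$. First note that $\phi$ is full: it is injective and $\mathcal O_2$ is simple, so $\phi(a)$ generates $\mathcal O_2$ for every nonzero $a$. Using quasidiagonality of $A$, I would choose cpc, approximately multiplicative maps $\bar\psi_n \colon A \to F_n$ into finite dimensional $\Cstar$-algebras whose induced map $\bar\psi = (\bar\psi_n)_{n=1}^\infty \colon A \to \prod_n F_n/\bigoplus_n F_n$ is an injective $^*$-homomorphism; after a routine reindexing I may assume $\|\bar\psi_n(a)\| \to \|a\|$ for every $a \in A$. Fix unital, injective $^*$-homomorphisms $\eta_n \colon F_n \to \mathcal O_2$, and write $\eta = (\eta_n)_{n=1}^\infty$ and $\Phi \coloneqq \eta\circ\bar\psi \colon A \to (\mathcal O_2)_{(\infty)}$. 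The two maps to be compared, via the classification Theorem~\ref{Thm:O2MvN}, are $\Phi$ and $\iota\circ\phi$, where $\iota \colon \mathcal O_2 \to (\mathcal O_2)_{(\infty)}$ is the canonical inclusion; both are $^*$-homomorphisms from the separable exact algebra $A$.

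Next I would verify the hypotheses of Theorem~\ref{Thm:O2MvN}. \emph{Nuclearity}: $\iota\phi$ is nuclear since $\phi$ is, while each $\eta_n\bar\psi_n$ factors through the finite dimensional $F_n$, so $\Phi$ is nuclear by \cite[Proposition~3.3]{D:JFA} and \cite[Theorem~2.9]{Gabe-JFA} using exactness of $A$. \emph{$\mathcal O_2$-stability}: $\iota\phi$ is $\mathcal O_2$-stable by \cite[Lemma~3.20]{Gabe17}; for $\Phi$, the relative commutant of the unital finite dimensional subalgebra $\eta_n(F_n) \subseteq \mathcal O_2$ contains a unital copy of $\mathcal O_2$ (each block corner of $\mathcal O_2$ is again isomorphic to $\mathcal O_2$ and has $\mathcal O_2$ in its relative commutant, and these assemble diagonally), and running this across $n$ produces a unital embedding $\mathcal O_2 \to ((\mathcal O_2)_{(\infty)} \cap \Phi(A)')/\Ann\Phi(A)$. \emph{Ideal data}: both maps are full. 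Indeed fullness of $\phi$ gives $\mathcal I(\iota\phi)(I) = (\mathcal O_2)_{(\infty)}$ for $I \ne 0$; and for $\Phi$, given nonzero $a$, the elements $\eta_n\bar\psi_n(a)$ are nonzero, hence full in the simple algebra $\mathcal O_2$, with $\|\eta_n\bar\psi_n(a)\| = \|\bar\psi_n(a)\| \to \|a\| > 0$, so Lemma~\ref{l:Cuntzdompi} (applied in the purely infinite algebra $\mathcal O_2$) produces a bounded sequence conjugating $\Phi(a)$ onto the unit of $(\mathcal O_2)_{(\infty)}$, whence $\mathcal I(\Phi)(I) = (\mathcal O_2)_{(\infty)}$ for every nonzero $I$. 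Thus $\mathcal I(\Phi) = \mathcal I(\iota\phi)$, and Theorem~\ref{Thm:O2MvN} yields approximate Murray--von Neumann equivalence.

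Finally, exactly as in the proof of Theorem~\ref{ThmC}, \cite[Proposition~3.10]{Gabe17} upgrades this to approximate unitary equivalence of $\Phi\oplus 0$ and $(\iota\phi)\oplus 0$ in $M_2((\mathcal O_2)_{(\infty)})$, and a reindexing argument makes it an honest unitary equivalence; conjugating the order zero maps $\eta_n(\cdot)\oplus 0$ by the resulting unitaries yields cpc order zero maps $\widehat\eta_n \colon F_n \to M_2(\mathcal O_2)$ with $\widehat\eta_n\bar\psi_n \to \phi\oplus 0$ pointwise. These constitute $0$-decomposable approximations of $\phi\oplus 0$; compressing to $\mathcal O_2$ via Proposition~\ref{Prop:cpcHer} and removing the sequence algebra via \cite[Proposition~2.5]{TW:APDE} gives $\dimnuc\phi = 0$. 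I expect the main obstacle to be checking that the finite dimensional model $\Phi$ is simultaneously full and $\mathcal O_2$-stable: fullness is exactly where quasidiagonality is genuinely used (to obtain a multiplicative, and after reindexing asymptotically isometric, downward map $\bar\psi$), and both properties rely on the freedom to choose the unital embeddings $\eta_n$ into the simple, purely infinite, self-absorbing algebra $\mathcal O_2$.
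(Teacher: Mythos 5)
Your proof is correct, and its overall architecture coincides with the paper's: both establish $\dimnuc\phi\le 1$ from Theorem~\ref{ThmC}, get the lower bound from $\dimnuc=0\Rightarrow\dr=0$ plus Corollary~\ref{DetectQDCor}, and in the quasidiagonal case compare $\phi$ with a finite-dimensionally factored model via Theorem~\ref{Thm:O2MvN}, finishing with a $2\times 2$ amplification and Proposition~\ref{Prop:cpcHer}. The one genuine divergence is the ambient algebra in which the classification theorem is applied. The paper works in the norm ultrapower $(\mathcal O_2)_\omega$, precisely because $(\mathcal O_2)_\omega$ is \emph{simple} (\cite[Proposition~6.2.6]{R-book}), so the hypothesis $\mathcal I(\iota\phi)=\mathcal I(\Phi)$ is immediate from injectivity of the two maps; it also arranges $\mathcal O_2$-stability of the model slickly by setting $\eta_n=\kappa\circ(\rho_n\otimes 1_{\mathcal O_2})$ for a fixed isomorphism $\kappa\colon\mathcal O_2\otimes\mathcal O_2\to\mathcal O_2$, so that a single unital copy $\kappa(1\otimes\mathcal O_2)$ commutes with every $\eta_n(F_n)$ at once. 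You instead work in the sequence algebra $(\mathcal O_2)_{(\infty)}$, which is emphatically \emph{not} simple, so the ideal-data verification is no longer free --- and you correctly recognise and repair this: your fullness argument for $\Phi$, using the reindexed lower bound $\|\bar\psi_n(a)\|\to\|a\|>0$ together with the uniformly bounded Cuntz-dominating sequences of Lemma~\ref{l:Cuntzdompi} in the purely infinite simple $\mathcal O_2$, is exactly what is needed, and your blockwise relative-commutant construction of a unital copy of $\mathcal O_2$ commuting with each $\eta_n(F_n)$ (assembled across $n$) is a valid, if slightly more laborious, substitute for the paper's $\kappa$-trick. Two small points: the $\mathcal O_\infty$-stability of $\phi$ from $\mathcal O_\infty$-stability of the codomain is \cite[Proposition~3.18]{Gabe17} rather than Proposition~3.19 plus Lemma~3.20 (Lemma~3.20 only propagates stability through \emph{post}-composition of a stable map, which is the wrong configuration here); and at the end, once the reindexing produces honest unitaries lifted to sequences in $M_2(\mathcal O_2)$, your approximations already land in $M_2(\mathcal O_2)$, so the appeal to \cite[Proposition~2.5]{TW:APDE} is redundant --- the paper avoids both steps by extracting a single unitary $u\in M_2(\mathcal O_2)$ and an index $n$ directly from the approximate unitary equivalence. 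Neither point affects correctness.
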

\begin{proof}
As $\mathcal O_2$ is nuclear and $\mathcal O_\infty$-stable, so is $\phi$ by
\cite[Proposition~3.18]{Gabe17}. Hence $\dimnuc \phi \leq 1$ by Theorem~\ref{ThmC}.

First suppose that $\dimnuc\phi = 0$. Since the definitions of nuclear dimension zero and
decomposition rank zero coincide, $\dr\,\phi =0$ and thus $A$ is quasidiagonal by
Corollary~\ref{DetectQDCor}.

Now suppose that $A$ is quasidiagonal. Let $\omega$ be a free ultrafilter on $\mathbb N$, and let
$(\mathcal O_2)_\omega$ denote the norm ultraproduct of $\mathcal O_2$. Let $\iota \colon \mathcal O_2
\hookrightarrow (\mathcal O_2)_\omega$ be the canonical (unital) embedding. Then $\iota \circ \phi
\colon  A \to (\mathcal O_2)_\omega$ is nuclear and $\mathcal O_2$-stable.

Fix an isomorphism $\kappa \colon \mathcal O_2 \otimes \mathcal O_2 \rightarrow \mathcal O_2$ (the
existence of which is due to Elliott, but was first recorded in \cite{R:CRMPASC}; see
\cite[Theorem~5.2.1]{R-book}). By quasidiagonality of $A$ there exist integers
$(k_n)_{n=1}^\infty$ and cpc maps $\psi_n \colon A \to M_{k_n}$ such that
\begin{equation}
\psi \coloneqq (\psi_n)_{n=1}^\infty \colon A \to \prod_{n=1}^\infty M_{k_n} / \bigoplus_{n=1}^\infty M_{k_n}
\end{equation}
is an injective $^*$-homomorphism. Let $\rho_n \colon M_{k_n} \to \mathcal O_2$ be an embedding
for each $n$, and let
\begin{equation}
\eta_n \coloneqq  \kappa \circ (\rho_n \otimes 1_{\mathcal O_2}) \colon M_{k_n} \to \mathcal O_2.
\end{equation}
Then $(\eta_n \circ \psi_n)_\omega \colon A \to (\mathcal O_2)_\omega$ is an injective
$^*$-homomorphism with a cpc lift $(\eta_n \circ \psi_n)_{n=1}^\infty \colon A \to \prod_{n =1}^\infty \mathcal
O_2$. As $A$ is exact and $\mathcal O_2$ is nuclear, this cpc lift is nuclear by
\cite[Proposition~3.3]{D:JFA}, and thus $(\eta_n \circ \psi_n)_\omega$ is nuclear. To see that
$(\eta_n \circ \psi_n)_\omega$ is $\mathcal O_2$-stable, observe that the unital
$\Cstar$-subalgebra
\begin{equation}
\kappa(1_{\mathcal O_2} \otimes \mathcal O_2) \subseteq (\mathcal O_2)_\omega
\end{equation}
is isomorphic to $\mathcal O_2$, and commutes with the image of $(\eta_n \circ \psi_n)_\omega$.
Hence both $\iota \circ \phi \colon A \to (\mathcal O_2)_\omega$ and $(\eta_n \circ \psi_n)_\omega$ are
nuclear, $\mathcal O_2$-stable homomorphisms of $A$ into $(\mathcal O_2)_\omega$. Since $(\mathcal
O_2)_\omega$ is simple by \cite[Proposition~6.2.6]{R-book}, and since $\iota \circ \phi$ and $(\eta_n \circ
\psi_n)_\omega$ are injective, we trivially have $\mathcal I(\iota \circ \phi) = \mathcal I\big((\eta_n \circ
\psi_n)_\omega\big)$. Therefore Theorem~\ref{Thm:O2MvN} implies that $\iota \circ \phi$ and $(\eta_n \circ
\psi_n)_\omega$ are approximately Murray--von Neumann equivalent.

By \cite[Proposition~3.10]{Gabe17}, the maps
\begin{equation}
\iota \circ \phi \oplus 0, \, (\eta_n \circ \psi_n)_\omega \oplus 0 \colon A \to M_2(\mathcal O_2)_\omega
\end{equation}
are approximately unitary equivalent. Hence for $\mathcal F \subseteq A$ finite and $\epsilon >0$,
there exist a unitary $u\in M_2(\mathcal O_2)$ and an $n\in \mathbb N$ such that
\begin{equation}
\| \phi(a) \oplus 0 - \Ad u (\eta_n(\psi_n(a)) \oplus 0) \|  < \epsilon , \qquad a\in \mathcal F.
\end{equation}
Let $\eta \coloneqq  \Ad u \circ (\eta_n \oplus 0) \colon M_{k_n} \to M_2(\mathcal O_2)$ for each
$n$. Then $\eta \circ \psi_n$ approximates $\phi \oplus 0$ up to $\epsilon$ on $\mathcal F$. As
$\eta$ is a $^*$-homomorphism and $\psi_n$ is a cpc map, it follows that $\phi \oplus 0$ has
nuclear dimension zero. By Proposition~\ref{Prop:cpcHer}, so too does $\phi$.
\end{proof}

The following is essentially contained in \cite{PR:crelle}, and uses tricks of Blackadar and Cuntz
from \cite{BC:AJM}. Recall the definition of properly infinite, positive elements given above
Lemma~\ref{l:imageOinftystable}.

\begin{lemma}\label{l:fullpropinf}
Let $b\in B_+$ and $\epsilon >0$. Suppose that $b$ and $(b-\epsilon)_+$ are properly infinite and
generate the same ideal $I$ in $B$. Then $I$ contains a properly infinite, full projection.
\end{lemma}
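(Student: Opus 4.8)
The plan is to realise the required projection inside the Cuntz class of $b$, using the spectral gap between $b$ and $(b-\epsilon)_+$ to upgrade a mere Cuntz equivalence into an honest projection. The argument splits into three steps: (i) establish a Cuntz equivalence $b$ with $(b-\epsilon)_+$; (ii) extract a projection from the gap; (iii) check that the projection is full in $I$ and properly infinite.

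\emph{Step (i).} First I would observe that the hypothesis that $b$ and $(b-\epsilon)_+$ generate the same ideal $I$ gives in particular $b \in \overline{B(b-\epsilon)_+ B}$. A standard property of the Cuntz relation (see \cite{R-book}) says that if a positive element $d$ lies in the ideal generated by a positive element $c$, then for every $\delta>0$ there is $n\in\mathbb{N}$ with $(d-\delta)_+ \precsim c^{\oplus n}$, where $c^{\oplus n}$ denotes $n$ orthogonal copies of $c$. Applying this with $d = b$ and $c = (b-\epsilon)_+$, and using that $(b-\epsilon)_+$ is properly infinite — so that $c^{\oplus n}\precsim c$ for all $n$ by \cite{KR-AJM} — I obtain $(b-\delta)_+ \precsim (b-\epsilon)_+$ for every $\delta>0$, and hence $b \precsim (b-\epsilon)_+$. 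Since $(b-\epsilon)_+ \precsim b$ holds trivially, $b$ and $(b-\epsilon)_+$ are Cuntz equivalent.

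\emph{Step (ii).} The point is now that this Cuntz equivalence comes with a genuine spectral gap: $(b-\epsilon/2)_+ \precsim b \precsim (b-\epsilon)_+$, while $(b-\epsilon)_+ \le (b-\epsilon/2)_+$. Here I would invoke the Blackadar--Cuntz projection-extraction technique \cite{BC:AJM} (as used in \cite{PR:crelle}): whenever a positive element $a$ satisfies $a \precsim (a-\epsilon)_+$ for some $\epsilon>0$, its Cuntz class contains a projection. Concretely, R\o{}rdam's lemma produces $\delta>0$ and $r \in B$ with $(b-3\epsilon/4)_+ = r\,(b-\epsilon-\delta)_+\,r^*$; setting $s := (b-\epsilon-\delta)_+^{1/2}\,r^*$ gives $s^* s = (b-3\epsilon/4)_+$ and $s s^* \in \overline{(b-\epsilon)_+ B (b-\epsilon)_+}$. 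The support of $ss^*$ sits strictly below that of $s^*s$, and this gap lets one pass through a bounded functional calculus (regularising $s$ by $s(s^*s+\lambda)^{-1/2}$ and letting $\lambda\to 0$) to a genuine projection $p \in \overline{bBb}$ with $p \precsim b \precsim p$. \textbf{This functional-calculus bookkeeping is the technical heart of the argument and the main obstacle}, since one must turn approximate support projections into an honest projection lying in the algebra — which is exactly what the gap, via the Blackadar--Cuntz trick, makes possible.

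\emph{Step (iii).} Finally I would harvest the two required properties from the Cuntz equivalence $p \precsim b \precsim p$. Cuntz equivalent elements generate the same ideal, so $p \in I$ and $\overline{BpB} = \overline{BbB} = I$; that is, $p$ generates $I$ and so is full. Moreover proper infiniteness is a Cuntz-equivalence invariant: if $a \precsim a'$, $a' \precsim a$ and $a'\oplus a' \precsim a'$, then $a\oplus a \precsim a'\oplus a' \precsim a' \precsim a$. Applying this to $p$ and $b$ (which is properly infinite) gives $p \oplus p \precsim p$; as $p$ is a projection, Cuntz subequivalence of projections is implemented by Murray--von Neumann subequivalence, so this says precisely that $p$ is a properly infinite projection. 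Thus $I$ contains a full, properly infinite projection, as required.
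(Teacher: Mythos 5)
Your skeleton (establish $b \sim (b-\epsilon)_+$, extract a projection from the spectral gap, then harvest fullness and proper infiniteness) is the same as the paper's, and Steps (i) and (iii) are essentially fine: Step (i) is exactly \cite[Proposition~3.5(ii)]{KR-AJM}, and the invariance statements in Step (iii) are standard --- although note that even there the direction $p \precsim b$ is not free and should not be smuggled out of Step (ii); in the paper it comes from $p \in I = \overline{BbB}$ together with proper infiniteness of $b$, again via \cite[Proposition~3.5(ii)]{KR-AJM}. The genuine gap is Step (ii), in two respects. First, the concrete mechanism you propose fails: $s^*s = (b - 3\epsilon/4)_+$ has no reason to have $0$ isolated in its spectrum, so $s(s^*s+\lambda)^{-1/2}$ does not converge in norm as $\lambda \to 0$, and no projection is produced this way. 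The correct exploitation of the gap is the Blackadar--Cuntz scaling-element trick: replace $s$ by $x \coloneqq s\,g(s^*s)$ for a suitable continuous $g$, so that $x^*x = h(b)$ with $h \equiv 1$ on $[\epsilon, \|b\|]$ while $xx^*$ remains in $\overline{(b-\epsilon)_+ B (b-\epsilon)_+}$; then $x^*x\, xx^* = xx^*$, i.e.\ $x$ is a scaling element, and \cite{BC:AJM} (packaged as \cite[Remarks~2.4 and 2.5]{PR:crelle}) produces a projection $p \in I$ satisfying $pc = c$ for every $c$ with $x^*x c = c$ and $xx^* c = 0$.

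Second, and more fundamentally: your blanket principle ``$a \precsim (a-\epsilon)_+$ implies the Cuntz class of $a$ contains a projection,'' asserted with no infiniteness hypothesis, is not an elementary known fact (it amounts to the statement that compact Cuntz classes are projection classes, which the scaling-element argument alone does not give), and even granting a projection $p$ from the scaling element, nothing in your sketch yields $b \precsim p$: the element $xx^*$ is itself a full-sized copy of $b$ sitting inside $\overline{(b-\epsilon)_+ B (b-\epsilon)_+}$, so a priori there is no room for an element $c \sim b$ that is orthogonal to $xx^*$ yet lies under the unit-locus of $x^*x$. This is precisely where the paper uses proper infiniteness of $(b-\epsilon)_+$: fixing $0 < \epsilon_1 < \epsilon$, by \cite[Lemma~2.5(i) and Proposition~3.3]{KR-AJM} one finds mutually orthogonal positive $c_1, c_2 \in \overline{(b-\epsilon_1)_+ B (b-\epsilon_1)_+}$ with $(b-\epsilon)_+ \precsim c_i$ for $i = 1,2$, arranges $xx^* \in \overline{c_1 B c_1}$ so that $xx^* c_2 = 0$ while $x^*x c_2 = c_2$, and concludes $p c_2 = c_2$, whence $b \sim c_2 \precsim p$. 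Your proof needs this orthogonal-copies manoeuvre (or an equivalent use of proper infiniteness) inserted into Step (ii); as written, Step (ii) does not go through.
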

\begin{proof}
In the following, we use the Cuntz relation on positive elements, and write $a\sim b$ if and only
if $a \precsim b$ and $b\precsim a$.  Arguing as in the last part of the proof of
\cite[Proposition~2.7 \mbox{(i)\;$\Rightarrow$\;(ii)}]{PR:crelle}, there is a positive element
$c_2 \in I$ for which $b \sim c_2$, and a projection $p\in I$ satisfying $pc_2 =
c_2$.\footnote{The following argument is essentially that of \cite[Proposition 2.7 \mbox{(i)\;
$\Rightarrow$\;(ii)}]{PR:crelle}: Fix $0 < \epsilon_1 < \epsilon$. As $b$ and $(b-\epsilon)_+$ are
properly infinite and generate the same ideal, it follows that $b \sim (b-\epsilon)_+ \sim (b-
\epsilon_1)_+$ by \cite[Proposition~3.5(ii)]{KR-AJM}. By \cite[Lemma~2.5(i) and
Proposition~3.3]{KR-AJM} we can find mutually orthogonal, positive elements $c_1$ and $c_2$ in
$\overline{(b-\epsilon_1)_+ I (b-\epsilon_1)_+}$ such that $(b-\epsilon)_+ \precsim c_1$ and
$(b-\epsilon)_+ \precsim c_2$. In particular, $b \precsim c_1$. By \cite[Remark~2.5]{PR:crelle}
there is a contraction $x\in I$ such that $x^\ast x (b - \epsilon_1)_+ = (b-\epsilon_1)_+$ and
$xx^\ast \in \overline{c_1 I c_1} \subseteq \overline{(b-\epsilon_1)_+ I (b-\epsilon_1)_+}$. Hence
$x$ is a scaling element in the sense of \cite{BC:AJM}, i.e.~$x$ is a contraction for which
$x^\ast x x x^\ast = x x^\ast$, and this $x$ satisfies $x^\ast x c_2 = c_2$ and $xx^\ast c_2 = 0$.
By \cite[Remark~2.4]{PR:crelle} there is a projection $p\in I$ such that $p c_2 = c_2$.} In
particular, $b \sim c_2 \precsim p$. As $b$ is properly infinite and $p\in I = \overline{BbB}$, it
follows that $p \precsim b$ by \cite[Proposition~3.5(ii)]{KR-AJM}. Hence $p$ and $b$ are Cuntz
equivalent, so $p$ is properly infinite and full in $I$.
\end{proof}

We can now prove Theorem~\ref{ThmD}: if $\theta$ is a full, $\mathcal O_2$-stable
$^*$-homomorphism out of a separable and exact $\Cstar$-algebra, then the nuclear dimension of
$\theta$ is~0 if $\theta$ is nuclear and $A$ is quasidiagonal, 1 if $\theta$ is nuclear and $A$ is
not quasidiagonal, and $\infty$ if $\theta$ is not nuclear.

\begin{proof}[Proof of Theorem~\ref{ThmD}]
If $\theta$ is not nuclear, then certainly $\dimnuc \theta=\infty$. So suppose $\theta$ is
nuclear. Then by Theorem~\ref{ThmC} we have $\dimnuc \theta \leq 1$. So it suffices to show that
$\dimnuc \theta = 0$ if and only if $A$ is quasidiagonal.

First suppose that $\dimnuc\theta = 0$. Then $\dr\,\theta =0$ and thus $A$ is quasidiagonal by
Corollary~\ref{DetectQDCor}.

Now suppose that $A$ is quasidiagonal. Let $a\in A$ be positive and non-zero, and $0 < \epsilon <
\| a\|$. As $\theta$ is $\mathcal O_2$-stable it is $\mathcal O_\infty$-stable. So as $\theta$ is also full, $\theta(a)$ and $(\theta(a)-\epsilon)_+$
are both properly infinite by Lemma~\ref{l:imageOinftystable}, and they both generate $B$ as an
ideal. By Lemma~\ref{l:fullpropinf}, $B$ contains a full, properly infinite projection. Hence
there exists an embedding $\iota \colon \mathcal O_2 \to B$ such that $\iota(1_{\mathcal O_2})$ is
a full projection in $B$. Let $\phi \colon A \to \mathcal O_2$ be an injective $^*$-homomorphism
as given by Kirchberg's embedding theorem. Then both $\theta$ and $\iota \circ \phi$ are nuclear,
full, $\mathcal O_2$-stable $^*$-homomorphisms, so they are approximately Murray--von Neumann
equivalent by Theorem~\ref{Thm:O2MvN}. By \cite[Proposition~3.10]{Gabe17}, $\theta \oplus 0,
(\iota \circ \phi)\oplus 0 \colon A \to M_2(B)$ are approximately unitary equivalent, and in
particular have the same nuclear dimension. By Proposition~\ref{p:nucdimO2}, $\phi$ has nuclear
dimension zero, and thus so does $\theta \oplus 0$. Hence $\theta$ has nuclear dimension zero by
Proposition~\ref{Prop:cpcHer}.
\end{proof}

\begin{problem}
Characterise when $^*$-homomorphisms have nuclear dimension zero.
\end{problem}

\end{document}